\providecommand{\U}[1]{\protect\rule{.1in}{.1in}}
\theoremstyle{definition}
\newtheorem{theo}{Theorem}[section]
\newenvironment{theorem}[1][]
{\begin{theo}[#1]\begin{leftbar}}
{\end{leftbar}\end{theo}}
\newtheorem{lem}[theo]{Lemma}
\newenvironment{lemma}[1][]
{\begin{lem}[#1]\begin{leftbar}}
{\end{leftbar}\end{lem}}
\newtheorem{prop}[theo]{Proposition}
\newenvironment{proposition}[1][]
{\begin{prop}[#1]\begin{leftbar}}
{\end{leftbar}\end{prop}}
\newtheorem{defi}[theo]{Definition}
\newtheorem{remk}[theo]{Remark}
\newenvironment{remark}[1][]
{\begin{remk}[#1]\begin{leftbar}}
{\end{leftbar}\end{remk}}
\newtheorem{coro}[theo]{Corollary}
\newenvironment{corollary}[1][]
{\begin{coro}[#1]\begin{leftbar}}
{\end{leftbar}\end{coro}}
\newtheorem{conv}[theo]{Convention}
\newtheorem{quest}[theo]{Question}
\newenvironment{question}[1][]
{\begin{quest}[#1]\begin{leftbar}}
{\end{leftbar}\end{quest}}
\newtheorem{warn}[theo]{Warning}
\newtheorem{conj}[theo]{Conjecture}
\newenvironment{conjecture}[1][]
{\begin{conj}[#1]\begin{leftbar}}
{\end{leftbar}\end{conj}}
\newtheorem{exam}[theo]{Example}
\newenvironment{example}[1][]
{\begin{exam}[#1]\begin{leftbar}}
{\end{leftbar}\end{exam}}
\newcommand{\silentsection}{\section}
\newenvironment{statement}{\begin{quote}}{\end{quote}}
\newenvironment{fineprint}{\begin{small}}{\end{small}}
\let\sumnonlimits\sum
\let\prodnonlimits\prod
\let\cupnonlimits\bigcup
\renewcommand{\sum}{\sumnonlimits\limits}
\renewcommand{\prod}{\prodnonlimits\limits}
\renewcommand{\bigcup}{\cupnonlimits\limits}
\DeclareMathOperator{\osc}{OSC}
\DeclareMathOperator{\rtb}{R2B}
\newenvironment{noncompile}{}{}
\begin{document}

\title{The one-sided cycle shuffles in the symmetric group algebra}
\author{Darij Grinberg and Nadia Lafreni\`{e}re}
\date{
10 March 2024
}
\maketitle

\begin{abstract}
\textbf{Abstract.} We study an infinite family of shuffling operators on the
symmetric group $S_{n}$, which includes the well-studied top-to-random
shuffle. The general shuffling scheme consists of removing one card at a time
from the deck (according to some probability distribution) and re-inserting it
at a position chosen uniformly at random among the positions below. Rewritten
in terms of the group algebra $\mathbb{R}\left[  S_{n}\right]  $, our shuffle
corresponds to right multiplication by a linear combination of the elements
\[
t_{\ell}:=\operatorname*{cyc}\nolimits_{\ell}+\operatorname*{cyc}%
\nolimits_{\ell,\ell+1}+\operatorname*{cyc}\nolimits_{\ell,\ell+1,\ell
+2}+\cdots+\operatorname*{cyc}\nolimits_{\ell,\ell+1,\ldots,n}\in
\mathbb{R}\left[  S_{n}\right]
\]
for all $\ell\in\left\{  1,2,\ldots,n\right\}  $ (where $\operatorname{cyc}%
_{i_{1},i_{2},\ldots,i_{p}}$ denotes the permutation in $S_{n}$ that cycles
through $i_{1},i_{2},\ldots,i_{p}$).

We compute the eigenvalues of these shuffling operators and of all their
linear combinations. In particular, we show that the eigenvalues of right
multiplication by a linear combination $\lambda_{1}t_{1}+\lambda_{2}%
t_{2}+\cdots+\lambda_{n}t_{n}$ (with $\lambda_{1},\lambda_{2},\ldots
,\lambda_{n}\in\mathbb{R}$) are the numbers $\lambda_{1}m_{I,1}+\lambda
_{2}m_{I,2}+\cdots+\lambda_{n}m_{I,n}$, where $I$ ranges over the
\emph{lacunar} subsets of $\left\{  1,2,\ldots,n-1\right\}  $ (i.e., over the
subsets that contain no two consecutive integers), and where $m_{I,\ell}$
denotes the distance from $\ell$ to the next-higher element of $I$ (which
element is understood to be $\ell$ itself if $\ell\in I$, and to be $n+1$ if
$\ell>\max I$). We compute the multiplicities of these eigenvalues and show
that if they are all distinct, the shuffling operator is diagonalizable. To
this purpose, we show that the operators of right multiplication by
$t_{1},t_{2},\ldots,t_{n}$ on $\mathbb{R}\left[  S_{n}\right]  $ are
simultaneously triangularizable, and in fact there is a combinatorially
defined basis (the \textquotedblleft descent-destroying
basis\textquotedblright, as we call it) of $\mathbb{R}\left[  S_{n}\right]  $
in which they are represented by upper-triangular matrices. The results stated
here over $\mathbb{R}$ for convenience are actually stated and proved over an
arbitrary commutative ring $\mathbf{k}$.

We finish by describing a strong stationary time for the random-to-below
shuffle, which is the shuffle in which the card that moves below is selected
uniformly at random, and we give the waiting time for this event to happen.
\medskip

\textbf{Mathematics Subject Classifications:} 05E99, 20C30, 60J10. \medskip

\textbf{Keywords:} symmetric group, permutations, card shuffling,
top-to-random shuffle, group algebra, substitutional analysis, Fibonacci
numbers, filtration, representation theory, Markov chain.

\end{abstract}
\tableofcontents

\section{Introduction}

Card shuffling operators have been studied both from algebraic and
probabilistic point of views. The interest in an algebraic study of those
operators bloomed with the discovery by Diaconis and Shahshahani that the
eigenvalues of some matrices could be used to bound the mixing time of the
shuffles \cite{DiaSha81}, which answers the question ``how many times should
we shuffle a deck of cards to get a well-shuffled deck?''. We now know a
combinatorial description of the eigenvalues of several shuffling operators,
including the transposition shuffle \cite{DiaSha81}, the riffle shuffle
\cite{BayDia92}, the top-to-random shuffle \cite{Phatar91} and the
random-to-random shuffle \cite{DieSal18}, among several others. An interesting
research question is to characterize shuffles whose eigenvalues admit a
combinatorial description. We contribute to this project by describing a new
family of shuffles that do so.

Given a probability distribution $P$ on the set $\left\{  1,2,\ldots
,n\right\}  $, the \emph{one-sided cycle shuffle} corresponding to $P$
consists of picking the card at position $i$ with probability $P\left(
i\right)  $, removing it, and reinserting it at a position weakly below
position $i$, chosen uniformly at random. By varying the probability
distribution, we obtain an infinite family of shuffling operators, whose
eigenvalues can be written as linear combinations of certain combinatorial
numbers with coefficients given by the probability distribution. Special cases
of interest include the top-to-random shuffle, the random-to-below shuffle
(where position $i$ is selected uniformly at random), and the unweighted
one-sided cycle shuffle (where position $i$ is selected with probability
$\dfrac{2\left(  n+1-i\right)  }{n\left(  n+1\right)  }$). A more explicit
description of the shuffles can be found in Section \ref{sec.shuffles}.

Two of our main results -- Corollary \ref{cor.eigen.spec} and Theorem
\ref{thm.eigen.mult} -- give the eigenvalues of all the one-sided cycle
shuffles. These eigenvalues are indexed by what we call \textquotedblleft
lacunar sets\textquotedblright, which are subsets of $\mathbb{Z}$ that do not
contain consecutive integers (see Section \ref{sec.Lacunarity} for details).
As a consequence, all eigenvalues are real, positive and explicitly described.

Most studies of eigenvalues of Markov chains focus on reversible chains, which
means that their transition matrix is symmetric. In that case, eigenvalues can
be used alone for bounding the mixing time of the Markov chain. This is
however not the case for the one-sided cycle shuffles.

Examples of non-reversible Markov chains whose eigenvalues have been studied
include the riffle shuffle \cite{BayDia92}, the top-to-random and
random-to-top shuffles \cite{Phatar91}, the pop shuffles and other `BHR'
shuffling operators \cite{BiHaRo99}, and the top-$m$-to-random shuffles
\cite{DiFiPi92}. All these admit a combinatorial description of their
eigenvalues. It is surprising that non-symmetric matrices admit real
eigenvalues, let alone eigenvalues that can be computed by simple formulas. It
is these surprisingly elegant eigenvalues that have given the impetus for the
present study.

To prove and explain our main results, we decompose the one-sided cycle
shuffles into linear combinations of $n$ operators $t_{1},t_{2},\ldots,t_{n}$,
which we call the \emph{somewhere-to-below shuffles}. Each somewhere-to-below
shuffle $t_{\ell}$ moves the card at position $\ell$ to a position weakly
below it, chosen uniformly at random. We show that the somewhere-to-below
shuffles are simultaneously triangularizable by giving explicitly a basis in
which they can be triangularized. This later gives us the eigenvalues. The
triangularity, in fact, is an understatement; we actually find a filtration
$0=F_{0}\subseteq F_{1}\subseteq F_{2}\subseteq\cdots\subseteq F_{f_{n+1}%
}=\mathbb{Z}\left[  S_{n}\right]  $ of the group ring of $S_{n}$ that is
preserved by all somewhere-to-below shuffles and has the additional property
that each $t_{\ell}$ acts as a scalar on each quotient $F_{i}/F_{i-1}$. Here,
perhaps unexpectedly, $f_{n+1}$ is the $\left(  n+1\right)  $-st Fibonacci
number. Thus, the number of distinct eigenvalues of a one-sided cycle shuffle
is never larger than $f_{n+1}$.

A diversity of algebraic techniques for computing the spectrum of shuffling
operators have appeared recently
\cite{ReSaWe14,DiPaRa14,DieSal18,Lafren19,BaCoMR21,Pang22,NesPen22}. This
paper contributes new algebraic methods to this extensive toolkit.

We end the paper by establishing a strong stationary time for one shuffling
operator in our family, the random-to-below shuffle, which happens in an
expected time of at most $n\left(  \log n+\log\left(  \log n\right)
+\log2\right)  + 1$. The arguments used here are similar to those used to get
a stationary time for the top-to-random shuffle; see Section
\ref{sec.stoppingtime}.
\medskip

This is the arXiv version of the present paper; a somewhat terser writeup
has been published in the \emph{Algebraic Combinatorics} journal.
See also the extended abstract \cite{fps2024sn} for a brief summary of this
and some related work.

\paragraph{Acknowledgements}

The authors would like to thank Eran Assaf, Sarah Brauner, Colin Defant, Persi
Diaconis, Theo Douvropoulos, Maxim Kontsevich, Martin Lorenz, Oliver
Matheau-Raven, Amy Pang, Karol Penson, Victor Reiner and Franco Saliola for
inspiring discussions and insightful comments. This work was made possible
thanks to \cite{sagemath}.

\section{The algebraic setup}

Card shuffling schemes are often understood by mathematicians as drawing,
randomly, a permutation and applying it to a deck of cards. Therefore, our
work takes place in the symmetric group algebra, which we define in this section.

\subsection{\label{subsec.notations}Basic notations}

Let $\mathbf{k}$ be any commutative ring. (In most applications, $\mathbf{k}$
is either $\mathbb{Z}$, $\mathbb{Q}$ or $\mathbb{R}$.)

Let $\mathbb{N}:=\left\{  0,1,2,\ldots\right\}  $ be the set of all
nonnegative integers.

For any integers $a$ and $b$, we set $\left[  a,b\right]  :=\left\{
x\in\mathbb{Z}\ \mid\ a\leq x\leq b\right\}  =\left\{  a,a+1,\ldots,b\right\}
$. This is an empty set if $a>b$.

For each $n\in\mathbb{Z}$, let $\left[  n\right]  :=\left[  1,n\right]
=\left\{  1,2,\ldots,n\right\}  $.

Fix an integer $n\in\mathbb{N}$. Let $S_{n}$ be the $n$-th symmetric group,
i.e., the group of all permutations of $\left[  n\right]  $. We multiply
permutations in the \textquotedblleft continental\textquotedblright\ way: that
is, $\left(  \pi\sigma\right)  \left(  i\right)  =\pi\left(  \sigma\left(
i\right)  \right)  $ for all $\pi,\sigma\in S_{n}$ and $i\in\left[  n\right]
$.

For any $k$ distinct elements $i_{1},i_{2},\ldots,i_{k}$ of $\left[  n\right]
$, we let $\operatorname*{cyc}\nolimits_{i_{1},i_{2},\ldots,i_{k}}$ be the
permutation in $S_{n}$ that sends $i_{1},i_{2},\ldots,i_{k-1},i_{k}$ to
$i_{2},i_{3},\ldots,i_{k},i_{1}$, respectively while leaving all remaining
elements of $\left[  n\right]  $ unchanged. This permutation is known as a
\emph{cycle}. Note that $\operatorname*{cyc}\nolimits_{i}=\operatorname*{id}$
for any single $i\in\left[  n\right]  $.

\subsection{Some elements of $\mathbf{k}\left[  S_{n}\right]  $}

Consider the group algebra $\mathbf{k}\left[  S_{n}\right]  $. In this
algebra, define $n$ elements $t_{1},t_{2},\ldots,t_{n}$ by setting%
\begin{equation}
t_{\ell}:=\operatorname*{cyc}\nolimits_{\ell}+\operatorname*{cyc}%
\nolimits_{\ell,\ell+1}+\operatorname*{cyc}\nolimits_{\ell,\ell+1,\ell
+2}+\cdots+\operatorname*{cyc}\nolimits_{\ell,\ell+1,\ldots,n}\in
\mathbf{k}\left[  S_{n}\right]  \label{eq.def.tl.deftl}%
\end{equation}
for each $\ell\in\left[  n\right]  $. Thus, in particular, $t_{n}%
=\operatorname*{cyc}\nolimits_{n}=\operatorname*{id}=1$ (where $1$ means the
unity of $\mathbf{k}\left[  S_{n}\right]  $). We shall refer to the $n$
elements $t_{1},t_{2},\ldots,t_{n}$ as the \emph{somewhere-to-below shuffles},
due to a probabilistic significance that we will discuss soon.

The first somewhere-to-below shuffle $t_{1}$ is known as the
\emph{top-to-random shuffle}, and has been studied, for example, in
\cite{DiFiPi92}.\footnote{Our $t_{1}$ equals the $B_{1}$ defined in
\cite[(4.4)]{DiFiPi92} (since the cycles $\operatorname*{cyc}\nolimits_{1}%
,\operatorname*{cyc}\nolimits_{1,2},\ldots,\operatorname*{cyc}%
\nolimits_{1,2,\ldots,n}$ are the only permutations $\pi\in S_{n}$ satisfying
$\pi^{-1}\left(  n\right)  >\pi^{-1}\left(  n-1\right)  >\cdots>\pi
^{-1}\left(  2\right)  $).
\par
The (German) diploma thesis \cite{Palmes10} provides a detailed exposition of
the results of \cite[(4.4)]{DiFiPi92} (in particular, \cite[Satz
2.4.6]{Palmes10} is \cite[Theorem 4.2]{DiFiPi92}).
\par
See also \cite{Grinbe18} for an exposition of the most basic algebraic
properties of $t_{1}$ (which is denoted by $\mathbf{A}$ in \cite{Grinbe18}).
An unexpected application to machine learning has recently been given in
\cite[proof of Lemma 29]{Reizen19}.} It shares a lot of properties with its
adjoint operator, the \emph{random-to-top shuffle}, also widely studied
(sometimes with other names, such as the Tsetlin Library or the move-to-front
rule, as in \cite{Hendri72, Donnel91, Phatar91, Fill96, BiHaRo99}), and
described in Section \ref{sect.furtheralg} as $t_{1}^{\prime}$.

We shall study not just the somewhere-to-below shuffles, but also their
$\mathbf{k}$-linear combinations $\lambda_{1}t_{1}+\lambda_{2}t_{2}%
+\cdots+\lambda_{n}t_{n}$ (with $\lambda_{1},\lambda_{2},\ldots,\lambda_{n}%
\in\mathbf{k}$), which we call the \emph{one-sided cycle shuffles}.

\subsection{The card-shuffling interpretation}

For $\mathbf{k}=\mathbb{R}$, the elements $t_{1},t_{2},\ldots,t_{n}$ (and many
other elements of $\mathbf{k}\left[  S_{n}\right]  $) have an interpretation
in terms of card shuffling.

Namely, we consider a permutation $w\in S_{n}$ as a way to order a deck of $n$
cards\footnote{As is customary in card-shuffling combinatorics, the cards are
bijectively numbered $1,2,\ldots,n$; there are no suits, colors or jokers.}
such that the cards are $w\left(  1 \right)  ,w\left(  2 \right)
,\ldots,w\left(  n \right)  $ from top to bottom (so the top card is $w\left(
1 \right)  $, and the bottom card is $w\left(  n \right)  $). Shuffling the
deck corresponds to permuting the cards: A permutation $\sigma\in S_{n}$
transforms a deck order $w\in S_{n}$ into the deck order $w\sigma$ (that is,
the order in which the cards are $w\left(  \sigma(1) \right)  ,w\left(
\sigma(2) \right)  ,\ldots,w\left(  \sigma(n) \right)  $ from top to bottom).

A probability distribution on the $n!$ possible orders of a deck of $n$ cards
can be identified with the element $\sum_{w\in S_{n}}P\left(  w\right)  w$ of
$\mathbb{R}\left[  S_{n}\right]  $, where $P\left(  w\right)  $ is the
probability of the deck having order $w$. Likewise, a nonzero element
$\sum_{\sigma\in S_{n}}P\left(  \sigma\right)  \sigma$ of $\mathbb{R}\left[
S_{n}\right]  $ (with all $P\left(  \sigma\right)  $ being nonnegative reals)
defines a Markov chain on the set of all these $n!$ orders, in which the
transition probability from deck order $w$ to deck order $w\tau$ equals
$\dfrac{P\left(  \tau\right)  }{\sum_{\sigma\in S_{n}}P\left(  \sigma\right)
}$ for each $w,\tau\in S_{n}$. This is an instance of a \emph{right random
walk on a group}, as defined (e.g.) in \cite[Section 2.6]{LePeWi09}.

From this point of view, the top-to-random shuffle $t_{1}$ describes the
Markov chain in which a deck is transformed by picking the topmost card and
moving it into the deck at a position chosen uniformly at random (which may
well be its original, topmost position). This explains the name of $t_{1}$
(and its significance to probabilists). More generally, a somewhere-to-below
shuffle $t_{\ell}$ transforms a deck by picking its $\ell$-th card from the
top and moving it to a weakly lower place (chosen uniformly at random).
Finally, a one-sided cycle shuffle $\lambda_{1}t_{1}+\lambda_{2}t_{2}%
+\cdots+\lambda_{n}t_{n}$ (with $\lambda_{1},\lambda_{2},\ldots,\lambda_{n}%
\in\mathbb{R}_{\geq0}$ being not all $0$) picks a card at random --
specifically, picking the $\ell$-th card from the top with probability
$\dfrac{(n-\ell+1)\lambda_{\ell}}{\sum_{i=1}^{n}(n-i+1)\lambda_{i}}$ -- and
moves it to a weakly lower place (chosen uniformly at random).

\section{\label{sec.shuffles}The one-sided cycle shuffles}

In this section, we shall explore the probabilistic significance of one-sided
cycle shuffles and several particular cases thereof. We begin by a reindexing
of the one-sided cycle shuffles that is particularly convenient for
probabilistic considerations. Note that, since transition matrices of Markov
chains have their rows summing to $1$, the operators, as we describe them in
this section, are scaled to satisfy this property. However, throughout the
paper, the coefficients can sum up to any numbers; multiplying the operators
by the appropriate number would give the corresponding Markov chain.

For a given probability distribution $P$ on the set $\left[  n\right]  $, we
define the \emph{one-sided cycle shuffle governed by }$P$ to be the element%
\[
\osc(P,n):=\frac{P(1)}{n}t_{1}+\frac{P(2)}{n-1}t_{2}+\frac{P(3)}{n-2}%
t_{3}+\cdots+\dfrac{P\left(  n\right)  }{1}t_{n}\in\mathbb{R}\left[
S_{n}\right]  .
\]
This one-sided cycle shuffle gives rise to a Markov chain on the symmetric
group $S_{n}$, which transforms a deck order by selecting a card at random
according to the probability distribution $P$ (more precisely, we pick the
\textbf{position}, not the value of the card, using $P$), and then applying
the corresponding somewhere-to-below shuffle. The transition probability of
this Markov chain is thus given by
\[
Q(\tau,\sigma)=\left\{
\begin{array}
[c]{ll}%
\sum_{i=1}^{n}\frac{P(i)}{n+1-i}, & \text{if }\sigma=\tau;\\
\frac{P(i)}{n+1-i}, & \text{if }\sigma=\tau\cdot\operatorname*{cyc}%
\nolimits_{i,i+1,\ldots,j}\text{ for some $j>i;$}\\
0, & \text{otherwise}.
\end{array}
\right.
\]
The $n!\times n!$-matrix $\left(  Q\left(  \tau,\sigma\right)  \right)
_{\tau,\sigma\in S_{n}}$ is the transition matrix of this Markov chain; when
we talk of the eigenvalues of the Markov chain, we refer to the eigenvalues of
the corresponding transition matrix.

These Markov chains are not reversible, which means that their transition
matrices are not symmetric.

\subsection{Interesting one-sided cycle shuffles}

Some probability distributions on $[n]$ lead to one-sided cycle shuffles that
have an interesting meaning in terms of card shuffling. We shall next consider
three such cases.

\paragraph{The top-to-random shuffle}

The top-to-random shuffle $t_{1}$ is the one-sided cycle shuffle that garnered
the most interest. We obtain it by setting $P(1)=1$, and $P(i)=0$ for all
$i\neq1$.

The transition matrix for the top-to-random shuffle, with $3$ cards $w_{1} :=
w\left(  1 \right)  $, $w_{2} := w\left(  2 \right)  $ and $w_{3} := w\left(
3 \right)  $, is
\[
\operatorname*{T2R}\nolimits_{3}%
=\bordermatrix{ 	&{\scriptstyle w_1w_2w_3} & {\scriptstyle w_1w_3w_2} &
	{\scriptstyle w_2w_1w_3} & {\scriptstyle w_2w_3w_1} &
	{\scriptstyle w_3w_1w_2} & {\scriptstyle w_3w_2w_1} \cr
	{\scriptstyle w_1w_2w_3} & \frac{1}{3} & 0 & \frac{1}{3} & \frac{1}{3} & 0 & 0 \cr
	{\scriptstyle w_1w_3w_2} & 0 & \frac{1}{3} & 0 & 0 & \frac{1}{3} & \frac{1}{3} \cr
	{\scriptstyle w_2w_1w_3} & \frac{1}{3} & \frac{1}{3} & \frac{1}{3} & 0 & 0 & 0 \cr
	{\scriptstyle w_2w_3w_1} & 0 & 0 & 0 & \frac{1}{3} &  \frac{1}{3} & \frac{1}{3} \cr
	{\scriptstyle w_3w_1w_2} & \frac{1}{3} & \frac{1}{3} & 0 & 0 & \frac{1}{3} & 0 \cr
	{\scriptstyle w_3w_2w_1} & 0 & 0 & \frac{1}{3} & \frac{1}{3} & 0 & \frac{1}{3} \cr
}
\]
(where $w_{i}w_{j}w_{k}$ is shorthand for the permutation in $S_{3}$ that
sends $1,2,3$ to $w_{i},w_{j},w_{k}$, respectively).

The eigenvalues of this matrix are known since \cite{Phatar91} to be
$0,\frac{1}{n},\frac{2}{n},\ldots,\frac{n-2}{n},1$, and the multiplicity of
the eigenvalue $\frac{i}{n}$ is the number of permutations in $S_{n}$ that
have exactly $i$ fixed points.\footnote{Actually, \cite{Phatar91} studies a
more general kind of shuffling operators with further parameters $p_{1}%
,p_{2},\ldots,p_{n}$, but these can no longer be seen as random walks on a
group and do not appear to fit into a well-behaved \textquotedblleft
somewhere-to-below shuffle\textquotedblright\ family in the way $t_{1}$ does.}
In other words, the eigenvalues of $t_{1}$ are $0,1,2,\ldots,\mbox{$n-2$},n$
with multiplicities as just said. Other descriptions of the eigenvalues of the
top-to-random shuffle are given in terms of set partitions \cite{BiHaRo99} and
in terms of standard Young tableaux \cite{ReSaWe14}.

\paragraph{The random-to-below shuffle}

The \emph{random-to-below shuffle} consists of picking any card randomly (with
uniform probability), and inserting it anywhere weakly below (with uniform
probability). This is the one-sided cycle shuffle governed by the uniform
distribution (i.e., by the probability distribution $P$ with $P(i)=\dfrac
{1}{n}$ for all $i\in\lbrack n]$). Hence, the random-to-below operator is, in
terms of the somewhere-to-below operators,
\[
\rtb_{n}=\frac{1}{n^{2}}t_{1}+\frac{1}{n(n-1)}t_{2}+\frac{1}{n(n-2)}%
t_{3}+\cdots+\frac{1}{n}t_{n}.
\]

A sample transition matrix for the random-to-below shuffle is given here for a
deck with $3$ cards:
\[
\rtb_{3} =
\bordermatrix{ 	&{\scriptstyle w_1w_2w_3} & {\scriptstyle w_1w_3w_2} &
	{\scriptstyle w_2w_1w_3} & {\scriptstyle w_2w_3w_1} &
	{\scriptstyle w_3w_1w_2} & {\scriptstyle w_3w_2w_1} \cr
	{\scriptstyle w_1w_2w_3} & \frac{11}{18} & \frac{1}{6} & \frac{1}{9} & \frac{1}{9} & 0 & 0 \cr
	{\scriptstyle w_1w_3w_2} & \frac{1}{6} & \frac{11}{18} & 0 & 0 & \frac{1}{9} & \frac{1}{9} \cr
	{\scriptstyle w_2w_1w_3} & \frac{1}{9} & \frac{1}{9} & \frac{11}{18} & \frac{1}{6} & 0 & 0 \cr
	{\scriptstyle w_2w_3w_1} & 0 & 0 & \frac{1}{6} & \frac{11}{18} & \frac{1}{9} & \frac{1}{9} \cr
	{\scriptstyle w_3w_1w_2} & \frac{1}{9} & \frac{1}{9} & 0 & 0 & \frac{11}{18} & \frac{1}{6} \cr
	{\scriptstyle w_3w_2w_1} & 0 & 0 & \frac{1}{9} & \frac{1}{9} & \frac{1}{6} & \frac{11}{18} \cr
}.
\]

A recently studied shuffle admits a similar description, namely the one-sided
transposition shuffle \cite{BaCoMR21}, that picks a card uniformly at random
and swaps it with a card chosen uniformly at random among the cards below.
Despite its similar-sounding description, it is not a one-sided cycle shuffle
(unless $n\leq2$), and a striking difference between the two shuffles is that
the matrix of the one-sided transposition shuffle is symmetric, unlike the one
for random-to-below.

\paragraph{The unweighted one-sided cycle}

Consider a variation of the problem, in which we pick a somewhere-to-below
move uniformly among the possible moves allowed. That is, we choose (with
uniform probability) two integers $i$ and $j$ in $\left[  n\right]  $
satisfying $i\leq j$, and then we apply the cycle $\operatorname*{cyc}%
\nolimits_{i,i+1,\ldots,j}$. Thus, the probability of applying the cycle
$\operatorname*{cyc}\nolimits_{i,i+1,\ldots,j}$ is $\dfrac{2}{n(n+1)}$ for all
$i<j$, and the probability of applying the identity is $\dfrac{2}{n+1}$. This
is the one-sided cycle shuffle governed by the probability distribution $P$
with $P\left(  i\right)  =\dfrac{2\left(  n-i+1\right)  }{n\left(  n+1\right)
}$. For $n=3$, its transition matrix is%
\[
\bordermatrix{  	&{\scriptstyle w_1w_2w_3} & {\scriptstyle w_1w_3w_2} &
	{\scriptstyle w_2w_1w_3} & {\scriptstyle w_2w_3w_1} &
	{\scriptstyle w_3w_1w_2} & {\scriptstyle w_3w_2w_1} \cr
	{\scriptstyle w_1w_2w_3} & \frac{1}{2} & \frac{1}{6} & \frac{1}{6} & \frac{1}{6} & 0 & 0 \cr
	{\scriptstyle w_1w_3w_2} & \frac{1}{6} & \frac{1}{2} & 0 & 0 & \frac{1}{6} & \frac{1}{6} \cr
	{\scriptstyle w_2w_1w_3} & \frac{1}{6} & \frac{1}{6} & \frac{1}{2} & \frac{1}{6} & 0 & 0 \cr
	{\scriptstyle w_2w_3w_1} & 0 & 0 & \frac{1}{6} & \frac{1}{2} & \frac{1}{6} & \frac{1}{6} \cr
	{\scriptstyle w_3w_1w_2} & \frac{1}{6} & \frac{1}{6} & 0 & 0 & \frac{1}{2} & \frac{1}{6} \cr
	{\scriptstyle w_3w_2w_1} & 0 & 0 & \frac{1}{6} & \frac{1}{6} & \frac{1}{6} & \frac{1}{2} \cr
}.
\]

\subsection{Eigenvalues and mixing time results for one-sided cycle shuffles}

\label{sec.eigenvalues_for_osc}

Corollary \ref{cor.eigen.spec} further below describes the eigenvalues for any
one-sided cycle shuffle. For a deck of $n$ cards, the eigenvalues are indexed
by lacunar subsets of $[n-1]$, which are subsets of $[n-1]$ that do not
contain consecutive integers. Given such a subset $I$, we define in Section
\ref{sec.Lacunarity} the nonnegative integers $m_{I,1},m_{I,2},\ldots,m_{I,n}%
$. Then, the eigenvalue of the one-sided cycle shuffle $\osc(P,n)$ indexed by
$I$ is
\[
\frac{P(1)}{n}m_{I,1}+\frac{P(2)}{n-1}m_{I,2}+\cdots+\dfrac{P\left(  n\right)
}{1}m_{I,n}.
\]

A consequence of this description is that all the eigenvalues are nonnegative
reals (and are rational if the $P\left(  1\right)  ,P\left(  2\right)
,\ldots,P\left(  n\right)  $ are). This is a surprising result for a matrix
that is not symmetric.

However, the fact that the matrices are not symmetric means that their
eigenvalues cannot be used alone to bound the mixing time for the one-sided
cycle shuffle. To palliate this, we describe a strong stationary time for the
one-sided cycle shuffles in Section \ref{sec.stoppingtime}. In the specific
case of the random-to-below shuffle, we give the waiting time to achieve it.

\paragraph{Eigenvalues of some interesting one-sided cycle shuffles}

The statement above can be used to find the eigenvalues of any one-sided cycle
shuffle, including the top-to-random shuffle. In this case, the eigenvalues
are given as $\dfrac{m_{I,1}}{n}$. It should become clear, after we define the
numbers $m_{I,1}$ and lacunar sets in Section \ref{sec.Lacunarity}, that the
values that $m_{I,1}$ can take are exactly the integers $0, 1, 2, \ldots, n-2,
n$.

Similarly, Corollary \ref{cor.eigen.spec} (as restated above) yields that the
eigenvalues for the unweighted one-sided cycle shuffle are given by $\dfrac
{2}{n(n+1)} \left(  m_{I,1} + m_{I,2} + \ldots+ m_{I,n}\right)  $, and are
indexed by the lacunar subsets of $[n-1]$. As far as we can tell, there is no
known simple combinatorial expression for the sum $m_{I,1} + m_{I,2} + \cdots+
m_{I,n}$.

\section{The operators in the symmetric group algebra}

We now resume the algebraic study of general one-sided cycle shuffles (with
arbitrary $\mathbf{k}$ and not necessarily governed by a probability
distribution). We will find it more convenient to work with endomorphisms of
the $\mathbf{k}$-module $\mathbf{k}\left[  S_{n}\right]  $ rather than with
$n!\times n!$-matrices.

For each element $x\in\mathbf{k}\left[  S_{n}\right]  $, let $R\left(
x\right)  $ denote the $\mathbf{k}$-linear map%
\begin{align*}
\mathbf{k}\left[  S_{n}\right]   &  \rightarrow\mathbf{k}\left[  S_{n}\right]
,\\
y  &  \mapsto yx.
\end{align*}
This map is known as \textquotedblleft right multiplication by $x$%
\textquotedblright, and is an endomorphism of the free $\mathbf{k}$-module
$\mathbf{k}\left[  S_{n}\right]  $; thus, it makes sense to speak of
eigenvalues, eigenvectors and triangularization.

One of our main results is the following:

\begin{theorem}
\label{thm.Rcomb-main}Let $\lambda_{1},\lambda_{2},\ldots,\lambda_{n}%
\in\mathbf{k}$. Then, the $\mathbf{k}$-module endomorphism $R\left(
\lambda_{1}t_{1}+\lambda_{2}t_{2}+\cdots+\lambda_{n}t_{n}\right)  $ of
$\mathbf{k}\left[  S_{n}\right]  $ can be triangularized -- i.e., there exists
a basis of the $\mathbf{k}$-module $\mathbf{k}\left[  S_{n}\right]  $ such
that this endomorphism is represented by an upper-triangular matrix with
respect to this basis. Moreover, this basis does not depend on $\lambda
_{1},\lambda_{2},\ldots,\lambda_{n}$.
\end{theorem}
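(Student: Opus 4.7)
The plan is to find a single basis of $\mathbf{k}\left[S_{n}\right]$ that simultaneously triangularizes all of the operators $R(t_{1}), R(t_{2}), \ldots, R(t_{n})$. Once such a basis is in hand, any $\mathbf{k}$-linear combination $R(\lambda_{1}t_{1}+\cdots+\lambda_{n}t_{n}) = \lambda_{1}R(t_{1})+\cdots+\lambda_{n}R(t_{n})$ is automatically represented by an upper-triangular matrix in the same basis, since upper-triangular matrices form a $\mathbf{k}$-submodule of $\operatorname{End}_{\mathbf{k}}(\mathbf{k}\left[S_{n}\right])$; and crucially, this basis will depend only on $n$, not on the $\lambda_{\ell}$. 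So the theorem reduces to the simultaneous triangularization statement.

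The first step is to construct the basis explicitly. For each permutation $w\in S_{n}$, I would define a ``descent-destroying'' element $d_{w}\in\mathbf{k}\left[S_{n}\right]$ of the form $d_{w}=w+(\text{a }\mathbf{k}\text{-linear combination of permutations that are strictly smaller than }w\text{ in some combinatorially chosen total order }\preceq)$. Because this expansion is unitriangular with respect to the standard basis $(w)_{w\in S_{n}}$, the family $(d_{w})_{w\in S_{n}}$ is automatically a basis of $\mathbf{k}\left[S_{n}\right]$. The correction terms are to be chosen precisely so as to cancel the ``upward'' contributions when we right-multiply by any of the $t_{\ell}$.

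The second step is to verify that for every $\ell\in[n]$ and every $w\in S_{n}$,
\[
d_{w}\,t_{\ell} \;=\; \mu_{w,\ell}\, d_{w} \;+\; \sum_{w'\prec w} c_{w,w',\ell}\, d_{w'}
\]
for scalars $\mu_{w,\ell}, c_{w,w',\ell}\in\mathbf{k}$. This identity exhibits $R(t_{\ell})$ as upper triangular in the ordered basis $(d_{w})_{w\in S_{n}}$, uniformly in $\ell$, which completes the proof. Guided by the abstract, the diagonal scalar $\mu_{w,\ell}$ should equal $m_{I,\ell}$ for a lacunar subset $I=I(w)\subseteq[n-1]$ naturally attached to $w$; grouping the $d_{w}$ by the value of $I(w)$ will then refine the triangularization into the Fibonacci-indexed filtration $0=F_{0}\subseteq F_{1}\subseteq\cdots\subseteq F_{f_{n+1}}=\mathbf{k}\left[S_{n}\right]$ on whose quotients each $t_{\ell}$ acts by the scalar $m_{I,\ell}$.

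The main obstacle is defining $d_{w}$ and the order $\preceq$ correctly, and verifying the triangular identity above. Right multiplication of $w$ by $t_{\ell}$ produces the sum $\sum_{j=\ell}^{n} w\cdot\operatorname{cyc}_{\ell,\ell+1,\ldots,j}$, which consists of $n-\ell+1$ permutations obtained from $w$ by a ``pull-value-$w(\ell)$-down'' move; most of them will not satisfy $w'\preceq w$, so the correction terms in the $d_{w}$ must precisely offset this excess. The combinatorial heart of the argument will therefore be (i)~choosing the right descent-based statistic on $S_{n}$ to define $\preceq$ and the associated lacunar set $I(w)$, and (ii)~finding a recursion or closed form for the correction terms that simultaneously works for all $n$ values of $\ell$. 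Once the right definition of $d_{w}$ is in hand, the triangular identity should follow from a careful induction on $\preceq$ together with a combinatorial lemma tracking how cycles $\operatorname{cyc}_{\ell,\ell+1,\ldots,j}$ interact with the descent structure of $w$.
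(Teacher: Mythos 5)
Your proposal correctly reduces the theorem to the simultaneous triangularization of $R\left(t_{1}\right),\ldots,R\left(t_{n}\right)$ and correctly guesses the overall shape of the argument: a family $\left(d_{w}\right)_{w\in S_{n}}$ that is unitriangular over the standard basis, with $d_{w}t_{\ell}$ expanding triangularly and diagonal coefficient $m_{I\left(w\right),\ell}$ for a lacunar set attached to $w$. But as written it is an outline, not a proof. The three objects on which everything hinges -- the elements $d_{w}$, the order $\preceq$, and the lacunar set $I\left(w\right)$ -- are never defined, and the identity $d_{w}t_{\ell}=\mu_{w,\ell}d_{w}+\sum_{w'\prec w}c_{w,w',\ell}d_{w'}$ is precisely the statement that has to be proven, not something that "should follow from a careful induction" once the definitions are guessed. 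You acknowledge this yourself ("The main obstacle is defining $d_{w}$ and the order $\preceq$ correctly, and verifying the triangular identity above"); that obstacle is the entire mathematical content of the theorem.

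There is also a structural conflation worth flagging: you ask a single order $\preceq$ to do double duty, both making $d_{w}=w+\left(\text{terms }\prec w\right)$ (to get the basis property) and making $R\left(t_{\ell}\right)$ triangular. In the paper these are two \emph{different} orders. The basis elements are $a_{w}=\sum_{\sigma\in G\left(\operatorname{Des}w\right)}w\sigma$, which are unitriangular with respect to the \emph{lexicographic} order; but the triangularization of $R\left(t_{\ell}\right)$ is with respect to the order by increasing $Q$-index, where $\operatorname{Qind}w$ is determined by $Q_{i}'\subseteq\operatorname{Des}w\subseteq\left[n-1\right]\setminus Q_{i}$. Neither order works for both purposes: for $n=4$ one has $a_{\left[4312\right]}t_{2}=a_{\left[4312\right]}+a_{\left[4321\right]}-\cdots$ with $\left[4321\right]$ lexicographically larger than $\left[4312\right]$, while for $n=3$ the element $a_{\left[321\right]}$ contains $\left[123\right]$, whose $Q$-index is larger than that of $\left[321\right]$. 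Finally, the key identity is not proved in the paper by direct manipulation of the $a_{w}$; it is deduced from the module-level containment $F\left(I\right)\cdot\left(t_{\ell}-m_{I,\ell}\right)\subseteq F\left(<\operatorname{sum}I\right)$ for the invariant spaces $F\left(I\right)=\left\{q\ \mid\ qs_{i}=q\text{ for all }i\in I'\right\}$, whose proof is a substantial case analysis of how simple transpositions commute past the products $s_{\ell}s_{\ell+1}\cdots s_{j-1}$, combined with the reduction to lacunar subsets of $\left[n-1\right]$ ordered by element sum. To complete your proposal you would need to supply all of this.
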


We shall eventually describe both the basis and the eigenvalues of this
endomorphism $R\left(  \lambda_{1}t_{1}+\lambda_{2}t_{2}+\cdots+\lambda
_{n}t_{n}\right)  $ explicitly; indeed, both will follow from Theorem
\ref{thm.Rcomb-conc}.

\begin{remark}
\label{rmk.Rcomb}In general, the endomorphism $R\left(  \lambda_{1}%
t_{1}+\lambda_{2}t_{2}+\cdots+\lambda_{n}t_{n}\right)  $ cannot be
diagonalized. For example:

\begin{itemize}
\item If we take $\mathbf{k}=\mathbb{C}$, $n=4$ and $\lambda_{i}=1$ for each
$i\in\left[  n\right]  $ (which is the unweighted one-sided cycle shuffle),
then the minimal polynomial of the endomorphism $R\left(  \lambda_{1}%
t_{1}+\lambda_{2}t_{2}+\cdots+\lambda_{n}t_{n}\right)  $ is $\left(
x-10\right)  \left(  x-6\right)  \left(  x-4\right)  ^{2}\left(  x-2\right)
$, so that this endomorphism is not diagonalizable.

\item If we take $\mathbf{k}=\mathbb{C}$, $n=3$ and $\lambda_{i}=\dfrac{6}{i}$
for each $i\in\left[  n\right]  $, then the minimal polynomial of the
endomorphism $R\left(  \lambda_{1}t_{1}+\lambda_{2}t_{2}+\cdots+\lambda
_{n}t_{n}\right)  $ is $\left(  x-8\right)  ^{2} \left(  x-26\right)  $, so
that this endomorphism is not diagonalizable.
\end{itemize}

Consequently, there is (in general) no basis of $\mathbf{k}\left[
S_{n}\right]  $ such that all the endomorphisms $R\left(  t_{1}\right)
,R\left(  t_{2}\right)  ,\ldots,R\left(  t_{n}\right)  $ are represented by
diagonal matrices with respect to this basis. Triangular matrices are thus the
best one might hope for; and Theorem \ref{thm.Rcomb-main} reveals that this
hope indeed comes true. Eventually, we will see (Theorem
\ref{thm.eigen.diagonalizable}) that the endomorphism $R\left(  \lambda_{1}
t_{1} + \lambda_{2} t_{2} + \cdots+ \lambda_{n} t_{n}\right)  $ is
diagonalizable (over a field) for a sufficiently generic choice of
$\lambda_{1}, \lambda_{2}, \ldots, \lambda_{n}$.
\end{remark}

\section{\label{sec.Lacunarity}Subset basics: Lacunarity, Enclosure and
Non-Shadow}

In order to concretize the claims of Theorem \ref{thm.Rcomb-main}, we shall
introduce some features of sets of integers and a rather famous integer
sequence. The main role will be played by the \textit{lacunar sets}, which
will later index a certain filtration of $\mathbf{k}\left[  S_{n}\right]  $ on
whose subquotients the endomorphisms $R\left(  t_{\ell}\right)  $ act by
scalars. This is especially convenient since the number of lacunar sets is
relatively small (a Fibonacci number).

Let $\left(  f_{0},f_{1},f_{2},\ldots\right)  $ be the \emph{Fibonacci
sequence}. This is the sequence of integers defined recursively by%
\[
f_{0}=0,\ \ \ \ \ \ \ \ \ \ f_{1}=1,\ \ \ \ \ \ \ \ \ \ \text{and}%
\ \ \ \ \ \ \ \ \ \ f_{m}=f_{m-1}+f_{m-2}\text{ for all }m\geq2.
\]

We shall say that a set $I\subseteq\mathbb{Z}$ is \emph{lacunar} if it
contains no two consecutive integers (i.e., there exists no $i\in I$ such that
$i+1\in I$). For instance, the set $\left\{  1,4,6\right\}  $ is lacunar,
while the set $\left\{  1,4,5\right\}  $ is not. Lacunar sets are also known
as \textquotedblleft sparse sets\textquotedblright\ (in \cite{AgNyOr06}) or as
\textquotedblleft Zeckendorf sets\textquotedblright\ (in \cite{Chu19}, at
least when they are finite subsets of $\left\{  1,2,3,\ldots\right\}  $).

It is known (see, e.g., \cite[Proposition 1.4.9]{Grinbe20}) that the number of
lacunar subsets of $\left[  n\right]  $ is the Fibonacci number $f_{n+2}$.
Applying this to $n-1$ instead of $n$, we conclude that the number of lacunar
subsets of $\left[  n-1\right]  $ is $f_{n+1}$ whenever $n>0$. A moment's
thought reveals that this holds for $n=0$ as well (since $\left[  -1\right]
=\varnothing$), and thus holds for each nonnegative integer $n$.

If $I$ is any set of integers, then $I-1$ will denote the set $\left\{
i-1\ \mid\ i\in I\right\}  $; this is again a set of integers. For instance,
$\left\{  2,4,5\right\}  -1=\left\{  1,3,4\right\}  $. Note that a set $I$ is
lacunar if and only if $I\cap\left(  I-1\right)  =\varnothing$.

For any subset $I$ of $\left[  n\right]  $, we define the following:

\begin{itemize}
\item We let $\widehat{I}$ be the set $\left\{  0\right\}  \cup I\cup\left\{
n+1\right\}  $. We shall refer to $\widehat{I}$ as the \emph{enclosure} of $I$.

For example, if $n=5$, then $\widehat{\left\{  2,3\right\}  }=\left\{
0,2,3,6\right\}  $.

\item For any $\ell\in\left[  n\right]  $, we let $m_{I,\ell}$ be the number%
\[
\left(  \text{smallest element of }\widehat{I}\text{ that is }\geq\ell\right)
-\ell\in\left[  0,n+1-\ell\right]  \subseteq\left[  0,n\right]  .
\]
Those numbers $m_{I, \ell}$ already appeared in Subsection
\ref{sec.eigenvalues_for_osc}, as they play a crucial role in the expression
of the eigenvalues of the one-sided cycle shuffles.

For example, if $n=5$ and $I=\left\{  2,3\right\}  $, then%
\[
\left(  m_{I,1},\ m_{I,2},\ m_{I,3},\ m_{I,4},\ m_{I,5}\right)  =\left(
1,\ 0,\ 0,\ 2,\ 1\right)  .
\]
We note that an $\ell\in\left[  n\right]  $ satisfies $m_{I,\ell}=0$ if and
only if $\ell\in\widehat{I}$ (or, equivalently, $\ell\in I$).

\item We let $I^{\prime}$ be the set $\left[  n-1\right]  \setminus\left(
I\cup\left(  I-1\right)  \right)  $. This is the set of all $i\in\left[
n-1\right]  $ satisfying $i\notin I$ and $i+1\notin I$. We shall refer to
$I^{\prime}$ as the \emph{non-shadow} of $I$.

For example, if $n=5$, then $\left\{  2,3\right\}  ^{\prime}=\left[  4\right]
\setminus\left\{  1,2,3\right\}  =\left\{  4\right\}  $.
\end{itemize}

\section{\label{sec.transpositions}The simple transpositions $s_{i}$}

In this section, we will recall the basic properties of simple transpositions
in the symmetric group $S_{n}$, and use them to rewrite the definition
(\ref{eq.def.tl.deftl}) of the somewhere-to-below shuffles.

For any $i\in\left[  n-1\right]  $, we let $s_{i}:=\operatorname*{cyc}%
\nolimits_{i,i+1}\in S_{n}$. This permutation $s_{i}$ is called a \emph{simple
transposition}. It is well-known that $s_{1},s_{2},\ldots,s_{n-1}$ generate
the group $S_{n}$. Moreover, it is known that two simple transpositions
$s_{i}$ and $s_{j}$ commute whenever $\left\vert i-j\right\vert >1$. This
latter fact is known as \emph{reflection locality}.

It is furthermore easy to see that
\begin{equation}
\operatorname*{cyc}\nolimits_{\ell,\ell+1,\ldots,k}=s_{\ell}s_{\ell+1}\cdots
s_{k-1} \label{eq.cyc-via-ss}%
\end{equation}
for each $\ell\leq k$ in $\left[  n\right]  $. Thus, (\ref{eq.def.tl.deftl})
rewrites as follows:%
\begin{align}
t_{\ell}  &  =1+s_{\ell}+s_{\ell}s_{\ell+1}+\cdots+s_{\ell}s_{\ell+1}\cdots
s_{n-1}\nonumber\\
&  =\sum_{j=\ell}^{n}s_{\ell}s_{\ell+1}\cdots s_{j-1}
\label{eq.def.tl.deftl-s-sum}%
\end{align}
for each $\ell\in\left[  n\right]  $.

The following relationship between simple transpositions will later be used in
proving the triangularizability of the somewhere-to-below shuffles:

\begin{lemma}
\label{lem.si-into-cyc}Let $\ell\in\left[  n\right]  $ and $j\in\left[
n\right]  $. Let $i\in\left[  \ell,j-2\right]  $. Then,%
\[
s_{\ell}s_{\ell+1}\cdots s_{j-1}\cdot s_{i}=s_{i+1}\cdot s_{\ell}s_{\ell
+1}\cdots s_{j-1}.
\]

\end{lemma}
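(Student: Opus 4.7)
The plan is a direct manipulation using the two standard relations among the simple transpositions: the braid relation $s_i s_{i+1} s_i = s_{i+1} s_i s_{i+1}$ and reflection locality $s_a s_b = s_b s_a$ for $|a-b| > 1$. Since $i \in [\ell, j-2]$, the indices $i$ and $i+1$ both appear among $\ell, \ell+1, \ldots, j-1$, so $s_i$ is literally one of the factors in the product $s_\ell s_{\ell+1} \cdots s_{j-1}$.

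First I would split the left-hand side as
\[
s_\ell s_{\ell+1} \cdots s_{j-1} \cdot s_i \;=\; \bigl(s_\ell s_{\ell+1} \cdots s_{i-1}\bigr) \cdot s_i \cdot s_{i+1} \cdot \bigl(s_{i+2} s_{i+3} \cdots s_{j-1}\bigr) \cdot s_i.
\]
Next, since $i+2, i+3, \ldots, j-1$ are all at distance at least $2$ from $i$, reflection locality lets me slide the trailing $s_i$ to the left past the block $s_{i+2} \cdots s_{j-1}$, yielding
\[
\bigl(s_\ell \cdots s_{i-1}\bigr) \cdot s_i s_{i+1} s_i \cdot \bigl(s_{i+2} \cdots s_{j-1}\bigr).
\]
Now I apply the braid relation to replace $s_i s_{i+1} s_i$ with $s_{i+1} s_i s_{i+1}$, obtaining
\[
\bigl(s_\ell \cdots s_{i-1}\bigr) \cdot s_{i+1} \cdot s_i s_{i+1} \cdot \bigl(s_{i+2} \cdots s_{j-1}\bigr).
\]
Finally, the new leading $s_{i+1}$ is at distance at least $2$ from each of $s_\ell, s_{\ell+1}, \ldots, s_{i-1}$, so reflection locality lets me move it all the way to the front, giving
\[
s_{i+1} \cdot s_\ell s_{\ell+1} \cdots s_{i-1} \cdot s_i s_{i+1} s_{i+2} \cdots s_{j-1} \;=\; s_{i+1} \cdot s_\ell s_{\ell+1} \cdots s_{j-1},
\]
which is the right-hand side.

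There is no real obstacle here; the main thing to check is that the index ranges behave correctly at the boundary cases. Specifically, when $i = \ell$ the block $s_\ell \cdots s_{i-1}$ is empty (so the final commutation step is vacuous), and the condition $i \le j-2$ guarantees that the block $s_{i+2} \cdots s_{j-1}$ is well-defined (possibly empty, when $i = j-2$, in which case the first commutation step is vacuous). In all cases the three relations suffice, and the proof is a short chain of rewrites with no case analysis beyond these trivial boundaries.
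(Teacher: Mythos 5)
Your proof is correct, but it takes a genuinely different route from the paper's. The paper proves the identity by setting $\sigma=\operatorname*{cyc}\nolimits_{\ell,\ell+1,\ldots,j}=s_{\ell}s_{\ell+1}\cdots s_{j-1}$ and invoking the conjugation formula $\sigma\operatorname*{cyc}\nolimits_{p_{1},\ldots,p_{k}}\sigma^{-1}=\operatorname*{cyc}\nolimits_{\sigma\left(  p_{1}\right)  ,\ldots,\sigma\left(  p_{k}\right)  }$: since $\sigma\left(  i\right)  =i+1$ and $\sigma\left(  i+1\right)  =i+2$, one gets $\sigma s_{i}\sigma^{-1}=s_{i+1}$ directly, with no case analysis at all. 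You instead work entirely inside the Coxeter presentation, splitting the word, sliding the trailing $s_{i}$ leftward by reflection locality, applying the braid relation $s_{i}s_{i+1}s_{i}=s_{i+1}s_{i}s_{i+1}$, and sliding the resulting $s_{i+1}$ to the front; your index bookkeeping and the treatment of the boundary cases $i=\ell$ and $i=j-2$ are all correct. Note that the braid relation is not among the facts the paper recalls in its section on simple transpositions (only reflection locality is), so you are implicitly adding one standard ingredient. The paper's argument is slightly slicker because it computes at the level of permutations acting on points, and the conjugation formula is reusable elsewhere; your argument has the compensating virtue that it uses only the defining relations of the Coxeter presentation, so it would transfer verbatim to the Hecke-algebra deformation $t_{\ell}^{\mathcal{H}}$ discussed at the end of the paper, where the permutation-level conjugation argument is not directly available.
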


\begin{proof}
[Proof of Lemma \ref{lem.si-into-cyc}.]From $i\in\left[  \ell,j-2\right]  $,
we obtain $i\in\left[  \ell,j-1\right]  $ and $i+1\in\left[  \ell,j-1\right]
$ and $\ell\leq i\leq j-2<j$.

It is well-known that
\begin{equation}
\sigma\operatorname*{cyc}\nolimits_{p_{1},p_{2},\ldots,p_{k}}\sigma
^{-1}=\operatorname*{cyc}\nolimits_{\sigma\left(  p_{1}\right)  ,\sigma\left(
p_{2}\right)  ,\ldots,\sigma\left(  p_{k}\right)  }
\label{pf.lem.si-into-cyc.gen}%
\end{equation}
for any $\sigma\in S_{n}$ and any $k$ distinct elements $p_{1},p_{2}%
,\ldots,p_{k}$ of $\left[  n\right]  $.

Let $\sigma=\operatorname*{cyc}\nolimits_{\ell,\ell+1,\ldots,j}$. Then,
$\sigma\left(  i\right)  =i+1$ (since $i\in\left[  \ell,j-1\right]  $) and
$\sigma\left(  i+1\right)  =i+2$ (since $i+1\in\left[  \ell,j-1\right]  $).
However, (\ref{pf.lem.si-into-cyc.gen}) yields%
\[
\sigma\operatorname*{cyc}\nolimits_{i,i+1}\sigma^{-1}=\operatorname*{cyc}%
\nolimits_{\sigma\left(  i\right)  ,\sigma\left(  i+1\right)  }%
=\operatorname*{cyc}\nolimits_{i+1,i+2}%
\]
(since $\sigma\left(  i\right)  =i+1$ and $\sigma\left(  i+1\right)  =i+2$).
In view of $s_{i}=\operatorname*{cyc}\nolimits_{i,i+1}$ and $s_{i+1}%
=\operatorname*{cyc}\nolimits_{i+1,i+2}$, this rewrites as $\sigma s_{i}%
\sigma^{-1}=s_{i+1}$. In other words, $\sigma s_{i}=s_{i+1}\sigma$. In view of
$\sigma=\operatorname*{cyc}\nolimits_{\ell,\ell+1,\ldots,j}=s_{\ell}s_{\ell
+1}\cdots s_{j-1}$, we can rewrite this as $s_{\ell}s_{\ell+1}\cdots
s_{j-1}\cdot s_{i}=s_{i+1}\cdot s_{\ell}s_{\ell+1}\cdots s_{j-1}$. This proves
Lemma \ref{lem.si-into-cyc}.
\end{proof}

\section{The invariant spaces $F\left(  I\right)  $}

\label{sec.invariantspaces}

Recall that our goal is to prove Theorem \ref{thm.Rcomb-main}, which claims
that the one-sided cycle shuffles are triangularizable. To that end, we will
construct a $\mathbf{k}$-submodule filtration of $\mathbf{k}\left[
S_{n}\right]  $ that is preserved by all the somewhere-to-below shuffles. In
this section, we define a first family of submodules $F\left(  I\right)  $ of
$\mathbf{k}[S_{n}]$, which will later serve as building blocks for this filtration.

\subsection{Definition}

For any subset $I$ of $\left[  n\right]  $, we define the following:

\begin{itemize}
\item We let $\operatorname*{sum}I$ denote the sum of all elements of $I$.
This is an integer with $0\leq\operatorname*{sum}I\leq n\left(  n+1\right)
/2$.

\item We let
\[
F\left(  I\right)  :=\left\{  q\in\mathbf{k}\left[  S_{n}\right]
\ \mid\ qs_{i}=q\text{ for all }i\in I^{\prime}\right\}  .
\]
This is a $\mathbf{k}$-submodule of $\mathbf{k}\left[  S_{n}\right]  $.
Intuitively, it can be understood as follows: Writing each permutation $\pi\in
S_{n}$ as the $n$-tuple $\left(  \pi\left(  1\right)  ,\pi\left(  2\right)
,\ldots,\pi\left(  n\right)  \right)  $ (this is called \emph{one-line
notation}), we can view an element $q\in\mathbf{k}\left[  S_{n}\right]  $ as a
$\mathbf{k}$-linear combination of such $n$-tuples. The group $S_{n}$ acts on
such $n$-tuples from the right by permuting positions, and thus acts on their
linear combinations by linearity. An element $q\in\mathbf{k}\left[
S_{n}\right]  $ belongs to $F\left(  I\right)  $ if and only if it is
invariant under permuting any two adjacent positions $i$ and $i+1$ that both
lie outside of $I$. We thus call $F\left(  I\right)  $ an \emph{invariant
space}. \medskip

In terms of shuffling operators, one can think of $F(I)$ as the set of all
random decks (i.e., probability distributions on the $n!$ orderings of a deck)
that are \emph{fully shuffled} within each contiguous interval of
$[n]\backslash I$. This is to be understood as follows: Let $q \in F(I)$, and
let $\sigma\in S_{n}$ be a term appearing in $q$ with coefficient $c$. Let
$[i,j]$ be an interval of $[n]$ containing no element of $I$. Then, for any
permutation $\tau\in S_{n}$ that fixes each element of $[n]\backslash[i,j]$,
the coefficient of $\sigma\tau$ in $q$ is also $c$. Moreover, this property
characterizes the elements $q$ of $F(I)$. \medskip

Note that the set $F\left(  I\right)  $ depends only on $n$ and $I^{\prime}$,
but not on $I$. We nevertheless find it better to index it by $I$.
\end{itemize}

Note that $F\left(  \left[  n\right]  \right)  =\mathbf{k}\left[
S_{n}\right]  $, since $\left[  n\right]  ^{\prime}= \varnothing$. (Also, many
other subsets $I$ of $\left[  n\right]  $ satisfy $F\left(  I\right)
=\mathbf{k}\left[  S_{n}\right]  $. For example, this holds for $I=\left\{
2,4,6,8,\ldots\right\}  \cap\left[  n\right]  $ and for $I=\left\{
1,3,5,7,\ldots\right\}  \cap\left[  n\right]  $ and for $I=\left[  n-1\right]
$. Indeed, all of these sets $I$ satisfy $I^{\prime}=\varnothing$.)

Here are some more examples of the sets $F\left(  I\right)  $:

\begin{example}
\label{exa.F(I).n=3}Let $n=3$. Then, there are $2^{3}=8$ many subsets $I$ of
$\left[  n\right]  =\left[  3\right]  $. We shall compute the non-shadow
$I^{\prime}$ and the invariant space $F\left(  I\right)  $ for each of them:

\begin{itemize}
\item We have $\varnothing^{\prime}=\left[  2\right]  $ and thus
\begin{align*}
F\left(  \varnothing\right)   &  =\left\{  q\in\mathbf{k}\left[  S_{n}\right]
\ \mid\ qs_{i}=q\text{ for all }i\in\left[  2\right]  \right\} \\
&  =\operatorname*{span}\left(  \left[  123\right]  +\left[  132\right]
+\left[  213\right]  +\left[  231\right]  +\left[  312\right]  +\left[
321\right]  \right)  .
\end{align*}
Here, the notation \textquotedblleft$\operatorname*{span}$\textquotedblright%
\ means a $\mathbf{k}$-linear span, whereas the notation $\left[  ijk\right]
$ means the permutation $\sigma\in S_{3}$ that sends $1,2,3$ to $i,j,k$,
respectively. (In our case, we are taking the span of a single vector, but
soon we will see some more complicated spans.)

\item We have $\left\{  1\right\}  ^{\prime}=\left\{  2\right\}  $ and thus
\begin{align*}
F\left(  \left\{  1\right\}  \right)   &  =\left\{  q\in\mathbf{k}\left[
S_{n}\right]  \ \mid\ qs_{2}=q\right\} \\
&  =\operatorname*{span}\left(  \left[  123\right]  +\left[  132\right]
,\ \ \left[  213\right]  +\left[  231\right]  ,\ \ \left[  312\right]
+\left[  321\right]  \right)  .
\end{align*}

\item We have $\left\{  3\right\}  ^{\prime}=\left\{  1\right\}  $ and thus%
\begin{align*}
F\left(  \left\{  3\right\}  \right)   &  =\left\{  q\in\mathbf{k}\left[
S_{n}\right]  \ \mid\ qs_{1}=q\right\} \\
&  =\operatorname*{span}\left(  \left[  123\right]  +\left[  213\right]
,\ \ \left[  132\right]  +\left[  312\right]  ,\ \ \left[  231\right]
+\left[  321\right]  \right)  .
\end{align*}

\item If $I$ is any of the sets $\left\{  2\right\}  $, $\left\{  1,2\right\}
$, $\left\{  1,3\right\}  $, $\left\{  2,3\right\}  $ and $\left\{
1,2,3\right\}  $, then $I^{\prime}=\varnothing$ and thus%
\begin{align*}
F\left(  I\right)   &  =\left\{  q\in\mathbf{k}\left[  S_{n}\right]  \right\}
=\mathbf{k}\left[  S_{n}\right] \\
&  =\operatorname*{span}\left(  \left[  123\right]  ,\ \ \left[  132\right]
,\ \ \left[  213\right]  ,\ \ \left[  231\right]  ,\ \ \left[  312\right]
,\ \ \left[  321\right]  \right)  .
\end{align*}

\end{itemize}
\end{example}

\begin{example}
\label{exa.F(I).n=4}Let $n=4$. Then, $\left\{  1\right\}  ^{\prime}=\left\{
2,3\right\}  $ and thus%
\begin{align*}
F\left(  \left\{  1\right\}  \right)   &  =\left\{  q\in\mathbf{k}\left[
S_{n}\right]  \ \mid\ qs_{i}=q\text{ for all }i\in\left\{  2,3\right\}
\right\} \\
&  =\operatorname*{span}(\left[  1234\right]  +\left[  1243\right]  +\left[
1324\right]  +\left[  1342\right]  +\left[  1423\right]  +\left[  1432\right]
,\\
&  \ \ \ \ \ \ \ \ \ \ \ \ \ \ \ \ \ \ \ \ \left[  2134\right]  +\left[
2143\right]  +\left[  2314\right]  +\left[  2341\right]  +\left[  2413\right]
+\left[  2431\right]  ,\\
&  \ \ \ \ \ \ \ \ \ \ \ \ \ \ \ \ \ \ \ \ \left[  3124\right]  +\left[
3142\right]  +\left[  3214\right]  +\left[  3241\right]  +\left[  3412\right]
+\left[  3421\right]  ,\\
&  \ \ \ \ \ \ \ \ \ \ \ \ \ \ \ \ \ \ \ \ \left[  4123\right]  +\left[
4132\right]  +\left[  4213\right]  +\left[  4231\right]  +\left[  4312\right]
+\left[  4321\right]  ).
\end{align*}
Here, $\left[  ijk\ell\right]  $ means the permutation $\sigma\in S_{4}$ that
sends $1,2,3,4$ to $i,j,k,\ell$, respectively.
\end{example}

In Section \ref{sec.Filtration}, we shall define a filtration of
$\mathbf{k}\left[  S_{n}\right]  $ that requires sorting subsets according to
the sum of their elements. Hence, for each $k\in\mathbb{N}$, we set
\[
F\left(  <k\right)  :=\sum_{\substack{J\subseteq\left[  n\right]
;\\\operatorname*{sum}J<k}}F\left(  J\right)  .
\]

\subsection{Right multiplication by $t_{\ell}-m_{I,\ell}$ moves us down the
$F\left(  I\right)  $-grid}

We now claim the following theorem, which will play a crucial role in our
proof of Theorem \ref{thm.Rcomb-main}:

\begin{theorem}
\label{thm.tl-FI}Let $I\subseteq\left[  n\right]  $ and $\ell\in\left[
n\right]  $. Then,%
\[
F\left(  I\right)  \cdot\left(  t_{\ell}-m_{I,\ell}\right)  \subseteq F\left(
<\operatorname*{sum}I\right)  .
\]
In other words, for each $q\in F\left(  I\right)  $, we have $q\cdot\left(
t_{\ell}-m_{I,\ell}\right)  \in F\left(  <\operatorname*{sum}I\right)  $.
\end{theorem}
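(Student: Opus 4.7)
Let $q \in F(I)$. Expanding via (\ref{eq.def.tl.deftl-s-sum}), I would write
$q t_\ell = \sum_{j=\ell}^n q \cdot \operatorname*{cyc}\nolimits_{\ell,\ell+1,\ldots,j}$,
and let $k$ denote the smallest element of $\widehat{I}$ with $k \geq \ell$, so that $m_{I,\ell} = k - \ell$. The goal is to split this sum at $j = k$: the ``inner'' terms $j \in [\ell, k-1]$ will collapse to $m_{I,\ell}$ copies of $q$, and the ``outer'' terms $j \in [k, n]$ will form the element landing in $F(<\operatorname*{sum} I)$.

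For the inner terms, I would observe that any index $i \in [\ell, j-1] \subseteq [\ell, k-2]$ satisfies $i, i+1 \in [\ell, k-1]$, and by the minimality of $k$ both $i$ and $i+1$ are outside $I$; hence $i \in I'$. Therefore $\operatorname*{cyc}\nolimits_{\ell,\ldots,j} = s_\ell s_{\ell+1} \cdots s_{j-1}$ lies in the parabolic subgroup $W_{I'} := \langle s_i : i \in I'\rangle$, which fixes $q$ by right multiplication. This gives $q \cdot \operatorname*{cyc}\nolimits_{\ell,\ldots,j} = q$ for each $j \in [\ell, k-1]$, producing exactly $m_{I,\ell} \cdot q$. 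Thus
\[
q(t_\ell - m_{I,\ell}) = \sum_{j=k}^{n} q \cdot \operatorname*{cyc}\nolimits_{\ell,\ell+1,\ldots,j}.
\]
If $k = n+1$ this sum is empty and we are done; so assume $k \in I$ and $k \leq n$. A direct check of the permutation action shows the cycle factorization $\operatorname*{cyc}\nolimits_{\ell,\ldots,j} = \operatorname*{cyc}\nolimits_{\ell,\ldots,k} \cdot \operatorname*{cyc}\nolimits_{k,k+1,\ldots,j}$, so the remaining sum equals $q \cdot c \cdot t_k$ where $c := \operatorname*{cyc}\nolimits_{\ell,\ldots,k}$.

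The task then reduces to showing $qct_k \in F(<\operatorname*{sum} I)$. The natural candidate is $J := (I \setminus \{k\}) \cup \{k-1\}$, for which $\operatorname*{sum} J = \operatorname*{sum} I - 1 < \operatorname*{sum} I$; the goal is $(qct_k) s_i = qct_k$ for each $i \in J'$. The tools are: (i) Lemma \ref{lem.si-into-cyc}, which turns $c \cdot s_i$ into $s_{i+1} \cdot c$ whenever $i \in [\ell, k-2]$, so that the invariance $q s_{i+1} = q$ (valid when $i+1 \in I'$) transfers across $c$; (ii) the commutation of $s_i$ with $t_k$ for $i \leq k-2$ (all indices of $t_k$ are $\geq k$); and (iii) a braid-type identity for the ``right boundary'', which I would derive by writing $t_k = 1 + s_k t_{k+1}$ and iterating to obtain
\[
t_k s_k - t_k = s_k s_{k+1}(s_k - 1) t_{k+2},
\]
together with the fact that $s_k$ commutes with $t_{k+2}$. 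A check of cases shows that for $i \in J'$ with $i \leq \ell-2$ or $i \in [\ell, k-3]$ or $i \geq k+1$, a combination of (i)--(ii) and the $I'$-invariance of $q$ already forces $(qct_k)s_i = qct_k$; the position $i = k-2$ is never in $J'$ (since $k-1 \in J$), so only $i = k$ remains problematic, and only in the sub-case where $k+1 \notin I$ and $k \leq n-1$.

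The main obstacle is exactly this boundary case $i = k$: here neither the Lemma nor direct commutation applies, and one must use the braid identity above to rewrite $(qct_k) s_k - qct_k = qc \cdot s_k s_{k+1}(s_k-1) t_{k+2}$ and then extract a cancellation, either by exploiting further $I'$-invariances of $q$ or by absorbing the excess into another summand of $F(<\operatorname*{sum} I)$ via a secondary decomposition. In the degenerate situation where $I' = \varnothing$ (so $F(I) = \mathbf{k}[S_n]$ and the specific $F(J)$ above does not suffice), the containment in $F(<\operatorname*{sum} I)$ would instead follow by observing that a smaller-sum set with empty non-shadow exists, making $F(<\operatorname*{sum} I) = \mathbf{k}[S_n]$ and rendering the inclusion automatic. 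The overall plan thus hinges on controlling this single boundary transposition via braid relations combined with the cycle factorization established in the previous steps.
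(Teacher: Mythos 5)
Your first half agrees with the paper: collapsing the terms $j\in[\ell,k-1]$ to $m_{I,\ell}\,q$ and rewriting the remainder as $q\cdot c\cdot t_k$ with $c=\operatorname{cyc}_{\ell,\ldots,k}$ is correct. The gap is in the second half. You try to place the \emph{entire} remaining element $qct_k$ into the single module $F(J)$ with $J=(I\setminus\{k\})\cup\{k-1\}$, and you assert that the only problematic check is $i=k$. That is false: for $i\in J'$ with $i\ge k+1$, the terms of $t_k$ with $j\ge i+2$ give $q\,c\,s_k\cdots s_{j-1}s_i=q\,s_{i+1}\,c\,s_k\cdots s_{j-1}$, so you need $qs_{i+1}=q$, i.e.\ $i+1\in I'$, which fails whenever $i+2\in I$. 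Concretely, take $n=6$, $I=\{2,5\}$, $\ell=1$ (so $k=2$, $J=\{1,5\}$, $3\in J'$) and $q=1+s_3\in F(I)$; then $(qs_1t_2)s_3-qs_1t_2=(1+s_3)(s_4-1)s_1s_2s_3s_4(1+s_5)\neq 0$, so $qct_k\notin F(J)$. Thus your strengthened containment is not merely unproven but wrong, and the obstruction recurs at $i=i_s-2$ for every element $i_s\in I$ lying above $k$, not just at the boundary $s_k$. The paper's proof avoids this precisely by splitting the remaining sum at each element of $I$ above $\ell$ into the block sums $\sum_{j=i_r}^{i_{r+1}-1}qs_\ell\cdots s_{j-1}$ and placing the $r$-th block into $F(K_r)$, where $K_r$ is obtained from $I$ by shifting \emph{all} of $i_k,i_{k+1},\ldots,i_r$ down by one: inside a block the troublesome $s_i$ only permutes the addends, and the extra elements $i_s-1\in K_r$ delete the indices $i_s-2$ from $K_r'$. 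Your closing remark about \textquotedblleft absorbing the excess into another summand via a secondary decomposition\textquotedblright\ is exactly this missing idea; it is the heart of the proof, not a detail that can be deferred.

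A second, independent error is your fallback for the case $I'=\varnothing$: it is not true that a smaller-sum subset with empty non-shadow always exists. For $I=\{2,4\}\subseteq[5]$ one has $I'=\varnothing$ and $\operatorname{sum}I=6$, yet every $K\subseteq[5]$ with $K'=\varnothing$ has $\operatorname{sum}K\ge 6$; in fact $F(<6)$ is a proper submodule of $\mathbf{k}[S_5]$ (it is the $F_7$ of rank $90$ in the paper's $n=5$ table), so the inclusion $\mathbf{k}[S_5]\cdot(t_\ell-m_{I,\ell})\subseteq F(<6)$ is a genuinely nontrivial statement that your argument does not address. Two minor further points: your case list omits $i=\ell-1$, which can lie in $J'$ and needs the separate argument of the paper's Case 2 (first showing $\ell\in I'$, then shortening $c$); and the identity $t_ks_k-t_k=s_ks_{k+1}(s_k-1)t_{k+2}$ is correct but does not by itself produce the required cancellation.
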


This theorem is essential to establishing the triangularization stated in
Theorem \ref{thm.Rcomb-main}, which requires sorting the submodules $F(I)$
according to the sum of elements in $I$.

\begin{proof}
[Proof of Theorem~\ref{thm.tl-FI}.]Fix $q\in F\left(  I\right)  $. We must
prove that $q\cdot\left(  t_{\ell}-m_{I,\ell}\right)  \in F\left(
<\operatorname*{sum}I\right)  $. There are three main parts to our proof. In
the first part, we express $q\cdot\left(  t_{\ell}-m_{I, \ell}\right)  $ as a
sum of products of $q$ with simple transpositions (Equation
\eqref{pf.thm.tl-FI.3}). In the second part, we will break this sum up into
smaller sums (Equation \eqref{pf.thm.tl-FI.5}). In the third and last part, we
will show that each of these smaller sums is in $F\left(  K\right)  $ for some
$K \subseteq\left[  n \right]  $ satisfying $\operatorname{sum} K <
\operatorname{sum} I$ (and therefore in $F\left(  <\operatorname*{sum}%
I\right)  $). This will complete the proof. \medskip

Write the set $I$ in the form $I=\left\{  i_{1}<i_{2}<\cdots<i_{p}\right\}  $,
and furthermore set $i_{0}:=0$ and $i_{p+1}:=n+1$. Then, the enclosure of $I$
is
\[
\widehat{I}=\left\{  0=i_{0}<i_{1}<i_{2}<\cdots<i_{p}<i_{p+1}=n+1\right\}  .
\]

Let $i_{k}$ be the smallest element of $\widehat{I}$ that is greater than or
equal to $\ell$. Thus, $m_{I,\ell}=i_{k}-\ell$ (by the definition of
$m_{I,\ell}$) and%
\begin{equation}
i_{0}<i_{1}<\cdots<i_{k-1}<\ell\leq i_{k}<i_{k+1}<\cdots<i_{p+1}.
\label{pf.thm.tl-FI.ineqs}%
\end{equation}
Note that $k\geq1$ (since $k=0$ would entail $\ell\leq i_{k}=i_{0}=0$, which
is absurd), so that $i_{k}\geq1$. \medskip

From $i_{p+1}=n+1$, we obtain $n=i_{p+1}-1$. Now, multiplying the equality
(\ref{eq.def.tl.deftl-s-sum}) by $q$, we obtain
\begin{align}
qt_{\ell}  &  =q\sum_{j=\ell}^{n}s_{\ell}s_{\ell+1}\cdots s_{j-1}=\sum
_{j=\ell}^{n}qs_{\ell}s_{\ell+1}\cdots s_{j-1}\nonumber\\
&  =\sum_{j=\ell}^{i_{p+1}-1}qs_{\ell}s_{\ell+1}\cdots s_{j-1}%
\ \ \ \ \ \ \ \ \ \ \left(  \text{since }n=i_{p+1}-1\right) \nonumber\\
&  =\sum_{j=\ell}^{i_{k}-1}qs_{\ell}s_{\ell+1}\cdots s_{j-1}+\sum_{j=i_{k}%
}^{i_{p+1}-1}qs_{\ell}s_{\ell+1}\cdots s_{j-1} \label{pf.thm.tl-FI.2}%
\end{align}
(since $\ell\leq i_{k}\leq i_{p+1}$).

Now, from (\ref{pf.thm.tl-FI.ineqs}), it is easy to see that each $u\in\left[
\ell,i_{k}-2\right]  $ belongs to the non-shadow $I^{\prime}$ (since neither
$u$ nor $u+1$ belongs to $I$). Thus, each $u\in\left[  \ell,i_{k}-2\right]  $
satisfies $qs_{u}=q$ (since $q\in F\left(  I\right)  $). By applying this
observation multiple times, we see that $qs_{\ell}s_{\ell+1}\cdots s_{j-1}=q$
for each $j\in\left[  \ell,i_{k}-1\right]  $. Thus,%
\[
\sum_{j=\ell}^{i_{k}-1}\underbrace{qs_{\ell}s_{\ell+1}\cdots s_{j-1}}%
_{=q}=\sum_{j=\ell}^{i_{k}-1}q=\underbrace{\left(  i_{k}-\ell\right)
}_{=m_{I,\ell}}q=m_{I,\ell}q.
\]
Hence, we can rewrite (\ref{pf.thm.tl-FI.2}) as%
\[
qt_{\ell}=m_{I,\ell}q+\sum_{j=i_{k}}^{i_{p+1}-1}qs_{\ell}s_{\ell+1}\cdots
s_{j-1}.
\]
In other words,%
\[
qt_{\ell}-m_{I,\ell}q=\sum_{j=i_{k}}^{i_{p+1}-1}qs_{\ell}s_{\ell+1}\cdots
s_{j-1}.
\]
Since $qt_{\ell}-m_{I,\ell}q=q\cdot\left(  t_{\ell}-m_{I,\ell}\right)  $, we
can rewrite this further as%
\begin{equation}
q\cdot\left(  t_{\ell}-m_{I,\ell}\right)  =\sum_{j=i_{k}}^{i_{p+1}-1}qs_{\ell
}s_{\ell+1}\cdots s_{j-1}. \label{pf.thm.tl-FI.3}%
\end{equation}

Next, recall that $i_{k}<i_{k+1}<\cdots<i_{p+1}$. Hence, the interval $\left[
i_{k},i_{p+1}-1\right]  $ can be written as the disjoint union%
\[
\left[  i_{k},i_{k+1}-1\right]  \sqcup\left[  i_{k+1},i_{k+2}-1\right]
\sqcup\cdots\sqcup\left[  i_{p},i_{p+1}-1\right]  .
\]
Thus, the sum on the right hand side of (\ref{pf.thm.tl-FI.3}) can be split up
as follows:%
\[
\sum_{j=i_{k}}^{i_{p+1}-1}qs_{\ell}s_{\ell+1}\cdots s_{j-1}=\sum_{r=k}%
^{p}\ \ \sum_{j=i_{r}}^{i_{r+1}-1}qs_{\ell}s_{\ell+1}\cdots s_{j-1}.
\]
Therefore, (\ref{pf.thm.tl-FI.3}) can be rewritten as%
\begin{equation}
q\cdot\left(  t_{\ell}-m_{I,\ell}\right)  =\sum_{r=k}^{p}\ \ \sum_{j=i_{r}%
}^{i_{r+1}-1}qs_{\ell}s_{\ell+1}\cdots s_{j-1}. \label{pf.thm.tl-FI.5}%
\end{equation}

Recall that our goal is to prove that $q\cdot\left(  t_{\ell}-m_{I,\ell
}\right)  \in F\left(  <\operatorname*{sum}I\right)  $. In order to do so, we
only need to show that%
\[
\sum_{j=i_{r}}^{i_{r+1}-1}qs_{\ell}s_{\ell+1}\cdots s_{j-1}\in F\left(
<\operatorname*{sum}I\right)  \ \ \ \ \ \ \ \ \ \ \text{for each }r\in\left[
k,p\right]
\]
(because once this is proved, the equality (\ref{pf.thm.tl-FI.5}) will become%
\[
q\cdot\left(  t_{\ell}-m_{I,\ell}\right)  =\sum_{r=k}^{p}\ \ \underbrace{\sum
_{j=i_{r}}^{i_{r+1}-1}qs_{\ell}s_{\ell+1}\cdots s_{j-1}}_{\in F\left(
<\operatorname*{sum}I\right)  }\in\sum_{r=k}^{p}F\left(  <\operatorname*{sum}%
I\right)  \subseteq F\left(  <\operatorname*{sum}I\right)  ,
\]
and we will have achieved our goal).

This is what we shall now do. So let us fix some $r\in\left[  k,p\right]  $.
We set%
\begin{equation}
q^{\prime}:=\sum_{j=i_{r}}^{i_{r+1}-1}qs_{\ell}s_{\ell+1}\cdots s_{j-1}.
\label{pf.thm.tl-FI.q'=}%
\end{equation}
We must show that $q^{\prime}\in F\left(  <\operatorname*{sum}I\right)  $.

To do so, we make extensive use of the facts stated in Section
\ref{sec.transpositions} about simple transpositions, and the rest of the
proof is obtained by dealing with several cases. \medskip

From $r\in\left[  k,p\right]  $, we obtain $k\leq r\leq p$. From $k\leq p$ and
$k\geq1$, we obtain $k\in\left[  p\right]  $, so that $i_{k}\in\left\{
i_{1},i_{2},\ldots,i_{p}\right\}  =I\subseteq\left[  n\right]  $. Therefore,
$i_{k}\leq n$.

Also, from $r\leq p$ and $r\geq k\geq1$, we obtain $r\in\left[  p\right]  $,
so that $i_{r}\in\left\{  i_{1},i_{2},\ldots,i_{p}\right\}  =I\subseteq\left[
n\right]  $. Therefore, $i_{r}\leq n$.

Furthermore, from $k\leq r\leq p$, we obtain $i_{k}\leq i_{r}\leq i_{p}$
(since $i_{1}<i_{2}<\cdots<i_{p}$).

Moreover, from $i_{r}\in\left[  n\right]  $, we obtain $i_{r}\geq1$. From
$i_{0}<i_{1}<i_{2}<\cdots<i_{p}<i_{p+1}=n+1$, we obtain $i_{r+1}\leq n+1$, so
that $i_{r+1}-1\leq n$. Combining this with $i_{r}\geq1$, we conclude that
$\left[  i_{r},i_{r+1}-1\right]  \subseteq\left[  n\right]  $. \medskip

We define a set%
\[
K:=\left(  \left(  I\setminus\left\{  i_{k},i_{k+1},\ldots,i_{r}\right\}
\right)  \cup\left\{  i_{k}-1,\ i_{k+1}-1,\ \ldots,\ i_{r}-1\right\}  \right)
\cap\left[  n\right]  .
\]
Thus, $K$ is obtained from $I$ by replacing the elements $i_{k},i_{k+1}%
,\ldots,i_{r}$ by $i_{k}-1,\ i_{k+1}-1,\ \ldots,\ i_{r}-1$ (and intersecting
the resulting set with $\left[  n\right]  $, which has the effect of removing
$0$ if we have replaced $1$ by $0$). Therefore, $K$ is a subset of $\left[
n\right]  $ and satisfies $\operatorname*{sum}K\leq\operatorname*{sum}%
I-\left(  r-k+1\right)  $ (since $i_{k},i_{k+1},\ldots,i_{r}$ are $r-k+1$
distinct elements of $I$, and we subtracted $1$ from each of
them\footnote{Note that the inequality $\operatorname*{sum}K\leq
\operatorname*{sum}I-\left(  r-k+1\right)  $ is not necessarily an equality,
since some of $i_{k}-1,\ i_{k+1}-1,\ \ldots,\ i_{r}-1$ might already belong to
$I\setminus\left\{  i_{k},i_{k+1},\ldots,i_{r}\right\}  $.}). Hence,
$\operatorname*{sum}K\leq\operatorname*{sum}I-\left(  r-k+1\right)
<\operatorname*{sum}I$ (because $r\geq k$). Thus, $F\left(  K\right)
\subseteq F\left(  <\operatorname*{sum}I\right)  $. Hence, in order to prove
that $q^{\prime} \in F\left(  < \operatorname{sum} I\right)  $, it will
suffice to show the more precise statement that
\[
q^{\prime}\in F\left(  K\right)  .
\]
We shall thus focus on proving this.

In order to prove this, it will clearly suffice to show that $q^{\prime}%
s_{i}=q^{\prime}$ for each $i\in K^{\prime}$, because of the definition of
$F\left(  K \right)  $. So let us fix $i\in K^{\prime}$. We must prove that
$q^{\prime}s_{i}=q^{\prime}$. The rest of the proof is dedicated to that goal.
\medskip

We have $i\in K^{\prime}=\left[  n-1\right]  \setminus\left(  K\cup\left(
K-1\right)  \right)  $ (by the definition of $K^{\prime}$, the non-shadow of
$K$). Thus, $i\in\left[  n-1\right]  $ and $i\notin K\cup\left(  K-1\right)
$. From the latter fact, we conclude that $i\notin K$ and $i+1\notin K$. From
$i\in\left[  n-1\right]  $, we obtain $i+1\in\left[  n\right]  $.

It is easy to see that%
\begin{equation}
i+1\notin I \label{pf.thm.tl-FI.i+1notinI}%
\end{equation}
\footnote{\textit{Proof of (\ref{pf.thm.tl-FI.i+1notinI}):} Assume the
contrary. Thus, $i+1\in I=\left\{  i_{1}<i_{2}<\cdots<i_{p}\right\}  $. In
other words, $i+1=i_{s}$ for some $s\in\left[  p\right]  $. Consider this $s$.
From $i+1=i_{s}$, we obtain $i=i_{s}-1$.
\par
If we had $s\in\left[  k,r\right]  $, then we would have%
\begin{align*}
i  &  =i_{s}-1\in\left\{  i_{k}-1,\ i_{k+1}-1,\ \ldots,\ i_{r}-1\right\}
\ \ \ \ \ \ \ \ \ \ \left(  \text{since }s\in\left[  k,r\right]  \right) \\
&  \subseteq\left(  I\setminus\left\{  i_{k},i_{k+1},\ldots,i_{r}\right\}
\right)  \cup\left\{  i_{k}-1,\ i_{k+1}-1,\ \ldots,\ i_{r}-1\right\}
\end{align*}
and therefore%
\[
i\in\left(  \left(  I\setminus\left\{  i_{k},i_{k+1},\ldots,i_{r}\right\}
\right)  \cup\left\{  i_{k}-1,\ i_{k+1}-1,\ \ldots,\ i_{r}-1\right\}  \right)
\cap\left[  n\right]
\]
(since $i\in\left[  n-1\right]  \subseteq\left[  n\right]  $). This would
contradict the fact that%
\[
i\notin K=\left(  \left(  I\setminus\left\{  i_{k},i_{k+1},\ldots
,i_{r}\right\}  \right)  \cup\left\{  i_{k}-1,\ i_{k+1}-1,\ \ldots
,\ i_{r}-1\right\}  \right)  \cap\left[  n\right]  .
\]
Hence, we cannot have $s\in\left[  k,r\right]  $. Thus, we have either $s<k$
or $s>r$. Therefore, we have $i_{s}\notin\left\{  i_{k},i_{k+1},\ldots
,i_{r}\right\}  $ (because of $i_{1}<i_{2}<\cdots<i_{p}$). In other words,
$i+1\notin\left\{  i_{k},i_{k+1},\ldots,i_{r}\right\}  $ (since $i+1=i_{s}$).
Combining $i+1\in I$ with $i+1\notin\left\{  i_{k},i_{k+1},\ldots
,i_{r}\right\}  $, we obtain%
\[
i+1\in I\setminus\left\{  i_{k},i_{k+1},\ldots,i_{r}\right\}  \subseteq\left(
I\setminus\left\{  i_{k},i_{k+1},\ldots,i_{r}\right\}  \right)  \cup\left\{
i_{k}-1,\ i_{k+1}-1,\ \ldots,\ i_{r}-1\right\}
\]
and therefore%
\begin{align*}
i+1  &  \in\left(  \left(  I\setminus\left\{  i_{k},i_{k+1},\ldots
,i_{r}\right\}  \right)  \cup\left\{  i_{k}-1,\ i_{k+1}-1,\ \ldots
,\ i_{r}-1\right\}  \right)  \cap\left[  n\right]  \ \ \ \ \ \ \ \ \ \ \left(
\text{since }i+1\in\left[  n\right]  \right) \\
&  =K.
\end{align*}
This contradicts $i+1\notin K$. This contradiction shows that our assumption
was false, and thus (\ref{pf.thm.tl-FI.i+1notinI}) is proved.}. Thus, it is
also easy to see that%
\begin{equation}
i\in I^{\prime}\text{ if }i\notin\left[  i_{k},i_{r}\right]
\label{pf.thm.tl-FI.K'subI'}%
\end{equation}
\footnote{\textit{Proof of (\ref{pf.thm.tl-FI.K'subI'}):} Assume that
$i\notin\left[  i_{k},i_{r}\right]  $. We must show that $i\in I^{\prime}$.
\par
Indeed, assume the contrary. Thus, $i\notin I^{\prime}=\left[  n-1\right]
\setminus\left(  I\cup\left(  I-1\right)  \right)  $ (by the definition of
$I^{\prime}$). Since $i\in\left[  n-1\right]  $, this entails that $i\in
I\cup\left(  I-1\right)  $. In other words, $i\in I$ or $i+1\in I$. Since
(\ref{pf.thm.tl-FI.i+1notinI}) yields $i+1\notin I$, we thus must have $i\in
I$. Hence, $i\in I\setminus K$ (since $i\in I$ but $i\notin K$).
\par
The definition of $K$ shows that $I\setminus K\subseteq\left\{  i_{k}%
,i_{k+1},\ldots,i_{r}\right\}  $ (although this inclusion is not necessarily
an equality). Therefore, each element of $I\setminus K$ must belong to
$\left\{  i_{k},i_{k+1},\ldots,i_{r}\right\}  $ and therefore to the interval
$\left[  i_{k},i_{r}\right]  $ as well (since $i_{0}<i_{1}<i_{2}<\cdots
<i_{p}<i_{p+1}$ entails $\left\{  i_{k},i_{k+1},\ldots,i_{r}\right\}
\subseteq\left[  i_{k},i_{r}\right]  $). Hence, from $i\in I\setminus K$, we
obtain $i\in\left[  i_{k},i_{r}\right]  $. But this contradicts $i\notin%
\left[  i_{k},i_{r}\right]  $. This contradiction shows that our assumption
was false. Thus, (\ref{pf.thm.tl-FI.K'subI'}) is proved.}.
Similarly, we can show that
\begin{equation}
i+1\in I^{\prime}\text{ if }i\in\left[  \ell,i_{r}-1\right]
\label{pf.thm.tl-FI.K'subI'2}%
\end{equation}
\footnote{\textit{Proof of (\ref{pf.thm.tl-FI.K'subI'2}):} Assume that
$i\in\left[  \ell,i_{r}-1\right]  $. We must show that $i+1\in I^{\prime}$.
\par
Indeed, assume the contrary. Thus, $i+1\notin I^{\prime}=\left[  n-1\right]
\setminus\left(  I\cup\left(  I-1\right)  \right)  $ (by the definition of
$I^{\prime}$).
\par
From $i\in\left[  \ell,i_{r}-1\right]  $, we obtain $i\geq\ell$ and $i\leq
i_{r}-1$. The latter inequality yields $i+1\leq i_{r}$. However,
(\ref{pf.thm.tl-FI.i+1notinI}) yields $i+1\notin I$. Thus, $i+1\neq i_{r}$
(because if we had $i+1=i_{r}$, then $i+1=i_{r}\in I$ would contradict
$i+1\notin I$). Combining this with $i+1\leq i_{r}$, we obtain $i+1<i_{r}\leq
n$. Hence, $i+1\leq n-1$, so that $i+1\in\left[  n-1\right]  $.
\par
Therefore, from $i+1\notin\left[  n-1\right]  \setminus\left(  I\cup\left(
I-1\right)  \right)  $, we obtain $i+1\in I\cup\left(  I-1\right)  $. In other
words, $i+1\in I$ or $i+1\in I-1$. Since $i+1\notin I$, we thus conclude that
$i+1\in I-1$. Thus, $i+2\in I=\left\{  i_{1}<i_{2}<\cdots<i_{p}\right\}  $. In
other words, there exists some $s\in\left[  p\right]  $ such that $i+2=i_{s}$.
Consider this $s$.
\par
From $i+1<i_{r}$, we obtain $i+1\leq i_{r}-1$, so that $i+2\leq i_{r}$.
Combining this with $i+2>i\geq\ell$, we find that $i+2\in\left[  \ell
,i_{r}\right]  $. Thus, $i_{s}=i+2\in\left[  \ell,i_{r}\right]  $. However,
the only numbers of the form $i_{t}$ (with $t\in\left[  0,p+1\right]  $) that
belong to the interval $\left[  \ell,i_{r}\right]  $ are $i_{k},i_{k+1}%
,\ldots,i_{r}$ (because of (\ref{pf.thm.tl-FI.ineqs})). Hence, from $i_{s}%
\in\left[  \ell,i_{r}\right]  $, we obtain $s\in\left[  k,r\right]  $.
Therefore,%
\begin{align*}
i+1  &  =i_{s}-1\ \ \ \ \ \ \ \ \ \ \left(  \text{since }i+2=i_{s}\right) \\
&  \in\left\{  i_{k}-1,\ i_{k+1}-1,\ \ldots,\ i_{r}-1\right\}
\ \ \ \ \ \ \ \ \ \ \left(  \text{since }s\in\left[  k,r\right]  \right) \\
&  \subseteq\left(  I\setminus\left\{  i_{k},i_{k+1},\ldots,i_{r}\right\}
\right)  \cup\left\{  i_{k}-1,\ i_{k+1}-1,\ \ldots,\ i_{r}-1\right\}  .
\end{align*}
Combined with $i+1\in\left[  n\right]  $, this results in%
\[
i+1\in\left(  \left(  I\setminus\left\{  i_{k},i_{k+1},\ldots,i_{r}\right\}
\right)  \cup\left\{  i_{k}-1,\ i_{k+1}-1,\ \ldots,\ i_{r}-1\right\}  \right)
\cap\left[  n\right]  =K.
\]
But this contradicts $i+1\notin K$. This contradiction shows that our
assumption was false. Thus, (\ref{pf.thm.tl-FI.K'subI'2}) is proved.}.
\medskip

From (\ref{pf.thm.tl-FI.ineqs}) and $r\geq k$, we obtain $\ell\leq
i_{r}<i_{r+1}$. Hence, we are in one of the following five cases:

\textit{Case 1:} We have $i<\ell-1$.

\textit{Case 2:} We have $i=\ell-1$.

\textit{Case 3:} We have $\ell\leq i<i_{r}$.

\textit{Case 4:} We have $i_{r}\leq i<i_{r+1}$.

\textit{Case 5:} We have $i\geq i_{r+1}$. \medskip

For each of these cases, we need to prove that $q^{\prime}s_{i}=q^{\prime}$.

Let us first consider Case 1. In this case, we have $i<\ell-1$. Thus,
$i<\ell-1<\ell\leq i_{k}$, so that $i\notin\left[  i_{k},i_{r}\right]  $.
Hence, from (\ref{pf.thm.tl-FI.K'subI'}), we obtain $i\in I^{\prime}$. Thus,
$qs_{i}=q$ (since $q\in F\left(  I\right)  $). Furthermore, from $i<\ell-1$,
we see that $s_{i}$ commutes with all the permutations $s_{\ell},s_{\ell
+1},\ldots,s_{i_{r+1}-2}$ that appear on the right hand side of
(\ref{pf.thm.tl-FI.q'=}) (by reflection locality). Hence, multiplying the
equality (\ref{pf.thm.tl-FI.q'=}) by $s_{i}$, we find%
\begin{align*}
q^{\prime}s_{i}  &  =\sum_{j=i_{r}}^{i_{r+1}-1}q\underbrace{s_{\ell}s_{\ell
+1}\cdots s_{j-1}\cdot s_{i}}_{\substack{=s_{i}\cdot s_{\ell}s_{\ell+1}\cdots
s_{j-1}\\\text{(since }s_{i}\text{ commutes with all of }s_{\ell},s_{\ell
+1},\ldots,s_{j-1}\text{)}}}=\sum_{j=i_{r}}^{i_{r+1}-1}\underbrace{qs_{i}%
}_{=q}\cdot s_{\ell}s_{\ell+1}\cdots s_{j-1}\\
&  =\sum_{j=i_{r}}^{i_{r+1}-1}qs_{\ell}s_{\ell+1}\cdots s_{j-1}=q^{\prime}.
\end{align*}
We have thus proved $q^{\prime}s_{i}=q^{\prime}$ in Case 1. \medskip

Let us next consider Case 2. In this case, we have $i=\ell-1$. Thus,
$i=\ell-1<\ell\leq i_{k}$, so that $i\notin\left[  i_{k},i_{r}\right]  $.
Hence, from (\ref{pf.thm.tl-FI.K'subI'}), we obtain $i\in I^{\prime}$. Thus,
$qs_{i}=q$ (since $q\in F\left(  I\right)  $). We must prove that $q^{\prime
}s_{i}=q^{\prime}$. This easily follows in the case when $\ell=n$%
\ \ \ \ \footnote{\textit{Proof.} Assume that $\ell=n$. Then, it is easy to
see that the sum on the right hand side of (\ref{pf.thm.tl-FI.q'=}) simplifies
to $q$ (since none of the $s_{\ell},s_{\ell+1},\ldots,s_{n-1}$ factors
actually exist). Hence, (\ref{pf.thm.tl-FI.q'=}) rewrites as $q^{\prime}=q$.
Thus, $q^{\prime}s_{i}=q^{\prime}$ follows from $qs_{i}=q$, qed.}. Hence, for
the rest of Case 2, we WLOG assume that $\ell\neq n$. Therefore, $\ell
\in\left[  n-1\right]  $. Moreover, $\ell=i+1$ (since $i=\ell-1$). Now, it is
easy to see that $\ell\in I^{\prime}$\ \ \ \ \footnote{\textit{Proof.} Assume
the contrary. Thus, $\ell\notin I^{\prime}=\left[  n-1\right]  \setminus
\left(  I\cup\left(  I-1\right)  \right)  $ (by the definition of $I^{\prime}%
$). Hence, $\ell\in I\cup\left(  I-1\right)  $ (since $\ell\in\left[
n-1\right]  $). In other words, $\ell\in I$ or $\ell+1\in I$. Since
$\ell-1=i\in I^{\prime}=\left[  n-1\right]  \setminus\left(  I\cup\left(
I-1\right)  \right)  $, we have $\ell-1\notin I\cup\left(  I-1\right)  $, so
that $\ell-1\notin I$ and $\ell\notin I$. In particular, $\ell\notin I$.
Hence, $\ell+1\in I$ (since we just showed that $\ell\in I$ or $\ell+1\in I$).
Combining $\ell\notin I$ and $\ell+1\in I$, we obtain $i_{k}=\ell+1$ (by the
definition of $i_{k}$). In other words, $i_{k}-1=\ell$. However, $i_{k}-1\in
K$ (by the definition of $K$). In other words, $\ell\in K$ (since
$i_{k}-1=\ell$). But this contradicts $\ell=i+1\notin K$. This contradiction
shows that our assumption was false, qed.}. Hence, $qs_{\ell}=q$ (since $q\in
F\left(  I\right)  $). From $\ell\in I^{\prime}=\left[  n-1\right]
\setminus\left(  I\cup\left(  I-1\right)  \right)  $, we furthermore obtain
$\ell\notin I\cup\left(  I-1\right)  $, so that $\ell\notin I$ and thus
$\ell\neq i_{r}$ (because $i_{r}\in I$). Hence, $\ell<i_{r}$ (since $\ell\leq
i_{k}\leq i_{r}$). Now, (\ref{pf.thm.tl-FI.q'=}) rewrites as%
\begin{align}
q^{\prime}  &  =\sum_{j=i_{r}}^{i_{r+1}-1}\underbrace{qs_{\ell}s_{\ell
+1}\cdots s_{j-1}}_{\substack{=\left(  qs_{\ell}\right)  \cdot s_{\ell
+1}s_{\ell+2}\cdots s_{j-1}\\\text{(since }\ell<i_{r}\leq j\text{)}}%
}=\sum_{j=i_{r}}^{i_{r+1}-1}\underbrace{\left(  qs_{\ell}\right)  }_{=q}\cdot
s_{\ell+1}s_{\ell+2}\cdots s_{j-1}\nonumber\\
&  =\sum_{j=i_{r}}^{i_{r+1}-1}qs_{\ell+1}s_{\ell+2}\cdots s_{j-1}.
\label{pf.thm.tl-FI.C2.q'=}%
\end{align}
From $i=\ell-1<\ell$, we see that $s_{i}$ commutes with all the permutations
$s_{\ell+1},s_{\ell+2},\ldots,s_{i_{r+1}-2}$ that appear on the right hand
side of (\ref{pf.thm.tl-FI.C2.q'=}) (by reflection locality). Hence,
multiplying the equality (\ref{pf.thm.tl-FI.C2.q'=}) by $s_{i}$, we find%
\begin{align*}
q^{\prime}s_{i}  &  =\sum_{j=i_{r}}^{i_{r+1}-1}q\underbrace{s_{\ell+1}%
s_{\ell+2}\cdots s_{j-1}\cdot s_{i}}_{\substack{=s_{i}\cdot s_{\ell+1}%
s_{\ell+2}\cdots s_{j-1}\\\text{(since }s_{i}\text{ commutes with all of
}s_{\ell+1},s_{\ell+2},\ldots,s_{j-1}\text{)}}}=\sum_{j=i_{r}}^{i_{r+1}%
-1}\underbrace{qs_{i}}_{=q}\cdot s_{\ell+1}s_{\ell+2}\cdots s_{j-1}\\
&  =\sum_{j=i_{r}}^{i_{r+1}-1}qs_{\ell+1}s_{\ell+2}\cdots s_{j-1}=q^{\prime
}\ \ \ \ \ \ \ \ \ \ \left(  \text{by (\ref{pf.thm.tl-FI.C2.q'=})}\right)  .
\end{align*}
We have thus proved $q^{\prime}s_{i}=q^{\prime}$ in Case 2. \medskip

Let us now consider Case 3. In this case, we have $\ell\leq i<i_{r}$. It is
easy to see that $i<i_{r}-1$\ \ \ \ \footnote{\textit{Proof.} The construction
of $K$ yields $i_{r}-1\in K$ (unless $i_{r}-1=0$). Hence, we cannot have
$i=i_{r}-1$ (since this would imply $i=i_{r}-1\in K$, which would contradict
$i\notin K$). However, from $i<i_{r}$, we obtain $i\leq i_{r}-1$. Thus,
$i<i_{r}-1$ (since we cannot have $i=i_{r}-1$).}. Hence, $i+1<i_{r}\leq n$, so
that $i+1\in\left[  n-1\right]  $. Also, $i\in\left[  \ell,i_{r}-1\right]  $
(since $\ell\leq i<i_{r}$). Thus, (\ref{pf.thm.tl-FI.K'subI'2}) yields $i+1\in
I^{\prime}$. Hence, $qs_{i+1}=q$ (since $q\in F\left(  I\right)  $).

Let $j\in\left[  i_{r},i_{r+1}-1\right]  $. Then, $i_{r}\leq j\leq i_{r+1}-1$,
so that $i<\underbrace{i_{r}}_{\leq j}-1\leq j-1$. Hence, $i\in\left[
\ell,j-2\right]  $ (since $\ell\leq i$). Also, $j\in\left[  i_{r}%
,i_{r+1}-1\right]  \subseteq\left[  n\right]  $. Therefore,%
\begin{align}
q\underbrace{s_{\ell}s_{\ell+1}\cdots s_{j-1}\cdot s_{i}}_{\substack{=s_{i+1}%
\cdot s_{\ell}s_{\ell+1}\cdots s_{j-1}\\\text{(by Lemma \ref{lem.si-into-cyc}%
)}}}  &  =\underbrace{qs_{i+1}}_{=q}\cdot s_{\ell}s_{\ell+1}\cdots
s_{j-1}\nonumber\\
&  =qs_{\ell}s_{\ell+1}\cdots s_{j-1}. \label{pf.thm.tl-FI.C3.one-term}%
\end{align}

Forget that we fixed $j$. We thus have proved (\ref{pf.thm.tl-FI.C3.one-term})
for each $j\in\left[  i_{r},i_{r+1}-1\right]  $. Now, multiplying the equality
(\ref{pf.thm.tl-FI.q'=}) by $s_{i}$, we find%
\[
q^{\prime}s_{i}=\sum_{j=i_{r}}^{i_{r+1}-1}\underbrace{qs_{\ell}s_{\ell
+1}\cdots s_{j-1}\cdot s_{i}}_{\substack{=qs_{\ell}s_{\ell+1}\cdots
s_{j-1}\\\text{(by (\ref{pf.thm.tl-FI.C3.one-term}))}}}=\sum_{j=i_{r}%
}^{i_{r+1}-1}qs_{\ell}s_{\ell+1}\cdots s_{j-1}=q^{\prime}.
\]
We have thus proved $q^{\prime}s_{i}=q^{\prime}$ in Case 3. \medskip

Next, let us consider Case 4. In this case, we have $i_{r}\leq i<i_{r+1}$. It
is easy to see that the latter inequality can be strengthened to $i<i_{r+1}%
-1$\ \ \ \ \footnote{\textit{Proof.} We have $i\in\left[  n-1\right]  $ and
thus $i<n$. If $r+1=p+1$, then $i_{r+1}=i_{p+1}=n+1$ and thus $i_{r+1}-1=n$,
whence $i<n=i_{r+1}-1$. Thus, for the rest of this proof, we WLOG assume that
we don't have $r+1=p+1$. Hence, $r+1\in\left[  p\right]  $. Thus, $i_{r+1}%
\in\left\{  i_{1}<i_{2}<\cdots<i_{p}\right\}  =I$. If we had $i+1=i_{r+1}$,
then we would thus have $i+1=i_{r+1}\in I$, which would contradict
(\ref{pf.thm.tl-FI.i+1notinI}). Hence, we cannot have $i+1=i_{r+1}$. Thus, we
have $i+1\neq i_{r+1}$, so that $i\neq i_{r+1}-1$. However, $i\leq i_{r+1}-1$
(since $i<i_{r+1}$). Combining these two facts, we obtain $i<i_{r+1}-1$.}. In
other words, $i+1\leq i_{r+1}-1$. Thus, both $i$ and $i+1$ belong to the
interval $\left[  i_{r},i_{r+1}-1\right]  $ (since $i_{r}\leq i<i+1$).

Now, we make the following three claims:

\begin{itemize}
\item \textit{Claim 1:} For any $j\in\left[  i_{r},i_{r+1}-1\right]
\setminus\left\{  i,i+1\right\}  $, we have%
\[
qs_{\ell}s_{\ell+1}\cdots s_{j-1}\cdot s_{i}=qs_{\ell}s_{\ell+1}\cdots
s_{j-1}.
\]

\item \textit{Claim 2:} We have%
\[
qs_{\ell}s_{\ell+1}\cdots s_{i-1}\cdot s_{i}=qs_{\ell}s_{\ell+1}\cdots s_{i}.
\]

\item \textit{Claim 3:} We have
\[
qs_{\ell}s_{\ell+1}\cdots s_{i}\cdot s_{i}=qs_{\ell}s_{\ell+1}\cdots s_{i-1}.
\]

\end{itemize}

Note that Claim 2 is trivial, while Claim 3 follows from $s_{i}^{2}%
=\operatorname*{id}$. Let us now prove Claim 1:

[\textit{Proof of Claim 1:} Fix some $j\in\left[  i_{r},i_{r+1}-1\right]
\setminus\left\{  i,i+1\right\}  $. Thus, $j\in\left[  i_{r},i_{r+1}-1\right]
$ and $j\notin\left\{  i,i+1\right\}  $. The latter fact reveals that either
$j<i$ or $j>i+1$. This means that we are in one of two subcases, which we
consider separately:

\begin{itemize}
\item Let us first consider the subcase when $j<i$. In this subcase, $s_{i}$
commutes with each of $s_{\ell},s_{\ell+1},\ldots,s_{j-1}$ (by reflection
locality). Thus, $s_{\ell}s_{\ell+1}\cdots s_{j-1}\cdot s_{i}=s_{i}\cdot
s_{\ell}s_{\ell+1}\cdots s_{j-1}$. Also, $j<i$ entails $i>j\geq i_{r}$ (since
$j\in\left[  i_{r},i_{r+1}-1\right]  $). Hence, $i\notin\left[  i_{k}%
,i_{r}\right]  $. Therefore, (\ref{pf.thm.tl-FI.K'subI'}) yields $i\in
I^{\prime}$. Thus, $qs_{i}=q$ (since $q\in F\left(  I\right)  $). Now,%
\[
q\underbrace{s_{\ell}s_{\ell+1}\cdots s_{j-1}\cdot s_{i}}_{=s_{i}\cdot
s_{\ell}s_{\ell+1}\cdots s_{j-1}}=\underbrace{qs_{i}}_{=q}\cdot s_{\ell
}s_{\ell+1}\cdots s_{j-1}=qs_{\ell}s_{\ell+1}\cdots s_{j-1}.
\]
We have thus proved Claim 1 in the subcase when $j<i$.

\item Let us now consider the subcase when $j>i+1$. In this subcase, we have
$i<j-1$ and thus $i\leq j-2$. Combining this with $\ell\leq i_{r}\leq i$, we
obtain $i\in\left[  \ell,j-2\right]  $. Hence, Lemma \ref{lem.si-into-cyc}
yields $s_{\ell}s_{\ell+1}\cdots s_{j-1}\cdot s_{i}=s_{i+1}\cdot s_{\ell
}s_{\ell+1}\cdots s_{j-1}$ (since $j\in\left[  i_{r},i_{r+1}-1\right]
\subseteq\left[  n\right]  $). Moreover, from $j\in\left[  i_{r}%
,i_{r+1}-1\right]  \subseteq\left[  n\right]  $, we obtain $j\leq n$, so that
$n\geq j>i+1$. Hence, $i+1<n$, so that $i+1\in\left[  n-1\right]  $.

Furthermore, $i_{r}\leq i<i+1$. On the other hand, from $j>i+1$, we obtain
$i+1<j\leq i_{r+1}-1$ (since $j\in\left[  i_{r},i_{r+1}-1\right]  $), so that
$i+2<i_{r+1}$. Hence, $i_{r}<i+1<i+2<i_{r+1}$. This chain of inequalities
shows that both numbers $i+1$ and $i+2$ lie strictly between the two numbers
$i_{r}$ and $i_{r+1}$, which are two adjacent elements of the enclosure
$\widehat{I}$ (in the sense that there are no further elements of
$\widehat{I}$ between them). Hence, neither $i+1$ nor $i+2$ can belong to
$\widehat{I}$. Thus, neither $i+1$ nor $i+2$ can belong to $I$ (since
$I\subseteq\widehat{I}$). In other words, $i+1\notin I\cup\left(  I-1\right)
$. Since $i+1\in\left[  n-1\right]  $, we thus obtain $i+1\in\left[
n-1\right]  \setminus\left(  I\cup\left(  I-1\right)  \right)  =I^{\prime}$
(by the definition of $I^{\prime}$). Thus, $qs_{i+1}=q$ (since $q\in F\left(
I\right)  $). Now,%
\[
q\underbrace{s_{\ell}s_{\ell+1}\cdots s_{j-1}\cdot s_{i}}_{=s_{i+1}\cdot
s_{\ell}s_{\ell+1}\cdots s_{j-1}}=\underbrace{qs_{i+1}}_{=q}\cdot s_{\ell
}s_{\ell+1}\cdots s_{j-1}=qs_{\ell}s_{\ell+1}\cdots s_{j-1}.
\]
We have thus proved Claim 1 in the subcase when $j>i+1$.
\end{itemize}

We have now covered both possible subcases. Hence, Claim 1 is proved.]

We have now proved all three Claims 1, 2 and 3. Now, consider the sum
$\sum_{j=i_{r}}^{i_{r+1}-1}qs_{\ell}s_{\ell+1}\cdots s_{j-1}$. This sum
contains both an addend for $j=i$ and an addend for $j=i+1$ (since both $i$
and $i+1$ belong to the interval $\left[  i_{r},i_{r+1}-1\right]  $). When we
multiply this sum by $s_{i}$ on the right (i.e., when we replace it by
$\sum_{j=i_{r}}^{i_{r+1}-1}qs_{\ell}s_{\ell+1}\cdots s_{j-1}\cdot s_{i}$), the
addend for $j=i$ becomes $qs_{\ell}s_{\ell+1}\cdots s_{i-1}\cdot
s_{i}=qs_{\ell}s_{\ell+1}\cdots s_{i}$ (by Claim 2), whereas the addend for
$j=i+1$ becomes $qs_{\ell}s_{\ell+1}\cdots s_{i}\cdot s_{i}=qs_{\ell}%
s_{\ell+1}\cdots s_{i-1}$ (by Claim 3), and all remaining addends stay
unchanged (by Claim 1). Hence, multiplying the sum $\sum_{j=i_{r}}^{i_{r+1}%
-1}qs_{\ell}s_{\ell+1}\cdots s_{j-1}$ by $s_{i}$ on the right merely permutes
its addends (specifically, the addend for $j=i$ is swapped with the addend for
$j=i+1$, while all other addends stay unchanged) and therefore does not change
the sum. In other words, we have%
\[
\sum_{j=i_{r}}^{i_{r+1}-1}qs_{\ell}s_{\ell+1}\cdots s_{j-1}\cdot s_{i}%
=\sum_{j=i_{r}}^{i_{r+1}-1}qs_{\ell}s_{\ell+1}\cdots s_{j-1}.
\]
Since $q^{\prime}=\sum_{j=i_{r}}^{i_{r+1}-1}qs_{\ell}s_{\ell+1}\cdots s_{j-1}%
$, this rewrites as $q^{\prime}s_{i}=q^{\prime}$. Thus, we have proved
$q^{\prime}s_{i}=q^{\prime}$ in Case 4. \medskip

Finally, let us consider Case 5. In this case, we have $i\geq i_{r+1}$. Thus,
$i\geq i_{r+1}>i_{r}$ (since $i_{0}<i_{1}<i_{2}<\cdots<i_{p}<i_{p+1}$), so
that $i\notin\left[  i_{k},i_{r}\right]  $. Hence, from
(\ref{pf.thm.tl-FI.K'subI'}), we obtain $i\in I^{\prime}$. Thus, $qs_{i}=q$
(since $q\in F\left(  I\right)  $). Furthermore, from $i\geq i_{r+1}$, we see
that $s_{i}$ commutes with all the permutations $s_{\ell},s_{\ell+1}%
,\ldots,s_{i_{r+1}-2}$ that appear on the right hand side of
(\ref{pf.thm.tl-FI.q'=}) (by reflection locality). Hence, multiplying the
equality (\ref{pf.thm.tl-FI.q'=}) by $s_{i}$, we find%
\begin{align*}
q^{\prime}s_{i}  &  =\sum_{j=i_{r}}^{i_{r+1}-1}q\underbrace{s_{\ell}s_{\ell
+1}\cdots s_{j-1}\cdot s_{i}}_{\substack{=s_{i}\cdot s_{\ell}s_{\ell+1}\cdots
s_{j-1}\\\text{(since }s_{i}\text{ commutes with all of }s_{\ell},s_{\ell
+1},\ldots,s_{j-1}\text{)}}}=\sum_{j=i_{r}}^{i_{r+1}-1}\underbrace{qs_{i}%
}_{=q}\cdot s_{\ell}s_{\ell+1}\cdots s_{j-1}\\
&  =\sum_{j=i_{r}}^{i_{r+1}-1}qs_{\ell}s_{\ell+1}\cdots s_{j-1}=q^{\prime}.
\end{align*}
We have thus proved $q^{\prime}s_{i}=q^{\prime}$ in Case 5. \medskip

We have now proved $q^{\prime}s_{i}=q^{\prime}$ in all five cases. Thus,
$q^{\prime}s_{i}=q^{\prime}$ always holds. As explained above, this completes
the proof of $q^{\prime}\in F\left(  K\right)  $. Therefore, $q^{\prime}\in
F\left(  K\right)  \subseteq F\left(  <\operatorname*{sum}I\right)  $. But
this is precisely what we needed to prove. Thus, Theorem \ref{thm.tl-FI} is proven.
\end{proof}

\section{The Fibonacci filtration}

\label{sec.Filtration}

In this section, we shall build a filtration of $\mathbf{k}\left[
S_{n}\right]  $ by $\mathbf{k}$-submodules that are invariant under the
somewhere-to-below shuffles $R\left(  t_{\ell}\right)  $, which furthermore
has the property that the latter shuffles act as scalars on the subquotients
of the filtration. This filtration will be built up from the submodules
$F\left(  I\right)  $ defined in the previous section, and its properties will
rely on Theorem \ref{thm.tl-FI}.

\subsection{Definition and examples}

Recall from Section \ref{sec.Lacunarity} that the number of lacunar subsets of
$\left[  n-1\right]  $ is $f_{n+1}$. Let $Q_{1},Q_{2},\ldots,Q_{f_{n+1}}$ be
all these $f_{n+1}$ lacunar subsets of $\left[  n-1\right]  $, listed in an
order that satisfies%
\begin{equation}
\operatorname*{sum}\left(  Q_{1}\right)  \leq\operatorname*{sum}\left(
Q_{2}\right)  \leq\cdots\leq\operatorname*{sum}\left(  Q_{f_{n+1}}\right)  .
\label{pf.thm.t-simultri.sum-order}%
\end{equation}
Then, define a $\mathbf{k}$-submodule%
\[
F_{i}:=F\left(  Q_{1}\right)  +F\left(  Q_{2}\right)  +\cdots+F\left(
Q_{i}\right)  \ \ \ \ \ \ \ \ \ \ \text{of }\mathbf{k}\left[  S_{n}\right]
\]
for each $i\in\left[  0,f_{n+1}\right]  $ (so that $F_{0}=0$). We claim the following:

\begin{theorem}
\label{thm.t-simultri}\ 

\begin{enumerate}
\item[\textbf{(a)}] We have%
\[
0=F_{0}\subseteq F_{1}\subseteq F_{2}\subseteq\cdots\subseteq F_{f_{n+1}%
}=\mathbf{k}\left[  S_{n}\right]  .
\]
In other words, the $\mathbf{k}$-submodules $F_{0},F_{1},\ldots,F_{f_{n+1}}$
form a $\mathbf{k}$-module filtration of $\mathbf{k}\left[  S_{n}\right]  $.

\item[\textbf{(b)}] We have $F_{i}\cdot t_{\ell}\subseteq F_{i}$ for each
$i\in\left[  0,f_{n+1}\right]  $ and $\ell\in\left[  n\right]  $.

\item[\textbf{(c)}] For each $i\in\left[  f_{n+1}\right]  $ and $\ell
\in\left[  n\right]  $, we have%
\[
F_{i}\cdot\left(  t_{\ell}-m_{Q_{i},\ell}\right)  \subseteq F_{i-1}.
\]

\end{enumerate}
\end{theorem}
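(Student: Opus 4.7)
My plan is to prove (c) by induction on $i$, deduce (b) from (c) at the same index, and handle (a) directly. For the induction, decompose $F_i = F_{i-1} + F(Q_i)$. Theorem~\ref{thm.tl-FI} applied to $I = Q_i$ gives
\[
F(Q_i) \cdot (t_\ell - m_{Q_i, \ell}) \subseteq F(<\operatorname*{sum} Q_i),
\]
while the inductive hypothesis (b) at index $i-1$ yields $F_{i-1} \cdot t_\ell \subseteq F_{i-1}$ and hence $F_{i-1} \cdot (t_\ell - m_{Q_i, \ell}) \subseteq F_{i-1}$. Thus (c) reduces to the inclusion $F(<\operatorname*{sum} Q_i) \subseteq F_{i-1}$. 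Once (c) is in hand, (b) follows by writing $t_\ell = (t_\ell - m_{Q_i, \ell}) + m_{Q_i, \ell}$, so that $F_i \cdot t_\ell \subseteq F_{i-1} + F_i = F_i$. Part (a) is direct: the chain of inclusions is tautological from the definition, and $F_{f_{n+1}} = \mathbf{k}[S_n]$ follows from the fact that the lacunar subset of $[n-1]$ of maximum sum --- namely $\{2,4,\ldots,n-1\}$ when $n$ is odd and $\{1,3,\ldots,n-1\}$ when $n$ is even --- has empty non-shadow, so that $F(Q_{f_{n+1}}) = \mathbf{k}[S_n]$.

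The inclusion $F(<\operatorname*{sum} Q_i) \subseteq F_{i-1}$ rests on the following combinatorial lemma, which I expect to be the main obstacle: \emph{for every $J \subseteq [n]$, there exists a lacunar $Q \subseteq [n-1]$ with $\operatorname*{sum} Q \leq \operatorname*{sum} J$ and $Q' \subseteq J'$} (equivalently, $F(J) \subseteq F(Q)$). Granted the lemma, each $F(J)$ appearing in the defining sum of $F(<\operatorname*{sum} Q_i)$ embeds into some $F(Q_j)$ with $\operatorname*{sum} Q_j \leq \operatorname*{sum} J < \operatorname*{sum} Q_i$, hence with $j < i$ by the sort order~(\ref{pf.thm.t-simultri.sum-order}); therefore $F(J) \subseteq F_{i-1}$, completing the reduction.

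To prove the lemma, I would transform $J$ iteratively while preserving two invariants: the sum does not increase, and the coverage $(J \cup (J-1)) \cap [n-1]$ does not shrink (equivalently, the non-shadow only grows, which is what we need for the embedding $F(J) \subseteq F(\cdot)$). First, reduce to $J \subseteq [n-1]$: if $n \in J$, replace $n$ by $n-1$ when $n-1 \notin J$, and simply delete $n$ when $n-1 \in J$; either move lowers the sum and does not weaken coverage on $[n-1]$. Next, while $J$ still has two consecutive elements, pick the smallest consecutive pair $\{i, i+1\} \subseteq J$ and act as follows: if this pair sits in a run of length $\geq 3$ in $J$, delete an interior element of that run; for an isolated pair, replace $i$ by $i-1$ when $i \geq 2$, and delete $1$ when $i=1$. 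Each modification strictly decreases $\operatorname*{sum} J$ while preserving or enlarging the coverage, so the procedure terminates at a lacunar $Q \subseteq [n-1]$ with the required properties. The finickiest case is the isolated pair with $i \geq 2$: the shift $i \mapsto i-1$ may create a new pair $\{i-2, i-1\}$ when $i-2 \in J$, but this is absorbed by the next iteration, and coverage at positions $i-1$, $i$, $i+1$ is maintained because $i-1 \in J_{\mathrm{new}}$ covers $i-2$ and $i-1$, and $i+1 \in J_{\mathrm{new}}$ covers $i$ and $i+1$.
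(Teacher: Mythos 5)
Your proposal is correct and follows essentially the same route as the paper: your key combinatorial lemma is precisely Corollary~\ref{cor.FI-lac-2} (which the paper also proves by iterated sum-decreasing set surgery, via Proposition~\ref{prop.FI-lac}), your reduction of part (c) to the inclusion $F\left(<\operatorname*{sum}Q_{i}\right)\subseteq F_{i-1}$ is the paper's Claim~0, and the remaining differences are cosmetic (you run a simultaneous induction on (b) and (c) using $F_{i}=F_{i-1}+F\left(Q_{i}\right)$, whereas the paper decomposes $F_{i}=\sum_{k=1}^{i}F\left(Q_{k}\right)$ and applies Theorem~\ref{thm.tl-FI} to each summand after rewriting $t_{\ell}-m_{Q_{i},\ell}=\left(t_{\ell}-m_{Q_{k},\ell}\right)+\left(m_{Q_{k},\ell}-m_{Q_{i},\ell}\right)$; and you obtain $F_{f_{n+1}}=\mathbf{k}\left[S_{n}\right]$ from the explicit maximum-sum lacunar subset instead of from $F\left(\left[n\right]\right)=\mathbf{k}\left[S_{n}\right]$). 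One wording slip worth fixing: when the coverage $\left(J\cup\left(J-1\right)\right)\cap\left[n-1\right]$ does not shrink, the non-shadow only \emph{shrinks} (giving $Q^{\prime}\subseteq J^{\prime}$, which is what your formal lemma statement correctly demands and what $F\left(J\right)\subseteq F\left(Q\right)$ requires), not ``grows.''
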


We will eventually prove this theorem; we will also show that each $F_{i}$ is
a free $\mathbf{k}$-module, so that its dimension $\dim F_{i}$ (also known as
its rank) is well-defined whenever $\mathbf{k}\neq0$. First, let us tabulate
the dimensions of the $F_{0},F_{1},\ldots,F_{f_{n+1}}$ for some small values
of $n$:

\begin{example}
Let $n=3$. Then, the lacunar subsets of $\left[  n-1\right]  $ are
$Q_{1}=\varnothing$ and $Q_{2}=\left\{  1\right\}  $ and $Q_{3}=\left\{
2\right\}  $ (this is the only possible ordering that satisfies
(\ref{pf.thm.t-simultri.sum-order}), because no two lacunar subsets of
$\left[  n-1\right]  $ have the same sum). The corresponding $F\left(
I\right)  $'s have already been computed in Example \ref{exa.F(I).n=3}. Here
are some properties of the corresponding $F_{i}$'s:%
\[%
\begin{tabular}
[c]{|c||c|c|c|}\hline
$i$ & $1$ & $2$ & $3$\\\hline\hline
$Q_{i}$ & $\varnothing$ & $\left\{  1\right\}  $ & $\left\{  2\right\}
$\\\hline
$Q_{i}^{\prime}$ & $\left\{  1,2\right\}  $ & $\left\{  2\right\}  $ &
$\varnothing$\\\hline
$\dim F_{i}$ & $1$ & $3$ & $6$\\\hline
$\dim F_{i}-\dim F_{i-1}$ & $1$ & $2$ & $3$\\\hline
\end{tabular}
\ .
\]
Of course, $F_{0}=0$, so we are not showing an $i=0$ column.
\end{example}

\begin{example}
\label{exa.Qi.4}Let $n=4$. Then, the lacunar subsets of $\left[  n-1\right]  $
are $Q_{1}=\varnothing$ and $Q_{2}=\left\{  1\right\}  $ and $Q_{3}=\left\{
2\right\}  $ and $Q_{4}=\left\{  3\right\}  $ and $Q_{5}=\left\{  1,3\right\}
$ (again, there is no other ordering). Here are some properties of the
corresponding $F_{i}$'s:%
\[%
\begin{tabular}
[c]{|c||c|c|c|c|c|}\hline
$i$ & $1$ & $2$ & $3$ & $4$ & $5$\\\hline\hline
$Q_{i}$ & $\varnothing$ & $\left\{  1\right\}  $ & $\left\{  2\right\}  $ &
$\left\{  3\right\}  $ & $\left\{  1,3\right\}  $\\\hline
$Q_{i}^{\prime}$ & $\left\{  1,2,3\right\}  $ & $\left\{  2,3\right\}  $ &
$\left\{  3\right\}  $ & $\left\{  1\right\}  $ & $\varnothing$\\\hline
$\dim F_{i}$ & $1$ & $4$ & $12$ & $18$ & $24$\\\hline
$\dim F_{i}-\dim F_{i-1}$ & $1$ & $3$ & $8$ & $6$ & $6$\\\hline
\end{tabular}
\ \ \
\]

\end{example}

\begin{example}
Let $n=5$. Then, the lacunar subsets of $\left[  n-1\right]  $ are
$Q_{1}=\varnothing$ and $Q_{2}=\left\{  1\right\}  $ and $Q_{3}=\left\{
2\right\}  $ and $Q_{4}=\left\{  3\right\}  $ and $Q_{5}=\left\{  4\right\}  $
and $Q_{6}=\left\{  1,3\right\}  $ and $Q_{7}=\left\{  1,4\right\}  $ and
$Q_{8}=\left\{  2,4\right\}  $ (this is one of two possible orderings; another
can be obtained by swapping $Q_{5}$ with $Q_{6}$). Here are some properties of
the corresponding $F_{i}$'s:%
\[%
\begin{tabular}
[c]{|c||c|c|c|c|c|c|c|c|}\hline
$i$ & $1$ & $2$ & $3$ & $4$ & $5$ & $6$ & $7$ & $8$\\\hline\hline
$Q_{i}$ & $\varnothing$ & $\left\{  1\right\}  $ & $\left\{  2\right\}  $ &
$\left\{  3\right\}  $ & $\left\{  4\right\}  $ & $\left\{  1,3\right\}  $ &
$\left\{  1,4\right\}  $ & $\left\{  2,4\right\}  $\\\hline
$Q_{i}^{\prime}$ & $\left\{  1,2,3,4\right\}  $ & $\left\{  2,3,4\right\}  $ &
$\left\{  3,4\right\}  $ & $\left\{  1,4\right\}  $ & $\left\{  1,2\right\}  $
& $\left\{  4\right\}  $ & $\left\{  2\right\}  $ & $\varnothing$\\\hline
$\dim F_{i}$ & $1$ & $5$ & $20$ & $40$ & $50$ & $70$ & $90$ & $120$\\\hline
$\dim F_{i}-\dim F_{i-1}$ & $1$ & $4$ & $15$ & $20$ & $10$ & $20$ & $20$ &
$30$\\\hline
\end{tabular}
\ .
\]

\end{example}

\begin{example}
Let $n=6$. Then, the lacunar subsets of $\left[  n-1\right]  $ (in one of
several orderings) can be found in the following table:%
\[%
\begin{tabular}
[c]{|c||c|c|c|c|c|c|c|c|c|c|c|c|c|}\hline
$i$ & $1$ & $2$ & $3$ & $4$ & $5$ & $6$ & $7$ & $8$ & $9$ & $10$ & $11$ & $12$
& $13$\\\hline\hline
$Q_{i}$ & $\varnothing$ & $\left\{  1\right\}  $ & $\left\{  2\right\}  $ &
$\left\{  3\right\}  $ & $\left\{  4\right\}  $ & $\left\{  1,3\right\}  $ &
$\left\{  5\right\}  $ & $\left\{  1,4\right\}  $ & $\left\{  1,5\right\}  $ &
$\left\{  2,4\right\}  $ & $\left\{  2,5\right\}  $ & $\left\{  3,5\right\}  $
& $\left\{  1,3,5\right\}  $\\\hline
$d_{i}$ & $1$ & $6$ & $30$ & $75$ & $115$ & $160$ & $175$ & $255$ & $300$ &
$420$ & $540$ & $630$ & $720$\\\hline
$\delta_{i}$ & $1$ & $5$ & $24$ & $45$ & $40$ & $45$ & $15$ & $80$ & $45$ &
$120$ & $120$ & $90$ & $90$\\\hline
\end{tabular}
\ ,
\]
where we set $d_{i}:=\dim F_{i}$ and $\delta_{i}:=\dim F_{i}-\dim F_{i-1}$ for
brevity. (We have not listed the sets $Q_{i}^{\prime}$ to avoid stretching the
table too much.)
\end{example}

When $\mathbf{k}$ is a field, Theorem \ref{thm.t-simultri} entails that the
endomorphisms $R\left(  t_{1}\right)  ,R\left(  t_{2}\right)  ,\ldots,R\left(
t_{n}\right)  $ on $\mathbf{k}\left[  S_{n}\right]  $ can be simultaneously
triangularized (as endomorphisms of the $\mathbf{k}$-module $\mathbf{k}\left[
S_{n}\right]  $). Thus, in particular, any $\mathbf{k}$-linear combination
$R\left(  \lambda_{1}t_{1}+\lambda_{2}t_{2}+\cdots+\lambda_{n}t_{n}\right)  $
of $R\left(  t_{1}\right)  ,R\left(  t_{2}\right)  ,\ldots,R\left(
t_{n}\right)  $ has all its eigenvalues in $\mathbf{k}$. However, we will
later prove this more generally, without assuming that $\mathbf{k}$ is a
field, by explicitly constructing a basis of $\mathbf{k}\left[  S_{n}\right]
$ that triangularizes $R\left(  t_{1}\right)  ,R\left(  t_{2}\right)
,\ldots,R\left(  t_{n}\right)  $.

\subsection{Properties of non-shadows}

So far, it may seem mysterious that the definition of our filtration $F_{0}
\subseteq F_{1} \subseteq F_{2} \subseteq\cdots\subseteq F_{f_{n+1}}$ relies
only on the $F\left(  I\right)  $ for the lacunar subsets $I$ of $\left[
n-1\right]  $, rather than using the $F\left(  I\right)  $ for all subsets $I$
of $\left[  n\right]  $. The reason for this is the observation
(Corollary~\ref{cor.FI-lac-2} further below) that the lacunar subsets $I$ of
$\left[  n-1\right]  $ are ``enough'' (i.e., the $F\left(  I\right)  $ for
which $I$ is not a lacunar subset of $\left[  n-1\right]  $ ``contribute
nothing new'' to the filtration). More precisely, each $F\left(  I\right)  $
(for any $I \subseteq\left[  n\right]  $) is contained in the sum of the
$F\left(  J\right)  $ where $J \subseteq\left[  n-1\right]  $ is lacunar and
satisfies $\operatorname{sum} J \leq\operatorname{sum} I$.

Before we can prove this, we shall show a few combinatorial properties of non-shadows.

\begin{proposition}
\label{prop.K'subI'}Let $I$ be a subset of $\left[  n\right]  $. Let $j\in I$.
Set $K:=\left(  I\setminus\left\{  j\right\}  \right)  \cup\left\{
j-1\right\}  $ if $j>1$, and otherwise set $K:=I\setminus\left\{  j\right\}
$. Then:

\begin{enumerate}
\item[\textbf{(a)}] We have $K^{\prime}\subseteq I^{\prime}\cup\left\{
j\right\}  $.

\item[\textbf{(b)}] If $j+1\in I$, then $K^{\prime}\subseteq I^{\prime}$.
\end{enumerate}
\end{proposition}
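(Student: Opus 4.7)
My plan is to unfold the definition of the non-shadow and do a short case analysis. For part (a), I take an arbitrary $i \in K'$ and aim to show that either $i = j$ or $i \in I'$. By definition of the non-shadow, $i \in K'$ means $i \in [n-1]$ together with $i \notin K$ and $i+1 \notin K$. Assuming $i \neq j$, the goal reduces to verifying $i \notin I$ and $i+1 \notin I$, which is what we need for membership in $I' = [n-1] \setminus (I \cup (I-1))$.

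For the case $j > 1$ (where $K = (I \setminus \{j\}) \cup \{j-1\}$), the condition $i \notin K$ becomes \emph{$i \notin I \setminus \{j\}$ and $i \neq j-1$}; combined with $i \neq j$ this gives $i \notin I$. Likewise $i+1 \notin K$ becomes \emph{$i+1 \notin I \setminus \{j\}$ and $i+1 \neq j-1$}; if $i+1$ were in $I$, then $i+1 = j$, forcing $i = j-1$, which contradicts $i \neq j-1$ — so $i+1 \notin I$. For the case $j = 1$ (where $K = I \setminus \{1\}$), the condition $i \notin K$ together with $i \neq 1$ yields $i \notin I$; the condition $i+1 \notin K$ gives $i+1 \notin I$ or $i+1 = 1$, but $i \in [n-1]$ rules out the latter. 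Either way, $i \in I'$, completing part (a).

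For part (b), I assume $j+1 \in I$ and argue that the exceptional case $i = j$ from part (a) cannot occur. Indeed, if $i = j$, then $i+1 = j+1$ lies in $I$ and differs from $j$, so $i+1 \in I \setminus \{j\} \subseteq K$ in both the $j > 1$ and $j = 1$ subcases, contradicting $i+1 \notin K$. Hence every $i \in K'$ satisfies $i \neq j$, and part (a) then forces $i \in I'$. The only real bookkeeping obstacle is keeping the $j = 1$ edge case straight, since then $j - 1 = 0 \notin [n]$; but this case is simpler than the generic one because no element is added to $K$ at all.
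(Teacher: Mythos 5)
Your proof is correct and follows essentially the same route as the paper's: unwind the definition of the non-shadow, show that any $i\in K'$ with $i\neq j$ must satisfy $i\notin I$ and $i+1\notin I$ (using $I\setminus\{j\}\subseteq K$ and, when $j>1$, the fact $j-1\in K$), and for part (b) observe that $j+1\in I\setminus\{j\}\subseteq K$ rules out the exceptional element $i=j$. Your explicit split into the $j>1$ and $j=1$ cases is only a cosmetic difference from the paper's treatment.
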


\begin{proof}
\textbf{(a)} Let $g\in K^{\prime}\setminus\left\{  j\right\}  $. We shall show
that $g\in I^{\prime}$.

Indeed, we have $g\in K^{\prime}\setminus\left\{  j\right\}  $. In other
words, $g\in K^{\prime}$ and $g\neq j$. Now, $g\in K^{\prime}=\left[
n-1\right]  \setminus\left(  K\cup\left(  K-1\right)  \right)  $ (by the
definition of $K^{\prime}$). In other words, $g\in\left[  n-1\right]  $ and
$g\notin K\cup\left(  K-1\right)  $. From $g\notin K\cup\left(  K-1\right)  $,
we obtain $g\notin K$ and $g+1\notin K$.

However, the construction of $K$ yields $I\setminus\left\{  j\right\}
\subseteq K$.

If we had $g\in I$, then we would have $g\in I\setminus\left\{  j\right\}  $
(since $g\in I$ and $g\neq j$), which would entail $g\in I\setminus\left\{
j\right\}  \subseteq K$, contradicting $g\notin K$. Hence, we cannot have
$g\in I$. Thus, we have $g\notin I$.

We shall now show that $g+1\notin I$. Indeed, let us assume the contrary.
Then, $g+1\in I$. If we had $g+1\neq j$, then we would have $g+1\in
I\setminus\left\{  j\right\}  $ (since $g+1\in I$ and $g+1\neq j$), which
would entail $g+1\in I\setminus\left\{  j\right\}  \subseteq K$, contradicting
$g+1\notin K$. Hence, we cannot have $g+1\neq j$. Thus, we must have $g+1=j$,
so that $g=j-1$ and thus $j-1=g\in\left[  n-1\right]  $. Hence, $j-1\geq1$, so
that $j\geq2$. Thus, the definition of $K$ yields $K=\left(  I\setminus
\left\{  j\right\}  \right)  \cup\left\{  j-1\right\}  $. Consequently,
$j-1\in K$. But this contradicts $j-1=g\notin K$. This contradiction shows
that our assumption was false. Hence, $g+1\notin I$ is proved.

Now, we know that $g\in\left[  n-1\right]  $ satisfies $g\notin I$ and
$g+1\notin I$. In other words, $g\in I^{\prime}$ (by the definition of
$I^{\prime}$).

Forget that we fixed $g$. We thus have shown that $g\in I^{\prime}$ for each
$g\in K^{\prime}\setminus\left\{  j\right\}  $. In other words, $K^{\prime
}\setminus\left\{  j\right\}  \subseteq I^{\prime}$. Hence,%
\[
K^{\prime}\subseteq\underbrace{\left(  K^{\prime}\setminus\left\{  j\right\}
\right)  }_{\subseteq I^{\prime}}\cup\left\{  j\right\}  \subseteq I^{\prime
}\cup\left\{  j\right\}  .
\]
This proves Proposition \ref{prop.K'subI'} \textbf{(a)}. \medskip

\textbf{(b)} Assume that $j+1\in I$. Thus, $j+1\in I\setminus\left\{
j\right\}  $ (since $j+1\neq j$). However, the definition of $K$ yields
$K\supseteq I\setminus\left\{  j\right\}  $. Thus, $j+1\in I\setminus\left\{
j\right\}  \subseteq K$. Hence, $j\in K-1\subseteq K\cup\left(  K-1\right)  $,
so that $j\notin\left[  n-1\right]  \setminus\left(  K\cup\left(  K-1\right)
\right)  $. In other words, $j\notin K^{\prime}$ (since $K^{\prime}=\left[
n-1\right]  \setminus\left(  K\cup\left(  K-1\right)  \right)  $). Hence,
$K^{\prime}\setminus\left\{  j\right\}  =K^{\prime}$ and therefore%
\[
K^{\prime}=\underbrace{K^{\prime}}_{\substack{\subseteq I^{\prime}\cup\left\{
j\right\}  \\\text{(by Proposition \ref{prop.K'subI'} \textbf{(a)})}%
}}\setminus\left\{  j\right\}  \subseteq\left(  I^{\prime}\cup\left\{
j\right\}  \right)  \setminus\left\{  j\right\}  \subseteq I^{\prime}.
\]
This proves Proposition \ref{prop.K'subI'} \textbf{(b)}.
\end{proof}

\begin{proposition}
\label{prop.FI-lac}Let $I\subseteq\left[  n\right]  $. Assume that $I$ is not
a lacunar subset of $\left[  n-1\right]  $. Then, there exists a subset $K$ of
$\left[  n\right]  $ such that $\operatorname*{sum}K<\operatorname*{sum}I$ and
$K^{\prime}\subseteq I^{\prime}$.
\end{proposition}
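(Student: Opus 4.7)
The plan is to directly exhibit $K$ using Proposition \ref{prop.K'subI'}. The hypothesis that $I$ is not a lacunar subset of $[n-1]$ means precisely that at least one of the following holds: \textbf{(A)} there exist two consecutive integers $j,j+1$ both in $I$; or \textbf{(B)} $n\in I$. (Indeed, if both fail, then $I$ has no two consecutive integers and is contained in $[n-1]$, so $I$ is a lacunar subset of $[n-1]$, contradicting the hypothesis.) I treat these two cases separately, and in each case the set $K$ is produced by Proposition \ref{prop.K'subI'}.

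In case (A), pick any $j\in I$ with $j+1\in I$ and let $K$ be as in Proposition \ref{prop.K'subI'} (so $K=(I\setminus\{j\})\cup\{j-1\}$ if $j>1$, and $K=I\setminus\{j\}$ otherwise). Since $j+1\in I$, Proposition \ref{prop.K'subI'}(b) immediately yields $K'\subseteq I'$. To verify $\operatorname*{sum}K<\operatorname*{sum}I$, I distinguish subcases: if $j=1$ then $\operatorname*{sum}K=\operatorname*{sum}I-1$; if $j>1$ and $j-1\notin I$ then $K$ merely replaces $j$ by $j-1$, giving $\operatorname*{sum}K=\operatorname*{sum}I-1$; and if $j>1$ and $j-1\in I$ then $K$ just removes $j$, giving $\operatorname*{sum}K=\operatorname*{sum}I-j\leq\operatorname*{sum}I-2$. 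In every subcase the sum strictly decreases.

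In case (B), I may assume that (A) does not hold (since (A) is already settled); thus $I$ contains no two consecutive integers, and in particular $n-1\notin I$. Apply Proposition \ref{prop.K'subI'} with $j:=n$, letting $K$ denote the resulting set. Part (a) gives $K'\subseteq I'\cup\{n\}$; however, $K'\subseteq[n-1]$ by the very definition of the non-shadow, while $n\notin[n-1]$, so the extra element is harmless and we obtain $K'\subseteq I'$. For the sums, when $n\geq 2$ the fact $n-1\notin I$ gives $\operatorname*{sum}K=\operatorname*{sum}I-n+(n-1)=\operatorname*{sum}I-1$; when $n=1$ we necessarily have $I=\{1\}$ and $K=\varnothing$, whence $\operatorname*{sum}K=0<1=\operatorname*{sum}I$.

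The argument is essentially bookkeeping, and Proposition \ref{prop.K'subI'} does the real work. The one conceptual point worth emphasizing is the observation in case (B) that $n$ lies outside the ambient set $[n-1]$ on which non-shadows live, so the potentially extraneous $\{n\}$ on the right-hand side of Proposition \ref{prop.K'subI'}(a) automatically drops out. That is what allows the same single construction to handle both kinds of lacunarity violations.
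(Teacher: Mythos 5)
Your proof is correct and follows essentially the same route as the paper: the same case split (consecutive elements in $I$ versus $n\in I$), the same invocation of Proposition \ref{prop.K'subI'}\ (part (b) in the first case, part (a) plus the observation that $K'\subseteq\left[  n-1\right]  $ kills the extraneous $\left\{  n\right\}  $ in the second), and the same sum bookkeeping. The only cosmetic difference is that you assume case (A) fails when treating case (B) to get an exact value of $\operatorname*{sum}K$, whereas the paper just uses the inequality $\operatorname*{sum}K\leq\operatorname*{sum}I-1$ directly.
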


\begin{proof}
We have assumed that $I$ is not a lacunar subset of $\left[  n-1\right]  $.
Thus, we are in one of the following two cases:

\textit{Case 1:} The set $I$ is not a subset of $\left[  n-1\right]  $.

\textit{Case 2:} The set $I$ is not lacunar.

Let us first consider Case 1. In this case, the set $I$ is not a subset of
$\left[  n-1\right]  $. Hence, we have $n\in I$ (since $I\subseteq\left[
n\right]  $). Let $K:=\left(  I\setminus\left\{  n\right\}  \right)
\cup\left\{  n-1\right\}  $ (or just $K:=I\setminus\left\{  n\right\}  $ in
the case when $n\leq1$). Then,
\begin{align*}
\operatorname*{sum}K  &  \leq\operatorname*{sum}I-n+\left(  n-1\right) \\
&  \ \ \ \ \ \ \ \ \ \ \ \ \ \ \ \ \ \ \ \ \left(  \text{since }n\in I\text{,
but }n-1\text{ may or may not belong to }I\right) \\
&  =\operatorname*{sum}I-1<\operatorname*{sum}I.
\end{align*}
However, Proposition \ref{prop.K'subI'} \textbf{(a)} (applied to $j=n$) yields
$K^{\prime}\subseteq I^{\prime}\cup\left\{  n\right\}  $ (since $n\in I$).
From this, we easily obtain $K^{\prime}\subseteq I^{\prime}$%
\ \ \ \ \footnote{\textit{Proof:} The definition of $K^{\prime}$ yields
$K^{\prime}=\left[  n-1\right]  \setminus\left(  K\cup\left(  K-1\right)
\right)  \subseteq\left[  n-1\right]  $. Combining this with $K^{\prime
}\subseteq I^{\prime}\cup\left\{  n\right\}  $, we obtain%
\[
K^{\prime}\subseteq\left[  n-1\right]  \cap\left(  I^{\prime}\cup\left\{
n\right\}  \right)  =\underbrace{\left(  \left[  n-1\right]  \cap I^{\prime
}\right)  }_{\subseteq I^{\prime}}\cup\underbrace{\left(  \left[  n-1\right]
\cap\left\{  n\right\}  \right)  }_{=\varnothing}\subseteq I^{\prime}.
\]
}. Hence, Proposition \ref{prop.FI-lac} is proved in Case 1.

Let us now consider Case 2. In this case, the set $I$ is not lacunar. In other
words, $I$ contains two consecutive integers $q-1$ and $q$. Consider these
$q-1$ and $q$. Let $K:=\left(  I\setminus\left\{  q-1\right\}  \right)
\cup\left\{  q-2\right\}  $ (or just $K:=I\setminus\left\{  q-1\right\}  $ in
the case when $q-2=0$). Then, $\operatorname*{sum}K<\operatorname*{sum}I$
(similarly to Case 1). However, Proposition \ref{prop.K'subI'} \textbf{(b)}
(applied to $j=q-1$) yields $K^{\prime}\subseteq I^{\prime}$ (since $q-1\in I$
and $\left(  q-1\right)  +1=q\in I$). Hence, Proposition \ref{prop.FI-lac} is
proved in Case 2.

We now have proved Proposition \ref{prop.FI-lac} in both Cases 1 and 2.
\end{proof}

Roughly speaking, Proposition~\ref{prop.FI-lac} tells us that if a subset $I$
of $\left[  n \right]  $ is not a lacunar subset of $\left[  n-1\right]  $,
then we can replace it by a subset $K$ that has a smaller sum (i.e., satisfies
$\operatorname*{sum}K<\operatorname*{sum}I$) and a non-shadow that is
contained in that of $I$. The latter subset $K$ may or may not be a lacunar
subset of $\left[  n-1 \right]  $. If it is not, then we can apply
Proposition~\ref{prop.FI-lac} to it again. Repeatedly applying
Proposition~\ref{prop.FI-lac} like this, we obtain the following corollary:

\begin{corollary}
\label{cor.FI-lac-2}Let $I\subseteq\left[  n\right]  $. Then, there exists a
lacunar subset $J$ of $\left[  n-1\right]  $ such that $\operatorname*{sum}%
J\leq\operatorname*{sum}I$ and $J^{\prime}\subseteq I^{\prime}$.
\end{corollary}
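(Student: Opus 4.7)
The plan is to prove the corollary by strong induction on the nonnegative integer $\operatorname{sum} I$, using Proposition~\ref{prop.FI-lac} as the inductive step. Equivalently, one may view the argument as an iterated application of Proposition~\ref{prop.FI-lac}: starting from $I_{0} := I$, as long as $I_{k}$ fails to be a lacunar subset of $[n-1]$, one invokes Proposition~\ref{prop.FI-lac} to produce a subset $I_{k+1} \subseteq [n]$ with $\operatorname{sum} I_{k+1} < \operatorname{sum} I_{k}$ and $I_{k+1}^{\prime} \subseteq I_{k}^{\prime}$. Because the sums form a strictly decreasing sequence of nonnegative integers, the process must terminate after finitely many steps, and it can only terminate at a lacunar subset of $[n-1]$; transitivity of $\leq$ and $\subseteq$ then yields the desired conclusion.

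More formally, I proceed by strong induction on $s := \operatorname{sum} I$. If $I$ is itself a lacunar subset of $[n-1]$, there is nothing to do: set $J := I$, so that $\operatorname{sum} J = \operatorname{sum} I$ and $J^{\prime} = I^{\prime}$. Otherwise, Proposition~\ref{prop.FI-lac} supplies a subset $K \subseteq [n]$ with $\operatorname{sum} K < s$ and $K^{\prime} \subseteq I^{\prime}$. Since $\operatorname{sum} K < s$, the inductive hypothesis applies to $K$ and yields a lacunar subset $J$ of $[n-1]$ satisfying $\operatorname{sum} J \leq \operatorname{sum} K$ and $J^{\prime} \subseteq K^{\prime}$. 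Chaining the inequalities gives $\operatorname{sum} J \leq \operatorname{sum} K < s = \operatorname{sum} I$, and chaining the inclusions gives $J^{\prime} \subseteq K^{\prime} \subseteq I^{\prime}$, completing the induction.

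The only conceivable obstacle is well-foundedness of the recursion, which is immediate: $\operatorname{sum}$ takes values in the finite set $\{0, 1, \ldots, n(n+1)/2\}$ and strictly decreases at each invocation of Proposition~\ref{prop.FI-lac}. All the combinatorial content of the corollary has already been absorbed into that proposition, so the remaining work is essentially just iteration.
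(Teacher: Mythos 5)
Your proof is correct and follows the same route as the paper: strong induction on $\operatorname{sum}I$, taking $J=I$ when $I$ is already a lacunar subset of $\left[ n-1\right]$, and otherwise applying Proposition~\ref{prop.FI-lac} followed by the induction hypothesis. The well-foundedness remark is a nice touch but, as you note, immediate.
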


\begin{proof}
We proceed by strong induction on $\operatorname*{sum}I$. Thus, we fix some
$I\subseteq\left[  n\right]  $. We must prove that there exists a lacunar
subset $J$ of $\left[  n-1\right]  $ satisfying $\operatorname*{sum}%
J\leq\operatorname*{sum}I$ and $J^{\prime}\subseteq I^{\prime}$.

If $I$ itself is a lacunar subset of $\left[  n-1\right]  $, then taking $J=I$
suffices. Thus, assume that $I$ is not. Hence, Proposition \ref{prop.FI-lac}
yields that there exists a subset $K$ of $\left[  n\right]  $ such that
$\operatorname*{sum}K<\operatorname*{sum}I$ and $K^{\prime}\subseteq
I^{\prime}$. Consider this $K$. Because of $\operatorname*{sum}%
K<\operatorname*{sum}I$, we can apply the induction hypothesis to $K$ instead
of $I$. We thus conclude that there exists a lacunar subset $J$ of $\left[
n-1\right]  $ such that $\operatorname*{sum}J\leq\operatorname*{sum}K$ and
$J^{\prime}\subseteq K^{\prime}$. This lacunar subset $J$ satisfies
$\operatorname*{sum}J\leq\operatorname*{sum}I$ (since $\operatorname*{sum}%
J\leq\operatorname*{sum}K<\operatorname*{sum}I$) and $J^{\prime}\subseteq
I^{\prime}$ (since $J^{\prime}\subseteq K^{\prime}\subseteq I^{\prime}$).
Hence, it is precisely the kind of subset that we were looking for. This
completes the induction step, and therefore Corollary \ref{cor.FI-lac-2} is proved.
\end{proof}

Corollary~\ref{cor.FI-lac-2} is largely responsible for the fact that the
filtration in Theorem~\ref{thm.t-simultri} uses only the lacunar subsets of
$\left[  n-1 \right]  $ (rather than all subsets of $\left[  n \right]  $).

Next, we observe an essentially obvious fact: If $A$ and $B$ are two subsets
of $\left[  n\right]  $ satisfying $B^{\prime}\subseteq A^{\prime}$, then%
\begin{equation}
F\left(  A\right)  \subseteq F\left(  B\right)  . \label{eq.FI-lac.AB}%
\end{equation}
(This follows directly from the definition of $F\left(  I\right)  $ in terms
of $I^{\prime}$, given at the beginning of Section \ref{sec.invariantspaces}.)

\begin{corollary}
\label{cor.FI-lac-cor}Let $k\in\mathbb{N}$. Then,%
\[
F\left(  <k\right)  =\sum_{\substack{J\subseteq\left[  n-1\right]  \text{ is
lacunar;}\\\operatorname*{sum}J<k}}F\left(  J\right)  .
\]

\end{corollary}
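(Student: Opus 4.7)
The plan is to prove the two inclusions separately. One direction is immediate from the definition of $F(<k)$: every lacunar subset $J$ of $[n-1]$ satisfying $\operatorname{sum} J < k$ is in particular a subset $J$ of $[n]$ satisfying $\operatorname{sum} J < k$, so each summand on the right appears among the summands defining $F(<k)$. Therefore
\[
\sum_{\substack{J\subseteq[n-1]\text{ is lacunar};\\ \operatorname{sum} J < k}} F(J) \subseteq F(<k).
\]

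For the reverse inclusion, by the definition of $F(<k)$ it suffices to show that $F(I)$ is contained in the right-hand side for every subset $I \subseteq [n]$ with $\operatorname{sum} I < k$. Fix such an $I$. By Corollary~\ref{cor.FI-lac-2}, there exists a lacunar subset $J$ of $[n-1]$ with $\operatorname{sum} J \leq \operatorname{sum} I$ and $J^{\prime} \subseteq I^{\prime}$. The first inequality gives $\operatorname{sum} J < k$, so $J$ is one of the indexing sets on the right. The inclusion $J^{\prime} \subseteq I^{\prime}$ together with the elementary observation (\ref{eq.FI-lac.AB}) yields $F(I) \subseteq F(J)$, which in turn lies in the right-hand sum. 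This gives the opposite inclusion and completes the proof.

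There is essentially no obstacle here: the argument is a direct packaging of Corollary~\ref{cor.FI-lac-2} (the hard combinatorial content, already proved) with the monotonicity (\ref{eq.FI-lac.AB}) of $F(\cdot)$ in the non-shadow. The only thing to be careful about is the strict-versus-weak inequality, namely making sure that $\operatorname{sum} J \leq \operatorname{sum} I$ combined with $\operatorname{sum} I < k$ gives $\operatorname{sum} J < k$ (which it clearly does).
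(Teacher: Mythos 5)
Your proof is correct and follows essentially the same route as the paper's: both directions are handled by the same two ingredients, namely Corollary~\ref{cor.FI-lac-2} combined with the monotonicity (\ref{eq.FI-lac.AB}) for the nontrivial inclusion, and the observation that the lacunar sum is a sub-sum of the defining sum for the easy one.
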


\begin{proof}
The definition of $F\left(  <k\right)  $ yields%
\[
F\left(  <k\right)  =\sum_{\substack{J\subseteq\left[  n\right]
;\\\operatorname*{sum}J<k}}F\left(  J\right)  =\sum_{\substack{I\subseteq
\left[  n\right]  ;\\\operatorname*{sum}I<k}}F\left(  I\right)  .
\]
Now, we shall show the following claim:

\begin{statement}
\textit{Claim 1:} For each $I\subseteq\left[  n\right]  $ satisfying
$\operatorname*{sum}I<k$, there exists some lacunar $J\subseteq\left[
n-1\right]  $ satisfying $\operatorname*{sum}J<k$ and $F\left(  I\right)
\subseteq F\left(  J\right)  $.
\end{statement}

[\textit{Proof of Claim 1:} Let $I\subseteq\left[  n\right]  $ satisfy
$\operatorname*{sum}I<k$. Then, Corollary \ref{cor.FI-lac-2} yields that there
exists a lacunar subset $J$ of $\left[  n-1\right]  $ such that
$\operatorname*{sum}J\leq\operatorname*{sum}I$ and $J^{\prime}\subseteq
I^{\prime}$. This lacunar subset $J$ then clearly satisfies
$\operatorname*{sum}J\leq\operatorname*{sum}I<k$ and $F\left(  I\right)
\subseteq F\left(  J\right)  $ (by (\ref{eq.FI-lac.AB}), applied to $A=I$ and
$B=J$). Thus, Claim 1 follows.] \medskip

Claim 1 shows that each addend of the sum $\sum_{\substack{I\subseteq\left[
n\right]  ;\\\operatorname*{sum}I<k}}F\left(  I\right)  $ is a subset of some
addend of the sum $\sum_{\substack{J\subseteq\left[  n-1\right]  \text{ is
lacunar;}\\\operatorname*{sum}J<k}}F\left(  J\right)  $. Hence, we have%
\[
\sum_{\substack{I\subseteq\left[  n\right]  ;\\\operatorname*{sum}%
I<k}}F\left(  I\right)  \subseteq\sum_{\substack{J\subseteq\left[  n-1\right]
\text{ is lacunar;}\\\operatorname*{sum}J<k}}F\left(  J\right)  .
\]
Combining this inclusion with the reverse inclusion%
\[
\sum_{\substack{J\subseteq\left[  n-1\right]  \text{ is lacunar;}%
\\\operatorname*{sum}J<k}}F\left(  J\right)  \subseteq\sum
_{\substack{I\subseteq\left[  n\right]  ;\\\operatorname*{sum}I<k}}F\left(
I\right)
\]
(which is obvious, since the left hand side is a sub-sum of the right hand
side), we obtain%
\[
\sum_{\substack{I\subseteq\left[  n\right]  ;\\\operatorname*{sum}%
I<k}}F\left(  I\right)  =\sum_{\substack{J\subseteq\left[  n-1\right]  \text{
is lacunar;}\\\operatorname*{sum}J<k}}F\left(  J\right)  .
\]
Thus,%
\[
F\left(  <k\right)  =\sum_{\substack{I\subseteq\left[  n\right]
;\\\operatorname*{sum}I<k}}F\left(  I\right)  =\sum_{\substack{J\subseteq
\left[  n-1\right]  \text{ is lacunar;}\\\operatorname*{sum}J<k}}F\left(
J\right)  .
\]
This proves Corollary \ref{cor.FI-lac-cor}.
\end{proof}

We now have the tools to restrict our study of the $\mathbf{k}$-submodules
$F(I)$ to the sets $I$ that are lacunar subsets of $\left[  n-1 \right]  $.

\subsection{Proof of the filtration}

Using the properties of non-shadows that we just established, we can prove
Theorem \ref{thm.t-simultri}, which gives a filtration of $\mathbf{k}\left[
S_{n} \right]  $ preserved by the somewhere-to-below shuffles.

\begin{proof}
[Proof of Theorem \ref{thm.t-simultri}.]We must establish the following three claims:

\begin{statement}
\textit{Claim 1:} We have $0=F_{0}\subseteq F_{1}\subseteq F_{2}%
\subseteq\cdots\subseteq F_{f_{n+1}}=\mathbf{k}\left[  S_{n}\right]  $.
\end{statement}

\begin{statement}
\textit{Claim 2:} We have $F_{i}\cdot t_{\ell}\subseteq F_{i}$ for each
$i\in\left[  0,f_{n+1}\right]  $ and $\ell\in\left[  n\right]  $.
\end{statement}

\begin{statement}
\textit{Claim 3:} For each $i\in\left[  f_{n+1}\right]  $ and $\ell\in\left[
n\right]  $, we have%
\[
F_{i}\cdot\left(  t_{\ell}-m_{Q_{i},\ell}\right)  \subseteq F_{i-1}.
\]

\end{statement}

First of all, let us show an auxiliary claim:

\begin{statement}
\textit{Claim 0:} Let $k\in\mathbb{N}$. Let $i_{k}$ be the largest
$i\in\left[  f_{n+1}\right]  $ satisfying $\operatorname*{sum}\left(
Q_{i}\right)  <k$ (or $0$ if no such $i$ exists). Then, $F\left(  <k\right)
=F_{i_{k}}$.
\end{statement}

[\textit{Proof of Claim 0:} Recall that $\operatorname*{sum}\left(
Q_{1}\right)  \leq\operatorname*{sum}\left(  Q_{2}\right)  \leq\cdots
\leq\operatorname*{sum}\left(  Q_{f_{n+1}}\right)  $. Thus, the inequality
$\operatorname*{sum}\left(  Q_{i}\right)  <k$ holds for each $i\leq i_{k}$ but
does not hold for any other $i$ (because $i_{k}$ is the largest $i\in\left[
f_{n+1}\right]  $ satisfying $\operatorname*{sum}\left(  Q_{i}\right)  <k$).
Therefore, the lacunar subsets $J$ of $\left[  n-1\right]  $ satisfying
$\operatorname*{sum}J<k$ are precisely $Q_{1},Q_{2},\ldots,Q_{i_{k}}$ (since
$Q_{1},Q_{2},\ldots,Q_{f_{n+1}}$ are all the lacunar subsets of $\left[
n-1\right]  $). Hence,%
\[
\sum_{\substack{J\subseteq\left[  n-1\right]  \text{ is lacunar;}%
\\\operatorname*{sum}J<k}}F\left(  J\right)  =F\left(  Q_{1}\right)  +F\left(
Q_{2}\right)  +\cdots+F\left(  Q_{i_{k}}\right)  =F_{i_{k}}%
\]
(by the definition of $F_{i_{k}}$). However, Corollary \ref{cor.FI-lac-cor}
yields%
\[
F\left(  <k\right)  =\sum_{\substack{J\subseteq\left[  n-1\right]  \text{ is
lacunar;}\\\operatorname*{sum}J<k}}F\left(  J\right)  =F_{i_{k}}.
\]
Thus, Claim 0 is proved.] \medskip

We can now easily prove Claims 1, 3 and 2 in this order:

[\textit{Proof of Claim 1:} From the construction of the modules $F_{i}$, it
is clear that $0=F_{0}\subseteq F_{1}\subseteq F_{2}\subseteq\cdots\subseteq
F_{f_{n+1}}$. We thus only need to prove $F_{f_{n+1}}=\mathbf{k}\left[
S_{n}\right]  $.

Let $k=\dbinom{n}{2}+1$. Then, $\operatorname*{sum}\left[  n\right]
=\dbinom{n}{2}<k$, so that $F\left(  \left[  n\right]  \right)  \subseteq
F\left(  <k\right)  $ (by the definition of $F\left(  <k\right)  $). Let
$i_{k}$ be the largest $i\in\left[  f_{n+1}\right]  $ satisfying
$\operatorname*{sum}\left(  Q_{i}\right)  <k$. Hence, Claim 0 yields $F\left(
<k\right)  =F_{i_{k}}$. Consider this $i_{k}$. However, $F\left(  \left[
n\right]  \right)  =\mathbf{k}\left[  S_{n}\right]  $ because the non-shadow
$[n]^{\prime}= \emptyset$. Thus, $\mathbf{k}\left[  S_{n}\right]  =F\left(
\left[  n\right]  \right)  \subseteq F\left(  <k\right)  =F_{i_{k}}\subseteq
F_{f_{n+1}}$ (because $F_{0}\subseteq F_{1}\subseteq F_{2}\subseteq
\cdots\subseteq F_{f_{n+1}}$). Thus, $F_{f_{n+1}}=\mathbf{k}\left[
S_{n}\right]  $ (since $F_{f_{n+1}}\subseteq\mathbf{k}\left[  S_{n}\right]
$). The proof of Claim 1 is thus finished.] \medskip

[\textit{Proof of Claim 3:} Let $i\in\left[  f_{n+1}\right]  $ and $\ell
\in\left[  n\right]  $. We must prove that $F_{i}\cdot\left(  t_{\ell
}-m_{Q_{i},\ell}\right)  \subseteq F_{i-1}$.

The definition of $F_{i-1}$ yields $F_{i-1}=F\left(  Q_{1}\right)  +F\left(
Q_{2}\right)  +\cdots+F\left(  Q_{i-1}\right)  $. Now, it is easy to see that%
\begin{equation}
F\left(  <\operatorname*{sum}\left(  Q_{k}\right)  \right)  \subseteq F_{i-1}
\label{pf.thm.t-simultri.c3.pf.1}%
\end{equation}
for each $k\in\left[  i\right]  $\ \ \ \ \footnote{\textit{Proof:} Let
$k\in\left[  i\right]  $. Let $j=\operatorname*{sum}\left(  Q_{k}\right)  $.
Let $i_{j}$ be the largest $\mathfrak{i}\in\left[  f_{n+1}\right]  $
satisfying $\operatorname*{sum}\left(  Q_{\mathfrak{i}}\right)  <j$ (or $0$ if
no such $\mathfrak{i}$ exists). Then, Claim 0 (applied to $j$ instead of $k$)
yields $F\left(  <j\right)  =F_{i_{j}}$. In view of $j=\operatorname*{sum}%
\left(  Q_{k}\right)  $, this rewrites as $F\left(  <\operatorname*{sum}%
\left(  Q_{k}\right)  \right)  =F_{i_{j}}$.
\par
However, recall that $i_{j}$ is the largest $\mathfrak{i}\in\left[
f_{n+1}\right]  $ satisfying $\operatorname*{sum}\left(  Q_{\mathfrak{i}%
}\right)  <j$. Thus, $\operatorname*{sum}\left(  Q_{\mathfrak{i}}\right)  <j$
for each $\mathfrak{i}\leq i_{j}$ (because $\operatorname*{sum}\left(
Q_{1}\right)  \leq\operatorname*{sum}\left(  Q_{2}\right)  \leq\cdots
\leq\operatorname*{sum}\left(  Q_{f_{n+1}}\right)  $). Since we \textbf{don't}
have $\operatorname*{sum}\left(  Q_{k}\right)  <j$ (because
$j=\operatorname*{sum}\left(  Q_{k}\right)  $), we thus cannot have $k\leq
i_{j}$. Hence, we have $i_{j}<k$, so that $i_{j}\leq k-1\leq i-1$ (because
$k\leq i$). Hence, $F_{i_{j}}\subseteq F_{i-1}$. Now, $F\left(
<\operatorname*{sum}\left(  Q_{k}\right)  \right)  =F_{i_{j}}\subseteq
F_{i-1}$. This proves (\ref{pf.thm.t-simultri.c3.pf.1}).}.

The definition of $F_{i}$ yields $F_{i}=F\left(  Q_{1}\right)  +F\left(
Q_{2}\right)  +\cdots+F\left(  Q_{i}\right)  =\sum_{k=1}^{i}F\left(
Q_{k}\right)  $. Thus,%
\begin{align*}
F_{i}\cdot\left(  t_{\ell}-m_{Q_{i},\ell}\right)   &  =\sum_{k=1}^{i}F\left(
Q_{k}\right)  \cdot\underbrace{\left(  t_{\ell}-m_{Q_{i},\ell}\right)
}_{=\left(  t_{\ell}-m_{Q_{k},\ell}\right)  +\left(  m_{Q_{k},\ell}%
-m_{Q_{i},\ell}\right)  }\\
&  =\sum_{k=1}^{i}\underbrace{F\left(  Q_{k}\right)  \cdot\left(  \left(
t_{\ell}-m_{Q_{k},\ell}\right)  +\left(  m_{Q_{k},\ell}-m_{Q_{i},\ell}\right)
\right)  }_{\subseteq F\left(  Q_{k}\right)  \cdot\left(  t_{\ell}%
-m_{Q_{k},\ell}\right)  +F\left(  Q_{k}\right)  \cdot\left(  m_{Q_{k},\ell
}-m_{Q_{i},\ell}\right)  }\\
&  \subseteq\sum_{k=1}^{i}\left(  F\left(  Q_{k}\right)  \cdot\left(  t_{\ell
}-m_{Q_{k},\ell}\right)  +F\left(  Q_{k}\right)  \cdot\left(  m_{Q_{k},\ell
}-m_{Q_{i},\ell}\right)  \right) \\
&  =\sum_{k=1}^{i}F\left(  Q_{k}\right)  \cdot\left(  t_{\ell}-m_{Q_{k},\ell
}\right)  +\underbrace{\sum_{k=1}^{i}F\left(  Q_{k}\right)  \cdot\left(
m_{Q_{k},\ell}-m_{Q_{i},\ell}\right)  }_{\substack{=\sum_{k=1}^{i-1}F\left(
Q_{k}\right)  \cdot\left(  m_{Q_{k},\ell}-m_{Q_{i},\ell}\right)
\\\text{(here, we have removed the addend}\\\text{for }k=i\text{, since this
addend is }0\text{)}}}\\
&  =\sum_{k=1}^{i}\underbrace{F\left(  Q_{k}\right)  \cdot\left(  t_{\ell
}-m_{Q_{k},\ell}\right)  }_{\substack{\subseteq F\left(  <\operatorname*{sum}%
\left(  Q_{k}\right)  \right)  \\\text{(by Theorem \ref{thm.tl-FI}%
,}\\\text{applied to }I=Q_{k}\text{)}}}+\sum_{k=1}^{i-1}\underbrace{F\left(
Q_{k}\right)  \cdot\left(  m_{Q_{k},\ell}-m_{Q_{i},\ell}\right)
}_{\substack{\subseteq F\left(  Q_{k}\right)  \\\text{(since }m_{Q_{k},\ell
}-m_{Q_{i},\ell}\text{ is just a scalar)}}}\\
&  \subseteq\sum_{k=1}^{i}\underbrace{F\left(  <\operatorname*{sum}\left(
Q_{k}\right)  \right)  }_{\substack{\subseteq F_{i-1}\\\text{(by
(\ref{pf.thm.t-simultri.c3.pf.1}))}}}+\underbrace{\sum_{k=1}^{i-1}F\left(
Q_{k}\right)  }_{\substack{=F\left(  Q_{1}\right)  +F\left(  Q_{2}\right)
+\cdots+F\left(  Q_{i-1}\right)  \\=F_{i-1}\\\text{(by the definition of
}F_{i-1}\text{)}}}\\
&  \subseteq\sum_{k=1}^{i}F_{i-1}+F_{i-1}\subseteq F_{i-1}.
\end{align*}
This proves Claim 3.] \medskip

[\textit{Proof of Claim 2:} Let $i\in\left[  0,f_{n+1}\right]  $ and $\ell
\in\left[  n\right]  $. We must prove that $F_{i}\cdot t_{\ell}\subseteq
F_{i}$. If $i=0$, then this is clearly true (since $F_{0}=0$). Thus, we WLOG
assume that $i\neq0$. Hence, $i\in\left[  f_{n+1}\right]  $. Thus, Claim 3
yields $F_{i}\cdot\left(  t_{\ell}-m_{Q_{i},\ell}\right)  \subseteq F_{i-1}$.
Now,%
\begin{align*}
F_{i}\cdot\underbrace{t_{\ell}}_{=\left(  t_{\ell}-m_{Q_{i},\ell}\right)
+m_{Q_{i},\ell}}  &  =F_{i}\cdot\left(  \left(  t_{\ell}-m_{Q_{i},\ell
}\right)  +m_{Q_{i},\ell}\right) \\
&  \subseteq\underbrace{F_{i}\cdot\left(  t_{\ell}-m_{Q_{i},\ell}\right)
}_{\subseteq F_{i-1}\subseteq F_{i}}+\underbrace{F_{i}\cdot m_{Q_{i},\ell}%
}_{\substack{\subseteq F_{i}\\\text{(since }m_{Q_{i},\ell}\text{ is a
scalar)}}}\\
&  \subseteq F_{i}+F_{i}\subseteq F_{i}.
\end{align*}
This proves Claim 2.] \medskip

We have now proved all Claims 1, 2 and 3. This proves Theorem \ref{thm.tl-FI}.
\end{proof}

\begin{noncompile}
\silentsection{Optimality of the filtration}

(This section has been removed, since its main result has been reproved later.)

Here is a natural question: Is there a way to shorten the filtration
$F_{0}\subseteq F_{1}\subseteq F_{2}\subseteq\cdots\subseteq F_{f_{n+1}}$ that
we just constructed? In other words, is there any $i\in\left[  f_{n+1}\right]
$ such that $F_{i}=F_{i-1}$ ? The answer to this question is \textquotedblleft
no\textquotedblright\ -- the filtration is already optimal, as long as
$\mathbf{k}\neq0$. The proof of this relies on the following lemma, which is a
converse to Proposition \ref{prop.FI-lac} (and a bit more):

\begin{lemma}
\label{lem.FI-lac-conv}Assume that $\mathbf{k}\neq0$. Let $I\subseteq\left[
n-1\right]  $ be lacunar. Then,%
\begin{equation}
F\left(  I\right)  \not \subseteq F\left(  <\operatorname*{sum}I\right)  .
\label{eq.lem.FI-lac-conv.weak}%
\end{equation}
Even better, we have%
\begin{equation}
F\left(  I\right)  \not \subseteq \sum_{\substack{K\subseteq\left[
n-1\right]  ;\\K\neq I;\\\operatorname*{sum}K\leq\operatorname*{sum}%
I}}F\left(  K\right)  . \label{eq.lem.FI-lac-conv.strong}%
\end{equation}

\end{lemma}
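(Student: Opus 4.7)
The plan is to exhibit an explicit witness in $F(I)$ that cannot lie in the sum on the right, reducing the task to a combinatorial invariance check. As a preliminary, I apply Corollary~\ref{cor.FI-lac-2}, together with the strict inequality $\operatorname*{sum}K_{1}<\operatorname*{sum}K$ produced by Proposition~\ref{prop.FI-lac} when $K$ is non-lacunar: this lets me replace each $F(K)$ on the right-hand side by $F(J)$ for some lacunar $J\subseteq\left[  n-1\right]  $ with $J\neq I$ and $\operatorname*{sum}J\leq\operatorname*{sum}I$. Thus it suffices to prove $F(I)\not\subseteq G_{I}$, where
\[
G_{I}:=\sum_{\substack{J\subseteq\left[  n-1\right]  \text{ lacunar};\\J\neq I;\\\operatorname*{sum}J\leq\operatorname*{sum}I}}F\left(  J\right)  .
\]

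The candidate witness is the parabolic sum $c_{I}:=\sum_{w\in W_{I}}w\in F(I)$, where $W_{I}$ denotes the standard parabolic subgroup of $S_{n}$ generated by $\{s_{i}\ \mid\ i\in I'\}$. The key structural fact is that $F(J)=\mathbf{k}\left[  S_{n}\right]  \cdot c_{J}$ as a left $\mathbf{k}\left[  S_{n}\right]  $-module, so every element of $G_{I}$ is right-invariant under the intersection $H:=\bigcap_{J}W_{J}$, which is itself a standard parabolic, generated by $\{s_{i}\ \mid\ i\in\bigcap_{J}J'\}$. Since $c_{I}\cdot h=c_{I}$ holds iff $h\in W_{I}$, whenever $H\not\subseteq W_{I}$ one concludes $c_{I}\notin G_{I}$ at once. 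In this regime the proof reduces to a purely combinatorial check that some simple transposition $s_{i}$ with $i\in I\cup(I-1)$ lies in every $W_{J}$ of the collection; this handles, for example, every $I$ with $|I|\leq 1$ (and the trivial case $I=\varnothing$, where $G_{I}=0$).

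When $H\subseteq W_{I}$ (which can occur once $|I|\geq 2$), the simple invariance criterion fails, and a hypothetical decomposition $c_{I}=\sum_{J}r_{J}$ with $r_{J}\in F(J)$ must be analyzed more carefully. Each $r_{J}$ factors through the projection $\pi\mapsto\pi W_{J}$, so the $r_{J}$'s are functions of coset data of various granularities. Since the support of $c_{I}$ is exactly the single right coset $W_{I}$, one can try to extract an antisymmetric component of $c_{I}$ under specific position swaps that lie in $W_{I}$ but in none of the $W_{J}$ of the collection; such a component is incompressible into the allowed combination of $r_{J}$'s and forces a contradiction. The main obstacle is precisely this case $H\subseteq W_{I}$: one must either construct an explicit linear functional $\phi_{I}$ that is $W_{J}$-balanced for every $J$ in the collection yet satisfies $\phi_{I}(c_{I})\neq 0$, or else carry out the coefficient bookkeeping sketched above uniformly in $I$. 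Both routes require a careful case analysis of how lacunar subsets of $\left[  n-1\right]  $ of bounded sum interact with the parabolic $W_{I}$, and this is where the bulk of the technical work lies.
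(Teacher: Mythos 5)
Your choice of witness is the right one --- your $c_{I}=\sum_{w\in W_{I}}w$ is exactly the symmetrizer $\operatorname*{sym}\left(  I^{\prime}\right)  \in F\left(  I\right)  $ that the paper's argument also uses, and your preliminary reduction to lacunar $J$ with $\operatorname*{sum}J\leq\operatorname*{sum}I$, $J\neq I$ (via Corollary~\ref{cor.FI-lac-2} plus the strict inequality from Proposition~\ref{prop.FI-lac}) is sound, though it is not actually needed. But the proof has a genuine gap: the case $H\subseteq W_{I}$, which you yourself flag as ``where the bulk of the technical work lies,'' is precisely the main case, and it is left as a plan rather than an argument. Your invariance criterion cannot be repaired, because the obstruction is inherently per-$J$: what one knows (this is Lemma~\ref{lem.lac-sum-less}-type information) is that for \emph{each} competing $J$ there is some $i_{J}\in J^{\prime}\setminus I^{\prime}$, but these indices vary with $J$, and the intersection $\bigcap_{J}W_{J}$ is typically trivial (or contained in $W_{I}$) as soon as the collection contains sets with complementary non-shadows, so no single $h\in H\setminus W_{I}$ exists. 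Also note a small slip: you would need $s_{i}\in W_{J}$ for $i\in\bigcap_{J}J^{\prime}$, i.e.\ $i$ outside every $J\cup\left(  J-1\right)  $, not merely ``$i\in I\cup\left(  I-1\right)  $ lying in every $W_{J}$''; in any case this only covers degenerate $I$.

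The missing ingredient --- your hypothetical functional $\phi_{I}$ that is $W_{J}$-balanced for every competitor yet nonzero on $c_{I}$ --- can be constructed explicitly, and this is how the paper closes the argument. The key combinatorial fact is: if $K\subseteq\left[  n-1\right]  $ satisfies $K\neq I$ and $\operatorname*{sum}K\leq\operatorname*{sum}I$ (with $I$ lacunar), then $K^{\prime}\not\subseteq I^{\prime}$ (otherwise $\operatorname*{sum}I<\operatorname*{sum}K$, a contradiction). Hence for each such $K$ there is $k\in K^{\prime}\setminus I^{\prime}$, so the antisymmetrizer $\operatorname*{anti}\left(  \left[  n-1\right]  \setminus I^{\prime}\right)  =\sum_{\sigma}\left(  -1\right)  ^{\sigma}\sigma$ (sum over the parabolic generated by $s_{t}$, $t\notin I^{\prime}$) factors as $\left(  \operatorname*{id}-s_{k}\right)  g$, and therefore $F\left(  K\right)  \cdot\operatorname*{anti}\left(  \left[  n-1\right]  \setminus I^{\prime}\right)  =0$; the choice of $k$ is allowed to depend on $K$, which is exactly what your single-element criterion could not accommodate. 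On the other hand $\operatorname*{sym}\left(  I^{\prime}\right)  \cdot\operatorname*{anti}\left(  \left[  n-1\right]  \setminus I^{\prime}\right)  \neq0$, since the identity appears in this product with coefficient $1$ (the two parabolics intersect trivially). Taking $\phi_{I}\left(  q\right)  $ to be the coefficient of the identity in $q\cdot\operatorname*{anti}\left(  \left[  n-1\right]  \setminus I^{\prime}\right)  $ gives the functional you wanted and finishes the proof; without this (or an equivalent) construction, your argument establishes the claim only for the easy cases $\left\vert I\right\vert \leq1$.
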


Before we prove this lemma, we need a further lemma about lacunar subsets:

\begin{lemma}
\label{lem.lac-sum-less}Let $I$ and $K$ be two subsets of $\left[  n-1\right]
$ such that $I$ is lacunar and $K\neq I$ and $K^{\prime}\subseteq I^{\prime}$.
Then, $\operatorname*{sum}I<\operatorname*{sum}K$.
\end{lemma}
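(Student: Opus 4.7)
The plan is to exhibit an explicit injection $\phi \colon I \to K$ with $\phi(i) \geq i$ for every $i \in I$, which immediately gives $\operatorname{sum} K \geq \operatorname{sum}\phi(I) \geq \operatorname{sum} I$, and then to squeeze strict inequality out of the assumption $K \neq I$.

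First I would unwind the hypothesis $K^{\prime} \subseteq I^{\prime}$: taking complements inside $\left[n-1\right]$, it is equivalent to the inclusion $\left(I \cup \left(I-1\right)\right) \cap \left[n-1\right] \subseteq \left(K \cup \left(K-1\right)\right) \cap \left[n-1\right]$. Since $I \subseteq \left[n-1\right]$, applying this to each $i \in I$ tells us that every element of $I$ either lies in $K$ or has its successor $i+1$ in $K$. I then define $\phi(i) := i$ when $i \in K$ and $\phi(i) := i+1$ otherwise; by the previous sentence $\phi(i) \in K$ and clearly $\phi(i) \geq i$. (In the second case, $i+1 \in K \subseteq \left[n-1\right]$ forces $i \leq n-2$, so there is no issue at $i = n-1$: the second case is then automatically vacuous and we must already have $n-1 \in K$.)

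Next I would check that $\phi$ is injective. Given distinct $i < i^{\prime}$ in $I$, lacunarity of $I$ forces $i^{\prime} \geq i+2$, so $\phi(i) \in \{i, i+1\}$ and $\phi(i^{\prime}) \in \{i^{\prime}, i^{\prime}+1\}$ lie in disjoint subsets of $\mathbb{Z}$, hence $\phi(i) \neq \phi(i^{\prime})$. Therefore $\phi(I) \subseteq K$ has cardinality $\lvert I \rvert$, and
\[
\operatorname{sum} K \;=\; \sum_{i \in I} \phi(i) \;+\; \sum_{k \in K \setminus \phi(I)} k \;\geq\; \sum_{i \in I} i \;+\; 0 \;=\; \operatorname{sum} I.
\]

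Finally I would argue strict inequality. Equality in the first sum forces $\phi(i) = i$, i.e.\ $i \in K$, for every $i \in I$ (so $I \subseteq K$ and $\phi(I) = I$); equality in the second sum forces $K \setminus \phi(I) = \varnothing$ (since each $k \in K \subseteq \left[n-1\right]$ satisfies $k \geq 1 > 0$), i.e.\ $K \subseteq \phi(I) = I$. Together these give $K = I$, contradicting $K \neq I$, so $\operatorname{sum} I < \operatorname{sum} K$. I do not anticipate a serious obstacle here: the argument is purely combinatorial, and the only subtlety is the boundary case $i = n-1$, which is handled by observing that the alternative branch of the definition of $\phi$ cannot arise there.
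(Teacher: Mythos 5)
Your proof is correct and rests on the same core observation as the paper's: from $K^{\prime}\subseteq I^{\prime}$ one gets that every $i\in I$ lies in $K$ or has $i+1\in K$, and lacunarity of $I$ prevents collisions -- the paper packages this as the inclusion $I\setminus K\subseteq\left(K\setminus I\right)-1$, while you package it as an injection $\phi\colon I\to K$ with $\phi\left(i\right)\geq i$. The only difference is bookkeeping: the paper splits into the cases $I\subseteq K$ and $I\not\subseteq K$ to get strictness, whereas you obtain it by analyzing the equality case; both work.
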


\begin{proof}
[Proof of Lemma \ref{lem.lac-sum-less}.]First, we observe that $I\setminus
K\subseteq\left(  K\setminus I\right)  -1$.

[\textit{Proof:} Let $i\in I\setminus K$. Thus, $i\in I$ and $i\notin K$. We
cannot have $i\in I^{\prime}$ (since this would contradict $i\in I$).

If we had $i+1\notin K$, then we would have $i\in K^{\prime}$ (since $i\in
I\subseteq\left[  n-1\right]  $ and $i\notin K$ and $i+1\notin K$), which
would entail $i\in K^{\prime}\subseteq I^{\prime}$; but this would contradict
the fact that we cannot have $i\in I^{\prime}$. Thus, we cannot have
$i+1\notin K$. In other words, we have $i+1\in K$. Furthermore, $I$ is
lacunar; thus, from $i\in I$, we obtain $i+1\notin I$. Combining this with
$i+1\in K$, we find $i+1\in K\setminus I$. Hence, $i\in\left(  K\setminus
I\right)  -1$.

Forget that we fixed $i$. We thus have proved that $i\in\left(  K\setminus
I\right)  -1$ for each $i\in I\setminus K$. In other words, $I\setminus
K\subseteq\left(  K\setminus I\right)  -1$.] \medskip

Now, the set $I$ is the union of its two disjoint subsets $I\setminus K$ and
$I\cap K$. Hence,%
\begin{equation}
\operatorname*{sum}I=\operatorname*{sum}\left(  I\setminus K\right)
+\operatorname*{sum}\left(  I\cap K\right)  .
\label{pf.lem.lac-sum-less.sumI=}%
\end{equation}
The same argument (with the roles of $I$ and $K$ swapped) yields%
\begin{equation}
\operatorname*{sum}K=\operatorname*{sum}\left(  K\setminus I\right)
+\operatorname*{sum}\left(  K\cap I\right)  .
\label{pf.lem.lac-sum-less.sumK=}%
\end{equation}

Our goal is to prove that $\operatorname*{sum}I<\operatorname*{sum}K$. If
$I\subseteq K$, then this is obvious (since we have $K\neq I$, so that $I$
must be a \textbf{proper} subset of $K$ in this case). Thus, we WLOG assume
that $I\not \subseteq K$ from now on. Hence, $I\setminus K\neq\varnothing$. In
view of $I\setminus K\subseteq\left(  K\setminus I\right)  -1$, this entails
$\left(  K\setminus I\right)  -1\neq\varnothing$, so that $K\setminus
I\neq\varnothing$. Hence, $\left\vert K\setminus I\right\vert >0$.

Now, from $I\setminus K\subseteq\left(  K\setminus I\right)  -1$, we obtain
\[
\operatorname*{sum}\left(  I\setminus K\right)  \leq\operatorname*{sum}\left(
\left(  K\setminus I\right)  -1\right)  =\operatorname*{sum}\left(  K\setminus
I\right)  -\underbrace{\left\vert K\setminus I\right\vert }_{>0}%
<\operatorname*{sum}\left(  K\setminus I\right)  .
\]

However, (\ref{pf.lem.lac-sum-less.sumI=}) becomes%
\[
\operatorname*{sum}I=\underbrace{\operatorname*{sum}\left(  I\setminus
K\right)  }_{<\operatorname*{sum}\left(  K\setminus I\right)  }%
+\operatorname*{sum}\left(  \underbrace{I\cap K}_{=K\cap I}\right)
<\operatorname*{sum}\left(  K\setminus I\right)  +\operatorname*{sum}\left(
K\cap I\right)  =\operatorname*{sum}K
\]
(by (\ref{pf.lem.lac-sum-less.sumK=})). This proves Lemma
\ref{lem.lac-sum-less}.
\end{proof}

\begin{proof}
[Proof of Lemma \ref{lem.FI-lac-conv}.]If $T$ is any subset of $\left[
n-1\right]  $, then we define $G_{T}$ to be the subgroup of $S_{n}$ generated
by the simple transpositions $s_{i}$ with $i\in T$, and we define the two
elements%
\begin{align*}
\operatorname*{sym}\left(  T\right)   &  :=\sum_{\sigma\in G_{T}}\sigma
\in\mathbf{k}\left[  S_{n}\right]  \ \ \ \ \ \ \ \ \ \ \text{and}\\
\operatorname*{anti}\left(  T\right)   &  :=\sum_{\sigma\in G_{T}%
}\underbrace{\left(  -1\right)  ^{\sigma}}_{\text{the sign of }\sigma}%
\sigma\in\mathbf{k}\left[  S_{n}\right]  .
\end{align*}
(These two elements $\operatorname*{sym}\left(  T\right)  $ and
$\operatorname*{anti}\left(  T\right)  $ are called the \emph{symmetrizer} and
the \emph{antisymmetrizer} of $T$, respectively.)

It is easy to see that%
\begin{equation}
\operatorname*{sym}\left(  T\right)  \cdot\operatorname*{anti}\left(  \left[
n-1\right]  \setminus T\right)  \neq0 \label{pf.lem.FI-lac-conv.asneq0}%
\end{equation}
for each subset $T$ of $\left[  n-1\right]  $.

[\textit{Proof of (\ref{pf.lem.FI-lac-conv.asneq0}):} Let $T$ be a subset of
$\left[  n-1\right]  $. The definitions of $\operatorname*{sym}\left(
T\right)  $ and $\operatorname*{anti}\left(  \left[  n-1\right]  \setminus
T\right)  $ yield%
\begin{align}
\operatorname*{sym}\left(  T\right)  \cdot\operatorname*{anti}\left(  \left[
n-1\right]  \setminus T\right)   &  =\left(  \sum_{\sigma\in G_{T}}%
\sigma\right)  \cdot\left(  \sum_{\tau\in G_{\left[  n-1\right]  \setminus T}%
}\left(  -1\right)  ^{\tau}\tau\right) \nonumber\\
&  =\sum_{\left(  \sigma,\tau\right)  \in G_{T}\times G_{\left[  n-1\right]
\setminus T}}\left(  -1\right)  ^{\tau}\sigma\cdot\tau.
\label{pf.lem.FI-lac-conv.asneq0.pf.1}%
\end{align}
However, the only pair $\left(  \sigma,\tau\right)  \in G_{T}\times G_{\left[
n-1\right]  \setminus T}$ satisfying $\sigma\cdot\tau=\operatorname*{id}$ is
$\left(  \operatorname*{id},\operatorname*{id}\right)  $ (since the two
subgroups $G_{T}$ and $G_{\left[  n-1\right]  \setminus T}$ have intersection
$G_{T}\cap G_{\left[  n-1\right]  \setminus T}=\left\{  \operatorname*{id}%
\right\}  $). Thus, the group element $\operatorname*{id}\in S_{n}$ appears on
the right hand side of (\ref{pf.lem.FI-lac-conv.asneq0.pf.1}) with coefficient
$1$. Therefore, this right hand side is $\neq0$. Thus, $\operatorname*{sym}%
T\cdot\operatorname*{anti}\left(  \left[  n-1\right]  \setminus T\right)
\neq0$. This proves (\ref{pf.lem.FI-lac-conv.asneq0}).] \footnote{By the way,
$\operatorname*{sym}\left(  T\right)  \cdot\operatorname*{anti}\left(  \left[
n-1\right]  \setminus T\right)  $ is the Young symmetrizer of a ribbon-shaped
skew Young diagram; but this is a distraction in the present context.}
\medskip

We will also use the following property of antisymmetrizers: If $T$ is a
subset of $\left[  n-1\right]  $, and if $t\in T$, then the antisymmetrizer
$\operatorname*{anti}\left(  T\right)  $ can be written in the form%
\begin{equation}
\operatorname*{anti}\left(  T\right)  =\left(  \operatorname*{id}%
-s_{t}\right)  \cdot g\ \ \ \ \ \ \ \ \ \ \text{for some }g\in\mathbf{k}%
\left[  S_{n}\right]  . \label{pf.lem.FI-lac-conv.asy-factor}%
\end{equation}
(This can be proved in several ways -- e.g., we can take $g=\sum
_{\substack{\sigma\in G_{T};\\\sigma\text{ is even}}}\sigma$.)

Now, it is easily seen that $\operatorname*{sym}\left(  I^{\prime}\right)  \in
F\left(  I\right)  $ (since generally, we have $\operatorname*{sym}\left(
T\right)  \cdot s_{t}=\operatorname*{sym}\left(  T\right)  $ for each
$T\subseteq\left[  n-1\right]  $ and each $t\in T$). We shall now show that
\begin{equation}
\operatorname*{sym}\left(  I^{\prime}\right)  \notin\sum_{\substack{K\subseteq
\left[  n-1\right]  ;\\K\neq I;\\\operatorname*{sum}K\leq\operatorname*{sum}%
I}}F\left(  K\right)  . \label{pf.lem.FI-lac-conv.3}%
\end{equation}

In order to prove this, we shall show that%
\begin{equation}
\operatorname*{sym}\left(  I^{\prime}\right)  \cdot\operatorname*{anti}\left(
\left[  n-1\right]  \setminus I^{\prime}\right)  \neq0
\label{pf.lem.FI-lac-conv.4}%
\end{equation}
but%
\begin{equation}
\sum_{\substack{K\subseteq\left[  n-1\right]  ;\\K\neq I;\\\operatorname*{sum}%
K\leq\operatorname*{sum}I}}F\left(  K\right)  \cdot\operatorname*{anti}\left(
\left[  n-1\right]  \setminus I^{\prime}\right)  =0.
\label{pf.lem.FI-lac-conv.5}%
\end{equation}

Indeed, (\ref{pf.lem.FI-lac-conv.4}) follows directly from
(\ref{pf.lem.FI-lac-conv.asneq0}).

In order to prove (\ref{pf.lem.FI-lac-conv.5}), it suffices to show that
\[
F\left(  K\right)  \cdot\operatorname*{anti}\left(  \left[  n-1\right]
\setminus I^{\prime}\right)  =0
\]
whenever $K$ is a subset of $\left[  n-1\right]  $ satisfying $K\neq I$ and
$\operatorname*{sum}K\leq\operatorname*{sum}I$. So let us fix such a $K$.
Then, $K^{\prime}\not \subseteq I^{\prime}$ (indeed, if we had $K^{\prime
}\subseteq I^{\prime}$, then Lemma \ref{lem.lac-sum-less} would yield
$\operatorname*{sum}I<\operatorname*{sum}K$, which would contradict
$\operatorname*{sum}K\leq\operatorname*{sum}I$). Hence, there exists some
$k\in K^{\prime}\setminus I^{\prime}$. Consider this $k$. Then, $k\in\left[
n-1\right]  \setminus I^{\prime}$, so that $\operatorname*{anti}\left(
\left[  n-1\right]  \setminus I^{\prime}\right)  $ can be written in the form%
\[
\operatorname*{anti}\left(  \left[  n-1\right]  \setminus I^{\prime}\right)
=\left(  \operatorname*{id}-s_{k}\right)  \cdot g\ \ \ \ \ \ \ \ \ \ \text{for
some }g\in\mathbf{k}\left[  S_{n}\right]
\]
(by (\ref{pf.lem.FI-lac-conv.asy-factor}), applied to $T=\left[  n-1\right]
\setminus I^{\prime}$ and $t=k$). Consider this $g$. On the other hand, since
$k\in K^{\prime}$, we have $F\left(  K\right)  \cdot\left(  \operatorname*{id}%
-s_{k}\right)  =0$ by the definition of $F\left(  K\right)  $. Thus,%
\[
F\left(  K\right)  \cdot\underbrace{\operatorname*{anti}\left(  \left[
n-1\right]  \setminus I^{\prime}\right)  }_{=\left(  \operatorname*{id}%
-s_{k}\right)  \cdot g}=\underbrace{F\left(  K\right)  \cdot\left(
\operatorname*{id}-s_{k}\right)  }_{=0}\cdot g=0,
\]
as desired. Thus, (\ref{pf.lem.FI-lac-conv.5}) is proved.

Having proved (\ref{pf.lem.FI-lac-conv.4}) and (\ref{pf.lem.FI-lac-conv.5}),
we immediately obtain (\ref{pf.lem.FI-lac-conv.3}). Therefore,
(\ref{eq.lem.FI-lac-conv.strong}) holds. However,
(\ref{eq.lem.FI-lac-conv.weak}) follows easily from
(\ref{eq.lem.FI-lac-conv.strong}), because the definition of $F\left(
<\operatorname*{sum}I\right)  $ yields%
\[
F\left(  <\operatorname*{sum}I\right)  =\sum_{\substack{K\subseteq\left[
n-1\right]  ;\\\operatorname*{sum}K<\operatorname*{sum}I}}F\left(  K\right)
\subseteq\sum_{\substack{K\subseteq\left[  n-1\right]  ;\\K\neq
I;\\\operatorname*{sum}K\leq\operatorname*{sum}I}}F\left(  K\right)  .
\]
This completes the proof of Lemma \ref{lem.FI-lac-conv}.
\end{proof}

\begin{corollary}
Assume that $\mathbf{k}\neq0$. Then, $F_{i}\neq F_{i-1}$ for each $i\in\left[
f_{n+1}\right]  $.
\end{corollary}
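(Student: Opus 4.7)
The plan is to deduce this corollary directly from Lemma~\ref{lem.FI-lac-conv}, which was just established. The corollary asserts strictness of every step in the Fibonacci filtration, and Lemma~\ref{lem.FI-lac-conv} is essentially a pointwise witness of exactly this fact, packaged in a form about individual $F(I)$ rather than partial sums.

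Fix $i \in [f_{n+1}]$. The idea is to exhibit an element of $F(Q_i) \subseteq F_i$ that does not lie in $F_{i-1}$. First I would recall that $F_{i-1} = F(Q_1) + F(Q_2) + \cdots + F(Q_{i-1})$, and that each of $Q_1, Q_2, \ldots, Q_{i-1}$ is a lacunar subset of $[n-1]$ distinct from $Q_i$, with $\operatorname{sum}(Q_j) \leq \operatorname{sum}(Q_i)$ (by the defining ordering~\eqref{pf.thm.t-simultri.sum-order}). Hence each summand $F(Q_j)$ of $F_{i-1}$ is one of the summands on the right-hand side of~\eqref{eq.lem.FI-lac-conv.strong}, so
\[
F_{i-1} \subseteq \sum_{\substack{K \subseteq [n-1];\\ K \neq Q_i;\\ \operatorname{sum} K \leq \operatorname{sum} Q_i}} F(K).
\]

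Applying Lemma~\ref{lem.FI-lac-conv} to $I = Q_i$ (legal since $Q_i$ is lacunar and $\mathbf{k} \neq 0$), the right-hand side above does \emph{not} contain $F(Q_i)$. Therefore $F(Q_i) \not\subseteq F_{i-1}$. Since $F(Q_i) \subseteq F_i$ by the very definition of $F_i$, this gives $F_i \not\subseteq F_{i-1}$, and in particular $F_i \neq F_{i-1}$, as claimed.

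There is no real obstacle here; Lemma~\ref{lem.FI-lac-conv} did all the heavy lifting (which itself relied on the symmetrizer/antisymmetrizer pairing $\operatorname{sym}(I^{\prime}) \cdot \operatorname{anti}([n-1]\setminus I^{\prime})$ and on Lemma~\ref{lem.lac-sum-less} to control subsets with small sum). The only thing to verify carefully is that the indexing set of the sum bounding $F_{i-1}$ really is contained in the indexing set appearing in~\eqref{eq.lem.FI-lac-conv.strong}, which is immediate from $Q_j \neq Q_i$ for $j < i$ and the monotonicity of the sequence $\operatorname{sum}(Q_1), \operatorname{sum}(Q_2), \ldots$.
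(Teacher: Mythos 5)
Your proof is correct, but it follows a genuinely different path from the one the paper ultimately uses --- and, in fact, it coincides with the authors' own first proof, which they wrote and then deliberately removed (it survives only inside a \texttt{noncompile} block of the source, so the lemma you invoke as ``just established'', namely the statement that $F(I)\not\subseteq\sum_{K\subseteq[n-1],\ K\neq I,\ \operatorname{sum}K\leq\operatorname{sum}I}F(K)$ for lacunar $I\subseteq[n-1]$ and $\mathbf{k}\neq 0$, is not actually present in the compiled paper). Your reduction of the corollary to that lemma is airtight: every $Q_j$ with $j<i$ is a lacunar subset of $[n-1]$ distinct from $Q_i$ with $\operatorname{sum}(Q_j)\leq\operatorname{sum}(Q_i)$, so $F_{i-1}$ sits inside exactly the sum from which the lemma excludes $F(Q_i)$. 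The cost of this route is that the symmetrizer/antisymmetrizer machinery (the nonvanishing of $\operatorname{sym}(I')\cdot\operatorname{anti}([n-1]\setminus I')$, the factorization of antisymmetrizers through $\operatorname{id}-s_t$, and the sum-comparison lemma for lacunar sets) must be carried solely for this purpose. The paper instead derives the corollary as a free byproduct of machinery it needs anyway: Theorem \ref{thm.aw.freeness} \textbf{(b)} exhibits $\left(\overline{a_w}\right)_{w\in S_n;\ \operatorname{Qind}w=i}$ as a basis of $F_i/F_{i-1}$, so nontriviality of $F_i/F_{i-1}$ reduces to the existence of a permutation of $Q$-index $i$, which Lemma \ref{lem.Qind.surj} supplies by an explicit construction. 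That argument is shorter once the descent-destroying basis is available, and it yields strictly more --- the exact rank $\delta_i$ of $F_i/F_{i-1}$, later used for the eigenvalue multiplicities --- whereas your pairing argument only certifies nonvanishing. If you wish to keep your route, you would need to reinstate and prove the excluded lemma rather than cite it.
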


\begin{proof}
[Proof sketch.]Let $i\in\left[  f_{n+1}\right]  $. Then, $F\left(
Q_{i}\right)  \subseteq F_{i}$. If we had $F_{i}=F_{i-1}$, then we would have%
\begin{align*}
F\left(  Q_{i}\right)   &  \subseteq F_{i}=F_{i-1}\\
&  =F\left(  Q_{1}\right)  +F\left(  Q_{2}\right)  +\cdots+F\left(
Q_{i-1}\right)  \ \ \ \ \ \ \ \ \ \ \left(  \text{by the definition of
}F_{i-1}\right) \\
&  \subseteq\sum_{\substack{K\subseteq\left[  n-1\right]  ;\\K\neq
Q_{i};\\\operatorname*{sum}K\leq\operatorname*{sum}\left(  Q_{i}\right)
}}F\left(  K\right)  \ \ \ \ \ \ \ \ \ \ \left(
\begin{array}
[c]{c}%
\text{since each of }Q_{1},Q_{2},\ldots,Q_{i-1}\text{ is a subset }K\\
\text{of }\left[  n-1\right]  \text{ satisfying }K\neq Q_{i}\text{ and
}\operatorname*{sum}K\leq\operatorname*{sum}\left(  Q_{i}\right)
\end{array}
\right)  .
\end{align*}
However, this is impossible, since (\ref{eq.lem.FI-lac-conv.strong}) (applied
to $I=Q_{i}$) yields%
\[
F\left(  Q_{i}\right)  \not \subseteq \sum_{\substack{K\subseteq\left[
n-1\right]  ;\\K\neq Q_{i};\\\operatorname*{sum}K\leq\operatorname*{sum}%
\left(  Q_{i}\right)  }}F\left(  K\right)  .
\]
Thus, we don't have $F_{i}=F_{i-1}$. Hence, $F_{i}\neq F_{i-1}$, qed.
\end{proof}
\end{noncompile}

\section{The descent-destroying basis of $\mathbf{k}\left[  S_{n}\right]  $}

We will now analyze the filtration $F_{0}\subseteq F_{1}\subseteq
F_{2}\subseteq\cdots\subseteq F_{f_{n+1}}$ from Theorem \ref{thm.t-simultri}
further. We shall show that each of the $\mathbf{k}$-modules $F_{0}%
,F_{1},\ldots,F_{f_{n+1}}$ in this filtration is free, and even better, that
there exists a basis of the $\mathbf{k}$-module $\mathbf{k}\left[
S_{n}\right]  $ such that each $F_{i}$ is spanned by an appropriate subfamily
of this basis.

\subsection{Definition}

To construct this basis, we need the following definitions (some of which are
commonplace in the combinatorics of the symmetric group):

\begin{itemize}
\item The \emph{descent set} of a permutation $w\in S_{n}$ is defined to be
the set of all $i\in\left[  n-1\right]  $ such that $w\left(  i\right)
>w\left(  i+1\right)  $. This set is denoted by $\operatorname*{Des}w$.

For example, the permutation in $S_{4}$ that sends $1,2,3,4$ to $3,2,4,1$ has
descent set $\left\{  1,3\right\}  $.

\item We define a total order $<$ on the set $S_{n}$ as follows: If $u$ and
$v$ are two distinct permutations in $S_{n}$, then we say that $u<v$ if and
only if the smallest $i\in\left[  n\right]  $ satisfying $u\left(  i\right)
\neq v\left(  i\right)  $ satisfies $u\left(  i\right)  <v\left(  i\right)  $.
This relation $<$ is a total order on the set $S_{n}$, and is known as the
\emph{lexicographic order} on $S_{n}$. (If we identify each permutation $w\in
S_{n}$ with the $n$-tuple $\left(  w\left(  1\right)  ,\ w\left(  2\right)
,\ \ldots,\ w\left(  n\right)  \right)  $, then this order is precisely the
lexicographic order on $n$-tuples of integers; this is why it has the same name.)

For example, the smallest permutation in $S_{n}$ with respect to the total
order $<$ is the identity permutation $\operatorname*{id}$, whereas the
largest permutation is the one that sends each $i\in\left[  n\right]  $ to
$n+1-i$.

\item For each $I\subseteq\left[  n-1\right]  $, we let $G\left(  I\right)  $
be the subgroup of $S_{n}$ generated by the subset $\left\{  s_{i}\ \mid\ i\in
I\right\}  $.

For instance, if $n=5$ and $I=\left\{  2,4\right\}  $, then $G\left(
I\right)  =\left\langle s_{2},s_{4}\right\rangle \leq S_{5}$.

\item For each $w\in S_{n}$, we set%
\begin{equation}
a_{w}:=\sum_{\sigma\in G\left(  \operatorname*{Des}w\right)  }w\sigma
\in\mathbf{k}\left[  S_{n}\right]  . \label{eq.aw.def}%
\end{equation}

\end{itemize}

\begin{example}
\label{exa.aw.1}For this example, let $n=3$. We write each permutation $w\in
S_{3}$ as the list $\left[  w\left(  1\right)  \ w\left(  2\right)  \ w\left(
3\right)  \right]  $ (written without commas for brevity, and using square
brackets to distinguish it from a parenthesized integer). Then,%
\begin{align*}
a_{\left[  123\right]  }  &  =\left[  123\right]  ;\\
a_{\left[  132\right]  }  &  =\left[  132\right]  +\left[  123\right]  ;\\
a_{\left[  213\right]  }  &  =\left[  213\right]  +\left[  123\right]  ;\\
a_{\left[  231\right]  }  &  =\left[  231\right]  +\left[  213\right]  ;\\
a_{\left[  312\right]  }  &  =\left[  312\right]  +\left[  132\right]  ;\\
a_{\left[  321\right]  }  &  =\left[  321\right]  +\left[  312\right]
+\left[  231\right]  +\left[  213\right]  +\left[  132\right]  +\left[
123\right]  .
\end{align*}

\end{example}

The quickest way to compute $a_{w}$ for a given permutation $w \in S_{n}$ is
as follows:

\begin{itemize}
\item Break the $n$-tuple $\left(  w\left(  1 \right)  , w\left(  2 \right)  ,
\ldots, w\left(  n \right)  \right)  $ into decreasing blocks by placing a
vertical bar between $w\left(  i \right)  $ and $w\left(  i+1 \right)  $
whenever $w\left(  i \right)  < w\left(  i+1 \right)  $. (For example, if
$\left(  w\left(  1 \right)  , w\left(  2 \right)  , \ldots, w\left(  n
\right)  \right)  = \left(  3, 5, 1, 2, 7, 6, 4 \right)  $, then the result of
this break-up is $\left(  3 \mid5, 1 \mid2 \mid7, 6, 4 \right)  $.)

\item Within each decreasing block, we permute the entries arbitrarily.

\item All resulting $n$-tuples are again interpreted as permutations $v \in
S_{n}$. The $a_{w}$ is the sum of these permutations $v$.
\end{itemize}

\subsection{The lexicographic property}

As Example \ref{exa.aw.1} demonstrates, it seems that an element $a_{w}$ is a
sum of $w$ and several permutations that are smaller than $w$ in the
lexicographic order. This is indeed always the case, and will follow from the
following proposition:

\begin{proposition}
\label{prop.aw.smaller}Let $w\in S_{n}$. Let $\sigma\in G\left(
\operatorname*{Des}w\right)  $ satisfy $\sigma\neq\operatorname*{id}$. Then,
$w\sigma<w$ (with respect to the lexicographic order).
\end{proposition}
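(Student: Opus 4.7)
The plan is to exploit the standard description of $G\left(\operatorname*{Des}w\right)$ as a Young subgroup that respects the ``decreasing blocks'' of $w$, and then track the smallest position where $\sigma$ disturbs $w$.

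First, I would partition $\left[n\right]$ into maximal consecutive intervals, called \emph{blocks}, by declaring $i$ and $i+1$ to belong to the same block exactly when $i\in\operatorname*{Des}w$. If $B=\left\{a,a+1,\ldots,b\right\}$ is such a block, then $a,a+1,\ldots,b-1\in\operatorname*{Des}w$, so by the definition of the descent set
\[
w\left(a\right)>w\left(a+1\right)>\cdots>w\left(b\right),
\]
i.e.\ $w$ is strictly decreasing on each block. Moreover, since $G\left(\operatorname*{Des}w\right)$ is generated by the $s_i$ with $i\in\operatorname*{Des}w$, and each such $s_i$ swaps only two positions lying in the same block, I would observe that every $\sigma\in G\left(\operatorname*{Des}w\right)$ stabilizes each block setwise. (This is essentially the Young subgroup decomposition $G\left(\operatorname*{Des}w\right)\cong\prodnonlimits_{B}S_{B}$, but I would only need the stabilization of blocks for the argument.)

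Next, I would let $i$ be the smallest element of $\left[n\right]$ with $\sigma\left(i\right)\neq i$; this exists because $\sigma\neq\operatorname*{id}$. Since $\sigma$ fixes $1,2,\ldots,i-1$ pointwise, it restricts to a bijection on $\left\{i,i+1,\ldots,n\right\}$; combined with $\sigma\left(i\right)\neq i$, this forces $\sigma\left(i\right)>i$. Let $B$ be the block containing $i$. Because $\sigma$ preserves $B$, we get $\sigma\left(i\right)\in B$, and because $w$ is strictly decreasing on $B$, the inequality $\sigma\left(i\right)>i$ upgrades to $w\left(\sigma\left(i\right)\right)<w\left(i\right)$.

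Finally, I would compare $w\sigma$ with $w$ in the lexicographic order. For every $k<i$ we have $\sigma\left(k\right)=k$, so $\left(w\sigma\right)\left(k\right)=w\left(k\right)$, while at $k=i$ we just showed $\left(w\sigma\right)\left(i\right)=w\left(\sigma\left(i\right)\right)<w\left(i\right)$. Thus $i$ is the smallest index at which $w\sigma$ and $w$ differ, and at that index $w\sigma$ is smaller; hence $w\sigma<w$, as required. The only step that requires a tiny bit of care is verifying that $\sigma$ preserves blocks (i.e.\ that the generators $s_j$, $j\in\operatorname*{Des}w$, do so); this will be the main ``obstacle'', though it is immediate once the block partition is defined.
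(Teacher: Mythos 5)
Your proof is correct. It starts the same way as the paper's informal proof (the block decomposition into maximal decreasing runs, and the observation that $G\left(\operatorname*{Des}w\right)$ stabilizes each block setwise), but where the paper's informal argument then waves its hands with ``permuting the values of a decreasing sequence can only make it lexicographically smaller'' -- and the paper's formal proof instead detours through two lemmas about inversion sets of inverses ($\operatorname*{Inv}\left(\left(w\sigma\right)^{-1}\right)\subseteq\operatorname*{Inv}\left(w^{-1}\right)$, and the fact that inclusion of inversion sets of inverses implies lexicographic comparability) -- you close the argument directly: take the smallest $i$ with $\sigma\left(i\right)\neq i$, note that $\sigma$ restricted to $\left\{i,\ldots,n\right\}$ is a bijection so $\sigma\left(i\right)>i$, and use block-preservation plus strict decrease on the block to get $\left(w\sigma\right)\left(i\right)=w\left(\sigma\left(i\right)\right)<w\left(i\right)$ while $\left(w\sigma\right)\left(k\right)=w\left(k\right)$ for $k<i$. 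This is both shorter and more self-contained than the paper's formal route: it needs no auxiliary lemmas, whereas the paper's inversion-set lemmas are more general tools (they give the full statement that $\operatorname*{Inv}\left(u^{-1}\right)\subseteq\operatorname*{Inv}\left(v^{-1}\right)$ forces $u\leq v$, which could be reused elsewhere). Your approach buys brevity and transparency; the paper's buys reusable machinery. No gaps.
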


Proposition \ref{prop.aw.smaller} is easy to prove with a bit of handwaving,
but trickier to prove formally. We shall thus give a quick informal proof
first, and then a longer, formal proof.

\begin{proof}
[Informal proof of Proposition \ref{prop.aw.smaller}.]Let $i_{1},i_{2}%
,\ldots,i_{p}$ be the elements of the set $\left[  n-1\right]  \setminus
\operatorname*{Des}w$ in increasing order. Furthermore, let $i_{0}=0$ and
$i_{p+1}=n$, so that $0=i_{0}<i_{1}<i_{2}<\cdots<i_{p}<i_{p+1}=n$. Define an
interval%
\[
J_{k}:=\left[  i_{k-1}+1,\ i_{k}\right]  \ \ \ \ \ \ \ \ \ \ \text{for each
}k\in\left[  p+1\right]  .
\]
Then, the $p+1$ intervals $J_{1},J_{2},\ldots,J_{p+1}$ form a set partition of
the interval $\left[  n\right]  $. The permutation $w$ is decreasing on each
of these $p+1$ intervals, and these $p+1$ intervals are actually the
inclusion-maximal intervals with this property.

Now, $\sigma\in G\left(  \operatorname*{Des}w\right)  $ means that the
permutation $\sigma$ preserves each of the $p+1$ intervals $J_{1},J_{2}%
,\ldots,J_{p+1}$ (that is, we have $\sigma\left(  J_{k}\right)  =J_{k}$ for
each $k\in\left[  p+1\right]  $).\ \ \ \ \footnote{Indeed,
$\operatorname*{Des}w=\left[  n-1\right]  \setminus\left\{  i_{1},i_{2}%
,\ldots,i_{p}\right\}  $. Hence, the group $G\left(  \operatorname*{Des}%
w\right)  $ is generated by the simple transpositions $s_{i}$ with
$i\in\left[  n-1\right]  \setminus\left\{  i_{1},i_{2},\ldots,i_{p}\right\}
$. Thus, $\sigma\in G\left(  \operatorname*{Des}w\right)  $ shows that
$\sigma$ is a product of such simple transpositions. However, each such simple
transposition preserves each of the $p+1$ intervals $J_{1},J_{2}%
,\ldots,J_{p+1}$. Thus, so does $\sigma$.} Hence, the permutation $w\sigma$ is
obtained from $w$ by separately permuting the values on each of the $p+1$
intervals $J_{1},J_{2},\ldots,J_{p+1}$. However, recall that $w$ is decreasing
on each of these $p+1$ intervals; thus, if we permute the values of $w$ on
each of these $p+1$ intervals separately, then the permutation $w$ can only
become smaller in the lexicographic order. Hence, $w\sigma\leq w$. Combining
this with $w\sigma\neq w$ (which follows from $\sigma\neq\operatorname*{id}$),
we obtain $w\sigma<w$. This proves Proposition \ref{prop.aw.smaller} (if you
believe this handwaving).
\end{proof}

\begin{fineprint}
Next, we shall give a more formal proof of Proposition \ref{prop.aw.smaller}
for the skeptical reader. This proof will require a further definition and two
lemmas (which might be of independent interest). We begin with the definition:

\begin{itemize}
\item If $w\in S_{n}$, then an \emph{inversion} of $w$ means a pair $\left(
i,j\right)  \in\left[  n\right]  \times\left[  n\right]  $ satisfying $i<j$
and $w\left(  i\right)  >w\left(  j\right)  $. We denote the set of all
inversions of a given permutation $w\in S_{n}$ by $\operatorname*{Inv}w$.
\end{itemize}

Now, we can state our two lemmas:

\begin{lemma}
\label{lem.aw.smaller.1}Let $w\in S_{n}$. Let $\sigma\in G\left(
\operatorname*{Des}w\right)  $. Then, $\operatorname*{Inv}\left(  \left(
w\sigma\right)  ^{-1}\right)  \subseteq\operatorname*{Inv}\left(
w^{-1}\right)  $.
\end{lemma}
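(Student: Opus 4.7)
My plan is to formalize the informal ``block'' argument already sketched for Proposition~\ref{prop.aw.smaller}, but reformulated in terms of inversions of the \emph{inverse} permutation. The key observation is that $\operatorname{Inv}(\pi^{-1})$ is naturally described in terms of \emph{values}: a pair $(a,b)$ with $a<b$ lies in $\operatorname{Inv}(\pi^{-1})$ if and only if $\pi^{-1}(a)>\pi^{-1}(b)$, i.e., the value $b$ appears to the left of the value $a$ in the one-line notation $(\pi(1),\pi(2),\ldots,\pi(n))$. This value-based description matches up well with right-multiplication by $\sigma$, since right multiplication permutes positions (and hence does not move any value out of its block).

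Concretely, first I would let $i_1<i_2<\cdots<i_p$ be the elements of $[n-1]\setminus\operatorname{Des} w$, set $i_0:=0$ and $i_{p+1}:=n$, and define the intervals $J_k:=[i_{k-1}+1,\,i_k]$ for $k\in[p+1]$. These $p+1$ intervals partition $[n]$, and $w$ is strictly decreasing on each $J_k$ by the choice of the $i_k$. Then I would verify that every $\sigma\in G(\operatorname{Des} w)$ preserves each $J_k$ setwise: each generator $s_i$ with $i\in\operatorname{Des} w$ satisfies $i,i+1\in J_k$ for some (unique) $k$, so $s_i$ swaps two adjacent elements of $J_k$ and fixes all other $J_m$; the claim for $\sigma$ follows by induction on the length of a word in these generators. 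As a consequence, for every value $v\in[n]$, the value $v$ occupies a position in the same block $J_k$ in both $w$ and $w\sigma$ (because $\{(w\sigma)(j):j\in J_k\}=\{w(\sigma(j)):j\in J_k\}=\{w(j):j\in J_k\}$).

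Next I would take an arbitrary $(a,b)\in\operatorname{Inv}((w\sigma)^{-1})$, i.e., $a<b$ with the value $b$ appearing strictly to the left of the value $a$ in $w\sigma$, and show $(a,b)\in\operatorname{Inv}(w^{-1})$ by splitting into two cases. In \emph{Case 1}, the values $a$ and $b$ occupy positions in the same block $J_k$ in $w$ (equivalently, in $w\sigma$): then since $w$ is strictly decreasing on $J_k$, the larger value $b$ automatically precedes the smaller value $a$ in the one-line notation of $w$, giving $(a,b)\in\operatorname{Inv}(w^{-1})$ directly. In \emph{Case 2}, the values $a$ and $b$ occupy positions in distinct blocks $J_{k_a}\neq J_{k_b}$ (the same pair of blocks in $w$ and $w\sigma$, by the preceding paragraph): then the relative order of $a$ and $b$ in the one-line notation is determined purely by whether $k_a<k_b$ or $k_a>k_b$, and is thus the \emph{same} in $w$ and $w\sigma$; the assumption that $b$ precedes $a$ in $w\sigma$ therefore transfers to $w$, giving $(a,b)\in\operatorname{Inv}(w^{-1})$.

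The only place a little care is needed is the verification that $G(\operatorname{Des} w)$ preserves the block decomposition, since this is where the hypothesis $\sigma\in G(\operatorname{Des} w)$ (as opposed to some larger parabolic subgroup) is really used; everything else is bookkeeping with one-line notation. I do not expect any genuine obstacle — the argument is essentially a clean rewrite of the informal proof of Proposition~\ref{prop.aw.smaller}, with ``$w\sigma<w$ lexicographically'' replaced by the finer statement ``$\operatorname{Inv}((w\sigma)^{-1})\subseteq\operatorname{Inv}(w^{-1})$'', which in the block picture becomes visibly transparent.
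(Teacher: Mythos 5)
Your proposal is correct: the block decomposition is well defined, $G\left(\operatorname*{Des}w\right)$ does preserve each block $J_{k}$ setwise (since each generator $s_{i}$ with $i\in\operatorname*{Des}w$ has both $i$ and $i+1$ inside a single block), $w$ is strictly decreasing on each block, and your two cases (values $a,b$ in the same block versus in different blocks) together cover all pairs and each yields $(a,b)\in\operatorname*{Inv}\left(w^{-1}\right)$. The route differs somewhat from the paper's. The paper argues by contradiction: given $(i,j)\in\operatorname*{Inv}\left(\left(w\sigma\right)^{-1}\right)$ with $a=w^{-1}\left(i\right)$ and $b=w^{-1}\left(j\right)$, it assumes $a<b$, uses $w\left(a\right)<w\left(b\right)$ to extract a single non-descent $k\in\left[a,b-1\right]$, and then exploits only the fact that $G\left(\operatorname*{Des}w\right)$ preserves the initial segment $\left[k\right]$ to reach a contradiction; it never needs the full block partition or the decreasingness of $w$ within blocks as explicit ingredients. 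Your version instead directly formalizes the informal proof the paper gives for Proposition \ref{prop.aw.smaller}, refined to track inversions rather than just lexicographic order: it is more structural and arguably more transparent (the same-block case is where decreasingness does the work, the different-block case is where block preservation does the work), at the cost of setting up the whole partition rather than a single well-chosen initial segment. Both proofs rest on the same underlying fact about the parabolic subgroup $G\left(\operatorname*{Des}w\right)$, so the difference is one of packaging rather than substance, and either is acceptable.
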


\begin{lemma}
\label{lem.aw.smaller.2}Let $u\in S_{n}$ and $v\in S_{n}$ satisfy
$\operatorname*{Inv}\left(  u^{-1}\right)  \subseteq\operatorname*{Inv}\left(
v^{-1}\right)  $. Then, $u\leq v$ (with respect to the lexicographic order).
\end{lemma}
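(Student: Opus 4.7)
The plan is to prove the contrapositive-style statement directly by comparing $u$ and $v$ at the first position where they differ, and then using the hypothesis on inversions of the inverses to rule out the ``wrong'' inequality. First I would observe the key combinatorial meaning of $\operatorname{Inv}(w^{-1})$: a pair $(i,j)$ with $i<j$ lies in $\operatorname{Inv}(w^{-1})$ if and only if $w^{-1}(i) > w^{-1}(j)$, i.e., the value $i$ appears to the right of the value $j$ in the one-line notation of $w$.

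The main step is as follows. Assume $u \neq v$, and let $k \in [n]$ be the smallest index such that $u(k) \neq v(k)$; set $a := u(k)$ and $b := v(k)$. Because $u(i) = v(i)$ for all $i < k$, the value $a$ does not occur among $v(1), v(2), \ldots, v(k-1)$ and $a \neq v(k)$, so $v^{-1}(a) > k$. Symmetrically, $u^{-1}(b) > k$. Now suppose, for the sake of contradiction, that $a > b$. Then the pair $(b, a)$ satisfies $b < a$ and
\[
u^{-1}(b) > k = u^{-1}(a),
\]
so $(b, a) \in \operatorname{Inv}(u^{-1})$. By hypothesis, $(b, a) \in \operatorname{Inv}(v^{-1})$, which means $v^{-1}(b) > v^{-1}(a)$. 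But $v^{-1}(b) = k$ and $v^{-1}(a) > k$, a contradiction. Hence $a < b$, i.e., $u(k) < v(k)$, and so $u < v$ in the lexicographic order.

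I do not expect any serious obstacle here: the argument is essentially a short case-check hinging on the bijective dictionary between $\operatorname{Inv}(w^{-1})$ and ``which value appears to the left of which'' in $w$'s one-line notation. The only thing to be careful about is not to confuse $\operatorname{Inv}(w)$ with $\operatorname{Inv}(w^{-1})$, which is why I phrase the key observation about positions of values explicitly before running the contradiction argument.
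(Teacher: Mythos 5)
Your proof is correct and follows essentially the same route as the paper's: locate the first position $k$ where $u$ and $v$ differ, observe that the pair $\left(  v\left(  k\right)  ,u\left(  k\right)  \right)  $ would be an inversion of $u^{-1}$ but not of $v^{-1}$ if $u\left(  k\right)  >v\left(  k\right)  $, and derive a contradiction from the hypothesis. Your upfront symmetric observation that $v^{-1}\left(  a\right)  >k$ and $u^{-1}\left(  b\right)  >k$ is a slightly tidier packaging of the same injectivity argument the paper runs inside its contradiction, but the underlying idea is identical.
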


\begin{proof}
[Proof of Lemma \ref{lem.aw.smaller.1}.]Let $\left(  i,j\right)
\in\operatorname*{Inv}\left(  \left(  w\sigma\right)  ^{-1}\right)  $.

We have $\left(  i,j\right)  \in\operatorname*{Inv}\left(  \left(
w\sigma\right)  ^{-1}\right)  $. In other words, $\left(  i,j\right)  $ is an
inversion of $\left(  w\sigma\right)  ^{-1}$. By the definition of an
inversion, this means that $\left(  i,j\right)  \in\left[  n\right]
\times\left[  n\right]  $ and $i<j$ and $\left(  w\sigma\right)  ^{-1}\left(
i\right)  >\left(  w\sigma\right)  ^{-1}\left(  j\right)  $.

Set $a:=w^{-1}\left(  i\right)  $ and $b:=w^{-1}\left(  j\right)  $. We shall
now show that $a>b$.

Indeed, assume the contrary. Thus, $a\leq b$. Since $a\neq b$%
\ \ \ \ \footnote{\textit{Proof.} We have $i<j$, thus $i\neq j$ and therefore
$w^{-1}\left(  i\right)  \neq w^{-1}\left(  j\right)  $. In other words,
$a\neq b$ (since $a=w^{-1}\left(  i\right)  $ and $b=w^{-1}\left(  j\right)
$).}, we thus obtain $a<b$.

From $a=w^{-1}\left(  i\right)  $ and $b=w^{-1}\left(  j\right)  $, we obtain
$w\left(  a\right)  =i$ and $w\left(  b\right)  =j$. Thus, $w\left(  a\right)
=i<j=w\left(  b\right)  $. Hence, there exists some $k\in\left[  a,b-1\right]
\setminus\operatorname*{Des}w$\ \ \ \ \footnote{\textit{Proof.} Assume the
contrary. Thus, there exists no $k\in\left[  a,b-1\right]  \setminus
\operatorname*{Des}w$. In other words, the set $\left[  a,b-1\right]
\setminus\operatorname*{Des}w$ is empty. In other words, $\left[
a,b-1\right]  \subseteq\operatorname*{Des}w$. Hence, each $i\in\left[
a,b-1\right]  $ satisfies $i\in\left[  a,b-1\right]  \subseteq
\operatorname*{Des}w$ and therefore $w\left(  i\right)  >w\left(  i+1\right)
$ (by the definition of $\operatorname*{Des}w$). In other words, we have%
\[
w\left(  a\right)  >w\left(  a+1\right)  >\cdots>w\left(  b-1\right)
>w\left(  b\right)  .
\]
This contradicts $w\left(  a\right)  <w\left(  b\right)  $. This contradiction
shows that our assumption was false, qed.}. Consider this $k$.

From $k\in\left[  a,b-1\right]  \setminus\operatorname*{Des}w\subseteq\left[
a,b-1\right]  $, we obtain $a\leq k\leq b-1<b$. Therefore, $a\in\left[
k\right]  $ but $b\notin\left[  k\right]  $. Moreover, from $k\in\left[
a,b-1\right]  \setminus\operatorname*{Des}w$, we obtain $k\notin%
\operatorname*{Des}w$.

Let $I=\operatorname*{Des}w$. Thus, $k\notin\operatorname*{Des}w=I$. Hence,
$s_{k}$ is not among the generators of the group $G\left(  I\right)  $.

Therefore, it is easy to see that
\begin{equation}
\tau\left(  \left[  k\right]  \right)  =\left[  k\right]
\ \ \ \ \ \ \ \ \ \ \text{for each }\tau\in G\left(  I\right)
\label{pf.lem.aw.smaller.1.tau-pres-k}%
\end{equation}
\footnote{\textit{Proof of (\ref{pf.lem.aw.smaller.1.tau-pres-k}):} We must
show that each element of $G\left(  I\right)  $ preserves the set $\left[
k\right]  $.
\par
We have defined $G\left(  I\right)  $ to be the subgroup of $S_{n}$ generated
by the subset $\left\{  s_{m}\ \mid\ m\in I\right\}  $. Hence, in order to
prove that each element of $G\left(  I\right)  $ preserves the set $\left[
k\right]  $, it suffices to prove that each of the generators $s_{m}$
preserves this set. In other words, it suffices to prove that $s_{m}\left(
\left[  k\right]  \right)  =\left[  k\right]  $ for each $m\in I$.
\par
But this is easy: Let $m\in I$. Then, $m\neq k$ (since $m\in I$ but $k\notin
I$). Hence, we have either $m<k$ or $m>k$. In the former case, the simple
transposition $s_{m}$ swaps the two elements $m$ and $m+1$, which both lie
inside $\left[  k\right]  $; thus, $s_{m}\left(  \left[  k\right]  \right)
=\left[  k\right]  $ in this case. In the latter case, the simple
transposition $s_{m}$ fixes all elements of $\left[  k\right]  $ (since
neither $m$ nor $m+1$ lies in $\left[  k\right]  $); thus, $s_{m}\left(
\left[  k\right]  \right)  =\left[  k\right]  $ in this case as well. Hence,
we have proved that $s_{m}\left(  \left[  k\right]  \right)  =\left[
k\right]  $ in all cases. As explained above, this completes the proof of
(\ref{pf.lem.aw.smaller.1.tau-pres-k}).}. Applying this to $\tau=\sigma$, we
obtain $\sigma\left(  \left[  k\right]  \right)  =\left[  k\right]  $ (since
$\sigma\in G\left(  \underbrace{\operatorname*{Des}w}_{=I}\right)  =G\left(
I\right)  $). Thus, $\sigma^{-1}\left(  \left[  k\right]  \right)  =\left[
k\right]  $ (since $\sigma$ is a bijection). However,%
\[
\left(  w\sigma\right)  ^{-1}\left(  i\right)  =\sigma^{-1}\left(
\underbrace{w^{-1}\left(  i\right)  }_{=a\in\left[  k\right]  }\right)
\in\sigma^{-1}\left(  \left[  k\right]  \right)  =\left[  k\right]  ,
\]
so that $\left(  w\sigma\right)  ^{-1}\left(  i\right)  \leq k$ and therefore
$k\geq\left(  w\sigma\right)  ^{-1}\left(  i\right)  >\left(  w\sigma\right)
^{-1}\left(  j\right)  $. In other words, $\left(  w\sigma\right)
^{-1}\left(  j\right)  <k$, so that $\left(  w\sigma\right)  ^{-1}\left(
j\right)  \in\left[  k\right]  $. Therefore, $\sigma\left(  \left(
w\sigma\right)  ^{-1}\left(  j\right)  \right)  \in\sigma\left(  \left[
k\right]  \right)  =\left[  k\right]  $. In view of $\sigma\left(  \left(
w\sigma\right)  ^{-1}\left(  j\right)  \right)  =\sigma\left(  \sigma
^{-1}\left(  w^{-1}\left(  j\right)  \right)  \right)  =w^{-1}\left(
j\right)  =b$, this rewrites as $b\in\left[  k\right]  $. But this contradicts
$b\notin\left[  k\right]  $. This contradiction shows that our assumption was false.

Hence, $a>b$ is proved. In view of $a=w^{-1}\left(  i\right)  $ and
$b=w^{-1}\left(  j\right)  $, we can rewrite this as $w^{-1}\left(  i\right)
>w^{-1}\left(  j\right)  $. Combining this with $\left(  i,j\right)
\in\left[  n\right]  \times\left[  n\right]  $ and $i<j$, we conclude that
$\left(  i,j\right)  $ is an inversion of $w^{-1}$. In other words, $\left(
i,j\right)  \in\operatorname*{Inv}\left(  w^{-1}\right)  $.

Forget that we fixed $\left(  i,j\right)  $. We thus have shown that $\left(
i,j\right)  \in\operatorname*{Inv}\left(  w^{-1}\right)  $ for each $\left(
i,j\right)  \in\operatorname*{Inv}\left(  \left(  w\sigma\right)
^{-1}\right)  $. In other words, $\operatorname*{Inv}\left(  \left(
w\sigma\right)  ^{-1}\right)  \subseteq\operatorname*{Inv}\left(
w^{-1}\right)  $. Lemma \ref{lem.aw.smaller.1} is thus proven.
\end{proof}

\begin{proof}
[Proof of Lemma \ref{lem.aw.smaller.2}.]We WLOG assume that $u\neq v$ (since
otherwise, the claim is obvious). Thus, there exists some $i\in\left[
n\right]  $ satisfying $u\left(  i\right)  \neq v\left(  i\right)  $. Consider
the \textbf{smallest} such $i$. We shall show that $u\left(  i\right)
<v\left(  i\right)  $. Once this is shown, we will immediately obtain $u<v$
(by the definition of lexicographic order), and thus Lemma
\ref{lem.aw.smaller.2} will follow.

So it remains to prove that $u\left(  i\right)  <v\left(  i\right)  $. For the
sake of contradiction, we assume the contrary. Thus, $u\left(  i\right)  \geq
v\left(  i\right)  $, so that $u\left(  i\right)  >v\left(  i\right)  $ (since
$u\left(  i\right)  \neq v\left(  i\right)  $).

The maps $u$ and $v$ are permutations, and thus are injective.

Recall that $i$ was defined to be the \textbf{smallest} element of $\left[
n\right]  $ satisfying $u\left(  i\right)  \neq v\left(  i\right)  $. Thus,%
\begin{equation}
u\left(  k\right)  =v\left(  k\right)  \ \ \ \ \ \ \ \ \ \ \text{for each
}k<i. \label{pf.lem.aw.smaller.2.smallest}%
\end{equation}

Let $p:=u\left(  i\right)  $ and $q:=v\left(  i\right)  $. Thus, $p>q$ (since
$u\left(  i\right)  >v\left(  i\right)  $), so that $q<p$. Hence, $q\neq p$,
so that $u^{-1}\left(  q\right)  \neq u^{-1}\left(  p\right)  $. Moreover,
$u^{-1}\left(  p\right)  =i$ (since $p=u\left(  i\right)  $). If we had
$u^{-1}\left(  q\right)  <i$, then we would have $u\left(  u^{-1}\left(
q\right)  \right)  =v\left(  u^{-1}\left(  q\right)  \right)  $ (by
(\ref{pf.lem.aw.smaller.2.smallest}), applied to $k=u^{-1}\left(  q\right)
$), so that $v\left(  u^{-1}\left(  q\right)  \right)  =u\left(  u^{-1}\left(
q\right)  \right)  =q=v\left(  i\right)  $ and therefore $u^{-1}\left(
q\right)  =i$ (since the map $v$ is injective); but this would contradict the
very assumption $u^{-1}\left(  q\right)  <i$. Hence, we cannot have
$u^{-1}\left(  q\right)  <i$. Thus, we must have $u^{-1}\left(  q\right)  \geq
i=u^{-1}\left(  p\right)  $. Combining this with $u^{-1}\left(  q\right)  \neq
u^{-1}\left(  p\right)  $, we obtain $u^{-1}\left(  q\right)  >u^{-1}\left(
p\right)  $.

Now we know that $\left(  q,p\right)  \in\left[  n\right]  \times\left[
n\right]  $ satisfies $q<p$ and $u^{-1}\left(  q\right)  >u^{-1}\left(
p\right)  $. In other words, $\left(  q,p\right)  $ is an inversion of
$u^{-1}$. Hence, $\left(  q,p\right)  \in\operatorname*{Inv}\left(
u^{-1}\right)  \subseteq\operatorname*{Inv}\left(  v^{-1}\right)  $. In other
words, $\left(  q,p\right)  $ is an inversion of $v^{-1}$. Hence,
$v^{-1}\left(  q\right)  >v^{-1}\left(  p\right)  $. Since $v^{-1}\left(
q\right)  =i$ (because $q=v\left(  i\right)  $), this rewrites as
$i>v^{-1}\left(  p\right)  $. Thus, $v^{-1}\left(  p\right)  <i$, so that we
can apply (\ref{pf.lem.aw.smaller.2.smallest}) to $k=v^{-1}\left(  p\right)  $
and obtain%
\[
u\left(  v^{-1}\left(  p\right)  \right)  =v\left(  v^{-1}\left(  p\right)
\right)  =p=u\left(  i\right)  .
\]
Hence, $v^{-1}\left(  p\right)  =i$ (because $u$ is injective). In other
words, $p=v\left(  i\right)  $. This contradicts $p>q=v\left(  i\right)  $.
This contradiction shows that our assumption was false. Hence, $u\left(
i\right)  <v\left(  i\right)  $ is proved, and Lemma \ref{lem.aw.smaller.2}
follows as explained above.
\end{proof}

\begin{proof}
[Formal proof of Proposition \ref{prop.aw.smaller}.]Lemma
\ref{lem.aw.smaller.1} yields $\operatorname*{Inv}\left(  \left(
w\sigma\right)  ^{-1}\right)  \subseteq\operatorname*{Inv}\left(
w^{-1}\right)  $. Hence, Lemma \ref{lem.aw.smaller.2} (applied to $u=w\sigma$
and $v=w$) yields $w\sigma\leq w$. However, from $\sigma\neq\operatorname*{id}%
$, we obtain $w\sigma\neq w$ (since $S_{n}$ is a group). Combining this with
$w\sigma\leq w$, we obtain $w\sigma<w$. This proves Proposition
\ref{prop.aw.smaller}.
\end{proof}
\end{fineprint}

\begin{corollary}
\label{cor.aw.lead}Let $w\in S_{n}$. Then,%
\[
a_{w}=w+\left(  \text{a sum of permutations }v\in S_{n}\text{ satisfying
}v<w\right)  .
\]

\end{corollary}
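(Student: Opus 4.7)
The plan is to read off the corollary directly from the definition of $a_w$ together with Proposition \ref{prop.aw.smaller}. By (\ref{eq.aw.def}),
\[
a_w = \sum_{\sigma \in G(\operatorname{Des} w)} w\sigma,
\]
and since $\operatorname{id} \in G(\operatorname{Des} w)$, I would split off the term corresponding to $\sigma = \operatorname{id}$, which equals $w \cdot \operatorname{id} = w$. This isolates $w$ as one summand.

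For the remaining summands, indexed by $\sigma \in G(\operatorname{Des} w) \setminus \{\operatorname{id}\}$, Proposition \ref{prop.aw.smaller} applies and yields $w\sigma < w$ in the lexicographic order. Therefore each of these summands is a permutation $v \in S_n$ with $v < w$, so their sum is precisely ``a sum of permutations $v \in S_n$ satisfying $v < w$''. Combining the two pieces gives
\[
a_w = w + \sum_{\substack{\sigma \in G(\operatorname{Des} w);\\ \sigma \neq \operatorname{id}}} w\sigma,
\]
which is the claimed decomposition.

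There is no real obstacle here, since all the work has been done in Proposition \ref{prop.aw.smaller}; the corollary is essentially bookkeeping. The only mild subtlety is noting that the right-hand sum is allowed to be empty (this happens exactly when $\operatorname{Des} w = \varnothing$, i.e., $w = \operatorname{id}$), in which case the statement still holds trivially since $a_w = w$ and an empty sum of permutations vanishes.
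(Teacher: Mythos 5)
Your proof is correct and is essentially identical to the paper's: both split off the $\sigma=\operatorname{id}$ term from the defining sum (\ref{eq.aw.def}) and apply Proposition \ref{prop.aw.smaller} to conclude that every remaining addend $w\sigma$ is a permutation strictly smaller than $w$. The remark about the empty-sum case is a harmless extra observation.
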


\begin{proof}
The definition of $a_{w}$ yields%
\begin{align*}
a_{w}  &  =\sum_{\sigma\in G\left(  \operatorname*{Des}w\right)  }%
w\sigma=\underbrace{w\operatorname*{id}}_{=w}+\sum_{\substack{\sigma\in
G\left(  \operatorname*{Des}w\right)  ;\\\sigma\neq\operatorname*{id}}%
}w\sigma\ \ \ \ \ \ \ \ \ \ \left(
\begin{array}
[c]{c}%
\text{here, we have split off the}\\
\text{addend for }\sigma=\operatorname*{id}\text{ from the sum }%
\end{array}
\right) \\
&  =w+\underbrace{\sum_{\substack{\sigma\in G\left(  \operatorname*{Des}%
w\right)  ;\\\sigma\neq\operatorname*{id}}}w\sigma}_{\substack{=\left(
\text{a sum of permutations }v\in S_{n}\text{ satisfying }v<w\right)
\\\text{(since Proposition \ref{prop.aw.smaller} shows that each}%
\\\text{addend }w\sigma\text{ of this sum satisfies }w\sigma<w\text{)}}}\\
&  =w+\left(  \text{a sum of permutations }v\in S_{n}\text{ satisfying
}v<w\right)  .
\end{align*}
This proves Corollary \ref{cor.aw.lead}.
\end{proof}

\subsection{The basis property}

Using Corollary \ref{cor.aw.lead}, we can now see that the elements $a_{w}$
for all $w\in S_{n}$ form a basis of $\mathbf{k}\left[  S_{n}\right]  $, and
furthermore, by selecting an appropriate subset of these elements, we can find
a basis of each $F\left(  I\right)  $. To wit, the following two propositions hold:

\begin{proposition}
\label{prop.aw.basis-kSn}The family $\left(  a_{w}\right)  _{w\in S_{n}}$ is a
basis of the $\mathbf{k}$-module $\mathbf{k}\left[  S_{n}\right]  $.
\end{proposition}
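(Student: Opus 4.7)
The plan is straightforward given Corollary \ref{cor.aw.lead}, which already contains the essential content. List the elements of $S_{n}$ in increasing lexicographic order as $w_{1}<w_{2}<\cdots<w_{n!}$. By Corollary \ref{cor.aw.lead}, each $a_{w_{k}}$ has the form
\[
a_{w_{k}} = w_{k} + \sum_{j<k} c_{k,j}\, w_{j} \qquad \text{for some scalars } c_{k,j}\in\mathbf{k}.
\]
Thus the matrix expressing the family $\left( a_{w_{1}}, a_{w_{2}}, \ldots, a_{w_{n!}} \right)$ in terms of the standard basis $\left( w_{1}, w_{2}, \ldots, w_{n!} \right)$ of $\mathbf{k}\left[ S_{n}\right]$ is lower unitriangular: it has $1$'s on the diagonal and $0$'s above.

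A lower unitriangular matrix has determinant $1$, so it is invertible over \emph{any} commutative ring $\mathbf{k}$ (not merely over a field). Since $\left( w_{1}, w_{2}, \ldots, w_{n!} \right)$ is the standard basis of the free $\mathbf{k}$-module $\mathbf{k}\left[ S_{n}\right]$, and the family $\left( a_{w_{k}} \right)_{k}$ is obtained from it by an invertible $\mathbf{k}$-linear change of coordinates, it follows that $\left( a_{w} \right)_{w\in S_{n}}$ is itself a basis of $\mathbf{k}\left[ S_{n}\right]$.

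There is essentially no obstacle in this proof: Corollary \ref{cor.aw.lead} does all the real work by establishing the triangularity. The one subtlety worth emphasizing -- since the paper works over an arbitrary commutative ring $\mathbf{k}$ rather than a field -- is that the invertibility of the change-of-basis matrix must be justified by computing its determinant (which is $1$) rather than by any dimension or nondegeneracy argument that would require $\mathbf{k}$ to be a field.
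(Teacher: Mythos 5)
Your proof is correct. The key ingredient is the same as in the paper -- the triangularity of $a_{w}$ with respect to the lexicographic order, supplied by Corollary \ref{cor.aw.lead} -- but your packaging is more direct: you observe that the change-of-basis matrix is unitriangular, hence has determinant $1$ and is invertible over any commutative ring, which immediately gives the basis property. The paper instead derives Proposition \ref{prop.aw.basis-kSn} as the special case $I=\varnothing$ of the more general Proposition \ref{prop.aw.basis-QI} (that $\left(a_{w}\right)_{w\in S_{n};\ I\subseteq\operatorname*{Des}w}$ is a basis of $Z\left(I\right)$), whose proof splits into a spanning claim (proved by strong induction on the lexicographic order) and a linear-independence claim (proved by the same triangularity you use). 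The paper needs that general machinery anyway, since Proposition \ref{prop.aw.basis-FI} (bases of the $F\left(I\right)$) is what drives the later results; but as a standalone proof of this one proposition, your determinant argument is shorter and equally valid over an arbitrary commutative ring, and you are right to flag that the invertibility must come from the determinant rather than from a dimension count.
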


\begin{proposition}
\label{prop.aw.basis-FI}For each $I\subseteq\left[  n\right]  $, the family
$\left(  a_{w}\right)  _{w\in S_{n};\ I^{\prime}\subseteq\operatorname*{Des}%
w}$ is a basis of the $\mathbf{k}$-module $F\left(  I\right)  $.
\end{proposition}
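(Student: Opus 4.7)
The plan is to verify three things in turn: (i) each element $a_w$ with $I'\subseteq\operatorname{Des} w$ lies in $F(I)$; (ii) the proposed family is linearly independent; (iii) it spans $F(I)$. Step (i) is immediate from the definition. If $I'\subseteq\operatorname{Des} w$ and $i\in I'$, then $s_i\in G(\operatorname{Des} w)$, so right multiplication by $s_i$ is a bijection of the group $G(\operatorname{Des} w)$; hence it permutes the summands of $a_w = \sum_{\sigma\in G(\operatorname{Des} w)} w\sigma$, which gives $a_w s_i = a_w$ and thus $a_w\in F(I)$. Step (ii) is free: Proposition~\ref{prop.aw.basis-kSn} (which itself is proved by observing, via Corollary~\ref{cor.aw.lead}, that the change-of-basis matrix from $(a_w)_{w\in S_n}$ to $(w)_{w\in S_n}$ is unitriangular in the lex order) says the whole family $(a_w)_{w\in S_n}$ is a basis of $\mathbf{k}[S_n]$, and any subfamily of a basis is linearly independent.

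Step (iii) is the substantive part. I would prove the following \emph{leading-term lemma}: if $q\in F(I)$ is nonzero and $w_0$ is the lex-largest permutation appearing in $q$ (when $q$ is expanded in the standard basis of $\mathbf{k}[S_n]$), then $I'\subseteq\operatorname{Des} w_0$. Suppose instead that some $i\in I'$ satisfies $w_0(i)<w_0(i+1)$. Under the continental convention $(w_0 s_i)(j)=w_0(s_i(j))$, the permutation $w_0 s_i$ agrees with $w_0$ outside positions $i,i+1$ and swaps the values there; since $w_0(i+1)>w_0(i)$, we get $w_0 s_i > w_0$ in lex order. Now the coefficient of $w_0 s_i$ in $q s_i$ equals the coefficient of $w_0$ in $q$, which is nonzero; but $q s_i = q$ has coefficient $0$ on $w_0 s_i$ by maximality of $w_0$, a contradiction.

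Given this lemma, spanning follows by induction on the lex-leading permutation of $q\in F(I)$. For nonzero $q$, let $w_0$ be its lex-leading permutation with coefficient $c_{w_0}\in\mathbf{k}$. By the lemma, $I'\subseteq\operatorname{Des} w_0$, so $a_{w_0}$ belongs to our proposed family; by step (i), $a_{w_0}\in F(I)$. By Corollary~\ref{cor.aw.lead}, $a_{w_0} = w_0 + (\text{a sum of lex-smaller permutations})$, so $q - c_{w_0} a_{w_0}$ lies in $F(I)$ and is either zero or has a strictly smaller lex-leading permutation. Since $S_n$ is finite, iterating this process expresses $q$ as a $\mathbf{k}$-linear combination of elements $a_w$ with $I'\subseteq\operatorname{Des} w$, establishing that the family spans $F(I)$.

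The main obstacle is the leading-term lemma; everything else is either bookkeeping or an appeal to Proposition~\ref{prop.aw.basis-kSn} and Corollary~\ref{cor.aw.lead}. The subtle point in the lemma is getting the lex comparison right under the continental multiplication convention — one must check that right multiplication by $s_i$ really does swap the values at positions $i$ and $i+1$ (rather than acting on the indices), so that absence of a descent at $i$ yields a \emph{larger} permutation $w_0 s_i$ rather than a smaller one.
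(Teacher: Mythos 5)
Your proposal is correct and follows essentially the same route as the paper: the paper proves the statement for the general invariant spaces $Z(I)$ (Proposition \ref{prop.aw.basis-QI}) by a strong induction on the lexicographic order whose key step is exactly your leading-term lemma (if $i\in I'$ is not a descent of the lex-top permutation $w_0$ of $q$, then $qs_i=q$ forces the coefficient of $w_0 s_i>w_0$ to be nonzero, a contradiction), followed by the same peel-off of $c_{w_0}a_{w_0}$ via Corollary \ref{cor.aw.lead}. Your appeal to Proposition \ref{prop.aw.basis-kSn} for linear independence is not circular, since that proposition follows directly from the unitriangularity in Corollary \ref{cor.aw.lead}.
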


We shall derive both Proposition \ref{prop.aw.basis-kSn} and Proposition
\ref{prop.aw.basis-FI} from a more general result. To state the latter, we
introduce another notation:

\begin{itemize}
\item For any subset $I$ of $\left[  n-1\right]  $, we set
\[
Z\left(  I\right)  :=\left\{  q\in\mathbf{k}\left[  S_{n}\right]
\ \mid\ qs_{i}=q\text{ for all }i\in I\right\}  .
\]
This is a $\mathbf{k}$-submodule of $\mathbf{k}\left[  S_{n}\right]  $.
\end{itemize}

The definition of those $\mathbf{k}$-submodules reminds us of the definition
of $F(I)$, so we make the relation between the two notions explicit:

\begin{proposition}
\label{prop.aw.QI=FI'}Let $I\subseteq\left[  n\right]  $. Then, $F\left(
I\right)  =Z\left(  I^{\prime}\right)  $.
\end{proposition}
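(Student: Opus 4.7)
The plan is essentially to observe that this proposition is an immediate consequence of unfolding the definitions, so the ``proof'' is little more than a direct comparison.

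Recall the definition of $F\left(I\right)$ from the start of Section \ref{sec.invariantspaces}: it is the set of all $q\in\mathbf{k}\left[S_{n}\right]$ such that $qs_{i}=q$ for every $i\in I^{\prime}$. On the other hand, the definition of $Z\left(J\right)$ (for a subset $J\subseteq\left[n-1\right]$) given just above the proposition says that $Z\left(J\right)$ is the set of all $q\in\mathbf{k}\left[S_{n}\right]$ such that $qs_{i}=q$ for every $i\in J$. Applying this with $J=I^{\prime}$ (which is a subset of $\left[n-1\right]$ by construction, so the definition is applicable), we see that $Z\left(I^{\prime}\right)$ is the set of all $q\in\mathbf{k}\left[S_{n}\right]$ such that $qs_{i}=q$ for every $i\in I^{\prime}$. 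These two defining conditions are identical, so $F\left(I\right)=Z\left(I^{\prime}\right)$.

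Thus the proof I would write is a single short paragraph: ``Both $F\left(I\right)$ and $Z\left(I^{\prime}\right)$ consist, by definition, of those $q\in\mathbf{k}\left[S_{n}\right]$ satisfying $qs_{i}=q$ for all $i\in I^{\prime}$. Hence $F\left(I\right)=Z\left(I^{\prime}\right)$, and Proposition \ref{prop.aw.QI=FI'} is proven.'' There is no real obstacle here; the only thing one has to check is that $I^{\prime}\subseteq\left[n-1\right]$ (which is immediate from the definition of the non-shadow $I^{\prime}=\left[n-1\right]\setminus\left(I\cup\left(I-1\right)\right)$), so that $Z\left(I^{\prime}\right)$ is well-defined.
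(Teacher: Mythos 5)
Your proposal is correct and matches the paper's proof exactly: both simply observe that $F\left(I\right)$ and $Z\left(I^{\prime}\right)$ are defined by the identical condition $qs_{i}=q$ for all $i\in I^{\prime}$. The extra remark that $I^{\prime}\subseteq\left[n-1\right]$ is a harmless (and correct) sanity check.
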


\begin{proof}
Both $F\left(  I\right)  $ and $Z\left(  I^{\prime}\right)  $ are defined to
be $\left\{  q\in\mathbf{k}\left[  S_{n}\right]  \ \mid\ qs_{i}=q\text{ for
all }i\in I^{\prime}\right\}  $. Thus, we have $F\left(  I\right)  =Z\left(
I^{\prime}\right)  $. This proves Proposition \ref{prop.aw.QI=FI'}.
\end{proof}

Now, we can state the general result from which both Proposition
\ref{prop.aw.basis-kSn} and Proposition \ref{prop.aw.basis-FI} will follow:

\begin{proposition}
\label{prop.aw.basis-QI}Let $I$ be a subset of $\left[  n-1\right]  $. Then,
the family $\left(  a_{w}\right)  _{w\in S_{n};\ I\subseteq\operatorname*{Des}%
w}$ is a basis of the $\mathbf{k}$-module $Z\left(  I\right)  $.
\end{proposition}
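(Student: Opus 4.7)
The plan is to recognize $(a_w)_{w \in T}$, where $T := \{w \in S_n : I \subseteq \operatorname{Des} w\}$, as a unitriangular reparametrization of the standard orbit-sum basis of $Z(I)$. The membership $a_w \in Z(I)$ for $w \in T$ is immediate: since $I \subseteq \operatorname{Des} w$ gives $G(I) \subseteq G(\operatorname{Des} w)$, right multiplication of $a_w = \sum_{\sigma \in G(\operatorname{Des} w)} w\sigma$ by any $s_i$ with $i \in I$ simply permutes the index $\sigma$ and leaves $a_w$ fixed.

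Since $Z(I)$ consists of elements invariant under right multiplication by $G(I)$, it has a $\mathbf{k}$-basis given by the coset sums $b_C := \sum_{x \in C} x$, indexed by the left cosets $C$ of $G(I)$ in $S_n$. I would then set up a bijection $T \to \{\text{cosets of } G(I)\}$ by $w \mapsto wG(I)$. In one direction, if $w \in T$, then Proposition \ref{prop.aw.smaller} applied within $G(I) \subseteq G(\operatorname{Des} w)$ gives $w\sigma < w$ for every $\sigma \in G(I) \setminus \{\operatorname{id}\}$, so $w$ is the lex-maximum of its coset $wG(I)$, and distinct elements of $T$ therefore land in distinct cosets. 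Conversely, the lex-maximum $w^*$ of an arbitrary coset $C$ of $G(I)$ must lie in $T$: otherwise some $i \in I$ would satisfy $i \notin \operatorname{Des}(w^*)$, so $w^*(i) < w^*(i+1)$ would force $w^*s_i > w^*$ while $w^*s_i \in C$, contradicting maximality.

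The heart of the argument is a triangular expansion. For $w \in T$, decompose $G(\operatorname{Des} w) = \bigsqcup_j \tau_j G(I)$ into left $G(I)$-cosets with $\tau_1 = \operatorname{id}$. Then
\[
a_w \;=\; \sum_j \sum_{\sigma \in G(I)} w\tau_j\sigma \;=\; b_{wG(I)} \;+\; \sum_{j \geq 2} b_{w\tau_j G(I)},
\]
a sum of pairwise distinct coset sums, each with coefficient $1$. By Proposition \ref{prop.aw.smaller}, every element of $wG(\operatorname{Des} w)$ is $\leq w$ in lex order; since each coset $w\tau_j G(I)$ with $j \geq 2$ is disjoint from $wG(I)$, it omits $w$, so all of its elements, including its lex-maximum representative $w' \in T$, satisfy $w' < w$. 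Ordering $T$ in lex order, the change-of-basis matrix from $(a_w)_{w \in T}$ to $(b_{wG(I)})_{w \in T}$ is therefore lower unitriangular with $0/1$ entries. Being unitriangular, it is invertible over any commutative ring $\mathbf{k}$, and $(a_w)_{w \in T}$ is a $\mathbf{k}$-basis of $Z(I)$.

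The main obstacle lies in this last step: one must confirm not merely that each $G(I)$-coset appearing inside the hull $wG(\operatorname{Des} w)$ other than $wG(I)$ itself misses $w$, but also that its lex-maximum representative falls strictly below $w$ in the global lex order on $S_n$, so that the triangular structure respects the ordering of the basis indices in $T$. Both facts fall out of Proposition \ref{prop.aw.smaller}; the remaining work is organizational bookkeeping.
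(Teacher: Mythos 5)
Your proof is correct, and it takes a genuinely different route from the one in the paper. You first identify the "obvious" basis of $Z\left(I\right)$ given by the coset sums $b_{C}=\sum_{x\in C}x$ over the left cosets $C=wG\left(I\right)$ (this is the standard orbit-sum description of invariants of a permutation action, valid over any commutative ring, and is worth a one-line verification: invariance under the generators $s_{i}$, $i\in I$, forces the coefficient function to be constant on each coset), then you match the cosets bijectively with the index set $T=\left\{w\ \mid\ I\subseteq\operatorname*{Des}w\right\}$ via lexicographically maximal representatives, and finally you observe that expanding $a_{w}=\sum_{j}b_{w\tau_{j}G\left(I\right)}$ along the coset decomposition $G\left(\operatorname*{Des}w\right)=\bigsqcup_{j}\tau_{j}G\left(I\right)$ produces a unitriangular transition matrix (by Proposition \ref{prop.aw.smaller}, all cosets other than $wG\left(I\right)$ consist of permutations $<w$, hence have lex-maximal representatives $<w$), so spanning and linear independence follow at once from invertibility of that matrix. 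The paper instead never introduces the coset-sum basis: it proves membership $a_{w}\in Z\left(I\right)$ the same way you do, but establishes spanning by a strong induction on the lexicographic order (peeling off the leading coefficient $\left[x\right]q$ and subtracting $\lambda a_{x}$, using the observation that $\left[x\right]q=0$ when $I\not\subseteq\operatorname*{Des}x$), and gets independence of the subfamily from the triangularity of the full family $\left(a_{w}\right)_{w\in S_{n}}$ via Corollary \ref{cor.aw.lead}. Your argument is more structural: it exhibits the rank of $Z\left(I\right)$ as the number of cosets of $G\left(I\right)$ (equivalently $\left|T\right|$, recovering the classical fact that each coset has a unique representative whose descent set contains $I$), at the cost of invoking the orbit-sum basis and coset bookkeeping; the paper's argument is longer but entirely self-contained, using only the leading-term property of $a_{w}$ with respect to the standard permutation basis.
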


\begin{proof}
To prove that the family $\left(  a_{w}\right)  _{w\in S_{n};\ I\subseteq
\operatorname*{Des} w}$ forms a basis of $Z(I)$, there are three items to
prove. First, we shall prove that each element of this family belongs to
$Z(I)$ (Claim 1 below). Then, we will show that this family spans $Z(I)$ (a
consequence of Claim 2 below). Finally, we will show that the (larger) family
$\left(  a_{w}\right)  _{w \in S_{n}}$ is $\mathbf{k}$-linearly independent
(Claim 3). The proofs of these three claims constitute the bulk of the proof
of Proposition \ref{prop.aw.basis-QI}, although an experienced reader will
likely find some (or even all) of them straightforward.\medskip

In the proof that follows, we shall use the notation $\left[  w\right]  q$ for
the coefficient of a permutation $w\in S_{n}$ in an element $q\in
\mathbf{k}\left[  S_{n}\right]  $. (Thus, each $q\in\mathbf{k}\left[
S_{n}\right]  $ satisfies $q=\sum_{w\in S_{n}}\left(  \left[  w\right]
q\right)  w$.) The definition of multiplication in the group algebra
$\mathbf{k}\left[  S_{n}\right]  $ shows that
\begin{equation}
\left[  w\right]  \left(  q\sigma\right)  =\left[  w\sigma^{-1}\right]  q
\label{pf.prop.aw.basis-QI.coeff1}%
\end{equation}
for any $w\in S_{n}$, $\sigma\in S_{n}$ and $q\in\mathbf{k}\left[
S_{n}\right]  $.\medskip

We shall first show that the family $\left(  a_{w}\right)  _{w\in
S_{n};\ I\subseteq\operatorname*{Des}w}$ is a family of vectors in $Z\left(
I\right)  $. In other words, we shall show the following:

\begin{statement}
\textit{Claim 1:} For each $w\in S_{n}$ satisfying $I\subseteq
\operatorname*{Des}w$, we have $a_{w}\in Z\left(  I\right)  $.
\end{statement}

[\textit{Proof of Claim 1:} Let $w\in S_{n}$ satisfy $I\subseteq
\operatorname*{Des}w$. Let $i\in I$. Then, $i\in I\subseteq\operatorname*{Des}%
w$. Hence, $s_{i}$ is one of the generators of the group $G\left(
\operatorname*{Des}w\right)  $ (by the definition of $G\left(
\operatorname*{Des}w\right)  $). Thus, $s_{i}\in G\left(  \operatorname*{Des}%
w\right)  $. However, $G\left(  \operatorname*{Des}w\right)  $ is a group.
Thus, the map $G\left(  \operatorname*{Des}w\right)  \rightarrow G\left(
\operatorname*{Des}w\right)  ,\ \sigma\mapsto\sigma s_{i}$ is a bijection
(since $s_{i}\in G\left(  \operatorname*{Des}w\right)  $).

However, the definition of $a_{w}$ yields $a_{w}=\sum_{\sigma\in G\left(
\operatorname*{Des}w\right)  }w\sigma$. Multiplying this equality by $s_{i}$,
we find%
\[
a_{w}s_{i}=\left(  \sum_{\sigma\in G\left(  \operatorname*{Des}w\right)
}w\sigma\right)  s_{i}=\sum_{\sigma\in G\left(  \operatorname*{Des}w\right)
}w\sigma s_{i}=\sum_{\sigma\in G\left(  \operatorname*{Des}w\right)  }w\sigma
\]
(here, we have substituted $\sigma$ for $\sigma s_{i}$ in the sum, since the
map $G\left(  \operatorname*{Des}w\right)  \rightarrow G\left(
\operatorname*{Des}w\right)  ,\ \sigma\mapsto\sigma s_{i}$ is a bijection).
Comparing this with $a_{w}=\sum_{\sigma\in G\left(  \operatorname*{Des}%
w\right)  }w\sigma$, we obtain $a_{w}s_{i}=a_{w}$.

Now, forget that we fixed $i$. We thus have shown that $a_{w}s_{i}=a_{w}$ for
each $i\in I$. In other words,%
\[
a_{w}\in\left\{  q\in\mathbf{k}\left[  S_{n}\right]  \ \mid\ qs_{i}=q\text{
for all }i\in I\right\}  =Z\left(  I\right)
\]
(by the definition of $Z\left(  I\right)  $). This proves Claim 1.] \medskip

Next, we shall show that the family $\left(  a_{w}\right)  _{w\in
S_{n};\ I\subseteq\operatorname*{Des}w}$ spans the $\mathbf{k}$-module
$Z\left(  I\right)  $. To achieve this, we will first prove the following:

\begin{statement}
\textit{Claim 2:} Let $u\in S_{n}$. Then,\footnote{Here and in the following,
$\operatorname*{span}\left(  \left(  f_{i}\right)  _{i\in I}\right)  $ denotes
the $\mathbf{k}$-linear span of a family $\left(  f_{i}\right)  _{i\in I}$ of
vectors.}
\[
Z\left(  I\right)  \cap\operatorname*{span}\left(  \left(  w\right)  _{w\in
S_{n};\ w\leq u}\right)  \subseteq\operatorname*{span}\left(  \left(
a_{w}\right)  _{w\in S_{n};\ I\subseteq\operatorname*{Des}w}\right)  .
\]

\end{statement}

[\textit{Proof of Claim 2:} We proceed by strong induction on $u$ (using the
lexicographic order as a well-ordering on $S_{n}$). Thus, we fix some
permutation $x\in S_{n}$, and we assume (as induction hypothesis) that Claim 2
has already been proved for each $u<x$. We must then prove Claim 2 for $u=x$.

Using our induction hypothesis, we can easily see that
\begin{equation}
Z\left(  I\right)  \cap\operatorname*{span}\left(  \left(  w\right)  _{w\in
S_{n};\ w<x}\right)  \subseteq\operatorname*{span}\left(  \left(
a_{w}\right)  _{w\in S_{n};\ I\subseteq\operatorname*{Des}w}\right)  .
\label{pf.prop.aw.basis-QI.c2.pf.IH2}%
\end{equation}
\footnote{\textit{Proof of (\ref{pf.prop.aw.basis-QI.c2.pf.IH2}):} If $x$ is
the smallest permutation in $S_{n}$ (with respect to the lexicographic order),
then the family $\left(  w\right)  _{w\in S_{n};\ w<x}$ is empty (since there
is no $w\in S_{n}$ satisfying $w<x$ in this case), and thus its span is
$\operatorname*{span}\left(  \left(  w\right)  _{w\in S_{n};\ w<x}\right)
=0$, so that we have $Z\left(  I\right)  \cap\underbrace{\operatorname*{span}%
\left(  \left(  w\right)  _{w\in S_{n};\ w<x}\right)  }_{=0}=0\subseteq
\operatorname*{span}\left(  \left(  a_{w}\right)  _{w\in S_{n};\ I\subseteq
\operatorname*{Des}w}\right)  $. Hence, if $x$ is the smallest permutation in
$S_{n}$, then (\ref{pf.prop.aw.basis-QI.c2.pf.IH2}) holds. Thus, for the rest
of this proof, we WLOG assume that $x$ is not the smallest permutation in
$S_{n}$. Thus, there exists some $w\in S_{n}$ such that $w<x$. Let $y$ be the
\textbf{largest} such $w$ (this is well-defined, since the lexicographic order
is a total order on the finite set $S_{n}$). Then, the permutations $w\in
S_{n}$ satisfying $w<x$ are precisely the permutations $w\in S_{n}$ satisfying
$w\leq y$. Thus, $\operatorname*{span}\left(  \left(  w\right)  _{w\in
S_{n};\ w<x}\right)  =\operatorname*{span}\left(  \left(  w\right)  _{w\in
S_{n};\ w\leq y}\right)  $. Note also that $y<x$ (by the definition of $y$).
\par
However, our induction hypothesis says that Claim 2 has already been proved
for each $u<x$. Hence, in particular, Claim 2 holds for $u=y$ (since $y<x$).
In other words, we have $Z\left(  I\right)  \cap\operatorname*{span}\left(
\left(  w\right)  _{w\in S_{n};\ w\leq y}\right)  \subseteq
\operatorname*{span}\left(  \left(  a_{w}\right)  _{w\in S_{n};\ I\subseteq
\operatorname*{Des}w}\right)  $. In view of $\operatorname*{span}\left(
\left(  w\right)  _{w\in S_{n};\ w<x}\right)  =\operatorname*{span}\left(
\left(  w\right)  _{w\in S_{n};\ w\leq y}\right)  $, we can rewrite this as
$Z\left(  I\right)  \cap\operatorname*{span}\left(  \left(  w\right)  _{w\in
S_{n};\ w<x}\right)  \subseteq\operatorname*{span}\left(  \left(
a_{w}\right)  _{w\in S_{n};\ I\subseteq\operatorname*{Des}w}\right)  $. This
completes the proof of (\ref{pf.prop.aw.basis-QI.c2.pf.IH2}).}

Our goal is to prove Claim 2 for $u=x$. In other words, our goal is to prove
that $Z\left(  I\right)  \cap\operatorname*{span}\left(  \left(  w\right)
_{w\in S_{n};\ w\leq x}\right)  \subseteq\operatorname*{span}\left(  \left(
a_{w}\right)  _{w\in S_{n};\ I\subseteq\operatorname*{Des}w}\right)  $.

To do so, we let $q\in Z\left(  I\right)  \cap\operatorname*{span}\left(
\left(  w\right)  _{w\in S_{n};\ w\leq x}\right)  $. Thus, $q\in Z\left(
I\right)  $ and $q\in\operatorname*{span}\left(  \left(  w\right)  _{w\in
S_{n};\ w\leq x}\right)  $. From $q\in\operatorname*{span}\left(  \left(
w\right)  _{w\in S_{n};\ w\leq x}\right)  $, we see that $q$ is a $\mathbf{k}%
$-linear combination of the family $\left(  w\right)  _{w\in S_{n};\ w\leq x}%
$. Thus,%
\begin{equation}
\left[  w\right]  q=0\ \ \ \ \ \ \ \ \ \ \text{for every }w\in S_{n}\text{
satisfying }w>x. \label{pf.prop.aw.basis-QI.c2.pf.0}%
\end{equation}

We want to show that $q\in\operatorname*{span}\left(  \left(  a_{w}\right)
_{w\in S_{n};\ I\subseteq\operatorname*{Des}w}\right)  $.

We are in one of the following two cases:

\textit{Case 1:} We have $I\not \subseteq \operatorname*{Des}x$.

\textit{Case 2:} We have $I\subseteq\operatorname*{Des}x$.

First, let us consider Case 1. In this case, we have $I\not \subseteq
\operatorname*{Des}x$. Hence, there exists some $k\in I$ such that
$k\notin\operatorname*{Des}x$. Consider this $k$. Then, $k\in I\subseteq
\left[  n-1\right]  $. Hence, if we had $x\left(  k\right)  >x\left(
k+1\right)  $, then we would have $k\in\operatorname*{Des}x$ (by the
definition of $\operatorname*{Des}x$), which would contradict $k\notin%
\operatorname*{Des}x$. Thus, we cannot have $x\left(  k\right)  >x\left(
k+1\right)  $. Hence, we have $x\left(  k\right)  \leq x\left(  k+1\right)  $.
Since $x\left(  k\right)  \neq x\left(  k+1\right)  $ (because $x$ is a
permutation), we thus find $x\left(  k\right)  <x\left(  k+1\right)  $. Hence,
it is easy to see that $xs_{k}>x$\ \ \ \ \footnote{\textit{Proof.} Let
$y:=xs_{k}$. Then, recalling how $s_{k}$ was defined, we see that all values
of $y$ are equal to the corresponding values of $x$ except for the values at
$k$ and $k+1$, which are swapped. In other words,%
\begin{align*}
&  \left(  y\left(  1\right)  ,y\left(  2\right)  ,\ldots,y\left(  k-1\right)
,y\left(  k\right)  ,y\left(  k+1\right)  ,y\left(  k+2\right)  ,\ldots
,y\left(  n\right)  \right) \\
&  =\left(  x\left(  1\right)  ,x\left(  2\right)  ,\ldots,x\left(
k-1\right)  ,x\left(  k+1\right)  ,x\left(  k\right)  ,x\left(  k+2\right)
,\ldots,x\left(  n\right)  \right)  .
\end{align*}
Thus, the smallest $i\in\left[  n\right]  $ satisfying $x\left(  i\right)
\neq y\left(  i\right)  $ is $k$, and this smallest $i$ satisfies $x\left(
i\right)  <y\left(  i\right)  $ (since we have $x\left(  k\right)  <x\left(
k+1\right)  =y\left(  k\right)  $). Therefore, the definition of lexicographic
order shows that $x<y$. Hence, $x<y=xs_{k}$, so that $xs_{k}>x$.}. Thus,
(\ref{pf.prop.aw.basis-QI.c2.pf.0}) (applied to $w=xs_{k}$) yields $\left[
xs_{k}\right]  q=0$.

On the other hand, $q\in Z\left(  I\right)  $, and therefore $qs_{i}=q$ for
all $i\in I$ (by the definition of $Z\left(  I\right)  $). Applying this to
$i=k$, we obtain $qs_{k}=q$ (since $k\in I$). However,
(\ref{pf.prop.aw.basis-QI.coeff1}) (applied to $w=x$ and $\sigma=s_{k}$)
yields%
\begin{align*}
\left[  x\right]  \left(  qs_{k}\right)   &  =\left[  xs_{k}^{-1}\right]
q=\left[  xs_{k}\right]  q\ \ \ \ \ \ \ \ \ \ \left(  \text{since }s_{k}%
^{-1}=s_{k}\right) \\
&  =0.
\end{align*}
In view of $qs_{k}=q$, this rewrites as $\left[  x\right]  q=0$. In other
words, $\left[  w\right]  q=0$ holds for $w=x$. Combining this with
(\ref{pf.prop.aw.basis-QI.c2.pf.0}), we obtain
\begin{equation}
\left[  w\right]  q=0\ \ \ \ \ \ \ \ \ \ \text{for every }w\in S_{n}\text{
satisfying }w\geq x. \label{pf.prop.aw.basis-QI.c2.pf.c1.1}%
\end{equation}
Hence, $q\in\operatorname*{span}\left(  \left(  w\right)  _{w\in S_{n}%
;\ w<x}\right)  $. Combining this with $q\in Z\left(  I\right)  $, we obtain%
\[
q\in Z\left(  I\right)  \cap\operatorname*{span}\left(  \left(  w\right)
_{w\in S_{n};\ w<x}\right)  \subseteq\operatorname*{span}\left(  \left(
a_{w}\right)  _{w\in S_{n};\ I\subseteq\operatorname*{Des}w}\right)
\]
(by (\ref{pf.prop.aw.basis-QI.c2.pf.IH2})). Hence, we have proved that
$q\in\operatorname*{span}\left(  \left(  a_{w}\right)  _{w\in S_{n}%
;\ I\subseteq\operatorname*{Des}w}\right)  $ in Case 1.

Let us next consider Case 2. In this case, we have $I\subseteq
\operatorname*{Des}x$. Hence, $a_{x}\in Z\left(  I\right)  $ (by Claim 1,
applied to $w=x$). Moreover, $a_{x}$ is an element of the family $\left(
a_{w}\right)  _{w\in S_{n};\ I\subseteq\operatorname*{Des}w}$ (since $x\in
S_{n}$ satisfies $I\subseteq\operatorname*{Des}x$). Hence, $a_{x}%
\in\operatorname*{span}\left(  \left(  a_{w}\right)  _{w\in S_{n}%
;\ I\subseteq\operatorname*{Des}w}\right)  $.

Let $\lambda:=\left[  x\right]  q$. Let $r:=q-\lambda a_{x}\in\mathbf{k}%
\left[  S_{n}\right]  $. Then, $r\in Z\left(  I\right)  $ (since $Z\left(
I\right)  $ is a $\mathbf{k}$-module, and since both $q$ and $a_{x}$ belong to
$Z\left(  I\right)  $). Moreover, Corollary \ref{cor.aw.lead} (applied to
$w=x$) yields%
\[
a_{x}=x+\left(  \text{a sum of permutations }v\in S_{n}\text{ satisfying
}v<x\right)  .
\]
Hence, $\left[  x\right]  \left(  a_{x}\right)  =1$ and%
\begin{equation}
\left[  w\right]  \left(  a_{x}\right)  =0\ \ \ \ \ \ \ \ \ \ \text{for each
}w\in S_{n}\text{ satisfying }w>x. \label{pf.prop.aw.basis-QI.c2.pf.c2.2}%
\end{equation}

Now, from $r=q-\lambda a_{x}$, we obtain
\[
\left[  x\right]  r=\left[  x\right]  \left(  q-\lambda a_{x}\right)  =\left[
x\right]  q-\underbrace{\lambda}_{=\left[  x\right]  q}\cdot
\underbrace{\left[  x\right]  \left(  a_{x}\right)  }_{=1}=\left[  x\right]
q-\left[  x\right]  q=0.
\]
Moreover, for each $w\in S_{n}$ satisfying $w>x$, we have
\begin{align*}
\left[  w\right]  r  &  =\left[  w\right]  \left(  q-\lambda a_{x}\right)
\ \ \ \ \ \ \ \ \ \ \left(  \text{since }r=q-\lambda a_{x}\right) \\
&  =\underbrace{\left[  w\right]  q}_{\substack{=0\\\text{(by
(\ref{pf.prop.aw.basis-QI.c2.pf.0}))}}}-\lambda\cdot\underbrace{\left[
w\right]  \left(  a_{x}\right)  }_{\substack{=0\\\text{(by
(\ref{pf.prop.aw.basis-QI.c2.pf.c2.2}))}}}=0-\lambda\cdot0=0.
\end{align*}
This equality also holds for $w=x$ (since we have just seen that $\left[
x\right]  r=0$). Hence, it holds for all $w\geq x$. Thus, we have shown that
$\left[  w\right]  r=0$ for each $w\in S_{n}$ satisfying $w\geq x$. In other
words, we have $r\in\operatorname*{span}\left(  \left(  w\right)  _{w\in
S_{n};\ w<x}\right)  $. Combining this with $r\in Z\left(  I\right)  $, we
obtain%
\[
r\in Z\left(  I\right)  \cap\operatorname*{span}\left(  \left(  w\right)
_{w\in S_{n};\ w<x}\right)  \subseteq\operatorname*{span}\left(  \left(
a_{w}\right)  _{w\in S_{n};\ I\subseteq\operatorname*{Des}w}\right)
\]
(by (\ref{pf.prop.aw.basis-QI.c2.pf.IH2})). Now, from $r=q-\lambda a_{x}$, we
obtain%
\begin{align*}
q  &  =\underbrace{r}_{\in\operatorname*{span}\left(  \left(  a_{w}\right)
_{w\in S_{n};\ I\subseteq\operatorname*{Des}w}\right)  }+\lambda
\underbrace{a_{x}}_{\in\operatorname*{span}\left(  \left(  a_{w}\right)
_{w\in S_{n};\ I\subseteq\operatorname*{Des}w}\right)  }\\
&  \in\operatorname*{span}\left(  \left(  a_{w}\right)  _{w\in S_{n}%
;\ I\subseteq\operatorname*{Des}w}\right)  +\lambda\operatorname*{span}\left(
\left(  a_{w}\right)  _{w\in S_{n};\ I\subseteq\operatorname*{Des}w}\right) \\
&  \subseteq\operatorname*{span}\left(  \left(  a_{w}\right)  _{w\in
S_{n};\ I\subseteq\operatorname*{Des}w}\right)  \ \ \ \ \ \ \ \ \ \ \left(
\text{since }\operatorname*{span}\left(  \left(  a_{w}\right)  _{w\in
S_{n};\ I\subseteq\operatorname*{Des}w}\right)  \text{ is a }\mathbf{k}%
\text{-module}\right)  .
\end{align*}
Hence, we have proved $q\in\operatorname*{span}\left(  \left(  a_{w}\right)
_{w\in S_{n};\ I\subseteq\operatorname*{Des}w}\right)  $ in Case 2.

Now, we have proved $q\in\operatorname*{span}\left(  \left(  a_{w}\right)
_{w\in S_{n};\ I\subseteq\operatorname*{Des}w}\right)  $ in both Cases 1 and
2. Hence, $q\in\operatorname*{span}\left(  \left(  a_{w}\right)  _{w\in
S_{n};\ I\subseteq\operatorname*{Des}w}\right)  $ always holds.

Forget that we fixed $q$. We thus have shown that $q\in\operatorname*{span}%
\left(  \left(  a_{w}\right)  _{w\in S_{n};\ I\subseteq\operatorname*{Des}%
w}\right)  $ for each $q\in Z\left(  I\right)  \cap\operatorname*{span}\left(
\left(  w\right)  _{w\in S_{n};\ w\leq x}\right)  $. In other words, $Z\left(
I\right)  \cap\operatorname*{span}\left(  \left(  w\right)  _{w\in
S_{n};\ w\leq x}\right)  \subseteq\operatorname*{span}\left(  \left(
a_{w}\right)  _{w\in S_{n};\ I\subseteq\operatorname*{Des}w}\right)  $. In
other words, we have proved Claim 2 for $u=x$. This completes the induction
step. Thus, Claim 2 is proven.] \medskip

Now, it is easy to see that the family $\left(  a_{w}\right)  _{w\in
S_{n};\ I\subseteq\operatorname*{Des}w}$ spans the $\mathbf{k}$-module
$Z\left(  I\right)  $\ \ \ \ \footnote{\textit{Proof.} Let $u$ be the largest
permutation in $S_{n}$ (with respect to the lexicographic order). Thus, every
$w\in S_{n}$ satisfies $w\leq u$.
\par
Let $q\in Z\left(  I\right)  $. Then, $q\in Z\left(  I\right)  \subseteq
\mathbf{k}\left[  S_{n}\right]  =\operatorname*{span}\left(  \left(  w\right)
_{w\in S_{n}}\right)  $ (since the family $\left(  w\right)  _{w\in S_{n}}$ is
a basis of the $\mathbf{k}$-module $\mathbf{k}\left[  S_{n}\right]  $).
However, the family $\left(  w\right)  _{w\in S_{n}}$ is the same as the
family $\left(  w\right)  _{w\in S_{n};\ w\leq u}$ (since every $w\in S_{n}$
satisfies $w\leq u$). Hence, $q\in\operatorname*{span}\left(  \left(
w\right)  _{w\in S_{n};\ w\leq u}\right)  $ (since $q\in\operatorname*{span}%
\left(  \left(  w\right)  _{w\in S_{n}}\right)  $). Combining this with $q\in
Z\left(  I\right)  $, we obtain $q\in Z\left(  I\right)  \cap
\operatorname*{span}\left(  \left(  w\right)  _{w\in S_{n};\ w\leq u}\right)
\subseteq\operatorname*{span}\left(  \left(  a_{w}\right)  _{w\in
S_{n};\ I\subseteq\operatorname*{Des}w}\right)  $ (by Claim 2).
\par
Forget that we fixed $q$. We thus have shown that each $q\in Z\left(
I\right)  $ satisfies $q\in\operatorname*{span}\left(  \left(  a_{w}\right)
_{w\in S_{n};\ I\subseteq\operatorname*{Des}w}\right)  $. In other words,
$Z\left(  I\right)  \subseteq\operatorname*{span}\left(  \left(  a_{w}\right)
_{w\in S_{n};\ I\subseteq\operatorname*{Des}w}\right)  $. In other words, the
family $\left(  a_{w}\right)  _{w\in S_{n};\ I\subseteq\operatorname*{Des}w}$
spans the $\mathbf{k}$-module $Z\left(  I\right)  $ (since Claim 1 shows that
this family is a family of vectors in $Z\left(  I\right)  $). Qed.}. We shall
now show that this family is $\mathbf{k}$-linearly independent. Slightly
better, we will show that the family $\left(  a_{w}\right)  _{w\in S_{n}}$ is
$\mathbf{k}$-linearly independent:

\begin{statement}
\textit{Claim 3:} Let $\left(  \lambda_{w}\right)  _{w\in S_{n}}$ be a family
of elements of $\mathbf{k}$ such that $\sum_{w\in S_{n}}\lambda_{w}a_{w}=0$.
Then, $\lambda_{w}=0$ for each $w\in S_{n}$.
\end{statement}

[\textit{Proof of Claim 3:} This follows by a straightforward triangularity
argument (where the triangularity is provided by Corollary \ref{cor.aw.lead}).
Purely for the sake of completeness, we present the argument in full:

We must prove that%
\begin{equation}
\lambda_{w}=0\ \ \ \ \ \ \ \ \ \ \text{for each }w\in S_{n}.
\label{pf.prop.aw.basis-QI.c3.pf.1}%
\end{equation}

In order to prove (\ref{pf.prop.aw.basis-QI.c3.pf.1}), we proceed by strong
induction on $w$, but this time we use the \textbf{reverse} of the
lexicographic order on $S_{n}$ as our well-ordering. Thus, we fix some $x\in
S_{n}$, and we assume (as the induction hypothesis) that
(\ref{pf.prop.aw.basis-QI.c3.pf.1}) has already been proved for each $w>x$
(not for each $w<x$ as in our previous induction proof). Our goal is then to
prove that (\ref{pf.prop.aw.basis-QI.c3.pf.1}) holds for $w=x$. In other
words, our goal is to prove that $\lambda_{x}=0$.

The induction hypothesis yields that (\ref{pf.prop.aw.basis-QI.c3.pf.1}) holds
for each $w>x$. In other words, $\lambda_{w}=0$ for each $w\in S_{n}$
satisfying $w>x$. Hence, $\sum_{\substack{w\in S_{n};\\w>x}%
}\underbrace{\lambda_{w}}_{=0}a_{w}=\sum_{\substack{w\in S_{n};\\w>x}%
}0a_{w}=0$. However, each $w\in S_{n}$ satisfies exactly one of the three
statements $w<x$ and $w=x$ and $w>x$. Hence, we can split the sum $\sum_{w\in
S_{n}}\lambda_{w}a_{w}$ as follows:%
\[
\sum_{w\in S_{n}}\lambda_{w}a_{w}=\sum_{\substack{w\in S_{n};\\w<x}%
}\lambda_{w}a_{w}+\underbrace{\sum_{\substack{w\in S_{n};\\w=x}}\lambda
_{w}a_{w}}_{=\lambda_{x}a_{x}}+\underbrace{\sum_{\substack{w\in S_{n}%
;\\w>x}}\lambda_{w}a_{w}}_{=0}=\sum_{\substack{w\in S_{n};\\w<x}}\lambda
_{w}a_{w}+\lambda_{x}a_{x}.
\]
Comparing this with $\sum_{w\in S_{n}}\lambda_{w}a_{w}=0$, we obtain%
\[
0=\sum_{\substack{w\in S_{n};\\w<x}}\lambda_{w}a_{w}+\lambda_{x}a_{x}.
\]
Taking the $x$-coefficients on both sides of this equality, we obtain%
\begin{align}
\left[  x\right]  0  &  =\left[  x\right]  \left(  \sum_{\substack{w\in
S_{n};\\w<x}}\lambda_{w}a_{w}+\lambda_{x}a_{x}\right) \nonumber\\
&  =\sum_{\substack{w\in S_{n};\\w<x}}\lambda_{w}\cdot\left[  x\right]
\left(  a_{w}\right)  +\lambda_{x}\cdot\left[  x\right]  \left(  a_{x}\right)
. \label{pf.prop.aw.basis-QI.c3.pf.4}%
\end{align}

Now, let $w\in S_{n}$ be such that $w<x$. Then, $x>w$. However, Corollary
\ref{cor.aw.lead} yields
\[
a_{w}=w+\left(  \text{a sum of permutations }v\in S_{n}\text{ satisfying
}v<w\right)  .
\]
Hence, $\left[  y\right]  \left(  a_{w}\right)  =0$ for all $y\in S_{n}$
satisfying $y>w$. Applying this to $y=x$, we obtain $\left[  x\right]  \left(
a_{w}\right)  =0$ (since $x>w$).

Forget that we fixed $w$. We thus have shown that%
\begin{equation}
\left[  x\right]  \left(  a_{w}\right)  =0\ \ \ \ \ \ \ \ \ \ \text{for each
}w\in S_{n}\text{ satisfying }w<x. \label{pf.prop.aw.basis-QI.c3.pf.3}%
\end{equation}

Also, Corollary \ref{cor.aw.lead} (applied to $w=x$) yields%
\[
a_{x}=x+\left(  \text{a sum of permutations }v\in S_{n}\text{ satisfying
}v<x\right)  .
\]
Hence, $\left[  x\right]  \left(  a_{x}\right)  =1$. Now,
\begin{align*}
0  &  =\left[  x\right]  0=\sum_{\substack{w\in S_{n};\\w<x}}\lambda_{w}%
\cdot\underbrace{\left[  x\right]  \left(  a_{w}\right)  }%
_{\substack{=0\\\text{(by (\ref{pf.prop.aw.basis-QI.c3.pf.3}))}}}+\lambda
_{x}\cdot\underbrace{\left[  x\right]  \left(  a_{x}\right)  }_{=1}%
\ \ \ \ \ \ \ \ \ \ \left(  \text{by (\ref{pf.prop.aw.basis-QI.c3.pf.4}%
)}\right) \\
&  =\underbrace{\sum_{\substack{w\in S_{n};\\w<x}}\lambda_{w}\cdot0}%
_{=0}+\lambda_{x}=\lambda_{x}.
\end{align*}
Thus, $\lambda_{x}=0$. In other words, (\ref{pf.prop.aw.basis-QI.c3.pf.1})
holds for $w=x$. This completes the induction step. Thus,
(\ref{pf.prop.aw.basis-QI.c3.pf.1}) is proved, and Claim 3 follows.] \medskip

Now, we have proved Claim 3. In other words, we have proved that the family
$\left(  a_{w}\right)  _{w\in S_{n}}$ is $\mathbf{k}$-linearly independent.
Hence, its subfamily $\left(  a_{w}\right)  _{w\in S_{n};\ I\subseteq
\operatorname*{Des}w}$ is $\mathbf{k}$-linearly independent as well (since a
subfamily of a $\mathbf{k}$-linearly independent family must itself be
$\mathbf{k}$-linearly independent family). Since we also know that this
subfamily spans the $\mathbf{k}$-module $Z\left(  I\right)  $, we thus
conclude that this subfamily is a basis of $Z\left(  I\right)  $. This proves
Proposition \ref{prop.aw.basis-QI}.
\end{proof}

\begin{proof}
[Proof of Proposition \ref{prop.aw.basis-kSn}.]The definition of $Z\left(
\varnothing\right)  $ yields%
\[
Z\left(  \varnothing\right)  =\left\{  q\in\mathbf{k}\left[  S_{n}\right]
\ \mid\ qs_{i}=q\text{ for all }i\in\varnothing\right\}  =\mathbf{k}\left[
S_{n}\right]
\]
(because the statement \textquotedblleft$qs_{i}=q$ for all $i\in\varnothing
$\textquotedblright\ is vacuously true for each $q\in\mathbf{k}\left[
S_{n}\right]  $). However, Proposition \ref{prop.aw.basis-QI} (applied to
$I=\varnothing$) yields that the family $\left(  a_{w}\right)  _{w\in
S_{n};\ \varnothing\subseteq\operatorname*{Des}w}$ is a basis of the
$\mathbf{k}$-module $Z\left(  \varnothing\right)  $. Since the family $\left(
a_{w}\right)  _{w\in S_{n};\ \varnothing\subseteq\operatorname*{Des}w}$ is
nothing other than the family $\left(  a_{w}\right)  _{w\in S_{n}}$ (because
the statement \textquotedblleft$\varnothing\subseteq\operatorname*{Des}%
w$\textquotedblright\ holds for each $w\in S_{n}$), we can rewrite this as
follows: The family $\left(  a_{w}\right)  _{w\in S_{n}}$ is a basis of the
$\mathbf{k}$-module $Z\left(  \varnothing\right)  $. In other words, the
family $\left(  a_{w}\right)  _{w\in S_{n}}$ is a basis of the $\mathbf{k}%
$-module $\mathbf{k}\left[  S_{n}\right]  $ (since $Z\left(  \varnothing
\right)  =\mathbf{k}\left[  S_{n}\right]  $). This proves Proposition
\ref{prop.aw.basis-kSn}.
\end{proof}

\begin{proof}
[Proof of Proposition \ref{prop.aw.basis-FI}.]Let $I\subseteq\left[  n\right]
$. Then, Proposition \ref{prop.aw.QI=FI'} yields $F\left(  I\right)  =Z\left(
I^{\prime}\right)  $.

However, Proposition \ref{prop.aw.basis-QI} (applied to $I^{\prime}$ instead
of $I$) yields that the family $\left(  a_{w}\right)  _{w\in S_{n}%
;\ I^{\prime}\subseteq\operatorname*{Des}w}$ is a basis of the $\mathbf{k}%
$-module $Z\left(  I^{\prime}\right)  $. Since $F\left(  I\right)  =Z\left(
I^{\prime}\right)  $, we can rewrite this as follows: The family $\left(
a_{w}\right)  _{w\in S_{n};\ I^{\prime}\subseteq\operatorname*{Des}w}$ is a
basis of the $\mathbf{k}$-module $F\left(  I\right)  $. This proves
Proposition \ref{prop.aw.basis-FI}.
\end{proof}

We refer to the basis $\left(  a_{w}\right)  _{w\in S_{n}}$ of $\mathbf{k}%
\left[  S_{n}\right]  $ as the \emph{descent-destroying basis}, due to how
$a_{w}$ is defined in terms of \textquotedblleft removing\textquotedblright%
\ descents from $w$. As with any basis, we can ask the following rather
natural question about it:

\begin{question}
How can we explicitly expand a permutation $v\in S_{n}$ in the basis $\left(
a_{w}\right)  _{w\in S_{n}}$ of $\mathbf{k}\left[  S_{n}\right]  $ ?
\end{question}

\begin{example}
For this example, let $n=4$. We write each permutation $w\in S_{4}$ as the
list $\left[  w\left(  1\right)  \ w\left(  2\right)  \ w\left(  3\right)
\ w\left(  4\right)  \right]  $ (written without commas for brevity, and using
square brackets to distinguish it from a parenthesized integer). Then,%
\[
\left[  3412\right]  =a_{\left[  1234\right]  }-a_{\left[  1324\right]
}+a_{\left[  1342\right]  }+a_{\left[  3124\right]  }-a_{\left[  3142\right]
}+a_{\left[  3412\right]  }.
\]

\end{example}

We note that it is \textbf{not} generally true that when we express a
permutation $v\in S_{n}$ as a $\mathbf{k}$-linear combination of the basis
$\left(  a_{w}\right)  _{w\in S_{n}}$, all coefficients will belong to
$\left\{  0,1,-1\right\}  $. However, the smallest $n$ for which this is not
the case is $n=8$, which suggests that the coefficients are not too complicated.

\section{$Q$-indices and bases of $F_{i}$}

\subsection{Definition}

We can now use our basis $\left(  a_{w}\right)  _{w\in S_{n}}$ and its
subfamilies $\left(  a_{w}\right)  _{w\in S_{n};\ I^{\prime}\subseteq
\operatorname*{Des}w}$ to obtain a basis for each piece $F_{i}$ of the
filtration $F_{0}\subseteq F_{1}\subseteq F_{2}\subseteq\cdots\subseteq
F_{f_{n+1}}$. First, for the sake of convenience, we define a certain
permutation statistic we call the \textquotedblleft$Q$-index\textquotedblright%
. It is worth pointing out that this \textquotedblleft$Q$%
-index\textquotedblright\ will depend on the way how we numbered the lacunar
subsets of $\left[  n-1\right]  $ by $Q_{1},Q_{2},\ldots,Q_{f_{n+1}}$, so it
is not really a natural permutation statistic. We will show in Proposition
\ref{prop.Qind.equivalent}, however, that the assignment of the lacunar set
$Q_{i}$ (where $i$ is the $Q$-index of $w$) to a permutation $w$ is canonical
(i.e., does not depend on the numbering of the lacunar subsets).

First, we prove a lemma:

\begin{lemma}
\label{lem.Qind.exists}Let $w\in S_{n}$. Then, there exists some $i\in\left[
f_{n+1}\right]  $ such that $Q_{i}^{\prime}\subseteq\operatorname*{Des}w$.
\end{lemma}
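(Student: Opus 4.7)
The plan is to construct, for the given $w \in S_n$, a lacunar subset $J$ of $[n-1]$ satisfying $J' \subseteq \operatorname{Des} w$; since $Q_1, Q_2, \ldots, Q_{f_{n+1}}$ enumerate all lacunar subsets of $[n-1]$, this $J$ must equal some $Q_i$, supplying the desired index $i \in [f_{n+1}]$.

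Unwinding the definition of the non-shadow, the inclusion $J' \subseteq \operatorname{Des} w$ is equivalent to the ascent set $A := [n-1] \setminus \operatorname{Des} w$ being contained in $J \cup (J-1)$; in other words, every $a \in A$ must satisfy $a \in J$ or $a+1 \in J$ (call such an $a$ \emph{covered} by $J$). I would then produce such a $J$ by a greedy right-to-left sweep: initialize $J := \emptyset$ and, for $i$ running through $n-1, n-2, \ldots, 1$ in decreasing order, add $i$ to $J$ precisely when $i \in A$ and $i+1 \notin J$. The resulting set is clearly a subset of $[n-1]$; it is lacunar, because at the moment $i$ is added the neighbor $i+1$ is ruled out by the condition while $i-1$ has not yet been processed and so does not lie in $J$; and every $a \in A$ is covered, since when $a$ was processed either $a+1$ was already in $J$, or the condition triggered and $a$ itself was added. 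Hence $A \subseteq J \cup (J-1)$, which gives $J' \subseteq \operatorname{Des} w$, and choosing $i$ with $Q_i = J$ finishes the proof.

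The construction is essentially forced, so I do not foresee any real obstacle. The only mildly subtle point is choosing the sweep direction: going right-to-left guarantees that adding $i$ cannot conflict with a previously chosen $i-1$, whereas a left-to-right sweep that prefers to insert $i+1$ instead would require a small case analysis near the right endpoint $i = n-1$ (where $i+1 = n$ is not allowed).
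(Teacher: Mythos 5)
Your proof is correct, but it takes a different (and more elaborate) route than the paper. You build a $w$-dependent lacunar set $J$ by a greedy right-to-left sweep so that every ascent $a \in [n-1]\setminus\operatorname{Des}w$ is covered by $J\cup(J-1)$; the verification that $J$ is lacunar and that every ascent is covered is sound (the right-to-left order indeed ensures that when $i$ is examined, the status of $i+1$ is final and $i-1$ has not yet entered $J$). The paper instead exhibits a single lacunar set, independent of $w$: take $I=\{j\in[n-1]\ \mid\ j\equiv n-1 \operatorname{mod} 2\}$, i.e.\ $\{n-1,n-3,n-5,\ldots\}$. Then $[n-1]\subseteq I\cup(I-1)$, so $I'=\varnothing\subseteq\operatorname{Des}w$ trivially, and the index $i$ with $Q_i=I$ works for every $w$ at once. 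That argument is shorter, but your construction buys something extra: since your greedy rule only ever inserts elements of the ascent set, your $J$ also satisfies $\operatorname{Des}w\subseteq[n-1]\setminus J$, so by Proposition \ref{prop.Qind.equivalent} the index you produce is in fact the $Q$-index of $w$ itself (and your sweep gives an explicit algorithm for the lacunar set appearing in Corollary \ref{cor.lac.bool}), which is more than the lemma asks for.
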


\begin{proof}
Let $I=\left\{  j\in\left[  n-1\right]  \ \mid\ j\equiv n-1\operatorname{mod}%
2\right\}  $. Then, $I$ is a lacunar subset of $\left[  n-1\right]  $ (in
fact, $I$ is lacunar since all elements of $I$ have the same parity). Thus,
there exists some $i\in\left[  f_{n+1}\right]  $ such that $I=Q_{i}$ (since
$Q_{1},Q_{2},\ldots,Q_{f_{n+1}}$ are all lacunar subsets of $\left[
n-1\right]  $). Consider this $i$. We shall show that $Q_{i}^{\prime}%
\subseteq\operatorname*{Des}w$.

The definition of $I$ yields that each element of $\left[  n-1\right]  $ is
either in $I$ (if it has the same parity as $n-1$) or in $I-1$ (if it has
not). In other words, $\left[  n-1\right]  \subseteq I\cup\left(  I-1\right)
$. The definition of $I^{\prime}$ yields $I^{\prime}=\left[  n-1\right]
\setminus\left(  I\cup\left(  I-1\right)  \right)  =\varnothing$ (since
$\left[  n-1\right]  \subseteq I\cup\left(  I-1\right)  $). In view of
$I=Q_{i}$, this rewrites as $Q_{i}^{\prime}=\varnothing$. Hence,
$Q_{i}^{\prime}=\varnothing\subseteq\operatorname*{Des}w$. This proves Lemma
\ref{lem.Qind.exists}.
\end{proof}

Now, we can define the $Q$-index:

\begin{itemize}
\item If $w\in S_{n}$ is any permutation, then the $Q$\emph{-index} of $w$ is
defined to be the \textbf{smallest} $i\in\left[  f_{n+1}\right]  $ such that
$Q_{i}^{\prime}\subseteq\operatorname*{Des}w$. (This is well-defined, because
Lemma \ref{lem.Qind.exists} shows that such an $i$ exists.) We denote the
$Q$-index of $w$ by $\operatorname*{Qind}w$.
\end{itemize}

\begin{example}
For this example, let $n=4$. Recall Example \ref{exa.Qi.4}, in which we listed
all the lacunar subsets of $\left[  3\right]  $ in order. Let $w\in S_{n}$ be
the permutation such that $\left(  w\left(  1\right)  ,w\left(  2\right)
,\ldots,w\left(  n\right)  \right)  =\left(  4,3,1,2\right)  $. Then,
$\operatorname*{Des}w=\left\{  1,2\right\}  $. Hence, $Q_{4}^{\prime}=\left\{
1\right\}  \subseteq\operatorname*{Des}w$, but it is easy to see that
$Q_{i}^{\prime}\not \subseteq \operatorname*{Des}w$ for all $i<4$. Hence, the
smallest $i\in\left[  f_{n+1}\right]  $ such that $Q_{i}^{\prime}%
\subseteq\operatorname*{Des}w$ is $4$. In other words, the $Q$-index of $w$ is
$4$. In other words, $\operatorname*{Qind}w=4$.
\end{example}

\subsection{An equivalent description}

As we said, the $Q$-index of a permutation $w\in S_{n}$ depends on the
ordering of $Q_{1},Q_{2},\ldots,Q_{f_{n+1}}$. However, the dependence is not
as strong as it might appear from the definition; indeed, we have the
following alternative characterization:

\begin{proposition}
\label{prop.Qind.equivalent}Let $w\in S_{n}$ and $i\in\left[  f_{n+1}\right]
$. Then, $\operatorname*{Qind}w=i$ if and only if $Q_{i}^{\prime}%
\subseteq\operatorname*{Des}w\subseteq\left[  n-1\right]  \setminus Q_{i}$.
\end{proposition}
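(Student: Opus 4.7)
My plan is to prove the two implications separately. The forward implication will rely on the minimality of $\operatorname*{Qind}w$ combined with a sum-reduction argument using Proposition~\ref{prop.K'subI'}(a) and Corollary~\ref{cor.FI-lac-2}; the backward implication will rely on a short uniqueness lemma: given any $D\subseteq\left[n-1\right]$, at most one lacunar subset $Q$ of $\left[n-1\right]$ can satisfy $Q^{\prime}\subseteq D\subseteq\left[n-1\right]\setminus Q$.

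For the forward direction, I assume $\operatorname*{Qind}w=i$. The inclusion $Q_{i}^{\prime}\subseteq\operatorname*{Des}w$ is immediate from the definition of $\operatorname*{Qind}$, so the content is to prove $Q_{i}\cap\operatorname*{Des}w=\varnothing$. Suppose for contradiction that $j\in Q_{i}\cap\operatorname*{Des}w$. I set $K:=\left(Q_{i}\setminus\left\{j\right\}\right)\cup\left\{j-1\right\}$ when $j>1$ (and $K:=Q_{i}\setminus\left\{1\right\}$ otherwise). Lacunarity of $Q_{i}$ gives $j-1\notin Q_{i}$, so $\operatorname*{sum}K=\operatorname*{sum}Q_{i}-1<\operatorname*{sum}Q_{i}$. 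Proposition~\ref{prop.K'subI'}(a) yields $K^{\prime}\subseteq Q_{i}^{\prime}\cup\left\{j\right\}$, which is contained in $\operatorname*{Des}w$ since $Q_{i}^{\prime}\subseteq\operatorname*{Des}w$ and $j\in\operatorname*{Des}w$. Applying Corollary~\ref{cor.FI-lac-2} to $K$ then produces a lacunar $J\subseteq\left[n-1\right]$ with $\operatorname*{sum}J\leq\operatorname*{sum}K<\operatorname*{sum}Q_{i}$ and $J^{\prime}\subseteq K^{\prime}\subseteq\operatorname*{Des}w$. Writing $J=Q_{k}$, the weak ordering $\operatorname*{sum}\left(Q_{1}\right)\leq\cdots\leq\operatorname*{sum}\left(Q_{f_{n+1}}\right)$ forces $k<i$, and then $Q_{k}^{\prime}\subseteq\operatorname*{Des}w$ contradicts the minimality of $\operatorname*{Qind}w=i$.

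For the backward direction, I assume the two inclusions $Q_{i}^{\prime}\subseteq\operatorname*{Des}w\subseteq\left[n-1\right]\setminus Q_{i}$ hold; then $\operatorname*{Qind}w\leq i$ is automatic, so it suffices to rule out $\operatorname*{Qind}w=k$ for some $k<i$. If such $k$ existed, applying the already-proved forward direction to $k$ would give $Q_{k}^{\prime}\subseteq\operatorname*{Des}w\subseteq\left[n-1\right]\setminus Q_{k}$ as well. I then invoke the uniqueness lemma: writing $D:=\operatorname*{Des}w$, any lacunar $Q\subseteq\left[n-1\right]$ with $Q^{\prime}\subseteq D\subseteq\left[n-1\right]\setminus Q$ is uniquely determined by the downward recursion
\[
j\in Q\ \Longleftrightarrow\ \left(j\notin D\ \text{and}\ j+1\notin Q\right),\qquad j=n-1,n-2,\ldots,1,
\]
with $n\notin Q$ as base. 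The ``only if'' direction of the recursion comes from $Q\cap D=\varnothing$ and from lacunarity of $Q$; the ``if'' direction comes from $Q^{\prime}\subseteq D$, since if $j\notin D$, $j\notin Q$ and $j+1\notin Q$, then $j\in\left[n-1\right]\setminus\left(Q\cup\left(Q-1\right)\right)=Q^{\prime}\subseteq D$, a contradiction. Hence $Q_{i}=Q_{k}$, contradicting the fact that the $Q_{1},Q_{2},\ldots,Q_{f_{n+1}}$ are pairwise distinct.

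The main obstacle is the bookkeeping in the forward direction: one has to combine Proposition~\ref{prop.K'subI'}(a) and Corollary~\ref{cor.FI-lac-2} so as to produce a lacunar set whose sum is \emph{strictly} smaller than $\operatorname*{sum}Q_{i}$, thereby giving an index $k<i$ in the chosen ordering. Once this is done, the uniqueness lemma finishes the backward direction essentially for free.
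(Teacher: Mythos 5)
Your proof is correct. The forward direction is essentially the paper's own argument: your construction of $K=\left(Q_{i}\setminus\left\{j\right\}\right)\cup\left\{j-1\right\}$ followed by Proposition~\ref{prop.K'subI'}~\textbf{(a)} and Corollary~\ref{cor.FI-lac-2} is exactly the content of the paper's Lemma~\ref{lem.lac-IR}, which you have simply inlined; the rest (reading off $k<i$ from the weak ordering of the sums and contradicting minimality) matches the paper. The backward direction, however, takes a genuinely different route. The paper fixes $k<i$, assumes $Q_{k}^{\prime}\subseteq\operatorname*{Des}w$, and derives a contradiction from Lemma~\ref{lem.lac-sum-less2}, a sum-comparison statement asserting that $I$ lacunar, $K\neq I$ and $K^{\prime}\subseteq\left[n-1\right]\setminus I$ force $\operatorname*{sum}I<\operatorname*{sum}K$; this only uses one half of the sandwich for $Q_{k}$ and stays entirely within the sum-ordering framework. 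You instead apply the already-established forward implication to $k=\operatorname*{Qind}w$ and then invoke a uniqueness lemma: for a fixed $D\subseteq\left[n-1\right]$, at most one lacunar $Q$ satisfies $Q^{\prime}\subseteq D\subseteq\left[n-1\right]\setminus Q$, proved by the downward recursion $j\in Q\Longleftrightarrow\left(j\notin D\text{ and }j+1\notin Q\right)$, whose two directions I checked are correctly justified by $Q\cap D=\varnothing$ plus lacunarity, and by $Q^{\prime}\subseteq D$, respectively. Your route has the advantage of directly establishing (and making explicit) the uniqueness half of what the paper later records as Corollary~\ref{cor.lac.bool}, and it makes the set $Q_{i}$ computable from $\operatorname*{Des}w$ by an explicit recursion; the paper's route avoids any appeal to the forward direction and keeps the backward direction self-contained via the sum inequality. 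Both are sound.
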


Before we prove this proposition, we need two further lemmas about lacunar subsets:

\begin{lemma}
\label{lem.lac-sum-less2}Let $I$ and $K$ be two subsets of $\left[
n-1\right]  $ such that $I$ is lacunar and $K\neq I$ and $K^{\prime}%
\subseteq\left[  n-1\right]  \setminus I$. Then, $\operatorname*{sum}%
I<\operatorname*{sum}K$.
\end{lemma}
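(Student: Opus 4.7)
The plan is to exhibit an explicit subset of $K$ whose sum is at least $\operatorname*{sum} I + |I \setminus K|$, from which the conclusion will follow (with the degenerate case $I \subseteq K$ handled separately). First I would unpack the hypothesis $K' \subseteq [n-1] \setminus I$: by definition $K' = [n-1] \setminus (K \cup (K-1))$, so the containment says $K' \cap I = \varnothing$, and since $I \subseteq [n-1]$ this is equivalent to the pointwise condition
\[
\text{for each } i \in I, \ \text{either } i \in K \ \text{or} \ i+1 \in K.
\]

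Next I would decompose $I = A \sqcup B$ with $A := I \cap K$ and $B := I \setminus K$. For each $i \in B$ the pointwise condition forces $i+1 \in K$ (since $i \notin K$), while the lacunarity of $I$ gives $i+1 \notin I$, and in particular $i+1 \notin A$. Thus $A$ and $B+1 := \{i+1 \mid i \in B\}$ are disjoint subsets of $K$, yielding
\[
A \sqcup (B+1) \subseteq K, \qquad \operatorname*{sum}(A \sqcup (B+1)) = \operatorname*{sum} A + \operatorname*{sum} B + |B| = \operatorname*{sum} I + |B|.
\]
In particular, $\operatorname*{sum} K \geq \operatorname*{sum} I + |B|$.

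If $B \neq \varnothing$, this already gives $\operatorname*{sum} K > \operatorname*{sum} I$. Otherwise $B = \varnothing$, meaning $I \subseteq K$; combined with $K \neq I$ this forces $I \subsetneq K$, and since every element of $K \setminus I$ is a positive integer, again $\operatorname*{sum} K > \operatorname*{sum} I$.

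The only substantive ingredient is the disjointness $A \cap (B+1) = \varnothing$, which is exactly where the lacunarity of $I$ is invoked (if $i \in B \subseteq I$ then $i+1 \notin I \supseteq A$). Beyond that, the argument reduces to an elementary sum comparison together with a one-line handling of the $B = \varnothing$ case, so I do not anticipate any serious obstacle.
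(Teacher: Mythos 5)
Your proof is correct and follows essentially the same route as the paper's: the core step in both is to send each $i\in I\setminus K$ to $i+1$, using the hypothesis on $K^{\prime}$ to get $i+1\in K$ and the lacunarity of $I$ to keep these images out of $I$, and then to compare sums (with the same separate treatment of the case $I\subseteq K$). The only cosmetic difference is that you exhibit the disjoint union $\left(  I\cap K\right)  \sqcup\left(  \left(  I\setminus K\right)  +1\right)  \subseteq K$ directly, while the paper phrases the same fact as the inclusion $I\setminus K\subseteq\left(  K\setminus I\right)  -1$ and compares $\operatorname*{sum}\left(  I\setminus K\right)  $ with $\operatorname*{sum}\left(  K\setminus I\right)  $.
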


\begin{proof}
[Proof of Lemma \ref{lem.lac-sum-less2}.]First, we observe that $I\setminus
K\subseteq\left(  K\setminus I\right)  -1$.

[\textit{Proof:} Let $i\in I\setminus K$. Thus, $i\in I$ and $i\notin K$.

If we had $i+1\notin K$, then we would have $i\in K^{\prime}$ (since $i\in
I\subseteq\left[  n-1\right]  $ and $i\notin K$ and $i+1\notin K$), which
would entail $i\in K^{\prime}\subseteq\left[  n-1\right]  \setminus I$; but
this would contradict $i\in I$. Thus, we cannot have $i+1\notin K$. In other
words, we have $i+1\in K$. Furthermore, $I$ is lacunar; thus, from $i\in I$,
we obtain $i+1\notin I$. Combining this with $i+1\in K$, we find $i+1\in
K\setminus I$. Hence, $i\in\left(  K\setminus I\right)  -1$.

Forget that we fixed $i$. We thus have proved that $i\in\left(  K\setminus
I\right)  -1$ for each $i\in I\setminus K$. In other words, $I\setminus
K\subseteq\left(  K\setminus I\right)  -1$.] \medskip

Now, the set $I$ is the union of its two disjoint subsets $I\setminus K$ and
$I\cap K$. Hence,%
\begin{equation}
\operatorname*{sum}I=\operatorname*{sum}\left(  I\setminus K\right)
+\operatorname*{sum}\left(  I\cap K\right)  .
\label{pf.lem.lac-sum-less2.sumI=}%
\end{equation}
The same argument (with the roles of $I$ and $K$ swapped) yields%
\begin{equation}
\operatorname*{sum}K=\operatorname*{sum}\left(  K\setminus I\right)
+\operatorname*{sum}\left(  K\cap I\right)  .
\label{pf.lem.lac-sum-less2.sumK=}%
\end{equation}

Our goal is to prove that $\operatorname*{sum}I<\operatorname*{sum}K$. If
$I\subseteq K$, then this is obvious (since we have $K\neq I$, so that $I$
must be a \textbf{proper} subset of $K$ in this case). Thus, we WLOG assume
that $I\not \subseteq K$ from now on. Hence, $I\setminus K\neq\varnothing$. In
view of $I\setminus K\subseteq\left(  K\setminus I\right)  -1$, this entails
$\left(  K\setminus I\right)  -1\neq\varnothing$, so that $K\setminus
I\neq\varnothing$. Hence, $\left\vert K\setminus I\right\vert >0$.

Now, from $I\setminus K\subseteq\left(  K\setminus I\right)  -1$, we obtain
\[
\operatorname*{sum}\left(  I\setminus K\right)  \leq\operatorname*{sum}\left(
\left(  K\setminus I\right)  -1\right)  =\operatorname*{sum}\left(  K\setminus
I\right)  -\underbrace{\left\vert K\setminus I\right\vert }_{>0}%
<\operatorname*{sum}\left(  K\setminus I\right)  .
\]

However, (\ref{pf.lem.lac-sum-less2.sumI=}) becomes%
\[
\operatorname*{sum}I=\underbrace{\operatorname*{sum}\left(  I\setminus
K\right)  }_{<\operatorname*{sum}\left(  K\setminus I\right)  }%
+\operatorname*{sum}\left(  \underbrace{I\cap K}_{=K\cap I}\right)
<\operatorname*{sum}\left(  K\setminus I\right)  +\operatorname*{sum}\left(
K\cap I\right)  =\operatorname*{sum}K
\]
(by (\ref{pf.lem.lac-sum-less2.sumK=})). This proves Lemma
\ref{lem.lac-sum-less2}.
\end{proof}

\begin{lemma}
\label{lem.lac-IR}Let $I$ be a subset of $\left[  n\right]  $. Let $j\in I$.
Then, there exists a lacunar subset $K$ of $\left[  n-1\right]  $ satisfying
$\operatorname*{sum}K<\operatorname*{sum}I$ and $K^{\prime}\subseteq
I^{\prime}\cup\left\{  j\right\}  $.
\end{lemma}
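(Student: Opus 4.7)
The plan is to combine Proposition \ref{prop.K'subI'}\textbf{(a)} with Corollary \ref{cor.FI-lac-2}: the first produces an intermediate set whose sum has decreased and whose non-shadow lies in $I' \cup \{j\}$, and the second then ``laconizes'' this intermediate set without enlarging either quantity further.

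More precisely, the first step is to define $K_0 := (I \setminus \{j\}) \cup \{j-1\}$ when $j > 1$, and $K_0 := I \setminus \{j\}$ when $j = 1$. In either case, $K_0$ is a subset of $[n]$, and one checks directly that $\operatorname{sum} K_0 \leq \operatorname{sum} I - 1 < \operatorname{sum} I$ (we remove $j$ from $I$ and possibly add back $j-1$). Moreover, Proposition \ref{prop.K'subI'}\textbf{(a)} (applied to $I$ and $j$) yields $K_0' \subseteq I' \cup \{j\}$.

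The second step is to apply Corollary \ref{cor.FI-lac-2} to the subset $K_0 \subseteq [n]$. This produces a lacunar subset $K$ of $[n-1]$ satisfying $\operatorname{sum} K \leq \operatorname{sum} K_0$ and $K' \subseteq K_0'$. Chaining the inequalities gives $\operatorname{sum} K \leq \operatorname{sum} K_0 < \operatorname{sum} I$, and chaining the inclusions gives $K' \subseteq K_0' \subseteq I' \cup \{j\}$. Thus $K$ has all the required properties, proving Lemma \ref{lem.lac-IR}.

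I expect no real obstacle here: both ingredients have already been established, and the argument is simply a two-step reduction. The only minor point to be careful about is the boundary case $j = 1$, where $j-1 = 0$ is not allowed in $[n]$ and so the construction of $K_0$ must omit it; but Proposition \ref{prop.K'subI'}\textbf{(a)} is explicitly formulated to cover this case, so no extra work is needed.
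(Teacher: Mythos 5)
Your proof is correct and is essentially identical to the paper's: the paper also constructs the intermediate set $R = (I\setminus\{j\})\cup\{j-1\}$ (or $I\setminus\{j\}$ when $j=1$), invokes Proposition \ref{prop.K'subI'} \textbf{(a)} to get $R'\subseteq I'\cup\{j\}$, and then applies Corollary \ref{cor.FI-lac-2} to $R$ and chains the inequalities and inclusions exactly as you do.
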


\begin{proof}
Set $R:=\left(  I\setminus\left\{  j\right\}  \right)  \cup\left\{
j-1\right\}  $ if $j>1$, and otherwise set $R:=I\setminus\left\{  j\right\}
$. Thus, the set $R$ is obtained from $I$ by replacing the element $j$ (which
was in $I$, because $j\in I$) by the smaller element $j-1$ (unless $j=1$, in
which case $j$ is just removed). In either case, we therefore have
$\operatorname*{sum}R<\operatorname*{sum}I$. Also, it is easy to see that
$R\subseteq\left[  n\right]  $ and $R^{\prime}\subseteq I^{\prime}\cup\left\{
j\right\}  $ (by Proposition \ref{prop.K'subI'} \textbf{(a)}, applied to
$K=R$). Thus, Corollary \ref{cor.FI-lac-2} (applied to $R$ instead of $I$)
yields that there exists a lacunar subset $J$ of $\left[  n-1\right]  $ such
that $\operatorname*{sum}J\leq\operatorname*{sum}R$ and $J^{\prime}\subseteq
R^{\prime}$. Consider this $J$. Then, $\operatorname*{sum}J\leq
\operatorname*{sum}R<\operatorname*{sum}I$ and $J^{\prime}\subseteq R^{\prime
}\subseteq I^{\prime}\cup\left\{  j\right\}  $. Hence, there exists a lacunar
subset $K$ of $\left[  n-1\right]  $ satisfying $\operatorname*{sum}%
K<\operatorname*{sum}I$ and $K^{\prime}\subseteq I^{\prime}\cup\left\{
j\right\}  $ (namely, $K=J$). This proves Lemma \ref{lem.lac-IR}.
\end{proof}

\begin{proof}
[Proof of Proposition \ref{prop.Qind.equivalent}.]$\Longrightarrow:$ Assume
that $\operatorname*{Qind}w=i$.\ We must prove that $Q_{i}^{\prime}%
\subseteq\operatorname*{Des}w\subseteq\left[  n-1\right]  \setminus Q_{i}$.

In view of the definition of the $Q$-index, our assumption
$\operatorname*{Qind}w=i$ means that $Q_{i}^{\prime}\subseteq
\operatorname*{Des}w$ and that $i$ is the smallest element of $\left[
f_{n+1}\right]  $ with this property. The latter statement means that
\begin{equation}
Q_{k}^{\prime}\not \subseteq \operatorname*{Des}w\ \ \ \ \ \ \ \ \ \ \text{for
each }k<i. \label{pf.lem.Qind.equivalent.to.1}%
\end{equation}

Now, let $j\in\left(  \operatorname*{Des}w\right)  \cap Q_{i}$. We shall
derive a contradiction.

Indeed, we have $j\in\left(  \operatorname*{Des}w\right)  \cap Q_{i}\subseteq
Q_{i}$. Hence, Lemma \ref{lem.lac-IR} (applied to $I=Q_{i}$) shows that there
exists a lacunar subset $K$ of $\left[  n-1\right]  $ satisfying
$\operatorname*{sum}K<\operatorname*{sum}\left(  Q_{i}\right)  $ and
$K^{\prime}\subseteq Q_{i}^{\prime}\cup\left\{  j\right\}  $. Consider this
$K$. Since $K$ is a lacunar subset of $\left[  n-1\right]  $, we have
$K=Q_{k}$ for some $k\in\left[  f_{n+1}\right]  $ (since the lacunar subsets
of $\left[  n-1\right]  $ are $Q_{1},Q_{2},\ldots,Q_{f_{n+1}}$). Consider this
$k$. Thus, $Q_{k}=K$, so that $\operatorname*{sum}\left(  Q_{k}\right)
=\operatorname*{sum}K<\operatorname*{sum}\left(  Q_{i}\right)  $. However, if
we had $i\leq k$, then we would have $\operatorname*{sum}\left(  Q_{i}\right)
\leq\operatorname*{sum}\left(  Q_{k}\right)  $ (by
(\ref{pf.thm.t-simultri.sum-order})), which would contradict
$\operatorname*{sum}\left(  Q_{k}\right)  <\operatorname*{sum}\left(
Q_{i}\right)  $. Thus, we cannot have $i\leq k$. Hence, we must have $i>k$, so
that $k<i$. Therefore, (\ref{pf.lem.Qind.equivalent.to.1}) yields
$Q_{k}^{\prime}\not \subseteq \operatorname*{Des}w$. In other words,
$K^{\prime}\not \subseteq \operatorname*{Des}w$ (since $Q_{k}=K$).

However, $K^{\prime}\subseteq\underbrace{Q_{i}^{\prime}}_{\subseteq
\operatorname*{Des}w}\cup\left\{  j\right\}  \subseteq\left(
\operatorname*{Des}w\right)  \cup\left\{  j\right\}  =\operatorname*{Des}w$
(since $j\in\left(  \operatorname*{Des}w\right)  \cap Q_{i}\subseteq
\operatorname*{Des}w$). This contradicts $K^{\prime}\not \subseteq
\operatorname*{Des}w$.

Forget that we fixed $j$. We thus have obtained a contradiction for each
$j\in\left(  \operatorname*{Des}w\right)  \cap Q_{i}$. Hence, there exists no
such $j$. In other words, the set $\left(  \operatorname*{Des}w\right)  \cap
Q_{i}$ is empty. In other words, $\operatorname*{Des}w$ is disjoint from
$Q_{i}$. Hence, $\operatorname*{Des}w\subseteq\left[  n-1\right]  \setminus
Q_{i}$ (since $\operatorname*{Des}w\subseteq\left[  n-1\right]  $). Combining
this with $Q_{i}^{\prime}\subseteq\operatorname*{Des}w$, we obtain
$Q_{i}^{\prime}\subseteq\operatorname*{Des}w\subseteq\left[  n-1\right]
\setminus Q_{i}$. Thus, we have proved the \textquotedblleft$\Longrightarrow
$\textquotedblright\ direction of Proposition \ref{prop.Qind.equivalent}.
\medskip

$\Longleftarrow:$ Assume that $Q_{i}^{\prime}\subseteq\operatorname*{Des}%
w\subseteq\left[  n-1\right]  \setminus Q_{i}$. We must prove that
$\operatorname*{Qind}w=i$.

We shall show that $Q_{k}^{\prime}\not \subseteq \operatorname*{Des}w$ for
each $k<i$. Indeed, let us fix a positive integer $k<i$. Thus,
$\operatorname*{sum}\left(  Q_{k}\right)  \leq\operatorname*{sum}\left(
Q_{i}\right)  $ (by (\ref{pf.thm.t-simultri.sum-order})). Also, from $k<i$, we
obtain $Q_{k}\neq Q_{i}$ (since the sets $Q_{1},Q_{2},\ldots,Q_{f_{n+1}}$ are
distinct). Also, the set $Q_{i}$ is lacunar (since the sets $Q_{1}%
,Q_{2},\ldots,Q_{f_{n+1}}$ are lacunar).

Now, assume (for the sake of contradiction) that $Q_{k}^{\prime}%
\subseteq\operatorname*{Des}w$. Then, $Q_{k}^{\prime}\subseteq
\operatorname*{Des}w\subseteq\left[  n-1\right]  \setminus Q_{i}$. Therefore,
Lemma \ref{lem.lac-sum-less2} (applied to $I=Q_{i}$ and $K=Q_{k}$) yields
$\operatorname*{sum}\left(  Q_{i}\right)  <\operatorname*{sum}\left(
Q_{k}\right)  $. This contradicts $\operatorname*{sum}\left(  Q_{k}\right)
\leq\operatorname*{sum}\left(  Q_{i}\right)  $. This contradiction shows that
our assumption (that $Q_{k}^{\prime}\subseteq\operatorname*{Des}w$) was false.
Hence, we have $Q_{k}^{\prime}\not \subseteq \operatorname*{Des}w$.

Forget that we fixed $k$. We thus have shown that $Q_{k}^{\prime
}\not \subseteq \operatorname*{Des}w$ for each $k<i$. Since we also know that
$Q_{i}^{\prime}\subseteq\operatorname*{Des}w$ (by assumption), we thus
conclude that $i$ is the \textbf{smallest} element of $\left[  f_{n+1}\right]
$ such that $Q_{i}^{\prime}\subseteq\operatorname*{Des}w$. In other words, $i$
is the $Q$-index of $w$ (since this is how the $Q$-index of $w$ is defined).
In other words, $i=\operatorname*{Qind}w$. That is, $\operatorname*{Qind}w=i$.
Thus, we have proved the \textquotedblleft$\Longleftarrow$\textquotedblright%
\ direction of Proposition \ref{prop.Qind.equivalent}.
\end{proof}

\subsection{Bases of the $F_{i}$ and $F_{i}/F_{i-1}$}

\begin{theorem}
\label{thm.aw.freeness}Recall the $\mathbf{k}$-module filtration
$0=F_{0}\subseteq F_{1}\subseteq F_{2}\subseteq\cdots\subseteq F_{f_{n+1}%
}=\mathbf{k}\left[  S_{n}\right]  $ from Theorem \ref{thm.t-simultri}. Then:

\begin{enumerate}
\item[\textbf{(a)}] For each $i\in\left[  0,f_{n+1}\right]  $, the
$\mathbf{k}$-module $F_{i}$ is free with basis $\left(  a_{w}\right)  _{w\in
S_{n};\ \operatorname*{Qind}w\leq i}$.

\item[\textbf{(b)}] For each $i\in\left[  f_{n+1}\right]  $, the $\mathbf{k}%
$-module $F_{i}/F_{i-1}$ is free with basis $\left(  \overline{a_{w}}\right)
_{w\in S_{n};\ \operatorname*{Qind}w=i}$. Here, $\overline{x}$ denotes the
projection of an element $x\in F_{i}$ onto the quotient $F_{i}/F_{i-1}$.
\end{enumerate}
\end{theorem}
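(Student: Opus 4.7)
The proof is essentially a bookkeeping exercise built on top of Propositions \ref{prop.aw.basis-kSn} and \ref{prop.aw.basis-FI}, plus the definition of the $Q$-index. There is no serious obstacle — the main thing to verify is an elementary translation between two ways of describing the same index set.

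\textbf{Plan for part (a).} The key observation is that, by the definition of $\operatorname{Qind} w$ as the smallest $k \in [f_{n+1}]$ with $Q_k' \subseteq \operatorname{Des} w$, the condition $\operatorname{Qind} w \leq i$ is equivalent to the existence of some $k \in [i]$ with $Q_k' \subseteq \operatorname{Des} w$. First I would unfold the definition $F_i = F(Q_1) + F(Q_2) + \cdots + F(Q_i)$ and apply Proposition \ref{prop.aw.basis-FI} to each summand: for each $k \in [i]$, the family $(a_w)_{w \in S_n;\ Q_k' \subseteq \operatorname{Des} w}$ spans $F(Q_k)$. Taking the union over $k \in [i]$, I conclude that $F_i$ is spanned by $\{a_w : \operatorname{Qind} w \leq i\}$, using the equivalence above. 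Since $(a_w)_{w \in S_n}$ is a $\mathbf{k}$-linearly independent family by Proposition \ref{prop.aw.basis-kSn}, the subfamily $(a_w)_{w \in S_n;\ \operatorname{Qind} w \leq i}$ is automatically linearly independent, and hence a basis of $F_i$. The edge cases are immediate: for $i = 0$ the indexing set is empty (since $\operatorname{Qind} w \geq 1$ always), matching $F_0 = 0$; for $i = f_{n+1}$, Lemma \ref{lem.Qind.exists} ensures that every $w \in S_n$ has a $Q$-index in $[f_{n+1}]$, so we recover the full basis of $\mathbf{k}[S_n]$.

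\textbf{Plan for part (b).} Given part (a), this is pure linear algebra. The basis $(a_w)_{w \in S_n;\ \operatorname{Qind} w \leq i}$ of $F_i$ splits as the disjoint union of the basis $(a_w)_{w \in S_n;\ \operatorname{Qind} w \leq i-1}$ of $F_{i-1}$ (by part (a) applied to $i-1$) and the family $(a_w)_{w \in S_n;\ \operatorname{Qind} w = i}$. Therefore $F_i$ is the internal direct sum of $F_{i-1}$ and the $\mathbf{k}$-submodule spanned by the $a_w$ with $\operatorname{Qind} w = i$. Passing to the quotient, I conclude that the images $(\overline{a_w})_{w \in S_n;\ \operatorname{Qind} w = i}$ form a basis of $F_i/F_{i-1}$.

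\textbf{Anticipated obstacles.} Essentially none of substance. The only point that requires a sentence of care is the equivalence between the condition ``$\operatorname{Qind} w \leq i$'' used to index the purported basis, and the union-of-index-sets description ``$\exists k \leq i$ with $Q_k' \subseteq \operatorname{Des} w$'' that comes naturally from writing $F_i$ as a sum of the $F(Q_k)$. This is immediate from the definition of $\operatorname{Qind}$ as a minimum. Everything else is a direct application of Propositions \ref{prop.aw.basis-FI} and \ref{prop.aw.basis-kSn}.
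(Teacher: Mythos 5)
Your proposal is correct and follows essentially the same route as the paper's proof: part (a) via Proposition \ref{prop.aw.basis-FI} applied to each $F(Q_k)$, the union-of-spanning-families argument, the equivalence of ``$\operatorname{Qind}w\leq i$'' with ``$Q_k^{\prime}\subseteq\operatorname{Des}w$ for some $k\in\left[i\right]$'', and linear independence inherited from Proposition \ref{prop.aw.basis-kSn}; part (b) via the standard fact that a free module modulo a free submodule spanned by a sub-basis is free on the complementary basis vectors. No gaps.
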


\begin{proof}
\textbf{(a)} Proposition \ref{prop.aw.basis-kSn} yields that the family
$\left(  a_{w}\right)  _{w\in S_{n}}$ is a basis of the $\mathbf{k}$-module
$\mathbf{k}\left[  S_{n}\right]  $. Hence, this family $\left(  a_{w}\right)
_{w\in S_{n}}$ is $\mathbf{k}$-linearly independent.

Let $i\in\left[  0,f_{n+1}\right]  $. For each $k\in\left[  i\right]  $, we
have%
\begin{equation}
F\left(  Q_{k}\right)  =\operatorname*{span}\left(  \left(  a_{w}\right)
_{w\in S_{n};\ Q_{k}^{\prime}\subseteq\operatorname*{Des}w}\right)
\label{pf.thm.aw.freeness.a.1}%
\end{equation}
(since Proposition \ref{prop.aw.basis-FI} (applied to $I=Q_{k}$) shows that
the family $\left(  a_{w}\right)  _{w\in S_{n};\ Q_{k}^{\prime}\subseteq
\operatorname*{Des}w}$ is a basis of the $\mathbf{k}$-module $F\left(
Q_{k}\right)  $). However, the definition of $F_{i}$ yields%
\begin{align}
F_{i}  &  =F\left(  Q_{1}\right)  +F\left(  Q_{2}\right)  +\cdots+F\left(
Q_{i}\right)  =\sum_{k=1}^{i}\underbrace{F\left(  Q_{k}\right)  }%
_{\substack{=\operatorname*{span}\left(  \left(  a_{w}\right)  _{w\in
S_{n};\ Q_{k}^{\prime}\subseteq\operatorname*{Des}w}\right)  \\\text{(by
(\ref{pf.thm.aw.freeness.a.1}))}}}\nonumber\\
&  =\sum_{k=1}^{i}\operatorname*{span}\left(  \left(  a_{w}\right)  _{w\in
S_{n};\ Q_{k}^{\prime}\subseteq\operatorname*{Des}w}\right) \nonumber\\
&  =\operatorname*{span}\left(  \left(  a_{w}\right)  _{w\in S_{n}%
;\ Q_{k}^{\prime}\subseteq\operatorname*{Des}w\text{ for some }k\in\left[
i\right]  }\right)  \label{pf.thm.aw.freeness.a.2}%
\end{align}
(since the sum of the spans of some families of vectors is the span of the
union of these families). However, if $w\in S_{n}$ is a permutation, then the
statement \textquotedblleft$Q_{k}^{\prime}\subseteq\operatorname*{Des}w$ for
some $k\in\left[  i\right]  $\textquotedblright\ is equivalent to the
statement \textquotedblleft$\operatorname*{Qind}w\leq i$\textquotedblright%
\ (since $\operatorname*{Qind}w$ is defined as the \textbf{smallest}
$j\in\left[  f_{n+1}\right]  $ such that $Q_{j}^{\prime}\subseteq
\operatorname*{Des}w$). Thus, the family $\left(  a_{w}\right)  _{w\in
S_{n};\ Q_{k}^{\prime}\subseteq\operatorname*{Des}w\text{ for some }%
k\in\left[  i\right]  }$ is precisely the family $\left(  a_{w}\right)  _{w\in
S_{n};\ \operatorname*{Qind}w\leq i}$. Hence, we can rewrite
(\ref{pf.thm.aw.freeness.a.2}) as follows:%
\[
F_{i}=\operatorname*{span}\left(  \left(  a_{w}\right)  _{w\in S_{n}%
;\ \operatorname*{Qind}w\leq i}\right)  .
\]
In other words, the family $\left(  a_{w}\right)  _{w\in S_{n}%
;\ \operatorname*{Qind}w\leq i}$ spans the $\mathbf{k}$-module $F_{i}$.
Furthermore, this family is $\mathbf{k}$-linearly independent (since it is a
subfamily of the $\mathbf{k}$-linearly independent family $\left(
a_{w}\right)  _{w\in S_{n}}$). Thus, this family is a basis of the
$\mathbf{k}$-module $F_{i}$. In other words, the $\mathbf{k}$-module $F_{i}$
is free with basis $\left(  a_{w}\right)  _{w\in S_{n};\ \operatorname*{Qind}%
w\leq i}$. This proves Theorem \ref{thm.aw.freeness} \textbf{(a)}. \medskip

\textbf{(b)} For each $i\in\left[  0,f_{n+1}\right]  $, we let $A\left(
i\right)  $ denote the set of all permutations $w\in S_{n}$ satisfying
$\operatorname*{Qind}w\leq i$. Clearly, $A\left(  0\right)  \subseteq A\left(
1\right)  \subseteq\cdots\subseteq A\left(  f_{n+1}\right)  $.

Let $i\in\left[  f_{n+1}\right]  $. Then, the permutations $w\in S_{n}$
satisfying $\operatorname*{Qind}w\leq i$ are precisely the permutations $w\in
A\left(  i\right)  $ (by the definition of $A\left(  i\right)  $). Hence, the
family $\left(  a_{w}\right)  _{w\in S_{n};\ \operatorname*{Qind}w\leq i}$ is
precisely the family $\left(  a_{w}\right)  _{w\in A\left(  i\right)  }$.

However, Theorem \ref{thm.aw.freeness} \textbf{(a)} yields that the
$\mathbf{k}$-module $F_{i}$ is free with basis $\left(  a_{w}\right)  _{w\in
S_{n};\ \operatorname*{Qind}w\leq i}$. In other words, the $\mathbf{k}$-module
$F_{i}$ is free with basis $\left(  a_{w}\right)  _{w\in A\left(  i\right)  }$
(since the family $\left(  a_{w}\right)  _{w\in S_{n};\ \operatorname*{Qind}%
w\leq i}$ is precisely the family $\left(  a_{w}\right)  _{w\in A\left(
i\right)  }$). The same argument (applied to $i-1$ instead of $i$) yields that
the $\mathbf{k}$-module $F_{i-1}$ is free with basis $\left(  a_{w}\right)
_{w\in A\left(  i-1\right)  }$. Note that $A\left(  i-1\right)  \subseteq
A\left(  i\right)  $ and that $F_{i-1}$ is a $\mathbf{k}$-submodule of $F_{i}$.

However, the following fact is simple and well-known:

\begin{statement}
\textit{Fact 1:} Let $B$ and $C$ be two sets such that $C\subseteq B$. Let $U$
be a $\mathbf{k}$-module that is free with a basis $\left(  f_{w}\right)
_{w\in B}$. Let $V$ be a $\mathbf{k}$-submodule of $U$ that is free with basis
$\left(  f_{w}\right)  _{w\in C}$. Then, the $\mathbf{k}$-module $U/V$ is free
with basis $\left(  \overline{f_{w}}\right)  _{w\in B\setminus C}$. Here,
$\overline{x}$ denotes the projection of an element $x\in U$ onto the quotient
$U/V$.
\end{statement}

We apply Fact 1 to $B=A\left(  i\right)  $ and $C=A\left(  i-1\right)  $ and
$U=F_{i}$ and $V=F_{i-1}$. As a consequence, we conclude that the $\mathbf{k}%
$-module $F_{i}/F_{i-1}$ is free with basis $\left(  \overline{a_{w}}\right)
_{w\in A\left(  i\right)  \setminus A\left(  i-1\right)  }$.

However,
\begin{align*}
&  A\left(  i\right)  \setminus A\left(  i-1\right) \\
&  =\left\{  w\in A\left(  i\right)  \ \mid\ w\notin A\left(  i-1\right)
\right\} \\
&  =\left\{  w\in S_{n} \ \mid\ w \in A\left(  i\right)  \text{ but not } w\in
A\left(  i-1\right)  \right\} \\
&  =\left\{  w\in S_{n}\ \mid\ \operatorname*{Qind}w\leq i\text{ but not
}\operatorname*{Qind}w\leq i-1\right\} \\
&  \ \ \ \ \ \ \ \ \ \ \ \ \ \ \ \ \ \ \ \ \left(
\begin{array}
[c]{c}%
\text{since }A\left(  i\right)  \text{ is the set of all }w\in S_{n}\text{
satisfying }\operatorname*{Qind}w\leq i\text{,}\\
\text{whereas }A\left(  i-1\right)  \text{ is the set of all }w\in S_{n}\text{
satisfying }\operatorname*{Qind}w\leq i-1
\end{array}
\right) \\
&  =\left\{  w\in S_{n}\ \mid\ \operatorname*{Qind}w=i\right\}
\end{align*}
(since a $w\in S_{n}$ satisfies \textquotedblleft$\operatorname*{Qind}w\leq i$
but not $\operatorname*{Qind}w\leq i-1$\textquotedblright\ if and only if it
satisfies $\operatorname*{Qind}w=i$). Thus, the family $\left(  \overline
{a_{w}}\right)  _{w\in A\left(  i\right)  \setminus A\left(  i-1\right)  }$ is
exactly the family $\left(  \overline{a_{w}}\right)  _{w\in S_{n}%
;\ \operatorname*{Qind}w=i}$. Hence, the $\mathbf{k}$-module $F_{i}/F_{i-1}$
is free with basis $\left(  \overline{a_{w}}\right)  _{w\in S_{n}%
;\ \operatorname*{Qind}w=i}$ (because we have previously showed that the
$\mathbf{k}$-module $F_{i}/F_{i-1}$ is free with basis $\left(  \overline
{a_{w}}\right)  _{w\in A\left(  i\right)  \setminus A\left(  i-1\right)  }$).
This proves Theorem \ref{thm.aw.freeness} \textbf{(b)}.
\end{proof}

\subsection{Our filtration has no equal terms}

For our next corollary, we need a simple existence result:

\begin{lemma}
\label{lem.Qind.surj}Let $i\in\left[  f_{n+1}\right]  $. Then, there exists
some permutation $w\in S_{n}$ satisfying $\operatorname*{Qind}w=i$.
\end{lemma}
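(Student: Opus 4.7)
The plan is to reduce the problem, via Proposition \ref{prop.Qind.equivalent}, to the well-known fact that every subset of $\left[  n-1\right]  $ arises as the descent set of some permutation in $S_{n}$.

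More precisely, I would first note that by Proposition \ref{prop.Qind.equivalent}, finding a permutation $w\in S_{n}$ with $\operatorname*{Qind}w=i$ is equivalent to finding a permutation $w$ whose descent set $D:=\operatorname*{Des}w$ satisfies $Q_{i}^{\prime}\subseteq D\subseteq\left[  n-1\right]  \setminus Q_{i}$. The sandwich condition is consistent, since the defining formula $Q_{i}^{\prime}=\left[  n-1\right]  \setminus\left(  Q_{i}\cup\left(  Q_{i}-1\right)  \right)  $ immediately gives $Q_{i}^{\prime}\cap Q_{i}=\varnothing$, i.e., $Q_{i}^{\prime}\subseteq\left[  n-1\right]  \setminus Q_{i}$. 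So it is enough to produce any permutation $w\in S_{n}$ whose descent set equals $Q_{i}^{\prime}$ (or, for that matter, any fixed $D$ with $Q_{i}^{\prime}\subseteq D\subseteq\left[  n-1\right]  \setminus Q_{i}$).

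The remaining task is therefore the classical existence statement: for any subset $D\subseteq\left[  n-1\right]  $, there exists $w\in S_{n}$ with $\operatorname*{Des}w=D$. I would justify this by an explicit construction. Write $D=\left\{  d_{1}<d_{2}<\cdots<d_{k}\right\}  $, set $d_{0}:=0$ and $d_{k+1}:=n$, and partition $\left[  n\right]  $ into the $k+1$ blocks $B_{j}:=\left[  d_{j-1}+1,\ d_{j}\right]  $ of sizes $b_{j}:=d_{j}-d_{j-1}$. Define $w$ by filling block $B_{1}$, in increasing order of position, with the largest $b_{1}$ values $\left\{  n-b_{1}+1,\ldots,n\right\}  $; then filling $B_{2}$, in increasing order of position, with the next largest $b_{2}$ values $\left\{  n-b_{1}-b_{2}+1,\ldots,n-b_{1}\right\}  $; and so on. Within each block the values increase with the position, so no descent occurs strictly inside a block; between consecutive blocks $B_{j}$ and $B_{j+1}$, the last value of $B_{j}$ is the largest in the union $B_{j}\cup B_{j+1}$ while the first value of $B_{j+1}$ is the smallest, producing a descent at position $d_{j}$. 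Hence $\operatorname*{Des}w=\left\{  d_{1},\ldots,d_{k}\right\}  =D$, as required.

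Taking $D:=Q_{i}^{\prime}$ in this construction yields a $w\in S_{n}$ with $\operatorname*{Des}w=Q_{i}^{\prime}$, so in particular $Q_{i}^{\prime}\subseteq\operatorname*{Des}w\subseteq\left[  n-1\right]  \setminus Q_{i}$, and Proposition \ref{prop.Qind.equivalent} gives $\operatorname*{Qind}w=i$. There is no real obstacle here; the only thing to be careful about is the reduction step, namely checking that the sandwich $Q_{i}^{\prime}\subseteq\left[  n-1\right]  \setminus Q_{i}$ really follows from the definition of the non-shadow (which, as noted, is immediate). The rest is standard combinatorics of descent sets.
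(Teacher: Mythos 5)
Your proof is correct and follows essentially the same route as the paper: both reduce the claim via Proposition \ref{prop.Qind.equivalent} to constructing a permutation with a prescribed descent set sandwiched between $Q_{i}^{\prime}$ and $\left[n-1\right]\setminus Q_{i}$. The only (immaterial) difference is that you realize the smallest admissible descent set $Q_{i}^{\prime}$, whereas the paper realizes the largest one, $\left[n-1\right]\setminus Q_{i}$; both choices work since $Q_{i}^{\prime}\subseteq\left[n-1\right]\setminus Q_{i}$.
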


\begin{proof}
We shall construct such a permutation $w$ as follows:

Let $J:=\left[  n-1\right]  \setminus Q_{i}$. Thus, $J$ is a subset of
$\left[  n-1\right]  $.

Let $m:=\left\vert J\right\vert $. Let $w\in S_{n}$ be the permutation that
sends the $m$ elements of $J$ (from smallest to largest) to the $m$ numbers
$n,n-1,n-2,\ldots,n-m+1$ (in this order) while sending the remaining $n-m$
elements of $\left[  n\right]  $ (from smallest to largest) to the $n-m$
numbers $1,2,\ldots,n-m$ (in this order). For example, if $n=8$ and
$J=\left\{  2,4,5\right\}  $, then $m=3$ and $\left(  w\left(  1\right)
,w\left(  2\right)  ,\ldots,w\left(  n\right)  \right)  =\left(
1,8,2,7,6,3,4,5\right)  $. The definition of $w$ easily yields that
$\operatorname*{Des}w=J$.

Thus, we have $\operatorname*{Des}w=J=\left[  n-1\right]  \setminus Q_{i}$.
The definition of $Q_{i}^{\prime}$ yields
\[
Q_{i}^{\prime}=\left[  n-1\right]  \setminus\underbrace{\left(  Q_{i}%
\cup\left(  Q_{i}-1\right)  \right)  }_{\supseteq Q_{i}}\subseteq\left[
n-1\right]  \setminus Q_{i}=J=\operatorname*{Des}w.
\]
Combining this with $\operatorname*{Des}w\subseteq\operatorname*{Des}w=\left[
n-1\right]  \setminus Q_{i}$, we obtain $Q_{i}^{\prime}\subseteq
\operatorname*{Des}w\subseteq\left[  n-1\right]  \setminus Q_{i}$. However,
the latter chain of inclusions is equivalent to $\operatorname*{Qind}w=i$
(because of Proposition \ref{prop.Qind.equivalent}). Thus, we have
$\operatorname*{Qind}w=i$.

So we have constructed a permutation $w\in S_{n}$ satisfying
$\operatorname*{Qind}w=i$. As explained above, this proves Lemma
\ref{lem.Qind.surj}.
\end{proof}

Combining Lemma \ref{lem.Qind.surj} with Theorem \ref{thm.aw.freeness}, we
obtain the following corollary (which, roughly speaking, says that our
filtration $F_{0}\subseteq F_{1}\subseteq F_{2}\subseteq\cdots\subseteq
F_{f_{n+1}}$ cannot be shortened):

\begin{corollary}
\label{cor.Fi.distinct}Assume that $\mathbf{k}\neq0$. Then, $F_{i}\neq
F_{i-1}$ for each $i\in\left[  f_{n+1}\right]  $.
\end{corollary}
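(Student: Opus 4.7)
The plan is to combine Lemma \ref{lem.Qind.surj} with Theorem \ref{thm.aw.freeness} to exhibit an explicit element of $F_i$ that does not lie in $F_{i-1}$. The intuition is that the filtration is indexed by the $Q$-index, and each $Q$-index value actually occurs on some permutation, so each step of the filtration really adds something.

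Concretely, fix $i \in [f_{n+1}]$. By Lemma \ref{lem.Qind.surj}, there exists a permutation $w \in S_n$ with $\operatorname{Qind} w = i$. I claim that the corresponding element $a_w$ lies in $F_i$ but not in $F_{i-1}$, which immediately gives $F_i \neq F_{i-1}$. That $a_w \in F_i$ is clear from Theorem \ref{thm.aw.freeness}\textbf{(a)}, since $\operatorname{Qind} w = i \leq i$ places $a_w$ in the basis of $F_i$.

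The key step is showing $a_w \notin F_{i-1}$. Theorem \ref{thm.aw.freeness}\textbf{(a)} (applied to $i-1$ instead of $i$) says that $F_{i-1}$ is free with basis $(a_v)_{v \in S_n;\ \operatorname{Qind} v \leq i-1}$. Meanwhile, Proposition \ref{prop.aw.basis-kSn} tells us that the full family $(a_v)_{v \in S_n}$ is a basis of $\mathbf{k}[S_n]$ and in particular is $\mathbf{k}$-linearly independent. If $a_w$ were to lie in $F_{i-1}$, we could write $a_w = \sum_{\operatorname{Qind} v \leq i-1} \lambda_v a_v$ for some $\lambda_v \in \mathbf{k}$; but since $\operatorname{Qind} w = i > i-1$, the index $w$ does not appear among the $v$ on the right-hand side, so this relation would give a nontrivial $\mathbf{k}$-linear dependence among $(a_v)_{v \in S_n}$ (the coefficient of $a_w$ on the left is $1 \neq 0$, which uses $\mathbf{k} \neq 0$). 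This contradicts the linear independence, so $a_w \notin F_{i-1}$, and therefore $F_i \supsetneq F_{i-1}$.

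The main (and essentially only) obstacle here has already been handled by the supporting results: the existence statement in Lemma \ref{lem.Qind.surj} guarantees that the $Q$-index function hits every value in $[f_{n+1}]$, and the basis statement in Theorem \ref{thm.aw.freeness} provides the free-module structure that lets us detect non-membership via coefficients. The role of the hypothesis $\mathbf{k} \neq 0$ is simply to ensure that the coefficient $1$ of $a_w$ in its own expansion is nonzero in $\mathbf{k}$, so that the contradiction with linear independence genuinely arises.
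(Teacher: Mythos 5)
Your proof is correct and uses the same two key ingredients as the paper (Lemma \ref{lem.Qind.surj} and Theorem \ref{thm.aw.freeness}); the paper merely packages the argument via part \textbf{(b)} of that theorem, observing that $F_i/F_{i-1}$ is free on the nonempty basis $\left(\overline{a_w}\right)_{w\in S_n;\ \operatorname{Qind}w=i}$ and hence nonzero, whereas you reach the same conclusion from part \textbf{(a)} together with the linear independence of the full family $\left(a_v\right)_{v\in S_n}$. This is an immaterial difference in bookkeeping, not a different route.
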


\begin{proof}
Let $i\in\left[  f_{n+1}\right]  $. We must prove that $F_{i}\neq F_{i-1}$. In
other words, we must prove that $F_{i}/F_{i-1}\neq0$ (since $F_{i-1}$ is a
$\mathbf{k}$-submodule of $F_{i}$). However, Theorem \ref{thm.aw.freeness}
\textbf{(b)} yields that the $\mathbf{k}$-module $F_{i}/F_{i-1}$ is free with
basis $\left(  \overline{a_{w}}\right)  _{w\in S_{n};\ \operatorname*{Qind}%
w=i}$. Hence, in order to prove that $F_{i}/F_{i-1}\neq0$, it suffices to show
that this basis $\left(  \overline{a_{w}}\right)  _{w\in S_{n}%
;\ \operatorname*{Qind}w=i}$ is nonempty. In other words, it suffices to show
that there exists some permutation $w\in S_{n}$ satisfying
$\operatorname*{Qind}w=i$. However, this follows from Lemma
\ref{lem.Qind.surj}. Thus, Corollary \ref{cor.Fi.distinct} is proved.
\end{proof}

\section{Triangularizing the endomorphism}

We are now ready to prove Theorem \ref{thm.Rcomb-main}, made concrete as follows:

\begin{theorem}
\label{thm.Rcomb-conc}Let $w\in S_{n}$ and $\ell\in\left[  n\right]  $. Let
$i=\operatorname*{Qind}w$. Then,%
\[
a_{w}t_{\ell}=m_{Q_{i},\ell}a_{w}+\left(  \text{a }\mathbf{k}\text{-linear
combination of }a_{v}\text{'s for }v\in S_{n}\text{ satisfying }%
\operatorname*{Qind}v<i\right)  .
\]

\end{theorem}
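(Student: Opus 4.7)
The plan is to assemble this statement from three results that are already in place, without any new combinatorics. First, from $i = \operatorname{Qind} w$ we have $Q_i^{\prime} \subseteq \operatorname{Des} w$ by the definition of the $Q$-index. I would then note that $a_w$ lies in $F(Q_i)$: indeed, Proposition \ref{prop.aw.basis-FI} expresses $F(Q_i)$ as the span of all $a_v$ with $Q_i^{\prime} \subseteq \operatorname{Des} v$, and $w$ is one such $v$. Since $F(Q_i) \subseteq F(Q_1) + F(Q_2) + \cdots + F(Q_i) = F_i$, we get $a_w \in F_i$.

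Second, I would apply Theorem~\ref{thm.t-simultri}\textbf{(c)} to this membership: it gives $F_i \cdot (t_\ell - m_{Q_i,\ell}) \subseteq F_{i-1}$, so in particular
\[
a_w t_\ell - m_{Q_i,\ell} \, a_w \;=\; a_w \cdot (t_\ell - m_{Q_i,\ell}) \;\in\; F_{i-1}.
\]
Third, I would invoke Theorem~\ref{thm.aw.freeness}\textbf{(a)} applied to $i-1$: the $\mathbf{k}$-module $F_{i-1}$ is free with basis $(a_v)_{v \in S_n,\; \operatorname{Qind} v \leq i-1}$, i.e., $(a_v)_{v \in S_n,\; \operatorname{Qind} v < i}$. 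Hence the element $a_w t_\ell - m_{Q_i,\ell} \, a_w$ of $F_{i-1}$ admits a (unique) expansion as a $\mathbf{k}$-linear combination of $a_v$'s with $\operatorname{Qind} v < i$. Rearranging yields precisely the claimed formula for $a_w t_\ell$.

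There is essentially no obstacle here: the heavy lifting was done earlier, in Theorem~\ref{thm.tl-FI} (which controls how $t_\ell$ drops the sum-filtration on the $F(I)$'s), in Theorem~\ref{thm.t-simultri} (which packages this into the Fibonacci filtration with $t_\ell$ acting as $m_{Q_i,\ell}$ on the $i$-th subquotient), and in Theorem~\ref{thm.aw.freeness} (which produces the explicit basis of each $F_i$). The only mildly subtle point is the compatibility check $a_w \in F_i$, but this is immediate once one unfolds the definition of $\operatorname{Qind}$ and applies Proposition~\ref{prop.aw.basis-FI}. Everything else is formal.
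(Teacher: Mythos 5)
Your proposal is correct and follows essentially the same route as the paper's proof: establish $a_{w}\in F_{i}$, apply Theorem~\ref{thm.t-simultri}~\textbf{(c)} to place $a_{w}\cdot\left(t_{\ell}-m_{Q_{i},\ell}\right)$ in $F_{i-1}$, and expand in the basis of $F_{i-1}$ from Theorem~\ref{thm.aw.freeness}~\textbf{(a)}. The only cosmetic difference is that you obtain $a_{w}\in F_{i}$ via Proposition~\ref{prop.aw.basis-FI} and the inclusion $F\left(Q_{i}\right)\subseteq F_{i}$, whereas the paper reads it off directly from the basis of $F_{i}$ given in Theorem~\ref{thm.aw.freeness}~\textbf{(a)}; both are valid.
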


This theorem shows that for each $\ell\in\left[  n\right]  $, the $n!\times
n!$-matrix that represents the endomorphism $R\left(  t_{\ell}\right)  $ of
$\mathbf{k}\left[  S_{n}\right]  $ with respect to the basis $\left(
a_{w}\right)  _{w\in S_{n}}$ is upper-triangular if we order the set $S_{n}$
by increasing $Q$-index (note that this is not the lexicographic order!).
Thus, the same holds for any $\mathbf{k}$-linear combination%
\[
R\left(  \lambda_{1}t_{1}+\lambda_{2}t_{2}+\cdots+\lambda_{n}t_{n}\right)
=\lambda_{1}R\left(  t_{1}\right)  +\lambda_{2}R\left(  t_{2}\right)
+\cdots+\lambda_{n}R\left(  t_{n}\right)  .
\]
Theorem \ref{thm.Rcomb-main} therefore follows, if we can prove Theorem
\ref{thm.Rcomb-conc}. We shall do this in a moment; first, let us give an example:

\begin{example}
For this example, let $n=4$. We write each permutation $w\in S_{4}$ as the
list $\left[  w\left(  1\right)  \ w\left(  2\right)  \ w\left(  3\right)
\ w\left(  4\right)  \right]  $ (written without commas for brevity, and using
square brackets to distinguish it from a parenthesized integer). Then,%
\[
a_{\left[  4312\right]  }t_{2}=a_{\left[  4312\right]  }%
+\underbrace{a_{\left[  4321\right]  }-a_{\left[  4231\right]  }-a_{\left[
3241\right]  }-a_{\left[  2143\right]  }}_{\substack{\text{this is a
}\mathbf{k}\text{-linear combination of }a_{v}\text{'s}\\\text{for }v\in
S_{n}\text{ satisfying }\operatorname*{Qind}v<i\text{, where }%
i=\operatorname*{Qind}\left[  4312\right]  }}.
\]
Indeed, Example \ref{exa.Qi.4} tells us that $\operatorname*{Qind}\left[
4312\right]  =4$, whereas $\operatorname*{Qind}\left[  4321\right]  =1$ and
$\operatorname*{Qind}\left[  4231\right]  =\operatorname*{Qind}\left[
3241\right]  =\operatorname*{Qind}\left[  2143\right]  =3$.
\end{example}

\begin{proof}
[Proof of Theorem \ref{thm.Rcomb-conc}.]Theorem \ref{thm.aw.freeness}
\textbf{(a)} yields that the $\mathbf{k}$-module $F_{i}$ is free with basis
$\left(  a_{v}\right)  _{v\in S_{n};\ \operatorname*{Qind}v\leq i}$. (Here, we
have renamed the index $w$ from Theorem \ref{thm.aw.freeness} \textbf{(a)} as
$v$ in order to avoid confusion with the already-fixed permutation $w$.)

Now, $w\in S_{n}$ and $\operatorname*{Qind}w\leq i$ (since
$\operatorname*{Qind}w=i$). Hence, $a_{w}$ is an element of the family
$\left(  a_{v}\right)  _{v\in S_{n};\ \operatorname*{Qind}v\leq i}$. Since the
latter family $\left(  a_{v}\right)  _{v\in S_{n};\ \operatorname*{Qind}v\leq
i}$ is a basis of $F_{i}$, this entails that $a_{w}\in F_{i}$. Hence,%
\[
\underbrace{a_{w}}_{\in F_{i}}\cdot\left(  t_{\ell}-m_{Q_{i},\ell}\right)  \in
F_{i}\cdot\left(  t_{\ell}-m_{Q_{i},\ell}\right)  \subseteq F_{i-1}%
\ \ \ \ \ \ \ \ \ \ \left(  \text{by Theorem \ref{thm.t-simultri}
\textbf{(c)}}\right)  .
\]

However, Theorem \ref{thm.aw.freeness} \textbf{(a)} (applied to $i-1$ instead
of $i$) yields that the $\mathbf{k}$-module $F_{i-1}$ is free with basis
$\left(  a_{v}\right)  _{v\in S_{n};\ \operatorname*{Qind}v\leq i-1}$. (Here,
again, we have renamed the index $w$ from Theorem \ref{thm.aw.freeness}
\textbf{(a)} as $v$ in order to avoid confusion with the already-fixed
permutation $w$.) Thus, in particular, $\left(  a_{v}\right)  _{v\in
S_{n};\ \operatorname*{Qind}v\leq i-1}$ is a basis of the $\mathbf{k}$-module
$F_{i-1}$. Hence, $F_{i-1}=\operatorname*{span}\left(  \left(  a_{v}\right)
_{v\in S_{n};\ \operatorname*{Qind}v\leq i-1}\right)  $. Now,
\[
a_{w}\cdot\left(  t_{\ell}-m_{Q_{i},\ell}\right)  \in F_{i-1}%
=\operatorname*{span}\left(  \left(  a_{v}\right)  _{v\in S_{n}%
;\ \operatorname*{Qind}v\leq i-1}\right)  =\operatorname*{span}\left(  \left(
a_{v}\right)  _{v\in S_{n};\ \operatorname*{Qind}v < i}\right)
\]
(since the condition \textquotedblleft$\operatorname*{Qind}v\leq
i-1$\textquotedblright\ is equivalent to \textquotedblleft%
$\operatorname*{Qind}v<i$\textquotedblright). In other words,%
\[
a_{w}\cdot\left(  t_{\ell}-m_{Q_{i},\ell}\right)  =\left(  \text{a }%
\mathbf{k}\text{-linear combination of }a_{v}\text{'s for }v\in S_{n}\text{
satisfying }\operatorname*{Qind}v<i\right)  .
\]
In view of $a_{w}\cdot\left(  t_{\ell}-m_{Q_{i},\ell}\right)  =a_{w}t_{\ell
}-m_{Q_{i},\ell}a_{w}$, this can be rewritten as%
\[
a_{w}t_{\ell}-m_{Q_{i},\ell}a_{w}=\left(  \text{a }\mathbf{k}\text{-linear
combination of }a_{v}\text{'s for }v\in S_{n}\text{ satisfying }%
\operatorname*{Qind}v<i\right)  .
\]
Equivalently,%
\[
a_{w}t_{\ell}=m_{Q_{i},\ell}a_{w}+\left(  \text{a }\mathbf{k}\text{-linear
combination of }a_{v}\text{'s for }v\in S_{n}\text{ satisfying }%
\operatorname*{Qind}v<i\right)  .
\]
This proves Theorem \ref{thm.Rcomb-conc}.
\end{proof}

\section{The eigenvalues of the endomorphism}

\subsection{An annihilating polynomial}

We have now shown enough to easily obtain a polynomial that annihilates any
given $\mathbf{k}$-linear combination $\lambda_{1}t_{1}+\lambda_{2}%
t_{2}+\cdots+\lambda_{n}t_{n}$ of the shuffles $t_{1},t_{2},\ldots,t_{n}$ (and
therefore the corresponding endomorphism $R\left(  \lambda_{1}t_{1}%
+\lambda_{2}t_{2}+\cdots+\lambda_{n}t_{n}\right)  $):

\begin{theorem}
\label{thm.eigen.annih-pol}Let $\lambda_{1},\lambda_{2},\ldots,\lambda_{n}%
\in\mathbf{k}$. Let $t:=\lambda_{1}t_{1}+\lambda_{2}t_{2}+\cdots+\lambda
_{n}t_{n}$. Then,%
\[
\prod_{\substack{I\subseteq\left[  n-1\right]  \text{ is}\\\text{lacunar}%
}}\left(  t-\left(  \lambda_{1}m_{I,1}+\lambda_{2}m_{I,2}+\cdots+\lambda
_{n}m_{I,n}\right)  \right)  =0.
\]
(Here, the product on the left hand side is well-defined, since all its
factors $t-\left(  \lambda_{1}m_{I,1}+\lambda_{2}m_{I,2}+\cdots+\lambda
_{n}m_{I,n}\right)  $ lie in the commutative subalgebra $\mathbf{k}\left[
t\right]  $ of $\mathbf{k}\left[  S_{n}\right]  $ and therefore commute with
each other.)
\end{theorem}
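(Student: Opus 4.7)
The plan is to exploit the filtration $0 = F_0 \subseteq F_1 \subseteq \cdots \subseteq F_{f_{n+1}} = \mathbf{k}[S_n]$ from Theorem \ref{thm.t-simultri}, on whose subquotients each $t_\ell$ acts (by right multiplication) as the scalar $m_{Q_i,\ell}$. For brevity, set $e_i := \lambda_1 m_{Q_i,1} + \lambda_2 m_{Q_i,2} + \cdots + \lambda_n m_{Q_i,n}$ for each $i \in [f_{n+1}]$. Since $Q_1, Q_2, \ldots, Q_{f_{n+1}}$ are precisely the lacunar subsets of $[n-1]$, the claim to prove rewrites as $\prod_{i=1}^{f_{n+1}} (t - e_i) = 0$.

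The first step is to upgrade Theorem \ref{thm.t-simultri}(c) from the individual $t_\ell$'s to their linear combination $t$. Writing $t - e_i = \sum_{\ell=1}^n \lambda_\ell (t_\ell - m_{Q_i,\ell})$ and using $\mathbf{k}$-linearity of right multiplication together with the fact that $F_{i-1}$ is a $\mathbf{k}$-submodule, we get
\[
F_i \cdot (t - e_i) \;=\; \sum_{\ell=1}^n \lambda_\ell \cdot F_i \cdot (t_\ell - m_{Q_i,\ell}) \;\subseteq\; \sum_{\ell=1}^n \lambda_\ell F_{i-1} \;\subseteq\; F_{i-1}.
\]

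The second step is to iterate this inclusion down the filtration. Applying it with $i = f_{n+1}, f_{n+1}-1, \ldots, 1$ in turn, and noting that each factor $t - e_j$ lies in $\mathbf{k}[t] \subseteq \mathbf{k}[S_n]$ and so acts by right multiplication, we obtain
\[
\mathbf{k}[S_n] \cdot (t - e_{f_{n+1}})(t - e_{f_{n+1}-1}) \cdots (t - e_1) \;\subseteq\; F_0 \;=\; 0.
\]
Since $1 \in \mathbf{k}[S_n]$, this forces $(t - e_{f_{n+1}})(t - e_{f_{n+1}-1}) \cdots (t - e_1) = 0$. Because all the factors lie in the commutative subalgebra $\mathbf{k}[t]$, they commute pairwise, and the product may be reindexed as $\prod_{i=1}^{f_{n+1}} (t - e_i) = 0$, which is exactly the statement of the theorem.

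There is no substantial obstacle here: the combinatorial and algebraic work has already been done in Theorem \ref{thm.t-simultri}, and the argument is a routine ``triangular annihilator'' telescope. The only thing that requires a moment's care is the linearity upgrade in the first step, which is why I wrote it out explicitly.
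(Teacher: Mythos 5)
Your proof is correct and follows essentially the same route as the paper's: the paper likewise first establishes $F_i\cdot(t-g_i)\subseteq F_{i-1}$ by linearity from Theorem \ref{thm.t-simultri}(c) (with $g_i$ playing the role of your $e_i$) and then telescopes down the filtration by induction, concluding via $1\in\mathbf{k}\left[S_n\right]=F_{f_{n+1}}$ and $F_0=0$. No gaps.
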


\begin{proof}
For each $i\in\left[  f_{n+1}\right]  $, we set
\[
g_{i}:=\lambda_{1}m_{Q_{i},1}+\lambda_{2}m_{Q_{i},2}+\cdots+\lambda
_{n}m_{Q_{i},n}=\sum_{\ell=1}^{n}\lambda_{\ell}m_{Q_{i},\ell}\in\mathbf{k}.
\]

First, we shall show that
\begin{equation}
F_{i}\cdot\left(  t-g_{i}\right)  \subseteq F_{i-1}%
\ \ \ \ \ \ \ \ \ \ \text{for each }i\in\left[  f_{n+1}\right]  .
\label{pf.thm.eigen.annih-pol.1}%
\end{equation}

[\textit{Proof of (\ref{pf.thm.eigen.annih-pol.1}):} Let $i\in\left[
f_{n+1}\right]  $. From $t=\lambda_{1}t_{1}+\lambda_{2}t_{2}+\cdots
+\lambda_{n}t_{n}=\sum_{\ell=1}^{n}\lambda_{\ell}t_{\ell}$ and $g_{i}%
=\sum_{\ell=1}^{n}\lambda_{\ell}m_{Q_{i},\ell}$, we obtain%
\[
t-g_{i}=\sum_{\ell=1}^{n}\lambda_{\ell}t_{\ell}-\sum_{\ell=1}^{n}\lambda
_{\ell}m_{Q_{i},\ell}=\sum_{\ell=1}^{n}\lambda_{\ell}\left(  t_{\ell}%
-m_{Q_{i},\ell}\right)  .
\]
Therefore,%
\begin{align*}
F_{i}\cdot\left(  t-g_{i}\right)   &  =F_{i}\cdot\sum_{\ell=1}^{n}%
\lambda_{\ell}\left(  t_{\ell}-m_{Q_{i},\ell}\right)  =\sum_{\ell=1}%
^{n}\lambda_{\ell}\underbrace{F_{i}\cdot\left(  t_{\ell}-m_{Q_{i},\ell
}\right)  }_{\substack{\subseteq F_{i-1}\\\text{(by Theorem
\ref{thm.t-simultri} \textbf{(c)})}}}\\
&  \subseteq\sum_{\ell=1}^{n}\lambda_{\ell}F_{i-1}\subseteq F_{i-1}%
\ \ \ \ \ \ \ \ \ \ \left(  \text{since }F_{i-1}\text{ is a }\mathbf{k}%
\text{-module}\right)  .
\end{align*}
This proves (\ref{pf.thm.eigen.annih-pol.1}).] \medskip

Next, we claim that%
\begin{equation}
F_{m}\cdot\prod_{j=1}^{m}\left(  t-g_{j}\right)
=0\ \ \ \ \ \ \ \ \ \ \text{for each }m\in\left[  0,f_{n+1}\right]  .
\label{pf.thm.eigen.annih-pol.2}%
\end{equation}
(Here, the product $\prod_{j=1}^{m}\left(  t-g_{j}\right)  $ is well-defined,
since all its factors $t-g_{j}$ lie in the commutative subalgebra
$\mathbf{k}\left[  t\right]  $ of $\mathbf{k}\left[  S_{n}\right]  $ and
therefore commute with each other.)

[\textit{Proof of (\ref{pf.thm.eigen.annih-pol.2}):} We proceed by induction
on $m$:

\textit{Induction base:} For $m=0$, the equality
(\ref{pf.thm.eigen.annih-pol.2}) says that $F_{0}\cdot\left(  \text{empty
product}\right)  =0$, which is true (since $F_{0}=0$).

\textit{Induction step:} Let $i\in\left[  f_{n+1}\right]  $. Assume (as the
induction hypothesis) that (\ref{pf.thm.eigen.annih-pol.2}) holds for $m=i-1$.
We must prove that (\ref{pf.thm.eigen.annih-pol.2}) holds for $m=i$.

We have%
\[
F_{i}\cdot\underbrace{\prod_{j=1}^{i}\left(  t-g_{j}\right)  }_{=\left(
t-g_{i}\right)  \cdot\prod_{j=1}^{i-1}\left(  t-g_{j}\right)  }%
=\underbrace{F_{i}\cdot\left(  t-g_{i}\right)  }_{\substack{\subseteq
F_{i-1}\\\text{(by (\ref{pf.thm.eigen.annih-pol.1}))}}}\cdot\prod_{j=1}%
^{i-1}\left(  t-g_{j}\right)  \subseteq F_{i-1}\cdot\prod_{j=1}^{i-1}\left(
t-g_{j}\right)  =0
\]
(since we assumed that (\ref{pf.thm.eigen.annih-pol.2}) holds for $m=i-1$).
Hence, $F_{i}\cdot\prod_{j=1}^{i}\left(  t-g_{j}\right)  =0$. In other words,
(\ref{pf.thm.eigen.annih-pol.2}) holds for $m=i$. This completes the induction
step. Thus, the proof of (\ref{pf.thm.eigen.annih-pol.2}) is complete.]
\medskip

Now, recall that $Q_{1},Q_{2},\ldots,Q_{f_{n+1}}$ are all the lacunar subsets
of $\left[  n-1\right]  $, listed without repetition. Hence,%
\begin{align*}
&  \prod_{\substack{I\subseteq\left[  n-1\right]  \text{ is}\\\text{lacunar}%
}}\left(  t-\left(  \lambda_{1}m_{I,1}+\lambda_{2}m_{I,2}+\cdots+\lambda
_{n}m_{I,n}\right)  \right) \\
&  =\prod_{j=1}^{f_{n+1}}\left(  t-\underbrace{\left(  \lambda_{1}m_{Q_{j}%
,1}+\lambda_{2}m_{Q_{j},2}+\cdots+\lambda_{n}m_{Q_{j},n}\right)
}_{\substack{=g_{j}\\\text{(by the definition of }g_{j}\text{)}}}\right) \\
&  =\prod_{j=1}^{f_{n+1}}\left(  t-g_{j}\right)  =\underbrace{1}%
_{\substack{\in\mathbf{k}\left[  S_{n}\right]  =F_{f_{n+1}}\\\text{(since
}F_{f_{n+1}}=\mathbf{k}\left[  S_{n}\right]  \\\text{(by Theorem
\ref{thm.t-simultri} \textbf{(a)}))}}}\cdot\prod_{j=1}^{f_{n+1}}\left(
t-g_{j}\right)  \in F_{f_{n+1}}\cdot\prod_{j=1}^{f_{n+1}}\left(
t-g_{j}\right)  =0
\end{align*}
(by (\ref{pf.thm.eigen.annih-pol.2}), applied to $m=f_{n+1}$). In other words,%
\[
\prod_{\substack{I\subseteq\left[  n-1\right]  \text{ is}\\\text{lacunar}%
}}\left(  t-\left(  \lambda_{1}m_{I,1}+\lambda_{2}m_{I,2}+\cdots+\lambda
_{n}m_{I,n}\right)  \right)  =0.
\]
This proves Theorem \ref{thm.eigen.annih-pol}.
\end{proof}

\subsection{The spectrum}

We can now describe the spectrum of $R\left(  \lambda_{1}t_{1}+\lambda
_{2}t_{2}+\cdots+\lambda_{n}t_{n}\right)  $ when $\mathbf{k}$ is a field:

\begin{corollary}
\label{cor.eigen.spec}Let $\lambda_{1},\lambda_{2},\ldots,\lambda_{n}%
\in\mathbf{k}$. Assume that $\mathbf{k}$ is a field. Then,%
\begin{align*}
&  \operatorname*{Spec}\left(  R\left(  \lambda_{1}t_{1}+\lambda_{2}%
t_{2}+\cdots+\lambda_{n}t_{n}\right)  \right) \\
&  =\left\{  \lambda_{1}m_{I,1}+\lambda_{2}m_{I,2}+\cdots+\lambda_{n}%
m_{I,n}\ \mid\ I\subseteq\left[  n-1\right]  \text{ is lacunar}\right\}  .
\end{align*}

Here, $\operatorname*{Spec}f$ denotes the spectrum (i.e., the set of all
eigenvalues) of a $\mathbf{k}$-linear operator $f$.
\end{corollary}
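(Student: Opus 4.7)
The plan is to establish the two inclusions of sets separately, using the tools already assembled. Write $t := \lambda_{1}t_{1}+\lambda_{2}t_{2}+\cdots+\lambda_{n}t_{n}$ and $g_{I} := \lambda_{1}m_{I,1}+\lambda_{2}m_{I,2}+\cdots+\lambda_{n}m_{I,n}$ for each lacunar $I\subseteq\left[n-1\right]$.

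For the inclusion $\operatorname{Spec}\left(R\left(t\right)\right)\subseteq\left\{g_{I}\ \mid\ I\text{ lacunar}\right\}$, I would invoke Theorem~\ref{thm.eigen.annih-pol} directly. That theorem states $\prod_{I\text{ lacunar}}\left(t-g_{I}\right)=0$ in $\mathbf{k}\left[S_{n}\right]$, which means the polynomial $P\left(x\right):=\prod_{I}\left(x-g_{I}\right)\in\mathbf{k}\left[x\right]$ annihilates the endomorphism $R\left(t\right)$. If $\alpha\in\mathbf{k}$ is an eigenvalue with eigenvector $v\neq0$, then $0=P\left(R\left(t\right)\right)\left(v\right)=P\left(\alpha\right)v$, and since $\mathbf{k}$ is a field and $v\neq0$ we conclude $P\left(\alpha\right)=0$, so $\alpha=g_{I}$ for at least one lacunar $I$.

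For the reverse inclusion, I would fix a lacunar $I\subseteq\left[n-1\right]$, write $I=Q_{i}$ for the corresponding index $i\in\left[f_{n+1}\right]$, and produce an honest eigenvector of eigenvalue $g_{Q_{i}}$ inside the piece $F_{i}$. The key observation — essentially equation \eqref{pf.thm.eigen.annih-pol.1} in the proof of Theorem~\ref{thm.eigen.annih-pol} — is that $F_{i}\cdot\left(t-g_{Q_{i}}\right)\subseteq F_{i-1}$. So $R\left(t\right)-g_{Q_{i}}$ restricts to an endomorphism of the finite-dimensional $\mathbf{k}$-vector space $F_{i}$ whose image lies in the proper subspace $F_{i-1}$ (proper by Corollary~\ref{cor.Fi.distinct}, which applies because $\mathbf{k}$ is a nonzero field). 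Hence this restriction cannot be surjective, and by rank–nullity it cannot be injective, so its kernel contains a nonzero vector, which is an eigenvector of $R\left(t\right)$ on all of $\mathbf{k}\left[S_{n}\right]$ with eigenvalue $g_{Q_{i}}$.

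There is no real obstacle: both directions are short corollaries of work already done. The only mild subtlety is that Theorem~\ref{thm.eigen.annih-pol} by itself does not force every $g_{I}$ to actually occur as an eigenvalue (an annihilating polynomial can have extraneous roots), which is why the strictness of the filtration furnished by Corollary~\ref{cor.Fi.distinct} is needed to pin down the spectrum exactly. As an alternative to the rank–nullity argument for the second inclusion, one could instead appeal to Theorem~\ref{thm.Rcomb-conc}: in the descent-destroying basis ordered by increasing $Q$-index, the matrix of $R\left(t\right)$ is upper triangular with diagonal entries $g_{Q_{\operatorname{Qind}w}}$, so its characteristic polynomial is $\prod_{w\in S_{n}}\left(x-g_{Q_{\operatorname{Qind}w}}\right)$, and Lemma~\ref{lem.Qind.surj} guarantees that every $i\in\left[f_{n+1}\right]$ is realized as $\operatorname{Qind}w$ for some $w\in S_{n}$, so each $g_{Q_{i}}$ appears among the roots.
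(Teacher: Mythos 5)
Your proposal is correct, and your primary argument is genuinely different from the paper's. The paper proves this corollary by ordering the descent-destroying basis $\left(a_{w}\right)_{w\in S_{n}}$ by increasing $Q$-index, showing via Theorem~\ref{thm.Rcomb-conc} that the matrix of $R\left(t\right)$ is upper-triangular with diagonal entries $g_{\operatorname*{Qind}w}$, and reading the spectrum off the diagonal (using Lemma~\ref{lem.Qind.surj} to see that every index is attained) --- i.e.\ exactly the ``alternative'' you sketch in your last sentence. Your main route instead splits the equality into two inclusions: the annihilating polynomial of Theorem~\ref{thm.eigen.annih-pol} bounds the spectrum from above (note that passing from $P\left(t\right)=0$ to $P\left(R\left(t\right)\right)=0$ uses that $R$ is an algebra antihomomorphism, i.e.\ Proposition~\ref{prop.antihom.unipol}, or the direct computation $P\left(R\left(t\right)\right)\left(y\right)=y\,P\left(t\right)$), while the containment $F_{i}\cdot\left(t-g_{Q_{i}}\right)\subseteq F_{i-1}$ together with the strictness $F_{i-1}\neq F_{i}$ from Corollary~\ref{cor.Fi.distinct} and rank--nullity produces an actual eigenvector for each $g_{Q_{i}}$. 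Both directions are sound, and you correctly flag the one real subtlety (an annihilating polynomial may have extraneous roots, so the lower bound on the spectrum needs a separate argument). What your route buys is a cleaner conceptual separation and a lighter footprint for the upper bound on the spectrum; what the paper's route buys is that the triangular matrix with its explicit diagonal is needed anyway for Theorem~\ref{thm.eigen.mult} on multiplicities, so the same computation does double duty there.
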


An interesting fact here is that the number of distinct eigenvalues cannot
exceed the number of lacunar subsets of $[n-1]$, which was shown in Section
\ref{sec.Lacunarity} to be the Fibonacci number $f_{n+1}$. This is a
surprisingly low number compared to the number of distinct eigenvalues that
$R\left(  a\right)  $ can have for an arbitrary $a \in\mathbf{k}\left[  S_{n}
\right]  $. In fact, the latter number is the number of involutions of $[n]$,
or equivalently the number of standard Young tableaux with $n$
cells.\footnote{This is due to the fact that (when $\mathbf{k}$ is a
$\mathbb{Q}$-algebra) $\mathbf{k}\left[  S_{n} \right]  $ decomposes into a
direct sum of \emph{Specht modules} indexed by partitions of $n$, and that the
Specht module corresponding to the partition $\lambda$ appears $f^{\lambda}$
many times, where $f^{\lambda}$ is the number of standard tableaux of shape
$\lambda$. Since $R\left(  a \right)  $ acts by the same endomorphism on all
copies of a single Specht module, but can act independently on all
non-isomorphic Specht modules, we see that the maximum number of distinct
eigenvalues of $R\left(  a \right)  $ equals the sum of the dimensions of all
non-isomorphic Specht modules. But this number is the number of standard
tableaux with $n$ cells, i.e., the number of involutions of $\left[  n
\right]  $.}


\begin{proof}
[Proof of Corollary \ref{cor.eigen.spec}.]Let
\[
\rho:=R\left(  \lambda_{1}t_{1}+\lambda_{2}t_{2}+\cdots+\lambda_{n}%
t_{n}\right)  :\mathbf{k}\left[  S_{n}\right]  \rightarrow\mathbf{k}\left[
S_{n}\right]  .
\]

Let $w_{1},w_{2},\ldots,w_{n!}$ be the $n!$ permutations in $S_{n}$, ordered
in such a way that%
\begin{equation}
\operatorname*{Qind}\left(  w_{1}\right)  \leq\operatorname*{Qind}\left(
w_{2}\right)  \leq\cdots\leq\operatorname*{Qind}\left(  w_{n!}\right)  .
\label{pf.cor.eigen.spec.qind-leq}%
\end{equation}
(This ordering is not the lexicographic order!)

Proposition \ref{prop.aw.basis-kSn} says that the family $\left(
a_{w}\right)  _{w\in S_{n}}$ is a basis of the $\mathbf{k}$-module
$\mathbf{k}\left[  S_{n}\right]  $. In other words, the list $\left(
a_{w_{1}},a_{w_{2}},\ldots,a_{w_{n!}}\right)  $ is a basis of the $\mathbf{k}%
$-module $\mathbf{k}\left[  S_{n}\right]  $ (since this list is just a
reindexing of the family $\left(  a_{w}\right)  _{w\in S_{n}}$). We shall
refer to this basis as the \emph{a-basis}. Let $M=\left(  \mu_{i,j}\right)
_{i,j\in\left[  n!\right]  }$ be the matrix that represents the endomorphism
$\rho$ with respect to this a-basis $\left(  a_{w_{1}},a_{w_{2}}%
,\ldots,a_{w_{n!}}\right)  $. Then, for each $j\in\left[  n!\right]  $, we
have%
\begin{equation}
\rho\left(  a_{w_{j}}\right)  =\sum_{k=1}^{n!}\mu_{k,j}a_{w_{k}}.
\label{pf.cor.eigen.spec.1}%
\end{equation}
On the other hand,%
\begin{align}
\rho\left(  a_{w_{j}}\right)   &  =\left(  R\left(  \lambda_{1}t_{1}%
+\lambda_{2}t_{2}+\cdots+\lambda_{n}t_{n}\right)  \right)  \left(  a_{w_{j}%
}\right) \nonumber\\
&  \ \ \ \ \ \ \ \ \ \ \ \ \ \ \ \ \ \ \ \ \left(  \text{since }\rho=R\left(
\lambda_{1}t_{1}+\lambda_{2}t_{2}+\cdots+\lambda_{n}t_{n}\right)  \right)
\nonumber\\
&  =a_{w_{j}}\cdot\underbrace{\left(  \lambda_{1}t_{1}+\lambda_{2}t_{2}%
+\cdots+\lambda_{n}t_{n}\right)  }_{=\sum_{\ell=1}^{n}\lambda_{\ell}t_{\ell}%
}\nonumber\\
&  \ \ \ \ \ \ \ \ \ \ \ \ \ \ \ \ \ \ \ \ \left(  \text{by the definition of
}R\left(  \lambda_{1}t_{1}+\lambda_{2}t_{2}+\cdots+\lambda_{n}t_{n}\right)
\right) \nonumber\\
&  =a_{w_{j}}\cdot\sum_{\ell=1}^{n}\lambda_{\ell}t_{\ell}=\sum_{\ell=1}%
^{n}\lambda_{\ell}a_{w_{j}}t_{\ell}. \label{pf.cor.eigen.spec.2}%
\end{align}

Define an element $g_{i}\in\mathbf{k}$ for each $i\in\left[  f_{n+1}\right]  $
as in the proof of Theorem \ref{thm.eigen.annih-pol}.

We shall now prove the following two properties of our matrix $M=\left(
\mu_{i,j}\right)  _{i,j\in\left[  n!\right]  }$:

\begin{statement}
\textit{Claim 1:} We have $\mu_{j,j}=g_{\operatorname*{Qind}\left(
w_{j}\right)  }$ for each $j\in\left[  n!\right]  .$
\end{statement}

\begin{statement}
\textit{Claim 2:} For any $j,k\in\left[  n!\right]  $ satisfying $k>j$, we
have $\mu_{k,j}=0$.
\end{statement}

[\textit{Proof of Claim 1:} Let $j\in\left[  n!\right]  $. We must prove that
$\mu_{j,j}=g_{\operatorname*{Qind}\left(  w_{j}\right)  }$.

The equality (\ref{pf.cor.eigen.spec.1}) shows that $\mu_{j,j}$ is the
coefficient of $a_{w_{j}}$ when $\rho\left(  a_{w_{j}}\right)  $ is expanded
as a $\mathbf{k}$-linear combination of the a-basis.

Let $i:=\operatorname*{Qind}\left(  w_{j}\right)  $. Then,
(\ref{pf.cor.eigen.spec.2}) becomes%
\begin{align*}
&  \rho\left(  a_{w_{j}}\right) \\
&  =\sum_{\ell=1}^{n}\lambda_{\ell}\underbrace{a_{w_{j}}t_{\ell}%
}_{\substack{=m_{Q_{i},\ell}a_{w_{j}}+\left(  \text{a }\mathbf{k}\text{-linear
combination of }a_{v}\text{'s for }v\in S_{n}\text{ satisfying }%
\operatorname*{Qind}v<i\right)  \\\text{(by Theorem \ref{thm.Rcomb-conc},
applied to }w=w_{j}\text{)}}}\\
&  =\sum_{\ell=1}^{n}\lambda_{\ell}\left(  m_{Q_{i},\ell}a_{w_{j}}+\left(
\text{a }\mathbf{k}\text{-linear combination of }a_{v}\text{'s for }v\in
S_{n}\text{ satisfying }\operatorname*{Qind}v<i\right)  \right) \\
&  =\sum_{\ell=1}^{n}\lambda_{\ell}m_{Q_{i},\ell}a_{w_{j}}+\left(  \text{a
}\mathbf{k}\text{-linear combination of }a_{v}\text{'s for }v\in S_{n}\text{
satisfying }\operatorname*{Qind}v<i\right)  .
\end{align*}
In view of%
\[
\sum_{\ell=1}^{n}\lambda_{\ell}m_{Q_{i},\ell}a_{w_{j}}=\underbrace{\left(
\sum_{\ell=1}^{n}\lambda_{\ell}m_{Q_{i},\ell}\right)  }_{\substack{=g_{i}%
\\\text{(by the definition of }g_{i}\text{)}}}a_{w_{j}}=g_{i}a_{w_{j}},
\]
we can rewrite this as%
\begin{equation}
\rho\left(  a_{w_{j}}\right)  =g_{i}a_{w_{j}}+\left(  \text{a }\mathbf{k}%
\text{-linear combination of }a_{v}\text{'s for }v\in S_{n}\text{ satisfying
}\operatorname*{Qind}v<i\right)  . \label{pf.cor.eigen.spec.3}%
\end{equation}

The right hand side of (\ref{pf.cor.eigen.spec.3}) is clearly a $\mathbf{k}%
$-linear combination of the a-basis. Let us compute the coefficient of
$a_{w_{j}}$ in this combination. Indeed, the first addend $g_{i}a_{w_{j}}$
clearly contributes $g_{i}$ to this coefficient. On the other hand, the
$\mathbf{k}$-linear combination of $a_{v}$'s for $v\in S_{n}$ satisfying
$\operatorname*{Qind}v<i$ does not contain $a_{w_{j}}$ (because $w_{j}$ is not
a $v\in S_{n}$ satisfying $\operatorname*{Qind}v<i$%
\ \ \ \ \footnote{\textit{Proof.} We have $\operatorname*{Qind}\left(
w_{j}\right)  =i$. Thus, we do not have $\operatorname*{Qind}\left(
w_{j}\right)  <i$. Hence, $w_{j}$ is not a $v\in S_{n}$ satisfying
$\operatorname*{Qind}v<i$.}), and thus does not contribute to the coefficient
of $a_{w_{j}}$ on the right hand side of (\ref{pf.cor.eigen.spec.3}). Thus,
the total coefficient with which the basis element $a_{w_{j}}$ appears on the
right hand side of (\ref{pf.cor.eigen.spec.3}) is $g_{i}$. Thus, the equality
(\ref{pf.cor.eigen.spec.3}) expresses $\rho\left(  a_{w_{j}}\right)  $ as a
$\mathbf{k}$-linear combination of the a-basis, and the basis element
$a_{w_{j}}$ appears in this combination with coefficient $g_{i}$. Hence, when
$\rho\left(  a_{w_{j}}\right)  $ is expanded as a $\mathbf{k}$-linear
combination of the a-basis, the basis element $a_{w_{j}}$ appears with
coefficient $g_{i}$. In other words, $\mu_{j,j}=g_{i}$ (since $\mu_{j,j}$ is
the coefficient of $a_{w_{j}}$ when $\rho\left(  a_{w_{j}}\right)  $ is
expanded as a $\mathbf{k}$-linear combination of the a-basis). In view of
$i=\operatorname*{Qind}\left(  w_{j}\right)  $, this rewrites as $\mu
_{j,j}=g_{\operatorname*{Qind}\left(  w_{j}\right)  }$. This completes our
proof of Claim 1.] \medskip

[\textit{Proof of Claim 2:} Let $j,k\in\left[  n!\right]  $ satisfy $k>j$. We
must prove that $\mu_{k,j}=0$.

The equality (\ref{pf.cor.eigen.spec.1}) shows that $\mu_{k,j}$ is the
coefficient of $a_{w_{k}}$ when $\rho\left(  a_{w_{j}}\right)  $ is expanded
as a $\mathbf{k}$-linear combination of the a-basis. Thus, our goal is to show
that this coefficient is $0$ (since we must prove that $\mu_{k,j}=0$). In
other words, our goal is to show that when $\rho\left(  a_{w_{j}}\right)  $ is
expanded as a $\mathbf{k}$-linear combination of the a-basis, the basis
element $a_{w_{k}}$ appears with coefficient $0$.

Let $i:=\operatorname*{Qind}\left(  w_{j}\right)  $. Just as in the proof of
Claim 1, we obtain the equality (\ref{pf.cor.eigen.spec.3}). The right hand
side of this equality is clearly a $\mathbf{k}$-linear combination of the
a-basis. Let us see whether the element $a_{w_{k}}$ of the a-basis appears in
this combination. Indeed, $a_{w_{k}}$ clearly does not appear in the addend
$g_{i}a_{w_{j}}$, because $k\neq j$ (since $k>j$). Furthermore, $a_{w_{k}}$
does not appear in the $\mathbf{k}$-linear combination of $a_{v}$'s for $v\in
S_{n}$ satisfying $\operatorname*{Qind}v<i$ either, because $w_{k}$ is not a
$v\in S_{n}$ satisfying $\operatorname*{Qind}v<i$%
\ \ \ \ \footnote{\textit{Proof.} From $k>j$, we obtain $j\leq k$ and thus
$\operatorname*{Qind}\left(  w_{j}\right)  \leq\operatorname*{Qind}\left(
w_{k}\right)  $ (by (\ref{pf.cor.eigen.spec.qind-leq})). Hence,
$\operatorname*{Qind}\left(  w_{k}\right)  \geq\operatorname*{Qind}\left(
w_{j}\right)  =i$. Thus, we do not have $\operatorname*{Qind}\left(
w_{k}\right)  <i$. Hence, $w_{k}$ is not a $v\in S_{n}$ satisfying
$\operatorname*{Qind}v<i$.}. Hence, $a_{w_{k}}$ appears nowhere on the right
hand side of (\ref{pf.cor.eigen.spec.3}). Thus, the equality
(\ref{pf.cor.eigen.spec.3}) expresses $\rho\left(  a_{w_{j}}\right)  $ as a
$\mathbf{k}$-linear combination of the a-basis, but without the basis element
$a_{w_{k}}$ ever appearing in this combination. Hence, when $\rho\left(
a_{w_{j}}\right)  $ is expanded as a $\mathbf{k}$-linear combination of the
a-basis, the basis element $a_{w_{k}}$ appears with coefficient $0$. This
completes our proof of Claim 2.] \medskip

Claim 2 shows that the matrix $M$ is upper-triangular. Hence, its eigenvalues
are its diagonal entries. In other words,
\[
\operatorname*{Spec}M=\left\{  \text{all diagonal entries of }M\right\}
=\left\{  \mu_{j,j}\ \mid\ j\in\left[  n!\right]  \right\}  =\left\{
g_{\operatorname*{Qind}\left(  w_{j}\right)  }\ \mid\ j\in\left[  n!\right]
\right\}
\]
(since Claim 1 yields that $\mu_{j,j}=g_{\operatorname*{Qind}\left(
w_{j}\right)  }$ for each $j\in\left[  n!\right]  $).

The values $\operatorname*{Qind}w$ for all $w\in S_{n}$ belong to the set
$\left[  f_{n+1}\right]  $ (by the definition of $\operatorname*{Qind}w$).
Conversely, each element $i$ of $\left[  f_{n+1}\right]  $ can be written as
$\operatorname*{Qind}w$ for at least one permutation $w\in S_{n}$ (by Lemma
\ref{lem.Qind.surj}). Combining these two observations, we obtain%
\[
\left\{  \operatorname*{Qind}w\ \mid\ w\in S_{n}\right\}  =\left[
f_{n+1}\right]  .
\]

Now, recall that the matrix $M$ represents the endomorphism $\rho$ with
respect to the basis $\left(  a_{w_{1}},a_{w_{2}},\ldots,a_{w_{n!}}\right)  $.
Hence, its eigenvalues are the eigenvalues of the latter endomorphism. In
other words, $\operatorname*{Spec}M=\operatorname*{Spec}\rho$. In view of
$\rho=R\left(  \lambda_{1}t_{1}+\lambda_{2}t_{2}+\cdots+\lambda_{n}%
t_{n}\right)  $, this rewrites as $\operatorname*{Spec}M=\operatorname*{Spec}%
\left(  R\left(  \lambda_{1}t_{1}+\lambda_{2}t_{2}+\cdots+\lambda_{n}%
t_{n}\right)  \right)  $. Hence,
\begin{align*}
&  \operatorname*{Spec}\left(  R\left(  \lambda_{1}t_{1}+\lambda_{2}%
t_{2}+\cdots+\lambda_{n}t_{n}\right)  \right) \\
&  =\operatorname*{Spec}M\\
&  =\left\{  g_{\operatorname*{Qind}\left(  w_{j}\right)  }\ \mid\ j\in\left[
n!\right]  \right\} \\
&  =\left\{  g_{\operatorname*{Qind}w}\ \mid\ w\in S_{n}\right\}
\ \ \ \ \ \ \ \ \ \ \left(  \text{since }w_{1},w_{2},\ldots,w_{n!}\text{ are
the }n!\text{ permutations in }S_{n}\right) \\
&  =\left\{  g_{i}\ \mid\ i\in\left[  f_{n+1}\right]  \right\}
\ \ \ \ \ \ \ \ \ \ \left(  \text{since }\left\{  \operatorname*{Qind}%
w\ \mid\ w\in S_{n}\right\}  =\left[  f_{n+1}\right]  \right) \\
&  =\left\{  \lambda_{1}m_{Q_{i},1}+\lambda_{2}m_{Q_{i},2}+\cdots+\lambda
_{n}m_{Q_{i},n}\ \mid\ i\in\left[  f_{n+1}\right]  \right\} \\
&  \ \ \ \ \ \ \ \ \ \ \ \ \ \ \ \ \ \ \ \ \left(  \text{since }g_{i}\text{ is
defined as }\lambda_{1}m_{Q_{i},1}+\lambda_{2}m_{Q_{i},2}+\cdots+\lambda
_{n}m_{Q_{i},n}\right) \\
&  =\left\{  \lambda_{1}m_{I,1}+\lambda_{2}m_{I,2}+\cdots+\lambda_{n}%
m_{I,n}\ \mid\ I\subseteq\left[  n-1\right]  \text{ is lacunar}\right\}
\end{align*}
(since $Q_{1},Q_{2},\ldots,Q_{f_{n+1}}$ are exactly the lacunar subsets $I$ of
$\left[  n-1\right]  $). This proves Corollary \ref{cor.eigen.spec}.
\end{proof}

\subsection{Diagonalizability}

We have already seen in Remark \ref{rmk.Rcomb} that the endomorphism $R\left(
\lambda_{1}t_{1}+\lambda_{2}t_{2}+\cdots+\lambda_{n}t_{n}\right)  $ of
$\mathbf{k}\left[  S_{n}\right]  $ may fail to be diagonalizable (even if
$\mathbf{k}=\mathbb{C}$). However, in a large class of cases, it is diagonalizable:

\begin{theorem}
\label{thm.eigen.diagonalizable}Let $\lambda_{1},\lambda_{2},\ldots
,\lambda_{n}\in\mathbf{k}$. Assume that $\mathbf{k}$ is a field. Assume that
the elements $\lambda_{1}m_{I,1}+\lambda_{2}m_{I,2}+\cdots+\lambda_{n}m_{I,n}$
for all lacunar subsets $I\subseteq\left[  n-1\right]  $ are distinct. Then,
the endomorphism $R\left(  \lambda_{1}t_{1}+\lambda_{2}t_{2}+\cdots
+\lambda_{n}t_{n}\right)  $ of $\mathbf{k}\left[  S_{n}\right]  $ is diagonalizable.
\end{theorem}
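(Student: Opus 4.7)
The plan is to combine the annihilating polynomial from Theorem \ref{thm.eigen.annih-pol} with the classical fact that a linear operator over a field whose minimal polynomial splits into pairwise distinct linear factors is diagonalizable. Set $t := \lambda_{1}t_{1}+\lambda_{2}t_{2}+\cdots+\lambda_{n}t_{n} \in \mathbf{k}[S_{n}]$ and
\[
p(x) := \prod_{\substack{I \subseteq [n-1] \text{ is}\\\text{lacunar}}} \bigl(x - (\lambda_{1}m_{I,1}+\lambda_{2}m_{I,2}+\cdots+\lambda_{n}m_{I,n})\bigr) \in \mathbf{k}[x].
\]
Theorem \ref{thm.eigen.annih-pol} then says precisely that $p(t) = 0$ in $\mathbf{k}[S_{n}]$.

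Next I would translate this into a statement about $R(t)$. The definition of $R$ gives $R(t)^{k}(y) = y t^{k} = R(t^{k})(y)$ for every $y \in \mathbf{k}[S_{n}]$ and every $k \in \mathbb{N}$, hence $R(t)^{k} = R(t^{k})$. By $\mathbf{k}$-linearity this extends to $q(R(t)) = R(q(t))$ for every polynomial $q \in \mathbf{k}[x]$. Applying this to $q = p$ and using $p(t) = 0$, I get $p(R(t)) = R(0) = 0$, so the polynomial $p$ annihilates the endomorphism $R(t) = R(\lambda_{1}t_{1}+\cdots+\lambda_{n}t_{n})$.

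Finally, the hypothesis of the theorem is exactly that the roots of $p$ --- namely, the scalars $\lambda_{1}m_{I,1}+\cdots+\lambda_{n}m_{I,n}$ for lacunar $I \subseteq [n-1]$ --- are pairwise distinct. Thus $p$ is a product of pairwise distinct linear factors over the field $\mathbf{k}$. The minimal polynomial of the endomorphism $R(t)$ divides $p$, hence is itself a product of pairwise distinct linear factors, which by the standard criterion forces $R(t)$ to be diagonalizable. Since the heavy lifting was done in Theorems \ref{thm.t-simultri} and \ref{thm.eigen.annih-pol}, there is no real obstacle here; the diagonalizability theorem is essentially a one-line corollary of the annihilating-polynomial identity together with the distinctness hypothesis.
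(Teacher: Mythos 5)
Your proposal is correct and follows essentially the same route as the paper: apply the annihilating polynomial of Theorem \ref{thm.eigen.annih-pol}, transfer it to $R(t)$ (the paper does this via the fact that $R$ is a $\mathbf{k}$-algebra antihomomorphism and Proposition \ref{prop.antihom.unipol}, while you verify $R(t)^{k}=R(t^{k})$ directly, which is the same computation), and conclude via the split-separable minimal polynomial criterion. No gaps.
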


In order to prove Theorem \ref{thm.eigen.diagonalizable}, we will need a
slightly apocryphal concept from algebra:

\begin{itemize}
\item A $\mathbf{k}$\emph{-algebra antihomomorphism} from a $\mathbf{k}%
$-algebra $A$ to a $\mathbf{k}$-algebra $B$ means a $\mathbf{k}$-linear map
$f:A\rightarrow B$ that satisfies $f\left(  1\right)  =1$ and%
\[
f\left(  a_{1}a_{2}\right)  =f\left(  a_{2}\right)  f\left(  a_{1}\right)
\ \ \ \ \ \ \ \ \ \ \text{for all }a_{1},a_{2}\in A.
\]

\end{itemize}

Thus, a $\mathbf{k}$-algebra antihomomorphism from a $\mathbf{k}$-algebra $A$
to a $\mathbf{k}$-algebra $B$ is the same as a $\mathbf{k}$-algebra
homomorphism from $A^{\operatorname*{op}}$ to $B$, where
$A^{\operatorname*{op}}$ is the opposite algebra of $A$ (that is, the
$\mathbf{k}$-algebra $A$ with its multiplication reversed).

It is well-known that $\mathbf{k}$-algebra homomorphisms preserve univariate
polynomials: That is, if $f$ is a $\mathbf{k}$-algebra homomorphism from a
$\mathbf{k}$-algebra $A$ to a $\mathbf{k}$-algebra $B$, and if $P\in
\mathbf{k}\left[  X\right]  $ is a polynomial, then $f\left(  P\left(
u\right)  \right)  =P\left(  f\left(  u\right)  \right)  $ for any $u\in A$.
The same holds for $\mathbf{k}$-algebra antihomomorphisms:

\begin{proposition}
\label{prop.antihom.unipol}Let $f$ be a $\mathbf{k}$-algebra antihomomorphism
from a $\mathbf{k}$-algebra $A$ to a $\mathbf{k}$-algebra $B$. Let
$P\in\mathbf{k}\left[  X\right]  $ be a polynomial. Then, $f\left(  P\left(
u\right)  \right)  =P\left(  f\left(  u\right)  \right)  $ for any $u\in A$.
\end{proposition}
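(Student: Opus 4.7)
The plan is to reduce the claim to the single identity $f(u^k) = f(u)^k$ for all $k \in \mathbb{N}$, and then invoke $\mathbf{k}$-linearity. The point is that although $f$ reverses the order of products in general, the powers $u, u^2, u^3, \ldots$ all commute (they lie in the commutative subalgebra $\mathbf{k}[u] \subseteq A$), so reversing their order has no effect. Writing $P(X) = \sum_{k=0}^{d} c_k X^k$ with $c_k \in \mathbf{k}$, I would use $\mathbf{k}$-linearity of $f$ to get
\[
f(P(u)) = f\Bigl(\sum_{k=0}^{d} c_k u^k\Bigr) = \sum_{k=0}^{d} c_k f(u^k),
\]
so everything hinges on the claim that $f(u^k) = f(u)^k$ for every $k \in \mathbb{N}$ and every $u \in A$.

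This claim I would prove by induction on $k$. The base cases are $k = 0$, which holds because $f(u^0) = f(1) = 1 = f(u)^0$ by the definition of a $\mathbf{k}$-algebra antihomomorphism, and $k = 1$, which is trivial. For the induction step, assume $f(u^k) = f(u)^k$. Then, using the antihomomorphism property $f(a_1 a_2) = f(a_2) f(a_1)$ with $a_1 = u$ and $a_2 = u^k$, I obtain
\[
f(u^{k+1}) = f(u \cdot u^k) = f(u^k) \, f(u) = f(u)^k \, f(u) = f(u)^{k+1}.
\]
This completes the induction.

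Combining the two steps, I conclude
\[
f(P(u)) = \sum_{k=0}^{d} c_k f(u^k) = \sum_{k=0}^{d} c_k f(u)^k = P(f(u)),
\]
which is exactly the assertion of Proposition~\ref{prop.antihom.unipol}. There is no real obstacle here; the only point worth emphasizing in the writeup is that the order-reversal in the definition of an antihomomorphism is harmless precisely because $f(u)^k$ and $f(u)$ commute with each other inside $B$.
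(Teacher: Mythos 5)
Your proof is correct and is exactly the argument the paper has in mind; the paper merely remarks that the statement "can be proved in the same way as the analogous result about $\mathbf{k}$-algebra homomorphisms," and your writeup supplies that routine induction on $f(u^k)=f(u)^k$ in full, correctly noting that the order reversal is harmless because powers of $u$ commute.
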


\begin{proof}
This can be proved in the same way as the analogous result about $\mathbf{k}%
$-algebra homomorphisms.
\end{proof}

\begin{proof}
[Proof of Theorem \ref{thm.eigen.diagonalizable}.]Consider the endomorphism
ring $\operatorname*{End}\nolimits_{\mathbf{k}}\left(  \mathbf{k}\left[
S_{n}\right]  \right)  $ of the $\mathbf{k}$-algebra $\mathbf{k}\left[
S_{n}\right]  $.

We have defined an endomorphism $R\left(  x\right)  \in\operatorname*{End}%
\nolimits_{\mathbf{k}}\left(  \mathbf{k}\left[  S_{n}\right]  \right)  $ of
the $\mathbf{k}$-module $\mathbf{k}\left[  S_{n}\right]  $ for each
$x\in\mathbf{k}\left[  S_{n}\right]  $. Thus, we obtain a map%
\begin{align*}
R:\mathbf{k}\left[  S_{n}\right]   &  \rightarrow\operatorname*{End}%
\nolimits_{\mathbf{k}}\left(  \mathbf{k}\left[  S_{n}\right]  \right)  ,\\
x  &  \mapsto R\left(  x\right)  .
\end{align*}
It is well-known (and straightforward to check) that this map $R$ is a
$\mathbf{k}$-algebra antihomomorphism (i.e., a $\mathbf{k}$-linear map
satisfying $R\left(  1\right)  =1$ and $R\left(  xy\right)  =R\left(
y\right)  \cdot R\left(  x\right)  $ for all $x,y\in\mathbf{k}\left[
S_{n}\right]  $). In fact, $R$ is the standard right action of the
$\mathbf{k}$-algebra $\mathbf{k}\left[  S_{n}\right]  $ on itself.

Let%
\[
t:=\lambda_{1}t_{1}+\lambda_{2}t_{2}+\cdots+\lambda_{n}t_{n}\in\mathbf{k}%
\left[  S_{n}\right]  .
\]
Let $\rho$ be the endomorphism $R\left(  t\right)  $ of $\mathbf{k}\left[
S_{n}\right]  $. We shall show that $\rho$ is diagonalizable.

A univariate polynomial $P\in\mathbf{k}\left[  X\right]  $ is said to be
\emph{split separable} if it can be factored as a product of distinct monic
polynomials of degree $1$ (that is, if it can be written as $P=\prod_{j=1}%
^{k}\left(  X-p_{j}\right)  $, where $p_{1},p_{2},\ldots,p_{k}$ are $k$
\textbf{distinct} elements of $\mathbf{k}$).

Let $P$ be the polynomial $\prod_{\substack{I\subseteq\left[  n-1\right]
\text{ is}\\\text{lacunar}}}\left(  X-\left(  \lambda_{1}m_{I,1}+\lambda
_{2}m_{I,2}+\cdots+\lambda_{n}m_{I,n}\right)  \right)  \in\mathbf{k}\left[
X\right]  $. This polynomial $P$ is split separable, since we assumed that the
elements $\lambda_{1}m_{I,1}+\lambda_{2}m_{I,2}+\cdots+\lambda_{n}m_{I,n}$ for
all lacunar subsets $I\subseteq\left[  n-1\right]  $ are distinct.

Moreover, the definition of $P$ yields
\[
P\left(  t\right)  =\prod_{\substack{I\subseteq\left[  n-1\right]  \text{
is}\\\text{lacunar}}}\left(  t-\left(  \lambda_{1}m_{I,1}+\lambda_{2}%
m_{I,2}+\cdots+\lambda_{n}m_{I,n}\right)  \right)  =0
\]
by Theorem \ref{thm.eigen.annih-pol}. However, $R$ is a $\mathbf{k}$-algebra
antihomomorphism. Hence, Proposition \ref{prop.antihom.unipol} (applied to
$A=\mathbf{k}\left[  S_{n}\right]  $, $B=\operatorname*{End}%
\nolimits_{\mathbf{k}}\left(  \mathbf{k}\left[  S_{n}\right]  \right)  $ and
$f=R$) yields that $R\left(  P\left(  u\right)  \right)  =P\left(  R\left(
u\right)  \right)  $ for any $u\in\mathbf{k}\left[  S_{n}\right]  $. Applying
this to $u=t$, we obtain $R\left(  P\left(  t\right)  \right)  =P\left(
\underbrace{R\left(  t\right)  }_{=\rho}\right)  =P\left(  \rho\right)  $.
Hence, $P\left(  \rho\right)  =R\left(  \underbrace{P\left(  t\right)  }%
_{=0}\right)  =R\left(  0\right)  =0$. Therefore, the minimal polynomial of
$\rho$ divides $P$. (Note that the minimal polynomial of $\rho$ is indeed
well-defined, since $\rho$ is an endomorphism of the finite-dimensional
$\mathbf{k}$-vector space $\mathbf{k}\left[  S_{n}\right]  $.)

It is easy to see that any polynomial $Q\in\mathbf{k}\left[  X\right]  $ that
divides a split separable polynomial must itself be split separable. Hence,
the minimal polynomial of $\rho$ is split separable (since this minimal
polynomial divides $P$, but we know that $P$ is split separable).

Now, recall the following fact (see, e.g., \cite[Theorem 4.11]{Conrad22} or
\cite[\S 6.4, Theorem 6]{HofKun71} or \cite[Proposition 3.8]{StoLui19}): If
the minimal polynomial of an endomorphism of a finite-dimensional $\mathbf{k}%
$-vector space is split separable, then this endomorphism is diagonalizable.
Hence, the endomorphism $\rho$ is diagonalizable (since the minimal polynomial
of $\rho$ is split separable). In other words, $R\left(  \lambda_{1}%
t_{1}+\lambda_{2}t_{2}+\cdots+\lambda_{n}t_{n}\right)  $ is diagonalizable
(since $\rho=R\left(  \underbrace{t}_{=\lambda_{1}t_{1}+\lambda_{2}%
t_{2}+\cdots+\lambda_{n}t_{n}}\right)  =R\left(  \lambda_{1}t_{1}+\lambda
_{2}t_{2}+\cdots+\lambda_{n}t_{n}\right)  $). This proves Theorem
\ref{thm.eigen.diagonalizable}.
\end{proof}

Note that Theorem \ref{thm.eigen.diagonalizable} is not an \textquotedblleft
if and only if\textquotedblright\ statement. We do not know if there is an
easy way to characterize when $R\left(  \lambda_{1}t_{1}+\lambda_{2}%
t_{2}+\cdots+\lambda_{n}t_{n}\right)  $ is diagonalizable.

\begin{remark}
Let $I$ be a subset of $\left[  n\right]  $. Then, the numbers $m_{I,1}%
,m_{I,2},\ldots,m_{I,n}$ together uniquely determine $I$. Indeed, a moment's
thought reveals that%
\[
I=\left\{  \ell\in\left[  n\right]  \ \mid\ m_{I,\ell}=0\right\}  .
\]
Hence, if $\mathbf{k}$ is a field of characteristic $0$, then the main
assumption of Theorem \ref{thm.eigen.diagonalizable} (viz., that the elements
$\lambda_{1}m_{I,1}+\lambda_{2}m_{I,2}+\cdots+\lambda_{n}m_{I,n}$ for all
lacunar subsets $I\subseteq\left[  n-1\right]  $ are distinct) will be
satisfied for any \textquotedblleft sufficiently\textquotedblright\ generic
$\lambda_{1},\lambda_{2},\ldots,\lambda_{n}\in\mathbf{k}$.
\end{remark}

\begin{example}
\label{ex.r2b.diagonalizability} We cannot use Theorem
\ref{thm.eigen.diagonalizable} to show that the random-to-below shuffle is
always diagonalizable. For example, when $n=12$, two lacunar sets
($\{1,6,8,10\}$ and $\{6,8,11\}$) yield $\sum_{\ell=1}^{n} \frac{m_{I,\ell}%
}{n+1-\ell} = \frac{13 573}{3960}$. This is the smallest example we could
find, meaning that the shuffle is certainly diagonalizable when $\mathbf{k} =
\mathbb{Q}$ and $n\leq11$. It remains an open question whether the
random-to-below shuffle is diagonalizable.
\end{example}

\begin{example}
\label{ex.t2r.diagonalizability} There are diagonalizable one-sided cycle
shuffles that do not satisfy the hypotheses of Theorem
\ref{thm.eigen.diagonalizable}. For example, it is known since \cite[Theorem
4.1]{DiFiPi92} that the top-to-random shuffle ($t_{1}$) is diagonalizable. In
our notation, it corresponds to $\lambda_{1} = 1$ and $\lambda_{2} =
\lambda_{3} = \ldots= \lambda_{n} =0$, which does not satisfy the conditions
of Theorem \ref{thm.eigen.diagonalizable} in general.
\end{example}

\begin{question}
\label{qu.criterion.diagonalizability} Can a necessary and sufficient
criterion be found for the diagonalizability of a one-sided shuffle (as
opposed to the merely sufficient one in Theorem \ref{thm.eigen.diagonalizable})?
\end{question}

\section{The multiplicities of the eigenvalues}

\subsection{The dimensions of $F_{i}/F_{i-1}$, explicitly}

In Theorem \ref{thm.aw.freeness} \textbf{(b)}, we have given bases for all the
quotient $\mathbf{k}$-modules $F_{i}/F_{i-1}$. The sizes of these bases are
the dimensions of these quotient $\mathbf{k}$-modules. Let us now characterize
these dimensions more explicitly:

\begin{theorem}
\label{thm.deltai.main}Let $i\in\left[  f_{n+1}\right]  $. Let $\delta_{i}$ be
the number of all permutations $w\in S_{n}$ satisfying $\operatorname*{Qind}%
w=i$. Then:

\begin{enumerate}
\item[\textbf{(a)}] The $\mathbf{k}$-module $F_{i}/F_{i-1}$ is free and has
dimension (i.e., rank) equal to $\delta_{i}$. (Here, of course, $F_{0}%
\subseteq F_{1}\subseteq F_{2}\subseteq\cdots\subseteq F_{f_{n+1}}$ is the
filtration from Theorem \ref{thm.t-simultri}.)

\item[\textbf{(b)}] The number $\delta_{i}$ equals the number of all
permutations $w\in S_{n}$ that satisfy%
\[
w\left(  j\right)  <w\left(  j+1\right)  \ \ \ \ \ \ \ \ \ \ \text{for all
}j\in Q_{i}%
\]
and%
\[
w\left(  j\right)  >w\left(  j+1\right)  \ \ \ \ \ \ \ \ \ \ \text{for all
}j\in Q_{i}^{\prime}.
\]

\item[\textbf{(c)}] Write the set $Q_{i}$ in the form $Q_{i}=\left\{
i_{1}<i_{2}<\cdots<i_{p}\right\}  $, and set $i_{0}=1$ and $i_{p+1}=n+1$. Let
$j_{k}=i_{k}-i_{k-1}$ for each $k\in\left[  p+1\right]  $. Then,%
\begin{equation}
\delta_{i}=\dbinom{n}{j_{1},j_{2},\ldots,j_{p+1}}\cdot\prod_{k=2}^{p+1}\left(
j_{k}-1\right)  . \label{eq.thm.deltai.main.c.eq}%
\end{equation}
Here, $\dbinom{n}{j_{1},j_{2},\ldots,j_{p+1}}$ denotes the multinomial
coefficient $\dfrac{n!}{j_{1}!j_{2}!\cdots j_{p+1}!}$.

\item[\textbf{(d)}] We have $\delta_{i}\mid n!$.
\end{enumerate}
\end{theorem}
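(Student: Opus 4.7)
Parts (a) and (b) should be essentially immediate from what is already in place. For (a), Theorem \ref{thm.aw.freeness}\textbf{(b)} has already exhibited a basis $(\overline{a_w})_{w \in S_n;\ \operatorname{Qind} w = i}$ of the $\mathbf{k}$-module $F_i/F_{i-1}$; by the definition of $\delta_i$ this basis has exactly $\delta_i$ elements, so $F_i/F_{i-1}$ is free of rank $\delta_i$. For (b), Proposition \ref{prop.Qind.equivalent} characterizes $\operatorname{Qind} w = i$ as the double inclusion $Q_i' \subseteq \operatorname{Des} w \subseteq [n-1] \setminus Q_i$; the left inclusion translates into the required descent condition at each $j \in Q_i'$, while the right inclusion rules out descents at positions in $Q_i$ and hence forces the ascent condition there.

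The real work is in part (c), where the plan is to count directly the permutations satisfying the ascent/descent pattern from (b). Partition the position set $[n]$ into $p+1$ consecutive intervals $B_k := \{i_{k-1}, i_{k-1}+1, \ldots, i_k - 1\}$ (with the conventions $i_0 = 1$ and $i_{p+1} = n+1$), of sizes $j_k = i_k - i_{k-1}$. Each inter-block boundary sits at a position $i_k - 1 \in Q_i - 1$, which lies in neither $Q_i$ nor $Q_i'$ and is therefore unconstrained. Inside a block $B_k$ with $k \geq 2$, the first internal position $i_{k-1}$ forces an ascent, while each subsequent internal position $i_{k-1}+1, \ldots, i_k - 2$ lies in $Q_i'$ and forces a descent; inside $B_1$ (for $j_1 \geq 2$) all internal positions force descents by an analogous argument. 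This yields the pattern $w(i_{k-1}) < w(i_{k-1}+1) > w(i_{k-1}+2) > \cdots > w(i_k - 1)$ inside each $B_k$ with $k \geq 2$, and a strictly decreasing sequence inside $B_1$. Counting then factors: first pick which values fill each block ($\binom{n}{j_1, \ldots, j_{p+1}}$ ways), then count in-block arrangements. $B_1$ contributes $1$, and each $B_k$ with $k \geq 2$ contributes $j_k - 1$, since $w(i_{k-1}+1)$ is forced to be the block's maximum, $w(i_{k-1})$ can be any of the remaining $j_k - 1$ values, and the trailing entries are then forced into decreasing order. Multiplying yields \eqref{eq.thm.deltai.main.c.eq}.

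Part (d) then falls out quickly: since $Q_i$ is lacunar, $j_k \geq 2$ whenever $k \geq 2$, so $(j_k - 1) \mid j_k!$ for each such $k$, and therefore $\prod_{k=2}^{p+1}(j_k - 1)$ divides $\prod_{k=1}^{p+1} j_k!$; rewriting $\delta_i = n! \prod_{k=2}^{p+1}(j_k-1) / \prod_{k=1}^{p+1} j_k!$ then shows that $n!/\delta_i$ is a positive integer, i.e., $\delta_i \mid n!$. The main obstacle throughout lies in part (c), specifically in the bookkeeping of which positions belong to $Q_i$, to $Q_i - 1$, or to $Q_i'$: one must verify that every intermediate position $i_{k-1}+1, \ldots, i_k - 2$ genuinely lies in $Q_i'$, and this uses both the lacunarity of $Q_i$ and the fact that $i_{k-1}$ and $i_k$ are \emph{consecutive} elements of $Q_i$ so that no other element of $Q_i$ lies strictly between them. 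Once this verification is in hand, the rest of the count is mechanical.
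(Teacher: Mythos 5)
Your proposal is correct and follows essentially the same route as the paper: (a) via the basis of $F_i/F_{i-1}$ from Theorem \ref{thm.aw.freeness}\textbf{(b)}, (b) via Proposition \ref{prop.Qind.equivalent}, (c) via the same block decomposition $[i_{k-1}, i_k-1]$ with a multinomial choice of value sets and the count of $j_k-1$ "ascent-then-decreasing" arrangements per block (the paper's Claim 1 on up-decreasing bijections), and (d) via the same observation that $j_k \geq 2$ for $k \geq 2$ makes $j_k - 1$ a divisor of $j_k!$. The bookkeeping you flag as the main obstacle — verifying that the positions $i_{k-1}+1,\ldots,i_k-2$ lie in $Q_i'$ while the boundary positions $i_k-1$ are unconstrained — is exactly what the paper records in its explicit description of $Q_i'$.
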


\begin{proof}
\textbf{(a)} Theorem \ref{thm.aw.freeness} \textbf{(b)} shows that the
$\mathbf{k}$-module $F_{i}/F_{i-1}$ is free with basis $\left(  \overline
{a_{w}}\right)  _{w\in S_{n};\ \operatorname*{Qind}w=i}$. Hence, its dimension
is the number of all permutations $w\in S_{n}$ satisfying
$\operatorname*{Qind}w=i$. But this latter number is $\delta_{i}$ (by the
definition of $\delta_{i}$). This proves Theorem \ref{thm.deltai.main}
\textbf{(a)}. \medskip

\textbf{(b)} For any permutation $w\in S_{n}$, we have the following chain of
equivalences:%
\begin{align*}
&  \ \left(  \operatorname*{Qind}w=i\right) \\
&  \Longleftrightarrow\ \left(  Q_{i}^{\prime}\subseteq\operatorname*{Des}%
w\subseteq\left[  n-1\right]  \setminus Q_{i}\right)
\ \ \ \ \ \ \ \ \ \ \left(  \text{by Proposition \ref{prop.Qind.equivalent}%
}\right) \\
&  \Longleftrightarrow\ \left(  \underbrace{Q_{i}^{\prime}\subseteq
\operatorname*{Des}w}_{\Longleftrightarrow\ \left(  j\in\operatorname*{Des}%
w\text{ for all }j\in Q_{i}^{\prime}\right)  }\text{ and }%
\underbrace{\operatorname*{Des}w\subseteq\left[  n-1\right]  \setminus Q_{i}%
}_{\substack{\Longleftrightarrow\ \left(  \operatorname*{Des}w\text{ is
disjoint from }Q_{i}\right)  \\\text{(since }\operatorname*{Des}%
w\subseteq\left[  n-1\right]  \text{ always holds)}}}\right) \\
&  \Longleftrightarrow\ \left(  \left(  j\in\operatorname*{Des}w\text{ for all
}j\in Q_{i}^{\prime}\right)  \text{ and }\underbrace{\left(
\operatorname*{Des}w\text{ is disjoint from }Q_{i}\right)  }%
_{\Longleftrightarrow\ \left(  j\notin\operatorname*{Des}w\text{ for all }j\in
Q_{i}\right)  }\right) \\
&  \Longleftrightarrow\ \left(  \left(  \underbrace{j\in\operatorname*{Des}%
w}_{\substack{\Longleftrightarrow\ \left(  w\left(  j\right)  >w\left(
j+1\right)  \right)  \\\text{(by the definition of }\operatorname*{Des}%
w\text{)}}}\text{ for all }j\in Q_{i}^{\prime}\right)  \text{ and }\left(
\underbrace{j\notin\operatorname*{Des}w}_{\substack{\Longleftrightarrow
\ \left(  w\left(  j\right)  \leq w\left(  j+1\right)  \right)  \\\text{(by
the definition of }\operatorname*{Des}w\text{)}}}\text{ for all }j\in
Q_{i}\right)  \right) \\
&  \Longleftrightarrow\ \left(  \left(  w\left(  j\right)  >w\left(
j+1\right)  \text{ for all }j\in Q_{i}^{\prime}\right)  \text{ and }\left(
\underbrace{w\left(  j\right)  \leq w\left(  j+1\right)  }%
_{\substack{\Longleftrightarrow\ \left(  w\left(  j\right)  <w\left(
j+1\right)  \right)  \\\text{(since }w\left(  j\right)  \neq w\left(
j+1\right)  \\\text{(because }w\text{ is a permutation))}}}\text{ for all
}j\in Q_{i}\right)  \right) \\
&  \Longleftrightarrow\ \left(  \left(  w\left(  j\right)  >w\left(
j+1\right)  \text{ for all }j\in Q_{i}^{\prime}\right)  \text{ and }\left(
w\left(  j\right)  <w\left(  j+1\right)  \text{ for all }j\in Q_{i}\right)
\right) \\
&  \Longleftrightarrow\ \left(  \left(  w\left(  j\right)  <w\left(
j+1\right)  \text{ for all }j\in Q_{i}\right)  \text{ and }\left(  w\left(
j\right)  >w\left(  j+1\right)  \text{ for all }j\in Q_{i}^{\prime}\right)
\right)  .
\end{align*}
Thus, $\delta_{i}$ equals the number of all permutations $w\in S_{n}$
satisfying%
\[
\left(  w\left(  j\right)  <w\left(  j+1\right)  \text{ for all }j\in
Q_{i}\right)  \text{ and }\left(  w\left(  j\right)  >w\left(  j+1\right)
\text{ for all }j\in Q_{i}^{\prime}\right)
\]
(because $\delta_{i}$ was defined as the number of all permutations $w\in
S_{n}$ satisfying $\operatorname*{Qind}w=i$). This proves Theorem
\ref{thm.deltai.main} \textbf{(b)}. \medskip

\textbf{(c)} We introduce a bit of terminology: If $K=\left[  u,v\right]  $ is
an interval of $\mathbb{Z}$, and if $T$ is an arbitrary subset of $\mathbb{Z}%
$, then a map $f:K\rightarrow T$ will be called \emph{up-decreasing} if it
satisfies%
\[
f\left(  u\right)  <f\left(  u+1\right)  >f\left(  u+2\right)  >f\left(
u+3\right)  >\cdots>f\left(  v\right)
\]
(that is, if it is increasing on $\left[  u,u+1\right]  $ and decreasing on
$\left[  u+1,v\right]  $). For instance, the map $\left[  5\right]
\rightarrow\left[  -3,0\right]  $ that sends each $k\in\left[  5\right]  $ to
$-\left\vert k-2\right\vert $ is up-decreasing.

The following fact is easy to see:

\begin{statement}
\textit{Claim 1:} Let $h\geq2$ be an integer. Let $K=\left[  u,v\right]  $ be
an interval of $\mathbb{Z}$ having size $\left\vert K\right\vert =v-u+1=h$.
Let $T$ be a subset of $\mathbb{Z}$ that has size $h$. Then, the number of
up-decreasing bijections $f:K\rightarrow T$ is $h-1$.
\end{statement}

[\textit{Proof of Claim 1:} We WLOG assume that $K=\left[  h\right]  $ and
$T=\left[  h\right]  $, because we can otherwise rename the elements of $K$
and of $T$ while preserving their relative order. Thus, the bijections
$f:K\rightarrow T$ are precisely the permutations of $\left[  h\right]  $, and
we must show that the number of up-decreasing permutations of $\left[
h\right]  $ is $h-1$.

But this is easy to show: An up-decreasing permutation of $\left[  h\right]  $
is a permutation $f$ of $\left[  h\right]  $ satisfying $f\left(  1\right)
<f\left(  2\right)  >f\left(  3\right)  >f\left(  4\right)  >\cdots>f\left(
h\right)  $. Thus, any up-decreasing permutation $f$ of $\left[  h\right]  $
is uniquely determined by its first value $f\left(  1\right)  $, because its
remaining values must be the remaining elements of $\left[  h\right]  $ in
decreasing order (to ensure that $f\left(  2\right)  >f\left(  3\right)
>f\left(  4\right)  >\cdots>f\left(  h\right)  $ holds). The first value
$f\left(  1\right)  $ cannot be $h$ (since this would violate $f\left(
1\right)  <f\left(  2\right)  $), but can be any of the other $h-1$ elements
of $\left[  h\right]  $. Thus, there are $h-1$ choices for $f\left(  1\right)
$, and each of these choices leads to a unique up-decreasing permutation $f$
of $\left[  h\right]  $. Hence, there are $h-1$ such permutations in total.
This completes the proof of Claim 1.] \medskip

Recall that $i_{1}<i_{2}<\cdots<i_{p}$ are the $p$ elements of $Q_{i}%
\subseteq\left[  n-1\right]  $, and we have furthermore set $i_{0}=1$ and
$i_{p+1}=n+1$. Hence,
\[
1=i_{0}\leq i_{1}<i_{2}<\cdots<i_{p}<i_{p+1}=n+1.
\]

Define an interval%
\[
J_{k}:=\left[  i_{k-1},\ i_{k}-1\right]  \ \ \ \ \ \ \ \ \ \ \text{for each
}k\in\left[  p+1\right]  .
\]
Then, the interval $\left[  n\right]  $ is the disjoint union $J_{1}\sqcup
J_{2}\sqcup\cdots\sqcup J_{p+1}$. We have%
\begin{equation}
Q_{i}=\left\{  i_{1},i_{2},\ldots,i_{p}\right\}
\label{pf.thm.deltai.main.c.Qi=}%
\end{equation}
and%
\begin{equation}
Q_{i}^{\prime}=\left\{  1,\ 2,\ \ldots,\ i_{1}-2\right\}  \cup\bigcup
_{k=2}^{p+1}\left\{  i_{k-1}+1,\ i_{k-1}+2,\ \ldots,\ i_{k}-2\right\}  .
\label{pf.thm.deltai.main.c.Qi'=}%
\end{equation}
Note further that each $k\in\left[  p+1\right]  $ satisfies $\left\vert
J_{k}\right\vert =i_{k}-i_{k-1}$ (since $J_{k}=\left[  i_{k-1},\ i_{k}%
-1\right]  $) and therefore $\left\vert J_{k}\right\vert =i_{k}-i_{k-1}=j_{k}%
$. Furthermore, note that $j_{1},j_{2},\ldots,j_{p+1}$ are nonnegative
integers (since each $k\in\left[  p+1\right]  $ satisfies $j_{k}%
=i_{k}-\underbrace{i_{k-1}}_{\leq i_{k}}\geq i_{k}-i_{k}=0$). Finally, it is
easy to see that
\begin{equation}
j_{k}\geq2\ \ \ \ \ \ \ \ \ \ \text{for each }k\in\left[  2,p+1\right]  .
\label{pf.thm.deltai.main.c.geq2}%
\end{equation}

\begin{fineprint}
[\textit{Proof of (\ref{pf.thm.deltai.main.c.geq2}):} Let $k\in\left[
2,p+1\right]  $. Then, both $k-1$ and $k$ belong to $\left[  p+1\right]  $.

The set $Q_{i}$ is a lacunar subset of $\left[  n-1\right]  $ (since
$Q_{1},Q_{2},\ldots,Q_{f_{n+1}}$ are all the lacunar subsets of $\left[
n-1\right]  $). Thus, the set $Q_{i}\cup\left\{  n+1\right\}  $ is lacunar as
well (since each element of $Q_{i}$ is $\leq n-1$ and thus differs by at least
$2$ from the new element $n+1$). Hence, any two distinct elements of the set
$Q_{i}\cup\left\{  n+1\right\}  $ differ by at least $2$.

However, from $Q_{i}=\left\{  i_{1}<i_{2}<\cdots<i_{p}\right\}  $ and
$i_{p+1}=n+1$, we obtain $Q_{i}\cup\left\{  n+1\right\}  =\left\{  i_{1}%
<i_{2}<\cdots<i_{p}<i_{p+1}\right\}  $ (since $Q_{i}\subseteq\left[
n-1\right]  $). Therefore, $i_{k-1}$ and $i_{k}$ are two distinct elements of
the set $Q_{i}\cup\left\{  n+1\right\}  $ (since both $k-1$ and $k$ belong to
$\left[  p+1\right]  $). Consequently, $i_{k-1}$ and $i_{k}$ differ by at
least $2$ (since any two distinct elements of the set $Q_{i}\cup\left\{
n+1\right\}  $ differ by at least $2$). In other words, $i_{k}-i_{k-1}\geq2$
(since $i_{k-1}<i_{k}$). But the definition of $j_{k}$ yields $j_{k}%
=i_{k}-i_{k-1}\geq2$. This proves (\ref{pf.thm.deltai.main.c.geq2}).] \medskip
\end{fineprint}

Now, Theorem \ref{thm.deltai.main} \textbf{(b)} shows that $\delta_{i}$ is the
number of all permutations $w\in S_{n}$ that satisfy%
\begin{equation}
w\left(  j\right)  <w\left(  j+1\right)  \ \ \ \ \ \ \ \ \ \ \text{for all
}j\in Q_{i} \label{pf.thm.deltai.main.c.3a}%
\end{equation}
and%
\begin{equation}
w\left(  j\right)  >w\left(  j+1\right)  \ \ \ \ \ \ \ \ \ \ \text{for all
}j\in Q_{i}^{\prime}. \label{pf.thm.deltai.main.c.3b}%
\end{equation}
In view of (\ref{pf.thm.deltai.main.c.Qi=}) and
(\ref{pf.thm.deltai.main.c.Qi'=}), we can rewrite this as follows: $\delta
_{i}$ is the number of all permutations $w\in S_{n}$ that satisfy%
\[
w\left(  1\right)  >w\left(  2\right)  >w\left(  3\right)  >\cdots>w\left(
i_{1}-1\right)
\]
and%
\[
w\left(  i_{k-1}\right)  <w\left(  i_{k-1}+1\right)  >w\left(  i_{k-1}%
+2\right)  >w\left(  i_{k-1}+3\right)  >\cdots>w\left(  i_{k}-1\right)
\]
for each $k\in\left[  2,p+1\right]  $. In other words, $\delta_{i}$ is the
number of all permutations $w\in S_{n}$ such that the restriction
$w\mid_{J_{1}}$ is strictly decreasing whereas the restrictions $w\mid_{J_{2}%
},\ \ w\mid_{J_{3}},\ \ \ldots,\ \ w\mid_{J_{p+1}}$ are up-decreasing (since
$J_{k}=\left[  i_{k-1},\ i_{k}-1\right]  $ for each $k\in\left[  p+1\right]
$). We can construct such a permutation $w$ as follows:

\begin{itemize}
\item First, we choose the \textbf{sets} $w\left(  J_{k}\right)  $ for all
$k\in\left[  p+1\right]  $. In doing so, we must ensure that these $p+1$ sets
are disjoint and cover the entire set $\left[  n\right]  $, and have the size
$\left\vert w\left(  J_{k}\right)  \right\vert =\left\vert J_{k}\right\vert
=j_{k}$ for each $k$. Thus, there are $\dbinom{n}{j_{1},j_{2},\ldots,j_{p+1}}$
many options at this step.

\item At this point, the restriction $w\mid_{J_{1}}$ is already uniquely
determined, since $w\mid_{J_{1}}$ has to be strictly decreasing and its image
$w\left(  J_{1}\right)  $ is already chosen.

\item Now, for each $k\in\left[  2,p+1\right]  $, we choose the restriction
$w\mid_{J_{k}}$. This restriction has to be an up-decreasing bijection from
the interval $J_{k}$ to the (already chosen) set $w\left(  J_{k}\right)  $,
which has size $\left\vert w\left(  J_{k}\right)  \right\vert =\left\vert
J_{k}\right\vert =j_{k}$; thus, by Claim 1 (applied to $h=j_{k}$ and $K=J_{k}$
and $T=w\left(  J_{k}\right)  $), there are $j_{k}-1$ options for this
restriction $w\mid_{J_{k}}$ (since (\ref{pf.thm.deltai.main.c.geq2}) yields
$j_{k}\geq2$). Hence, in total, we have $\prod_{k=2}^{p+1}\left(
j_{k}-1\right)  $ options at this step.
\end{itemize}

\noindent Altogether, the total number of possibilities to perform this
construction is thus $\dbinom{n}{j_{1},j_{2},\ldots,j_{p+1}}\cdot\prod
_{k=2}^{p+1}\left(  j_{k}-1\right)  $. Hence,
\[
\delta_{i}=\dbinom{n}{j_{1},j_{2},\ldots,j_{p+1}}\cdot\prod_{k=2}^{p+1}\left(
j_{k}-1\right)  .
\]
This proves Theorem \ref{thm.deltai.main} \textbf{(c)}. \medskip

\textbf{(d)} Define the integers $i_{0},i_{1},\ldots,i_{p+1}$ and $j_{1}%
,j_{2},\ldots,j_{p+1}$ as in Theorem \ref{thm.deltai.main} \textbf{(c)}. Then,
we have $j_{k}\geq2$ for each $k\in\left[  2,p+1\right]  $ (in fact, this is
the inequality (\ref{pf.thm.deltai.main.c.geq2}), which has been shown in our
above proof of Theorem \ref{thm.deltai.main} \textbf{(c)}). Hence, for each
$k\in\left[  2,p+1\right]  $, we have%
\[
j_{k}!=\underbrace{1\cdot2\cdot\cdots\cdot\left(  j_{k}-2\right)  }_{=\left(
j_{k}-2\right)  !}\cdot\left(  j_{k}-1\right)  \cdot j_{k}=\left(
j_{k}-2\right)  !\cdot\left(  j_{k}-1\right)  \cdot j_{k}%
\]
and therefore%
\begin{equation}
j_{k}-1=\dfrac{j_{k}!}{\left(  j_{k}-2\right)  !\cdot j_{k}}.
\label{pf.thm.deltai.main.d.2}%
\end{equation}

The definition of a multinomial coefficient yields%
\[
\dbinom{n}{j_{1},j_{2},\ldots,j_{p+1}}=\dfrac{n!}{j_{1}!j_{2}!\cdots j_{p+1}%
!}=\dfrac{n!}{\prod_{k=1}^{p+1}j_{k}!}=\dfrac{n!}{j_{1}!\prod_{k=2}^{p+1}%
j_{k}!}.
\]
From (\ref{eq.thm.deltai.main.c.eq}), we now obtain%
\begin{align*}
\delta_{i}  &  =\underbrace{\dbinom{n}{j_{1},j_{2},\ldots,j_{p+1}}}%
_{=\dfrac{n!}{j_{1}!\prod_{k=2}^{p+1}j_{k}!}}\cdot\prod_{k=2}^{p+1}%
\underbrace{\left(  j_{k}-1\right)  }_{\substack{=\dfrac{j_{k}!}{\left(
j_{k}-2\right)  !\cdot j_{k}}\\\text{(by (\ref{pf.thm.deltai.main.d.2}))}%
}}=\dfrac{n!}{j_{1}!\prod_{k=2}^{p+1}j_{k}!}\cdot\prod_{k=2}^{p+1}\dfrac
{j_{k}!}{\left(  j_{k}-2\right)  !\cdot j_{k}}\\
&  =\dfrac{n!}{j_{1}!}\cdot\prod_{k=2}^{p+1}\left(  \dfrac{j_{k}!}{\left(
j_{k}-2\right)  !\cdot j_{k}}/j_{k}!\right)  =\dfrac{n!}{j_{1}!\cdot
\prod_{k=2}^{p+1}\left(  \left(  j_{k}-2\right)  !\cdot j_{k}\right)  }.
\end{align*}
Thus, we obtain $\delta_{i}\mid n!$ (since the denominator $j_{1}!\cdot
\prod_{k=2}^{p+1}\left(  \left(  j_{k}-2\right)  !\cdot j_{k}\right)  $ in
this equality is clearly an integer). This proves Theorem
\ref{thm.deltai.main} \textbf{(d)}.
\end{proof}

\subsection{The multiplicities of the eigenvalues}

Finally, we can find the algebraic multiplicities of the eigenvalues of the
endomorphism $R\left(  \lambda_{1}t_{1}+\lambda_{2}t_{2}+\cdots+\lambda
_{n}t_{n}\right)  $ (when $\mathbf{k}$ is a field and $\lambda_{1},\lambda
_{2},\ldots,\lambda_{n}\in\mathbf{k}$ are arbitrary). Roughly speaking, we
want to claim that each eigenvalue $\lambda_{1}m_{I,1}+\lambda_{2}%
m_{I,2}+\cdots+\lambda_{n}m_{I,n}$ (where $I\subseteq\left[  n-1\right]  $ is
a lacunar subset) has algebraic multiplicity $\delta_{i}$, where $i\in\left[
f_{n+1}\right]  $ is chosen such that $I=Q_{i}$ (and where $\delta_{i}$ is as
in Theorem \ref{thm.deltai.main}). This is not fully precise; indeed, if some
lacunar subsets $I\subseteq\left[  n-1\right]  $ produce the same eigenvalues
$\lambda_{1}m_{I,1}+\lambda_{2}m_{I,2}+\cdots+\lambda_{n}m_{I,n}$, then their
respective $\delta_{i}$'s need to be added together to form the right
algebraic multiplicity. The technically correct statement of our claim is thus
as follows:

\begin{theorem}
\label{thm.eigen.mult}Assume that $\mathbf{k}$ is a field. Let $\lambda
_{1},\lambda_{2},\ldots,\lambda_{n}\in\mathbf{k}$. For each $i\in\left[
f_{n+1}\right]  $, let $\delta_{i}$ be the number of all permutations $w\in
S_{n}$ satisfying $\operatorname*{Qind}w=i$. For each $i\in\left[
f_{n+1}\right]  $, we set
\[
g_{i}:=\lambda_{1}m_{Q_{i},1}+\lambda_{2}m_{Q_{i},2}+\cdots+\lambda
_{n}m_{Q_{i},n}=\sum_{\ell=1}^{n}\lambda_{\ell}m_{Q_{i},\ell}\in\mathbf{k}.
\]

Let $\kappa\in\mathbf{k}$. Then, the algebraic multiplicity of $\kappa$ as an
eigenvalue of $R\left(  \lambda_{1}t_{1}+\lambda_{2}t_{2}+\cdots+\lambda
_{n}t_{n}\right)  $ equals%
\[
\sum_{\substack{i\in\left[  f_{n+1}\right]  ;\\g_{i}=\kappa}}\delta_{i}.
\]

\end{theorem}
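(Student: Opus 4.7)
My plan is to deduce Theorem \ref{thm.eigen.mult} directly from the triangularization already established in Theorem \ref{thm.Rcomb-conc} together with the counting of $Q$-indices provided by Theorem \ref{thm.deltai.main} \textbf{(a)}. Recall that the algebraic multiplicity of $\kappa$ as an eigenvalue of a $\mathbf{k}$-linear endomorphism $\rho$ of a finite-dimensional $\mathbf{k}$-vector space is by definition the multiplicity of $\kappa$ as a root of the characteristic polynomial $\det(X \cdot \operatorname{id} - \rho)$. For an endomorphism represented by an upper-triangular matrix $M = (\mu_{k,j})$, this multiplicity equals the number of indices $j$ for which $\mu_{j,j} = \kappa$. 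Thus the task reduces to counting the diagonal entries of an explicit upper-triangular matrix.

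First I would set $\rho := R(\lambda_1 t_1 + \lambda_2 t_2 + \cdots + \lambda_n t_n)$ and order the permutations $w_1, w_2, \ldots, w_{n!}$ of $S_n$ by non-decreasing $Q$-index (exactly as in the proof of Corollary \ref{cor.eigen.spec}). By Proposition \ref{prop.aw.basis-kSn} the reindexed family $(a_{w_1}, a_{w_2}, \ldots, a_{w_{n!}})$ is a $\mathbf{k}$-basis of $\mathbf{k}[S_n]$. The proof of Corollary \ref{cor.eigen.spec} already establishes (via Theorem \ref{thm.Rcomb-conc}) both that the matrix $M = (\mu_{k,j})$ of $\rho$ in this basis is upper-triangular and that its diagonal entries are $\mu_{j,j} = g_{\operatorname{Qind}(w_j)}$ for each $j \in [n!]$. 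I would therefore invoke those two facts verbatim without reproving them.

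Consequently, the characteristic polynomial of $\rho$ factors as
\[
\det(X \cdot \operatorname{id} - \rho) = \prod_{j=1}^{n!} \bigl(X - g_{\operatorname{Qind}(w_j)}\bigr) = \prod_{i=1}^{f_{n+1}} (X - g_i)^{\#\{w \in S_n \,:\, \operatorname{Qind} w = i\}} = \prod_{i=1}^{f_{n+1}} (X - g_i)^{\delta_i},
\]
where the last equality uses the definition of $\delta_i$. The multiplicity of $\kappa$ as a root of this polynomial is obtained by collecting all factors whose base $g_i$ equals $\kappa$; that multiplicity is exactly $\sum_{i \in [f_{n+1}];\ g_i = \kappa} \delta_i$, which is what we set out to prove.

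I do not anticipate a serious obstacle here: every nontrivial ingredient (triangularization, identification of the diagonal entries, and the combinatorial count $\delta_i$) has already been proved earlier. The only minor point that requires care is making sure that the ``algebraic multiplicity'' we compute is the one obtained from the characteristic polynomial (rather than, say, the dimension of a generalized eigenspace); but over a field these agree for endomorphisms of finite-dimensional spaces, so no extra work is needed.
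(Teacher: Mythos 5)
Your proposal is correct and follows essentially the same route as the paper's proof: both use the upper-triangular matrix of $R(\lambda_1 t_1+\cdots+\lambda_n t_n)$ in the basis $\left(a_{w_j}\right)$ ordered by $Q$-index, identify the diagonal entries as $g_{\operatorname{Qind}(w_j)}$, and count how many of them equal $\kappa$. Phrasing this count via the factored characteristic polynomial rather than directly counting diagonal appearances is only a cosmetic difference.
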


\begin{proof}
We shall use the notations introduced in the proof of Corollary
\ref{cor.eigen.spec}. In that proof, we have shown that the matrix $M$ is upper-triangular.

Recall that the eigenvalues of a triangular matrix are its diagonal entries,
and moreover, the algebraic multiplicity of an eigenvalue is the number of
times that it appears on the main diagonal. We can apply this fact to the
matrix $M$ (since $M$ is upper-triangular), and thus conclude that%
\begin{align*}
&  \left(  \text{the algebraic multiplicity of }\kappa\text{ as an eigenvalue
of }M\right) \\
&  =\left(  \text{the number of times that }\kappa\text{ appears on the main
diagonal of }M\right) \\
&  =\left(  \text{the number of }j\in\left[  n!\right]  \text{ such that }%
\mu_{j,j}=\kappa\right)  \ \ \ \ \ \ \ \ \ \ \left(  \text{since }M=\left(
\mu_{i,j}\right)  _{i,j\in\left[  n!\right]  }\right) \\
&  =\left(  \text{the number of }j\in\left[  n!\right]  \text{ such that
}g_{\operatorname*{Qind}\left(  w_{j}\right)  }=\kappa\right) \\
&  \ \ \ \ \ \ \ \ \ \ \ \ \ \ \ \ \ \ \ \ \left(
\begin{array}
[c]{c}%
\text{since Claim 1 from the proof of Corollary \ref{cor.eigen.spec}}\\
\text{yields that } \mu_{j,j}=g_{\operatorname*{Qind}\left(  w_{j}\right)
}\text{ for each }j\in\left[  n!\right]
\end{array}
\right) \\
&  =\left(  \text{the number of }w\in S_{n}\text{ such that }%
g_{\operatorname*{Qind}w}=\kappa\right) \\
&  \ \ \ \ \ \ \ \ \ \ \ \ \ \ \ \ \ \ \ \ \left(  \text{since }w_{1}%
,w_{2},\ldots,w_{n!}\text{ are the }n!\text{ permutations in }S_{n}\right) \\
&  =\sum_{\substack{i\in\left[  f_{n+1}\right]  ;\\g_{i}=\kappa}%
}\underbrace{\left(  \text{the number of }w\in S_{n}\text{ such that
}\operatorname*{Qind}w=i\right)  }_{\substack{=\delta_{i}\\\text{(by the
definition of }\delta_{i}\text{)}}}\\
&  \ \ \ \ \ \ \ \ \ \ \ \ \ \ \ \ \ \ \ \ \left(  \text{here we have split
the sum up according to the value of }\operatorname*{Qind}w\right) \\
&  =\sum_{\substack{i\in\left[  f_{n+1}\right]  ;\\g_{i}=\kappa}}\delta_{i}.
\end{align*}
This proves Theorem \ref{thm.eigen.mult}.
\end{proof}

\section{\label{sect.furtheralg}Further algebraic consequences}

In this section, we shall derive some more corollaries from the above. To be
more specific, we first study the algebraic properties of the antipode of the
one-sided cycle shuffle $\lambda_{1}t_{1}+\lambda_{2}t_{2}+\cdots+\lambda
_{n}t_{n}$; this corresponds to the reversal of the corresponding Markov
chain. Then, we discuss the endomorphism $L\left(  \lambda_{1}t_{1}%
+\lambda_{2}t_{2}+\cdots+\lambda_{n}t_{n}\right)  $ corresponding to left
multiplication (as opposed to right multiplication, which we have studied
before) by the shuffle. We next use our notions of $Q$-index and non-shadow to
subdivide the Boolean algebra of the set $\left[  n-1 \right]  $ into Boolean
intervals indexed by the lacunar subsets of $\left[  n-1 \right]  $. Finally,
we explore what known results about the top-to-random shuffle our results can
and cannot prove.

\subsection{\label{subsect.furtheralg.btr}Below-to-somewhere shuffles}

We have so far been considering the somewhere-to-below shuffles $t_{1}%
,t_{2},\ldots,t_{n}$, which are sums of cycles. If we invert these cycles
(i.e., reverse the order of cycling), we obtain new elements of $\mathbf{k}%
\left[  S_{n}\right]  $, which may be called the \textquotedblleft
below-to-somewhere shuffles\textquotedblright. Here is their precise definition:

For each $\ell\in\left[  n\right]  $, we define the element%
\begin{equation}
t_{\ell}^{\prime}:=\operatorname*{cyc}\nolimits_{\ell}+\operatorname*{cyc}%
\nolimits_{\ell+1,\ell}+\operatorname*{cyc}\nolimits_{\ell+2,\ell+1,\ell
}+\cdots+\operatorname*{cyc}\nolimits_{n,n-1,\ldots,\ell}\in\mathbf{k}\left[
S_{n}\right]  . \label{eq.def.tl.deftl'}%
\end{equation}
In terms of card shuffling, this element $t_{\ell}^{\prime}$ corresponds to
randomly picking a card from the bottommost $n-\ell+1$ positions in the deck
(with uniform probabilities) and moving it to position $\ell$. Thus, we call
$t_{1}^{\prime},t_{2}^{\prime},\ldots,t_{n}^{\prime}$ the
\emph{below-to-somewhere shuffles}. The first of them, $t_{1}^{\prime}$, is
known as the \emph{random-to-top shuffle} (as it picks a random card and
surfaces it to the top of the deck).

It is natural to ask whether our above properties of $t_{1},t_{2},\ldots
,t_{n}$ have analogues for these new elements $t_{1}^{\prime},t_{2}^{\prime
},\ldots,t_{n}^{\prime}$. For example, an analogue of Theorem
\ref{thm.eigen.annih-pol} holds:

\begin{theorem}
\label{thm.eigen.annih-pol'}Let $\lambda_{1},\lambda_{2},\ldots,\lambda_{n}%
\in\mathbf{k}$. Let $t^{\prime}:=\lambda_{1}t_{1}^{\prime}+\lambda_{2}%
t_{2}^{\prime}+\cdots+\lambda_{n}t_{n}^{\prime}$. Then,%
\[
\prod_{\substack{I\subseteq\left[  n-1\right]  \text{ is}\\\text{lacunar}%
}}\left(  t^{\prime}-\left(  \lambda_{1}m_{I,1}+\lambda_{2}m_{I,2}%
+\cdots+\lambda_{n}m_{I,n}\right)  \right)  =0.
\]

\end{theorem}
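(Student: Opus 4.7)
The plan is to deduce Theorem \ref{thm.eigen.annih-pol'} from Theorem \ref{thm.eigen.annih-pol} by applying the antipode (i.e., inversion) map on $\mathbf{k}\left[  S_{n}\right]  $. Let $S : \mathbf{k}\left[  S_{n}\right]  \to \mathbf{k}\left[  S_{n}\right]  $ be the $\mathbf{k}$-linear map sending each group element $\sigma \in S_{n}$ to its inverse $\sigma^{-1}$. A standard check (using $(\pi\sigma)^{-1} = \sigma^{-1}\pi^{-1}$) shows that $S$ is a $\mathbf{k}$-algebra antihomomorphism from $\mathbf{k}\left[  S_{n}\right]  $ to itself: it satisfies $S(1) = 1$ and $S(xy) = S(y) S(x)$ for all $x,y \in \mathbf{k}\left[  S_{n}\right]  $.

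Next, I would verify that $S(t_{\ell}) = t_{\ell}^{\prime}$ for each $\ell \in \left[  n\right]  $. Indeed, the inverse of the cycle $\operatorname*{cyc}\nolimits_{\ell,\ell+1,\ldots,j}$ (which sends $i \mapsto i+1$ on $\left[  \ell,j-1\right]  $ and sends $j \mapsto \ell$) is the cycle that reverses this action, namely $\operatorname*{cyc}\nolimits_{j,j-1,\ldots,\ell}$. Summing over $j\in \left[  \ell,n\right]  $ and comparing with the definitions (\ref{eq.def.tl.deftl}) and (\ref{eq.def.tl.deftl'}), we obtain $S(t_{\ell}) = t_{\ell}^{\prime}$. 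By $\mathbf{k}$-linearity of $S$, this yields $S(t) = t^{\prime}$ where $t := \lambda_{1}t_{1}+\lambda_{2}t_{2}+\cdots+\lambda_{n}t_{n}$.

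Now introduce the polynomial
\[
P(X) := \prod_{\substack{I\subseteq\left[  n-1\right]  \text{ is}\\\text{lacunar}}}\left(  X - \left(  \lambda_{1}m_{I,1}+\lambda_{2}m_{I,2}+\cdots+\lambda_{n}m_{I,n}\right)  \right)  \in\mathbf{k}\left[  X\right]  .
\]
Theorem \ref{thm.eigen.annih-pol} says exactly that $P(t) = 0$. Applying $S$ to both sides, we obtain $S(P(t)) = 0$. However, Proposition \ref{prop.antihom.unipol} (applied to $A = B = \mathbf{k}\left[  S_{n}\right]  $, $f = S$, $u = t$, and the polynomial $P$) gives $S(P(t)) = P(S(t)) = P(t^{\prime})$. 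Therefore $P(t^{\prime}) = 0$, which is precisely the statement of Theorem \ref{thm.eigen.annih-pol'}.

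No step here is really an obstacle; the only point requiring care is the verification that the inverse of $\operatorname*{cyc}\nolimits_{\ell,\ell+1,\ldots,j}$ is $\operatorname*{cyc}\nolimits_{j,j-1,\ldots,\ell}$, but this is immediate from the definition of a cycle. Everything else is a direct transport-of-structure argument through the antipode antihomomorphism.
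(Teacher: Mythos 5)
Your proposal is correct and follows exactly the same route as the paper's own proof: apply the antipode $S$ (an algebra antihomomorphism with $S(t_{\ell})=t_{\ell}^{\prime}$) to the identity $P(t)=0$ from Theorem \ref{thm.eigen.annih-pol}, and use Proposition \ref{prop.antihom.unipol} to conclude $P(t^{\prime})=0$. No gaps.
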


Theorem \ref{thm.eigen.annih-pol'} can actually be deduced from Theorem
\ref{thm.eigen.annih-pol} pretty easily:

Let $S$ be the $\mathbf{k}$-linear map $\mathbf{k}\left[  S_{n}\right]
\rightarrow\mathbf{k}\left[  S_{n}\right]  $ that sends each permutation $w\in
S_{n}$ to its inverse $w^{-1}$. This map $S$ is known as the \emph{antipode}
of the group algebra $\mathbf{k}\left[  S_{n}\right]  $ (see, e.g.,
\cite[Example 2.2.8]{Meusbu21}); it is an involution (i.e., it satisfies
$S\circ S=\operatorname*{id}$) and a $\mathbf{k}$-algebra antihomomorphism
(i.e., it is $\mathbf{k}$-linear and satisfies $S\left(  1\right)  =1$ and
$S\left(  uv\right)  =S\left(  v\right)  \cdot S\left(  u\right)  $ for all
$u,v\in\mathbf{k}\left[  S_{n}\right]  $). For any $k$ distinct elements
$i_{1},i_{2},\ldots,i_{k}$ of $\left[  n\right]  $, we have
\begin{align}
S\left(  \operatorname*{cyc}\nolimits_{i_{1},i_{2},\ldots,i_{k}}\right)   &
=\left(  \operatorname*{cyc}\nolimits_{i_{1},i_{2},\ldots,i_{k}}\right)
^{-1}\ \ \ \ \ \ \ \ \ \ \left(  \text{by the definition of }S\right)
\nonumber\\
&  =\operatorname*{cyc}\nolimits_{i_{k},i_{k-1},\ldots,i_{1}}.
\label{eq.bts.Scyc}%
\end{align}
Hence, for each $\ell\in\left[  n\right]  $, we have%
\begin{align}
S\left(  t_{\ell}\right)   &  =S\left(  \operatorname*{cyc}\nolimits_{\ell
}+\operatorname*{cyc}\nolimits_{\ell,\ell+1}+\operatorname*{cyc}%
\nolimits_{\ell,\ell+1,\ell+2}+\cdots+\operatorname*{cyc}\nolimits_{\ell
,\ell+1,\ldots,n}\right)  \ \ \ \ \ \ \ \ \ \ \left(  \text{by
(\ref{eq.def.tl.deftl})}\right) \nonumber\\
&  =S\left(  \operatorname*{cyc}\nolimits_{\ell}\right)  +S\left(
\operatorname*{cyc}\nolimits_{\ell,\ell+1}\right)  +S\left(
\operatorname*{cyc}\nolimits_{\ell,\ell+1,\ell+2}\right)  +\cdots+S\left(
\operatorname*{cyc}\nolimits_{\ell,\ell+1,\ldots,n}\right) \nonumber\\
&  =\operatorname*{cyc}\nolimits_{\ell}+\operatorname*{cyc}\nolimits_{\ell
+1,\ell}+\operatorname*{cyc}\nolimits_{\ell+2,\ell+1,\ell}+\cdots
+\operatorname*{cyc}\nolimits_{n,n-1,\ldots,\ell}\ \ \ \ \ \ \ \ \ \ \left(
\text{by (\ref{eq.bts.Scyc})}\right) \nonumber\\
&  =t_{\ell}^{\prime}\ \ \ \ \ \ \ \ \ \ \left(  \text{by
(\ref{eq.def.tl.deftl'})}\right)  . \label{eq.def.Stl}%
\end{align}
Thus, we can obtain properties of $t_{1}^{\prime},t_{2}^{\prime},\ldots
,t_{n}^{\prime}$ by applying the map $S$ to corresponding properties of
$t_{1},t_{2},\ldots,t_{n}$. In particular, we can obtain Theorem
\ref{thm.eigen.annih-pol'} this way:

\begin{proof}
[Proof of Theorem \ref{thm.eigen.annih-pol'}.]Let $t:=\lambda_{1}t_{1}%
+\lambda_{2}t_{2}+\cdots+\lambda_{n}t_{n}$. Thus,%
\begin{align*}
S\left(  t\right)   &  =S\left(  \lambda_{1}t_{1}+\lambda_{2}t_{2}%
+\cdots+\lambda_{n}t_{n}\right) \\
&  =\lambda_{1}S\left(  t_{1}\right)  +\lambda_{2}S\left(  t_{2}\right)
+\cdots+\lambda_{n}S\left(  t_{n}\right)  \ \ \ \ \ \ \ \ \ \ \left(
\text{since }S\text{ is }\mathbf{k}\text{-linear}\right) \\
&  =\lambda_{1}t_{1}^{\prime}+\lambda_{2}t_{2}^{\prime}+\cdots+\lambda
_{n}t_{n}^{\prime}\ \ \ \ \ \ \ \ \ \ \left(  \text{by (\ref{eq.def.Stl}%
)}\right) \\
&  =t^{\prime}\ \ \ \ \ \ \ \ \ \ \left(  \text{by the definition of
}t^{\prime}\right)  .
\end{align*}

Now, let $P$ be the polynomial $\prod_{\substack{I\subseteq\left[  n-1\right]
\text{ is}\\\text{lacunar}}}\left(  X-\left(  \lambda_{1}m_{I,1}+\lambda
_{2}m_{I,2}+\cdots+\lambda_{n}m_{I,n}\right)  \right)  \in\mathbf{k}\left[
X\right]  $. Then,%
\[
P\left(  t\right)  =\prod_{\substack{I\subseteq\left[  n-1\right]  \text{
is}\\\text{lacunar}}}\left(  t-\left(  \lambda_{1}m_{I,1}+\lambda_{2}%
m_{I,2}+\cdots+\lambda_{n}m_{I,n}\right)  \right)  =0
\]
(by Theorem \ref{thm.eigen.annih-pol}). Thus, $S\left(  P\left(  t\right)
\right)  =S\left(  0\right)  =0$.

However, $S$ is a $\mathbf{k}$-algebra antihomomorphism. Thus, Proposition
\ref{prop.antihom.unipol} (applied to $A=\mathbf{k}\left[  S_{n}\right]  $,
$B=\mathbf{k}\left[  S_{n}\right]  $, $f=S$ and $u=t$) yields that
\[
S\left(  P\left(  t\right)  \right)  =P\left(  \underbrace{S\left(  t\right)
}_{=t^{\prime}}\right)  =P\left(  t^{\prime}\right)  =\prod
_{\substack{I\subseteq\left[  n-1\right]  \text{ is}\\\text{lacunar}}}\left(
t^{\prime}-\left(  \lambda_{1}m_{I,1}+\lambda_{2}m_{I,2}+\cdots+\lambda
_{n}m_{I,n}\right)  \right)
\]
(by the definition of $P$). Comparing this with $S\left(  P\left(  t\right)
\right)  =0$, we obtain%
\[
\prod_{\substack{I\subseteq\left[  n-1\right]  \text{ is}\\\text{lacunar}%
}}\left(  t^{\prime}-\left(  \lambda_{1}m_{I,1}+\lambda_{2}m_{I,2}%
+\cdots+\lambda_{n}m_{I,n}\right)  \right)  =0.
\]
This proves Theorem \ref{thm.eigen.annih-pol'}.
\end{proof}

A more interesting question is to find an analogue of Theorem
\ref{thm.Rcomb-main} for the below-to-somewhere shuffles: Is there a basis of
the $\mathbf{k}$-module $\mathbf{k}\left[  S_{n}\right]  $ with respect to
which the $\mathbf{k}$-module endomorphisms $R\left(  \lambda_{1}t_{1}%
^{\prime}+\lambda_{2}t_{2}^{\prime}+\cdots+\lambda_{n}t_{n}^{\prime}\right)  $
are represented by triangular matrices for all $\lambda_{1},\lambda_{2}%
,\ldots,\lambda_{n}\in\mathbf{k}$ ? Again, the answer is \textquotedblleft
yes\textquotedblright, but this basis is no longer the descent-destroying
basis $\left(  a_{w}\right)  _{w\in S_{n}}$ (ordered by increasing $Q$-index);
instead, it is the \textbf{dual} basis to $\left(  a_{w}\right)  _{w\in S_{n}%
}$ with respect to a certain bilinear form (ordered by \textbf{decreasing}
$Q$-index). Let us elaborate on this now.\footnote{Note that, with respect to
the \textbf{standard} basis $\left(  w \right)  _{w \in S_{n}}$ of
$\mathbf{k}\left[  S_{n} \right]  $, the matrix representing the endomorphism
$R\left(  \lambda_{1}t_{1}^{\prime}+\lambda_{2}t_{2}^{\prime}+\cdots
+\lambda_{n}t_{n}^{\prime}\right)  $ is the transpose of the matrix
representing the endomorphism $R\left(  \lambda_{1}t_{1}+\lambda_{2}%
t_{2}+\cdots+\lambda_{n}t_{n}\right)  $. However, neither of these two
matrices is triangular.}

First, we recall some concepts from linear algebra (although we are working at
a slightly unusual level of generality, since we do not require $\mathbf{k}$
to be a field):

\begin{itemize}
\item The \emph{dual} of a $\mathbf{k}$-module $U$ is defined to be the
$\mathbf{k}$-module $\operatorname*{Hom}\nolimits_{\mathbf{k}}\left(
U,\mathbf{k}\right)  $ of all $\mathbf{k}$-linear maps from $U$ to
$\mathbf{k}$. We denote this dual by $U^{\vee}$.

\item A \emph{bilinear form} on two $\mathbf{k}$-modules $U$ and $V$ is
defined to be a map $f:U\times V\rightarrow\mathbf{k}$ that is $\mathbf{k}%
$-linear in each of its two arguments. A bilinear form $f:U\times
V\rightarrow\mathbf{k}$ canonically induces a $\mathbf{k}$-module
homomorphism
\begin{align*}
f^{\circ}:V  &  \rightarrow U^{\vee},\\
v  &  \mapsto\left(  \text{the map }U\rightarrow\mathbf{k}\text{ that sends
each }u\in U\text{ to }f\left(  u,v\right)  \right)  .
\end{align*}
A bilinear form $f:U\times V\rightarrow\mathbf{k}$ is called
\emph{nondegenerate} if the $\mathbf{k}$-module homomorphism $f^{\circ
}:V\rightarrow U^{\vee}$ is an isomorphism.

\item If $U$ and $V$ are two $\mathbf{k}$-modules with bases $\left(
u_{w}\right)  _{w\in W}$ and $\left(  v_{w}\right)  _{w\in W}$,
respectively\footnote{Note that the bases must have the same indexing set in
this definition.}, and if $f:U\times V\rightarrow\mathbf{k}$ is a bilinear
form, then we say that the basis $\left(  v_{w}\right)  _{w\in W}$ is
\emph{dual} to $\left(  u_{w}\right)  _{w\in W}$ with respect to $f$ if and
only if we have%
\[
\left(  f\left(  u_{p},v_{q}\right)  =\left[  p=q\right]
\ \ \ \ \ \ \ \ \ \ \text{for all }p,q\in W\right)  .
\]
Here, we are using the \emph{Iverson bracket notation}: For each statement
$\mathcal{A}$, we let $\left[  \mathcal{A}\right]  $ denote the truth value of
$\mathcal{A}$ (that is, $1$ if $\mathcal{A}$ is true and $0$ if $\mathcal{A}$
is false).
\end{itemize}

The following three general facts about dual bases are easy and known:

\begin{proposition}
\label{prop.bilf.nondeg-if-bases}Let $U$ and $V$ be two $\mathbf{k}$-modules,
and let $f:U\times V\rightarrow\mathbf{k}$ be a bilinear form. Let $\left(
u_{w}\right)  _{w\in W}$ be a basis of the $\mathbf{k}$-module $U$ such that
the set $W$ is finite. Let $\left(  v_{w}\right)  _{w\in W}$ be a basis of the
$\mathbf{k}$-module $V$ that is dual to $\left(  u_{w}\right)  _{w\in W}$.
Then, the bilinear form $f$ is nondegenerate.
\end{proposition}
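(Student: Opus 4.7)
The plan is to show that $f^{\circ}$ sends the basis $\left( v_w \right)_{w \in W}$ of $V$ bijectively to a basis of $U^{\vee}$, and thereby conclude that $f^{\circ}$ is a $\mathbf{k}$-module isomorphism.

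First, since $\left( u_w \right)_{w \in W}$ is a basis of $U$ and $W$ is finite, the $\mathbf{k}$-module $U$ is free of finite rank. Hence $U^{\vee} = \operatorname{Hom}_{\mathbf{k}}\left(U, \mathbf{k}\right)$ admits the associated \emph{dual basis} $\left( u_w^{*} \right)_{w \in W}$, where $u_w^{*} : U \to \mathbf{k}$ is the unique $\mathbf{k}$-linear map determined by $u_w^{*}\left( u_p \right) = \left[ w = p \right]$ for all $p \in W$. (This is a standard fact for finitely generated free modules, and boils down to the universal property of the basis $\left( u_w \right)_{w \in W}$.)

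Next, I would compute $f^{\circ}\left( v_w \right)$ for each $w \in W$. By the definition of $f^{\circ}$, the map $f^{\circ}\left( v_w \right)$ sends each $u \in U$ to $f\left( u, v_w \right)$. In particular, for any $p \in W$, we have
\[
f^{\circ}\left( v_w \right)\left( u_p \right) = f\left( u_p, v_w \right) = \left[ p = w \right] = u_w^{*}\left( u_p \right),
\]
where the middle equality uses the assumption that $\left( v_w \right)_{w \in W}$ is dual to $\left( u_w \right)_{w \in W}$ with respect to $f$. Since the two $\mathbf{k}$-linear maps $f^{\circ}\left( v_w \right)$ and $u_w^{*}$ agree on the basis $\left( u_p \right)_{p \in W}$ of $U$, they are equal; that is, $f^{\circ}\left( v_w \right) = u_w^{*}$.

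Finally, $f^{\circ}$ is a $\mathbf{k}$-linear map that sends the basis $\left( v_w \right)_{w \in W}$ of $V$ bijectively onto the basis $\left( u_w^{*} \right)_{w \in W}$ of $U^{\vee}$. Hence $f^{\circ}$ is a $\mathbf{k}$-module isomorphism, which is precisely the definition of $f$ being nondegenerate. No step here looks like a genuine obstacle: the main ingredient is just the well-known existence of the dual basis in the finite-rank free case, and the rest is bookkeeping.
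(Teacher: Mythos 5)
Your argument is correct and is essentially identical to the paper's proof: the paper's coordinate functionals $c_w$ are exactly your dual basis elements $u_w^{*}$, and both proofs conclude by observing that $f^{\circ}$ maps the basis $\left(v_w\right)_{w\in W}$ onto this basis of $U^{\vee}$ and is therefore an isomorphism.
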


\begin{proof}
[Proof sketch.]Recall that $\left(  u_{w}\right)  _{w\in W}$ is a basis of
$U$. For each $w\in W$, let $c_{w}:U\rightarrow\mathbf{k}$ be the map that
sends each $u\in U$ to the $u_{w}$-coordinate of $u$ with respect to this
basis. This map $c_{w}$ is $\mathbf{k}$-linear and thus belongs to $U^{\vee}$.
Now, it is easy to see that $\left(  c_{w}\right)  _{w\in W}$ is a basis of
$U^{\vee}$ (since $W$ is finite).

However, the basis $\left(  v_{w}\right)  _{w\in W}$ is dual to $\left(
u_{w}\right)  _{w\in W}$. Thus, $f^{\circ}\left(  v_{w}\right)  =c_{w}$ for
each $w\in W$ (since any $w,p\in W$ satisfy $\left(  f^{\circ}\left(
v_{w}\right)  \right)  \left(  u_{p}\right)  =f\left(  u_{p},v_{w}\right)
=\left[  p=w\right]  =c_{w}\left(  u_{p}\right)  $). In other words, the map
$f^{\circ}$ sends the basis $\left(  v_{w}\right)  _{w\in W}$ of $V$ to the
basis $\left(  c_{w}\right)  _{w\in W}$ of $U^{\vee}$. This entails that
$f^{\circ}$ is an isomorphism (since any $\mathbf{k}$-linear map that sends a
basis of its domain to a basis of its target must be an isomorphism). In other
words, $f$ is nondegenerate. This proves Proposition
\ref{prop.bilf.nondeg-if-bases}.
\end{proof}

\begin{proposition}
\label{prop.bilf.dual-bases}Let $U$ and $V$ be two $\mathbf{k}$-modules, and
let $f:U\times V\rightarrow\mathbf{k}$ be a nondegenerate bilinear form. Let
$\left(  u_{w}\right)  _{w\in W}$ be a basis of the $\mathbf{k}$-module $U$,
where $W$ is a finite set. Then, there is a unique basis of $V$ that is dual
to $\left(  u_{w}\right)  _{w\in W}$ with respect to $f$.
\end{proposition}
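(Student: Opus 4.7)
The plan is to prove this proposition by using the induced map $f^{\circ}:V\rightarrow U^{\vee}$ as a bridge between the dual basis problem in $V$ and the standard (coordinate functional) dual basis construction in $U^{\vee}$. Nondegeneracy of $f$ gives us precisely what we need: $f^{\circ}$ is a $\mathbf{k}$-module isomorphism, so it transports bases in $U^{\vee}$ bijectively to bases in $V$.

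First I would set up the coordinate functionals. Since $(u_{w})_{w\in W}$ is a basis of $U$, for each $w\in W$ there is a unique $\mathbf{k}$-linear map $c_{w}:U\rightarrow\mathbf{k}$ characterized by $c_{w}(u_{p})=[p=w]$ for all $p\in W$. Because $W$ is finite, the family $(c_{w})_{w\in W}$ is a basis of $U^{\vee}$ (this is the standard fact that the dual of a finitely generated free module is free with the coordinate-functional basis; one verifies linear independence by evaluating at each $u_{p}$, and spanning by noting that any $\varphi\in U^{\vee}$ equals $\sum_{w\in W}\varphi(u_{w})\,c_{w}$).

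Next I would construct the dual basis explicitly. Define $v_{w}:=(f^{\circ})^{-1}(c_{w})\in V$ for each $w\in W$. Since $f^{\circ}$ is an isomorphism (by nondegeneracy) and sends $(v_{w})_{w\in W}$ bijectively to the basis $(c_{w})_{w\in W}$ of $U^{\vee}$, the family $(v_{w})_{w\in W}$ is itself a basis of $V$. To verify duality, observe that for all $p,q\in W$,
\[
f(u_{p},v_{q})=\bigl(f^{\circ}(v_{q})\bigr)(u_{p})=c_{q}(u_{p})=[p=q],
\]
where the first equality is the definition of $f^{\circ}$ and the second uses $f^{\circ}(v_{q})=c_{q}$. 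This establishes existence.

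For uniqueness, I would suppose that $(v_{w})_{w\in W}$ and $(v_{w}')_{w\in W}$ are both bases of $V$ dual to $(u_{w})_{w\in W}$. Then for any fixed $w\in W$, the two linear functionals $f^{\circ}(v_{w})$ and $f^{\circ}(v_{w}')$ in $U^{\vee}$ agree on every basis element $u_{p}$ (both giving $[p=w]$), and hence are equal as elements of $U^{\vee}$. Since $f^{\circ}$ is injective, this forces $v_{w}=v_{w}'$. Thus there is no genuine obstacle here: the entire argument is a routine transport along the isomorphism $f^{\circ}$, and the only mild care needed is in invoking the finiteness of $W$ to guarantee that $(c_{w})_{w\in W}$ actually spans $U^{\vee}$ (which is the one place where finite generation is used).
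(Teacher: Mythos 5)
Your proof is correct and follows essentially the same route as the paper's: both transport the problem along the isomorphism $f^{\circ}:V\rightarrow U^{\vee}$ and use the coordinate functionals $\left(c_{w}\right)_{w\in W}$ as the dual basis in $U^{\vee}$ (the paper phrases this as a WLOG reduction to $V=U^{\vee}$, while you pull back explicitly via $\left(f^{\circ}\right)^{-1}$). Your uniqueness argument via injectivity of $f^{\circ}$ is a welcome explicit version of what the paper leaves as "easy to see."
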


\begin{proof}
[Proof sketch.]Since $f$ is nondegenerate, the map $f^{\circ}:V\rightarrow
U^{\vee}$ is an isomorphism. Thus, we can WLOG assume that $V=U^{\vee}$ and
that $f$ is the standard pairing between $U$ and $U^{\vee}$ (that is, the
bilinear form $U\times U^{\vee}\rightarrow\mathbf{k}$ that sends each pair
$\left(  u,f\right)  $ to $f\left(  u\right)  \in\mathbf{k}$). Now, recall
that $\left(  u_{w}\right)  _{w\in W}$ is a basis of $U$. For each $w\in W$,
let $c_{w}:U\rightarrow\mathbf{k}$ be the map that sends each $u\in U$ to the
$u_{w}$-coordinate of $u$ with respect to this basis. This map $c_{w}$ is
$\mathbf{k}$-linear and thus belongs to $U^{\vee}$. Now, it is easy to see
that $\left(  c_{w}\right)  _{w\in W}$ is a basis of $U^{\vee}=V$ that is dual
to $\left(  u_{w}\right)  _{w\in W}$ with respect to $f$, and moreover it is
the only such basis. Hence, Proposition \ref{prop.bilf.dual-bases} follows.
\end{proof}

\begin{proposition}
\label{prop.bilf.dual-expansions}Let $U$ and $V$ be two $\mathbf{k}$-modules,
and let $f:U\times V\rightarrow\mathbf{k}$ be a bilinear form. Let $\left(
u_{w}\right)  _{w\in W}$ be a basis of the $\mathbf{k}$-module $U$ such that
the set $W$ is finite. Let $\left(  v_{w}\right)  _{w\in W}$ be a basis of the
$\mathbf{k}$-module $V$ that is dual to $\left(  u_{w}\right)  _{w\in W}$. Then:

\begin{enumerate}
\item[\textbf{(a)}] For any $u\in U$, we have%
\[
u=\sum_{w\in W}f\left(  u,v_{w}\right)  u_{w}.
\]

\item[\textbf{(b)}] For any $v\in V$, we have%
\[
v=\sum_{w\in W}f\left(  u_{w},v\right)  v_{w}.
\]

\end{enumerate}
\end{proposition}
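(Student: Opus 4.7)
The plan is the standard dual-basis calculation: expand the vector in the given basis with unknown coefficients, then pair with the dual basis vectors to read off these coefficients one at a time. Since both parts are completely symmetric in the roles of $U$ and $V$, the bulk of the work sits in part \textbf{(a)}, and part \textbf{(b)} will follow by the same argument with the arguments of $f$ swapped.

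For part \textbf{(a)}, I would start from the fact that $(u_w)_{w \in W}$ is a basis of $U$, so any $u \in U$ admits a unique expansion $u = \sum_{w \in W} \lambda_w u_w$ with coefficients $\lambda_w \in \mathbf{k}$. Fix an arbitrary $p \in W$ and apply the $\mathbf{k}$-linear map $f(-, v_p) : U \to \mathbf{k}$ to both sides. Linearity in the first argument of $f$ gives
\[
f(u, v_p) = \sum_{w \in W} \lambda_w \, f(u_w, v_p).
\]
Now invoke the duality hypothesis $f(u_w, v_p) = [w = p]$, which collapses the sum to the single term $\lambda_p$. Hence $\lambda_p = f(u, v_p)$ for every $p \in W$, and substituting this back into the expansion yields $u = \sum_{w \in W} f(u, v_w) u_w$, as required.

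For part \textbf{(b)}, I would mirror the argument: write $v = \sum_{w \in W} \mu_w v_w$ using the basis $(v_w)_{w \in W}$ of $V$, apply $f(u_p, -)$ for a fixed $p \in W$, and use linearity in the second argument together with $f(u_p, v_w) = [p = w]$ to extract $\mu_p = f(u_p, v)$. Substituting gives the stated formula.

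No step here is an obstacle; the only thing to be a bit careful about is that finiteness of $W$ is not actually needed for this proposition (it was used for Propositions \ref{prop.bilf.nondeg-if-bases} and \ref{prop.bilf.dual-bases}, but here every $u \in U$ has finite support in the basis $(u_w)_{w \in W}$ by definition of a basis, so the sums above are automatically finite). I will therefore not invoke finiteness in the proof, keeping it as short and self-contained as possible.
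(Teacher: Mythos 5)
Your proof is correct and follows essentially the same route as the paper's: expand $u$ in the basis $\left(u_{w}\right)_{w\in W}$, pair with each $v_{q}$, and use bilinearity together with $f\left(u_{w},v_{q}\right)=\left[w=q\right]$ to identify the coefficients, with part \textbf{(b)} obtained by the symmetric argument. Your side remark that finiteness of $W$ is not actually needed here is also accurate, since the coefficients of $u$ have finite support in any basis.
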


\begin{proof}
\textbf{(a)} Let $u\in U$. Recall that $\left(  u_{w}\right)  _{w\in W}$ is a
basis of the $\mathbf{k}$-module $U$. Thus, we can write $u$ as a $\mathbf{k}%
$-linear combination of this basis. In other words, there exists a family
$\left(  \lambda_{w}\right)  _{w\in W}\in\mathbf{k}^{W}$ of scalars such that%
\begin{equation}
u=\sum_{w\in W}\lambda_{w}u_{w}. \label{pf.prop.bilf.dual-expansions.a.u=sum}%
\end{equation}
Consider this family.

We have assumed that the basis $\left(  v_{w}\right)  _{w\in W}$ is dual to
$\left(  u_{w}\right)  _{w\in W}$. In other words, we have
\begin{equation}
\left(  f\left(  u_{p},v_{q}\right)  =\left[  p=q\right]
\ \ \ \ \ \ \ \ \ \ \text{for all }p,q\in W\right)  .
\label{pf.prop.bilf.dual-expansions.a.dual}%
\end{equation}

Now, for each $q\in W$, we have%
\begin{align*}
f\left(  u,v_{q}\right)   &  =f\left(  \sum_{w\in W}\lambda_{w}u_{w}%
,\ v_{q}\right)  \ \ \ \ \ \ \ \ \ \ \left(  \text{since }u=\sum_{w\in
W}\lambda_{w}u_{w}\right) \\
&  =\sum_{w\in W}\lambda_{w}\underbrace{f\left(  u_{w},v_{q}\right)
}_{\substack{=\left[  w=q\right]  \\\text{(by
(\ref{pf.prop.bilf.dual-expansions.a.dual}), applied to }p=w\text{)}%
}}\ \ \ \ \ \ \ \ \ \ \left(  \text{since }f\text{ is a bilinear form}\right)
\\
&  =\sum_{w\in W}\lambda_{w}\left[  w=q\right]  =\lambda_{q}%
\underbrace{\left[  q=q\right]  }_{=1}+\sum_{\substack{w\in W;\\w\neq
q}}\lambda_{w}\underbrace{\left[  w=q\right]  }_{\substack{=0\\\text{(since
}w\neq q\text{)}}}\\
&  \ \ \ \ \ \ \ \ \ \ \ \ \ \ \ \ \ \ \ \ \left(  \text{here, we have split
off the addend for }w=q\text{ from the sum}\right) \\
&  =\lambda_{q}+\underbrace{\sum_{\substack{w\in W;\\w\neq q}}\lambda_{w}%
0}_{=0}=\lambda_{q}.
\end{align*}
Renaming the variable $q$ as $w$ in this result, we obtain the following: For
each $w\in W$, we have
\begin{equation}
f\left(  u,v_{w}\right)  =\lambda_{w}.
\label{pf.prop.bilf.dual-expansions.a.f=lam}%
\end{equation}

Now, (\ref{pf.prop.bilf.dual-expansions.a.u=sum}) becomes%
\[
u=\sum_{w\in W}\underbrace{\lambda_{w}}_{\substack{=f\left(  u,v_{w}\right)
\\\text{(by (\ref{pf.prop.bilf.dual-expansions.a.f=lam}))}}}u_{w}=\sum_{w\in
W}f\left(  u,v_{w}\right)  u_{w}.
\]
This proves Proposition \ref{prop.bilf.dual-expansions} \textbf{(a)}. \medskip

\textbf{(b)} This is analogous to the proof of part \textbf{(a)} (but, of
course, the obvious changes need to be made -- e.g., the equality
(\ref{pf.prop.bilf.dual-expansions.a.u=sum}) is replaced by $v=\sum_{w\in
W}\lambda_{w}v_{w}$, and the equality
(\ref{pf.prop.bilf.dual-expansions.a.f=lam}) is replaced by $f\left(
u_{w},v\right)  =\lambda_{w}$).
\end{proof}

Now, we apply the above to the $\mathbf{k}$-module $\mathbf{k}\left[
S_{n}\right]  $. We define a bilinear form $f:\mathbf{k}\left[  S_{n}\right]
\times\mathbf{k}\left[  S_{n}\right]  \rightarrow\mathbf{k}$ by setting%
\begin{equation}
f\left(  p,q\right)  =\left[  p=q\right]  \ \ \ \ \ \ \ \ \ \ \text{for all
}p,q\in S_{n}. \label{eq.bilf.onkSn}%
\end{equation}
(This defines a unique bilinear form, since $\left(  w\right)  _{w\in S_{n}}$
is a basis of the $\mathbf{k}$-module $\mathbf{k}\left[  S_{n}\right]  $.)
Clearly, the basis $\left(  w\right)  _{w\in S_{n}}$ of $\mathbf{k}\left[
S_{n}\right]  $ is dual to itself with respect to this form $f$. Thus,
Proposition \ref{prop.bilf.nondeg-if-bases} (applied to $U=\mathbf{k}\left[
S_{n}\right]  $, $V=\mathbf{k}\left[  S_{n}\right]  $, $W=S_{n}$, $\left(
u_{w}\right)  _{w\in W}=\left(  w\right)  _{w\in S_{n}}$ and $\left(
v_{w}\right)  _{w\in W}=\left(  w\right)  _{w\in S_{n}}$) yields that the
bilinear form $f$ is nondegenerate. Hence, Proposition
\ref{prop.bilf.dual-bases} (applied to $U=\mathbf{k}\left[  S_{n}\right]  $,
$V=\mathbf{k}\left[  S_{n}\right]  $, $W=S_{n}$ and $\left(  u_{w}\right)
_{w\in W}=\left(  a_{w}\right)  _{w\in S_{n}}$) yields that there is a unique
basis of $\mathbf{k}\left[  S_{n}\right]  $ that is dual to $\left(
a_{w}\right)  _{w\in S_{n}}$ with respect to $f$ (since Proposition
\ref{prop.aw.basis-kSn} tells us that $\left(  a_{w}\right)  _{w\in S_{n}}$ is
a basis of $\mathbf{k}\left[  S_{n}\right]  $). Let us denote this basis by
$\left(  b_{w}\right)  _{w\in S_{n}}$. Thus, the basis $\left(  b_{w}\right)
_{w\in S_{n}}$ is dual to $\left(  a_{w}\right)  _{w\in S_{n}}$; in other
words, we have%
\begin{equation}
f\left(  a_{p},b_{q}\right)  =\left[  p=q\right]
\ \ \ \ \ \ \ \ \ \ \text{for all }p,q\in S_{n}. \label{eq.bilf.onkSn-ab}%
\end{equation}

Now, we claim the following analogue to Theorem \ref{thm.Rcomb-conc}:

\begin{theorem}
\label{thm.Rcomb-conc'}Let $w\in S_{n}$ and $\ell\in\left[  n\right]  $. Let
$i=\operatorname*{Qind}w$. Then,%
\[
b_{w}t_{\ell}^{\prime}=m_{Q_{i},\ell}b_{w}+\left(  \text{a }\mathbf{k}%
\text{-linear combination of }b_{v}\text{'s for }v\in S_{n}\text{ satisfying
}\operatorname*{Qind}v>i\right)  .
\]

\end{theorem}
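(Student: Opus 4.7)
The plan is to deduce Theorem \ref{thm.Rcomb-conc'} from Theorem \ref{thm.Rcomb-conc} by exploiting the duality between the bases $(a_w)_{w\in S_n}$ and $(b_w)_{w\in S_n}$, using the antipode $S$ to relate $t_\ell$ to $t'_\ell$.

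The first step is to establish an adjoint relation between right multiplication by $t_\ell$ and right multiplication by $t'_\ell$ with respect to the bilinear form $f$ from (\ref{eq.bilf.onkSn}). For any two permutations $p,\sigma \in S_n$ and any $q \in S_n$, we have $f(p\sigma, q) = [p\sigma = q] = [p = q\sigma^{-1}] = f(p, q\sigma^{-1})$. Since $S(\sigma) = \sigma^{-1}$, this reads $f(p\sigma, q) = f(p, q \cdot S(\sigma))$, and $\mathbf{k}$-linearity in all three arguments extends this to
\[
f(u \cdot x, v) = f(u, v \cdot S(x)) \qquad \text{for all } u,v,x \in \mathbf{k}[S_n].
\]
Applying this with $x = t_\ell$ and using the identity $S(t_\ell) = t'_\ell$ from (\ref{eq.def.Stl}), we obtain $f(u \cdot t_\ell, v) = f(u, v \cdot t'_\ell)$ for all $u,v \in \mathbf{k}[S_n]$.

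The second step is to compute the coefficients of $b_w t'_\ell$ in the basis $(b_v)_{v\in S_n}$ via Proposition \ref{prop.bilf.dual-expansions}(b) (applied to $U = V = \mathbf{k}[S_n]$ with its two dual bases $(a_w)$ and $(b_w)$). This gives $b_w t'_\ell = \sum_{v\in S_n} f(a_v,\ b_w t'_\ell)\, b_v$. Using the adjoint relation just established, $f(a_v,\ b_w t'_\ell) = f(a_v t_\ell,\ b_w)$. Now I would apply Theorem \ref{thm.Rcomb-conc} (to $v$ in place of $w$), setting $j := \operatorname{Qind} v$, to write
\[
a_v t_\ell = m_{Q_j,\ell}\, a_v + \sum_{\substack{u \in S_n;\\ \operatorname{Qind} u < j}} \lambda_u^{(v)} a_u
\]
for some scalars $\lambda_u^{(v)} \in \mathbf{k}$. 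Pairing with $b_w$ via $f$, and using the duality $f(a_u, b_w) = [u=w]$ from (\ref{eq.bilf.onkSn-ab}), yields
\[
f(a_v t_\ell,\ b_w) = m_{Q_j,\ell}\,[v=w] + \lambda_w^{(v)}\cdot [\operatorname{Qind} w < j].
\]

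The third step is a case analysis. If $v = w$, then $j = i$ and the second term vanishes (it requires $\operatorname{Qind} w < \operatorname{Qind} w$), so the coefficient is $m_{Q_i,\ell}$. If $v \neq w$ and $\operatorname{Qind} v \leq \operatorname{Qind} w = i$, then $[v=w] = 0$ and $j \leq i$ makes the second term vanish too, so the coefficient is $0$. The only remaining case contributing nontrivially is $\operatorname{Qind} v > i$, in which case the coefficient $\lambda_w^{(v)}$ may be nonzero. Substituting back gives exactly
\[
b_w t'_\ell = m_{Q_i,\ell}\, b_w + \sum_{\substack{v \in S_n;\\ \operatorname{Qind} v > i}} \lambda_w^{(v)}\, b_v,
\]
which is the claim of Theorem \ref{thm.Rcomb-conc'}.

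There is no serious obstacle in this plan: the only conceptual point is the adjoint formula $f(u t_\ell, v) = f(u, v t'_\ell)$, and everything else is a mechanical application of Theorem \ref{thm.Rcomb-conc} and Proposition \ref{prop.bilf.dual-expansions}(b). The inversion of direction in the $Q$-index condition (from $\operatorname{Qind} v < i$ on the $a$-side to $\operatorname{Qind} v > i$ on the $b$-side) is precisely the expected behavior of dual bases under a triangular operator and its adjoint, so no further combinatorial input about lacunar sets is needed.
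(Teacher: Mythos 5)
Your proof is correct and follows essentially the same route as the paper: both establish the adjoint relation $f(u\,x, v) = f(u, v\,S(x))$ (the paper isolates this as Lemma \ref{lem.S.self-adj}), specialize to $x=t_\ell$ so that $S(t_\ell)=t'_\ell$, expand $b_w t'_\ell$ in the $b$-basis via Proposition \ref{prop.bilf.dual-expansions}, and then invoke Theorem \ref{thm.Rcomb-conc} to identify the coefficients. The paper packages the bookkeeping into two intermediate claims involving the auxiliary elements $\widetilde{a}_u$ and $\widetilde{b}_u$, whereas your case analysis on $f(a_v t_\ell, b_w)$ is a somewhat more direct version of the same computation.
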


Once we have proved Theorem \ref{thm.Rcomb-conc'}, it will follow that if we
order the basis $\left(  b_{w}\right)  _{w\in S_{n}}$ in the order of
decreasing $Q$-index, the endomorphisms $R\left(  t_{1}^{\prime}\right)
,R\left(  t_{2}^{\prime}\right)  ,\ldots,R\left(  t_{n}^{\prime}\right)  $
(and thus also their linear combinations $R\left(  \lambda_{1}t_{1}^{\prime
}+\lambda_{2}t_{2}^{\prime}+\cdots+\lambda_{n}t_{n}^{\prime}\right)  $) will
be represented by upper-triangular matrices. The analogue of Theorem
\ref{thm.Rcomb-main} for below-to-somewhere shuffles will thus follow. So it
remains to prove Theorem \ref{thm.Rcomb-conc'}. In order to do so, we need a
simple lemma about the bilinear form $f:\mathbf{k}\left[  S_{n}\right]
\times\mathbf{k}\left[  S_{n}\right]  \rightarrow\mathbf{k}$ defined by
(\ref{eq.bilf.onkSn}):

\begin{lemma}
\label{lem.S.self-adj}We have%
\[
f\left(  u,vS\left(  x\right)  \right)  =f\left(  ux,v\right)
\ \ \ \ \ \ \ \ \ \ \text{for all }x,u,v\in\mathbf{k}\left[  S_{n}\right]  .
\]

\end{lemma}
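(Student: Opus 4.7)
The plan is to exploit the $\mathbf{k}$-trilinearity of both sides of the claimed identity in the variables $x$, $u$, $v$. Since $\mathbf{k}[S_n]$ is a free $\mathbf{k}$-module with basis $(w)_{w \in S_n}$, it suffices to verify the equality when each of $x$, $u$, $v$ is a single permutation. Indeed, both $(x,u,v) \mapsto f(u, vS(x))$ and $(x,u,v) \mapsto f(ux, v)$ are $\mathbf{k}$-linear in each argument separately (using that $S$ is $\mathbf{k}$-linear, that $f$ is a bilinear form, and that multiplication in $\mathbf{k}[S_n]$ is $\mathbf{k}$-bilinear), so any equality that holds on the basis $S_n \times S_n \times S_n$ extends to all of $\mathbf{k}[S_n]^3$.

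So I would fix three arbitrary permutations $\pi, \sigma, \tau \in S_n$ and verify the identity for $x = \pi$, $u = \sigma$, $v = \tau$. On the left-hand side, the definition of $S$ gives $S(\pi) = \pi^{-1}$, hence $vS(x) = \tau \pi^{-1} \in S_n$, and the defining formula \eqref{eq.bilf.onkSn} yields
\[
f(u, vS(x)) = f(\sigma,\ \tau \pi^{-1}) = [\sigma = \tau \pi^{-1}] = [\sigma \pi = \tau].
\]
On the right-hand side, we have $ux = \sigma \pi \in S_n$, so again by \eqref{eq.bilf.onkSn},
\[
f(ux, v) = f(\sigma \pi,\ \tau) = [\sigma \pi = \tau].
\]
The two sides agree, which proves the identity for basis triples.

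There is no real obstacle here: the only point to be careful about is the reduction to basis elements, which requires confirming that both sides are $\mathbf{k}$-trilinear (this is immediate from the $\mathbf{k}$-linearity of $S$, of multiplication, and of $f$ in each slot). Once that is noted, the computation on basis elements is a one-line application of the definitions of $f$ and $S$, completing the proof of Lemma~\ref{lem.S.self-adj}.
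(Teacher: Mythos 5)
Your proposal is correct and follows essentially the same route as the paper's proof: reduce to basis permutations by trilinearity, then observe that both sides equal the Iverson bracket $[\sigma\pi=\tau]$ since $u=vx^{-1}$ and $ux=v$ are equivalent statements. No gaps.
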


\begin{proof}
Let $x,u,v\in\mathbf{k}\left[  S_{n}\right]  $. We must prove the equality
$f\left(  u,vS\left(  x\right)  \right)  =f\left(  ux,v\right)  $. Both sides
of this equality depend $\mathbf{k}$-linearly on each of the three elements
$x,u,v$ (since the map $f$ is $\mathbf{k}$-linear in each argument, whereas
the map $S$ is $\mathbf{k}$-linear). Hence, in order to prove this equality,
we can WLOG assume that all of $x,u,v$ belong to the basis $\left(  w\right)
_{w\in S_{n}}$ of the $\mathbf{k}$-module $\mathbf{k}\left[  S_{n}\right]  $.
Assume this.

Thus, $x,u,v\in S_{n}$. The definition of $S$ now yields $S\left(  x\right)
=x^{-1}$ (since $x\in S_{n}$). Moreover, $vx^{-1}\in S_{n}$ (since $v$ and
$x^{-1}$ belong to $S_{n}$) and $ux\in S_{n}$ (since $u$ and $x$ belong to
$S_{n}$). Furthermore, (\ref{eq.bilf.onkSn}) (applied to $p=u$ and $q=vx^{-1}%
$) yields $f\left(  u,vx^{-1}\right)  =\left[  u=vx^{-1}\right]  $ (since $u$
and $vx^{-1}$ belong to $S_{n}$). Likewise, (\ref{eq.bilf.onkSn}) (applied to
$p=ux$ and $q=v$) yields $f\left(  ux,v\right)  =\left[  ux=v\right]  $.

However, the two statements $u=vx^{-1}$ and $ux=v$ are clearly equivalent.
Thus, their truth values are equal. In other words, $\left[  u=vx^{-1}\right]
=\left[  ux=v\right]  $. Combining what we have shown above, we obtain%
\[
f\left(  u,v\underbrace{S\left(  x\right)  }_{=x^{-1}}\right)  =f\left(
u,vx^{-1}\right)  =\left[  u=vx^{-1}\right]  =\left[  ux=v\right]  =f\left(
ux,v\right)
\]
(since $f\left(  ux,v\right)  =\left[  ux=v\right]  $). This is precisely the
equality that we wanted to prove. Thus, Lemma \ref{lem.S.self-adj} is proved.
\end{proof}

\begin{proof}
[Proof of Theorem \ref{thm.Rcomb-conc'}.]Forget that we fixed $w$ and $i$ (but
keep $\ell$ fixed). For each $u\in S_{n}$, define two elements%
\[
\widetilde{a}_{u}:=a_{u}t_{\ell}-m_{Q_{\operatorname*{Qind}u},\ell}%
a_{u}\ \ \ \ \ \ \ \ \ \ \text{and}\ \ \ \ \ \ \ \ \ \ \widetilde{b}%
_{u}:=b_{u}t_{\ell}^{\prime}-m_{Q_{\operatorname*{Qind}u},\ell}b_{u}%
\]
of $\mathbf{k}\left[  S_{n}\right]  $.

We know that the family $\left(  a_{w}\right)  _{w\in S_{n}}$ is a basis of
the $\mathbf{k}$-module $\mathbf{k}\left[  S_{n}\right]  $; we called this
basis the descent-destroying basis. We also know that $\left(  b_{w}\right)
_{w\in S_{n}}$ is a basis of $\mathbf{k}\left[  S_{n}\right]  $ that is dual
to $\left(  a_{w}\right)  _{w\in S_{n}}$ with respect to $f$. Thus,
Proposition \ref{prop.bilf.dual-expansions} \textbf{(a)} (applied to
$U=\mathbf{k}\left[  S_{n}\right]  $, $V=\mathbf{k}\left[  S_{n}\right]  $,
$W=S_{n}$, $\left(  u_{w}\right)  _{w\in W}=\left(  a_{w}\right)  _{w\in
S_{n}}$ and $\left(  v_{w}\right)  _{w\in W}=\left(  b_{w}\right)  _{w\in
S_{n}}$) shows that each $u\in\mathbf{k}\left[  S_{n}\right]  $ satisfies%
\begin{equation}
u=\sum_{w\in S_{n}}f\left(  u,b_{w}\right)  a_{w}.
\label{pf.thm.Rcomb-conc'.u=sum}%
\end{equation}
Furthermore, Proposition \ref{prop.bilf.dual-expansions} \textbf{(b)} (applied
to $U=\mathbf{k}\left[  S_{n}\right]  $, $V=\mathbf{k}\left[  S_{n}\right]  $,
$W=S_{n}$, $\left(  u_{w}\right)  _{w\in W}=\left(  a_{w}\right)  _{w\in
S_{n}}$ and $\left(  v_{w}\right)  _{w\in W}=\left(  b_{w}\right)  _{w\in
S_{n}}$) shows that each $v\in\mathbf{k}\left[  S_{n}\right]  $ satisfies%
\begin{equation}
v=\sum_{w\in S_{n}}f\left(  a_{w},v\right)  b_{w}.
\label{pf.thm.Rcomb-conc'.v=sum}%
\end{equation}

For each $u\in S_{n}$, we have%
\begin{equation}
\widetilde{a}_{u}=\sum_{w\in S_{n}}f\left(  \widetilde{a}_{u},b_{w}\right)
a_{w} \label{pf.thm.Rcomb-conc'.1}%
\end{equation}
(by (\ref{pf.thm.Rcomb-conc'.u=sum}), applied to $\widetilde{a}_{u}$ instead
of $u$).

For each $v\in S_{n}$, we have%
\[
\widetilde{b}_{v}=\sum_{w\in S_{n}}f\left(  a_{w},\widetilde{b}_{v}\right)
b_{w}%
\]
(by (\ref{pf.thm.Rcomb-conc'.v=sum}), applied to $\widetilde{b}_{v}$ instead
of $v$). Renaming the indices $v$ and $w$ as $w$ and $v$ in this sentence, we
obtain the following: For each $w\in S_{n}$, we have%
\begin{equation}
\widetilde{b}_{w}=\sum_{v\in S_{n}}f\left(  a_{v},\widetilde{b}_{w}\right)
b_{v}. \label{pf.thm.Rcomb-conc'.2}%
\end{equation}

We shall now prove the following:

\begin{statement}
\textit{Claim 1:} Let $u,w\in S_{n}$ be such that $\operatorname*{Qind}%
w\geq\operatorname*{Qind}u$. Then, $f\left(  \widetilde{a}_{u},b_{w}\right)
=0$.
\end{statement}

\begin{statement}
\textit{Claim 2:} Let $u,w\in S_{n}$. Then, $f\left(  a_{u},\widetilde{b}%
_{w}\right)  =f\left(  \widetilde{a}_{u},b_{w}\right)  $.
\end{statement}

[\textit{Proof of Claim 1:} Let $j=\operatorname*{Qind}u$. By assumption, we
have $\operatorname*{Qind}w\geq\operatorname*{Qind}u=j$. Thus, $w$ does not
satisfy $\operatorname*{Qind}w<j$.

Theorem \ref{thm.Rcomb-conc} (applied to $u$ and $j$ instead of $w$ and $i$)
yields
\[
a_{u}t_{\ell}=m_{Q_{j},\ell}a_{u}+\left(  \text{a }\mathbf{k}\text{-linear
combination of }a_{v}\text{'s for }v\in S_{n}\text{ satisfying }%
\operatorname*{Qind}v<j\right)
\]
(since $j=\operatorname*{Qind}u$). In other words,%
\[
a_{u}t_{\ell}-m_{Q_{j},\ell}a_{u}=\left(  \text{a }\mathbf{k}\text{-linear
combination of }a_{v}\text{'s for }v\in S_{n}\text{ satisfying }%
\operatorname*{Qind}v<j\right)  .
\]
In view of
\[
\widetilde{a}_{u}=a_{u}t_{\ell}-m_{Q_{\operatorname*{Qind}u},\ell}a_{u}%
=a_{u}t_{\ell}-m_{Q_{j},\ell}a_{u}\ \ \ \ \ \ \ \ \ \ \left(  \text{since
}\operatorname*{Qind}u=j\right)  ,
\]
we can rewrite this as
\[
\widetilde{a}_{u}=\left(  \text{a }\mathbf{k}\text{-linear combination of
}a_{v}\text{'s for }v\in S_{n}\text{ satisfying }\operatorname*{Qind}%
v<j\right)  .
\]
This equality shows that $\widetilde{a}_{u}$ can be written as a $\mathbf{k}%
$-linear combination of the descent-destroying basis, and the only basis
elements that appear (with nonzero coefficients) in this combination are the
$a_{v}$ for $v\in S_{n}$ satisfying $\operatorname*{Qind}v<j$. Hence, if $v\in
S_{n}$ does not satisfy $\operatorname*{Qind}v<j$, then $a_{v}$ does not
appear in the expansion of $\widetilde{a}_{u}$ as a $\mathbf{k}$-linear
combination of the descent-destroying basis. Applying this to $v=w$, we
conclude that $a_{w}$ does not appear in the expansion of $\widetilde{a}_{u}$
as a $\mathbf{k}$-linear combination of the descent-destroying basis (since
$w\in S_{n}$ does not satisfy $\operatorname*{Qind}w<j$). In other words, the
coefficient of $a_{w}$ when $\widetilde{a}_{u}$ is expanded as a $\mathbf{k}%
$-linear combination of the descent-destroying basis is $0$.

However, the equality (\ref{pf.thm.Rcomb-conc'.1}) shows that $f\left(
\widetilde{a}_{u},b_{w}\right)  $ is the coefficient of $a_{w}$ when
$\widetilde{a}_{u}$ is expanded as a $\mathbf{k}$-linear combination of the
descent-destroying basis. But we have just shown that this coefficient is $0$.
Thus, we conclude that $f\left(  \widetilde{a}_{u},b_{w}\right)  =0$. This
proves Claim 1.] \medskip

[\textit{Proof of Claim 2:} The definition of $\widetilde{a}_{u}$ yields
$\widetilde{a}_{u}=a_{u}t_{\ell}-m_{Q_{\operatorname*{Qind}u},\ell}a_{u}$.
Thus,%
\begin{align}
f\left(  \widetilde{a}_{u},b_{w}\right)   &  =f\left(  a_{u}t_{\ell
}-m_{Q_{\operatorname*{Qind}u},\ell}a_{u},\ b_{w}\right) \nonumber\\
&  =f\left(  a_{u}t_{\ell},b_{w}\right)  -m_{Q_{\operatorname*{Qind}u},\ell
}\underbrace{f\left(  a_{u},b_{w}\right)  }_{\substack{=\left[  u=w\right]
\\\text{(by (\ref{eq.bilf.onkSn-ab}), applied to }p=u\\\text{and }q=w\text{)}%
}}\ \ \ \ \ \ \ \ \ \ \left(  \text{since }f\text{ is a bilinear form}\right)
\nonumber\\
&  =f\left(  a_{u}t_{\ell},b_{w}\right)  -m_{Q_{\operatorname*{Qind}u},\ell
}\left[  u=w\right]  . \label{pf.thm.Rcomb-conc'.c2.pf.1}%
\end{align}

However, it is easy to see that%
\begin{equation}
m_{Q_{\operatorname*{Qind}u},\ell}\left[  u=w\right]
=m_{Q_{\operatorname*{Qind}w},\ell}\left[  u=w\right]
\label{pf.thm.Rcomb-conc'.c2.pf.2}%
\end{equation}
\footnote{\textit{Proof of (\ref{pf.thm.Rcomb-conc'.c2.pf.2}):} If $u=w$, then
(\ref{pf.thm.Rcomb-conc'.c2.pf.2}) is obvious. Hence, we WLOG assume that
$u\neq w$. Thus, $\left[  u=w\right]  =0$. Hence, both sides of
(\ref{pf.thm.Rcomb-conc'.c2.pf.2}) equal $0$ (since they contain the factor
$\left[  u=w\right]  =0$). Thus, (\ref{pf.thm.Rcomb-conc'.c2.pf.2}) holds,
qed.}.

On the other hand, the definition of $\widetilde{b}_{w}$ yields $\widetilde{b}%
_{w}=b_{w}t_{\ell}^{\prime}-m_{Q_{\operatorname*{Qind}w},\ell}b_{w}$. Hence,%
\begin{align*}
f\left(  a_{u},\widetilde{b}_{w}\right)   &  =f\left(  a_{u},\ b_{w}t_{\ell
}^{\prime}-m_{Q_{\operatorname*{Qind}w},\ell}b_{w}\right) \\
&  =f\left(  a_{u},b_{w}\underbrace{t_{\ell}^{\prime}}_{\substack{=S\left(
t_{\ell}\right)  \\\text{(by (\ref{eq.def.Stl}))}}}\right)
-m_{Q_{\operatorname*{Qind}w},\ell}\underbrace{f\left(  a_{u},b_{w}\right)
}_{\substack{=\left[  u=w\right]  \\\text{(by (\ref{eq.bilf.onkSn-ab}),
applied to }p=u\\\text{and }q=w\text{)}}}\\
&  \ \ \ \ \ \ \ \ \ \ \ \ \ \ \ \ \ \ \ \ \left(  \text{since }f\text{ is a
bilinear form}\right) \\
&  =\underbrace{f\left(  a_{u},b_{w}S\left(  t_{\ell}\right)  \right)
}_{\substack{=f\left(  a_{u}t_{\ell},b_{w}\right)  \\\text{(by Lemma
\ref{lem.S.self-adj}, applied to }a_{u}\text{, }b_{w}\text{ and }t_{\ell
}\\\text{instead of }u\text{, }v\text{ and }x\text{)}}%
}-\underbrace{m_{Q_{\operatorname*{Qind}w},\ell}\left[  u=w\right]
}_{\substack{=m_{Q_{\operatorname*{Qind}u},\ell}\left[  u=w\right]
\\\text{(by (\ref{pf.thm.Rcomb-conc'.c2.pf.2}))}}}\\
&  =f\left(  a_{u}t_{\ell},b_{w}\right)  -m_{Q_{\operatorname*{Qind}u},\ell
}\left[  u=w\right]  =f\left(  \widetilde{a}_{u},b_{w}\right)
\end{align*}
(by (\ref{pf.thm.Rcomb-conc'.c2.pf.1})). This proves Claim 2.] \medskip

Now, let $w\in S_{n}$. Let $i=\operatorname*{Qind}w$. Then, the definition of
$\widetilde{b}_{w}$ yields $\widetilde{b}_{w}=b_{w}t_{\ell}^{\prime
}-m_{Q_{\operatorname*{Qind}w},\ell}b_{w}=b_{w}t_{\ell}^{\prime}-m_{Q_{i}%
,\ell}b_{w}$ (since $\operatorname*{Qind}w=i$). However,
(\ref{pf.thm.Rcomb-conc'.2}) yields%
\begin{align*}
\widetilde{b}_{w}  &  =\sum_{v\in S_{n}}\underbrace{f\left(  a_{v}%
,\widetilde{b}_{w}\right)  }_{\substack{=f\left(  \widetilde{a}_{v}%
,b_{w}\right)  \\\text{(by Claim 2, applied to }u=v\text{)}}}b_{v}=\sum_{v\in
S_{n}}f\left(  \widetilde{a}_{v},b_{w}\right)  b_{v}\\
&  =\sum_{\substack{v\in S_{n};\\\operatorname*{Qind}w\geq\operatorname*{Qind}%
v}}\underbrace{f\left(  \widetilde{a}_{v},b_{w}\right)  }%
_{\substack{=0\\\text{(by Claim 1, applied to }u=v\text{)}}}b_{v}%
+\sum_{\substack{v\in S_{n};\\\operatorname*{Qind}w<\operatorname*{Qind}%
v}}f\left(  \widetilde{a}_{v},b_{w}\right)  b_{v}\\
&  \ \ \ \ \ \ \ \ \ \ \ \ \ \ \ \ \ \ \ \ \left(
\begin{array}
[c]{c}%
\text{since each }v\in S_{n}\text{ satisfies either }\operatorname*{Qind}%
w\geq\operatorname*{Qind}v\\
\text{or }\operatorname*{Qind}w<\operatorname*{Qind}v\text{ (but not both)}%
\end{array}
\right) \\
&  =\underbrace{\sum_{\substack{v\in S_{n};\\\operatorname*{Qind}%
w\geq\operatorname*{Qind}v}}0b_{v}}_{=0}+\sum_{\substack{v\in S_{n}%
;\\\operatorname*{Qind}w<\operatorname*{Qind}v}}f\left(  \widetilde{a}%
_{v},b_{w}\right)  b_{v}=\sum_{\substack{v\in S_{n};\\\operatorname*{Qind}%
w<\operatorname*{Qind}v}}f\left(  \widetilde{a}_{v},b_{w}\right)  b_{v}\\
&  =\left(  \text{a }\mathbf{k}\text{-linear combination of }b_{v}\text{'s for
}v\in S_{n}\text{ satisfying }\operatorname*{Qind}w<\operatorname*{Qind}%
v\right) \\
&  =\left(  \text{a }\mathbf{k}\text{-linear combination of }b_{v}\text{'s for
}v\in S_{n}\text{ satisfying }i<\operatorname*{Qind}v\right) \\
&  \ \ \ \ \ \ \ \ \ \ \ \ \ \ \ \ \ \ \ \ \left(  \text{since }%
\operatorname*{Qind}w=i\right) \\
&  =\left(  \text{a }\mathbf{k}\text{-linear combination of }b_{v}\text{'s for
}v\in S_{n}\text{ satisfying }\operatorname*{Qind}v>i\right)
\end{align*}
(since $i<\operatorname*{Qind}v$ is equivalent to $\operatorname*{Qind}v>i$).
In view of $\widetilde{b}_{w}=b_{w}t_{\ell}^{\prime}-m_{Q_{i},\ell}b_{w}$,
this can be rewritten as%
\[
b_{w}t_{\ell}^{\prime}-m_{Q_{i},\ell}b_{w}=\left(  \text{a }\mathbf{k}%
\text{-linear combination of }b_{v}\text{'s for }v\in S_{n}\text{ satisfying
}\operatorname*{Qind}v>i\right)  .
\]
In other words,
\[
b_{w}t_{\ell}^{\prime}=m_{Q_{i},\ell}b_{w}+\left(  \text{a }\mathbf{k}%
\text{-linear combination of }b_{v}\text{'s for }v\in S_{n}\text{ satisfying
}\operatorname*{Qind}v>i\right)  .
\]
This proves Theorem \ref{thm.Rcomb-conc'}.
\end{proof}

\subsection{\label{subsect.furtheralg.L}Left multiplication}

For each element $x\in\mathbf{k}\left[  S_{n}\right]  $, let $L\left(
x\right)  $ denote the $\mathbf{k}$-linear map%
\begin{align*}
\mathbf{k}\left[  S_{n}\right]   &  \rightarrow\mathbf{k}\left[  S_{n}\right]
,\\
y  &  \mapsto xy.
\end{align*}
This is a \textquotedblleft left\textquotedblright\ analogue to the right
multiplication map $R\left(  x\right)  $. It is interesting to study from a
shuffling perspective, as this corresponds to shuffling on the labels of a
permutation instead of shuffling on the positions. Thus, having studied
$R\left(  \lambda_{1}t_{1}+\lambda_{2}t_{2}+\cdots+\lambda_{n}t_{n}\right)  $
in detail, we may wonder which of our results extend to $L\left(  \lambda
_{1}t_{1}+\lambda_{2}t_{2}+\cdots+\lambda_{n}t_{n}\right)  $. In particular,
does an analogue of Theorem \ref{thm.Rcomb-main} hold for $L\left(
\lambda_{1}t_{1}+\lambda_{2}t_{2}+\cdots+\lambda_{n}t_{n}\right)  $ instead of
$R\left(  \lambda_{1}t_{1}+\lambda_{2}t_{2}+\cdots+\lambda_{n}t_{n}\right)  $ ?

The answer is \textquotedblleft yes\textquotedblright, and in fact it turns
out that this question is equivalent to the analogous question for $R\left(
\lambda_{1}t_{1}^{\prime}+\lambda_{2}t_{2}^{\prime}+\cdots+\lambda_{n}%
t_{n}^{\prime}\right)  $ answered (in the positive) in Subsection
\ref{subsect.furtheralg.btr}, because the endomorphisms $L\left(  \lambda
_{1}t_{1}+\lambda_{2}t_{2}+\cdots+\lambda_{n}t_{n}\right)  $ and $R\left(
\lambda_{1}t_{1}^{\prime}+\lambda_{2}t_{2}^{\prime}+\cdots+\lambda_{n}%
t_{n}^{\prime}\right)  $ are conjugate via the antipode $S$. More generally,
the following holds:\footnote{Recall that $S$ is the $\mathbf{k}$-linear map
from $\mathbf{k}\left[  S_{n}\right]  $ to $\mathbf{k}\left[  S_{n}\right]  $
that sends each $w\in S_{n}$ to $w^{-1}$.}

\begin{proposition}
\label{prop.L.LxSx}Let $x\in\mathbf{k}\left[  S_{n}\right]  $. Then, the
endomorphisms $L\left(  x\right)  $ and $R\left(  S\left(  x\right)  \right)
$ of $\mathbf{k}\left[  S_{n}\right]  $ are mutually conjugate in the
endomorphism ring $\operatorname*{End}\nolimits_{\mathbf{k}}\left(
\mathbf{k}\left[  S_{n}\right]  \right)  $ of the $\mathbf{k}$-module
$\mathbf{k}\left[  S_{n}\right]  $. Namely, we have%
\begin{equation}
R\left(  S\left(  x\right)  \right)  =S\circ\left(  L\left(  x\right)
\right)  \circ S^{-1}. \label{eq.prop.L.LxSx.1}%
\end{equation}

\end{proposition}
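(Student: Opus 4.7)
The plan is to exploit just two facts about the antipode $S$ established earlier in the section: $S$ is an involution (so $S^{-1} = S$), and $S$ is a $\mathbf{k}$-algebra antihomomorphism (so $S(uv) = S(v)S(u)$ for all $u,v \in \mathbf{k}[S_n]$). Both mutual conjugacy and the explicit formula \eqref{eq.prop.L.LxSx.1} will follow from a single direct computation verifying the action of $S \circ L(x) \circ S^{-1}$ on an arbitrary $y \in \mathbf{k}[S_n]$.

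Concretely, I would proceed as follows. First, I would note that $S$ is a $\mathbf{k}$-module automorphism of $\mathbf{k}[S_n]$ (since $S \circ S = \operatorname{id}$ implies that $S$ is its own inverse), which makes the conjugation $S \circ (-) \circ S^{-1}$ a well-defined inner automorphism of $\operatorname{End}_{\mathbf{k}}(\mathbf{k}[S_n])$. Second, I would fix an arbitrary $y \in \mathbf{k}[S_n]$ and compute
\[
\bigl(S \circ L(x) \circ S^{-1}\bigr)(y) = S\bigl(L(x)(S(y))\bigr) = S\bigl(x \cdot S(y)\bigr),
\]
using $S^{-1} = S$ in the first step and the definition of $L(x)$ in the second. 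Then, applying the antihomomorphism property of $S$, I would rewrite this as
\[
S\bigl(x \cdot S(y)\bigr) = S(S(y)) \cdot S(x) = y \cdot S(x) = R(S(x))(y),
\]
using $S \circ S = \operatorname{id}$ and the definition of $R(S(x))$. Since $y$ was arbitrary, this yields the identity \eqref{eq.prop.L.LxSx.1} as an equality of endomorphisms. Mutual conjugacy is then immediate, since \eqref{eq.prop.L.LxSx.1} rearranges (after conjugating by $S^{-1} = S$ on both sides) to $L(x) = S^{-1} \circ R(S(x)) \circ S$.

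There is no substantive obstacle here; the proof is essentially a one-line manipulation, and the only point requiring any care is the bookkeeping of the reversal in $S(x \cdot S(y)) = S(S(y)) \cdot S(x)$, which is exactly what the antihomomorphism property provides. I would therefore expect the write-up to be short and self-contained, with no appeals beyond the two basic properties of $S$ already recalled at the start of Subsection~\ref{subsect.furtheralg.btr}.
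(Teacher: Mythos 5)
Your proposal is correct and follows essentially the same route as the paper: evaluate $S\circ L(x)\circ S^{-1}$ on an arbitrary $y$, use the definition of $L(x)$, apply the antihomomorphism property of $S$, and cancel $S$ against $S^{-1}$ to land on $y\,S(x)=R(S(x))(y)$. The only cosmetic difference is that you replace $S^{-1}$ by $S$ at the outset (using the involution property), whereas the paper keeps $S^{-1}(y)$ and cancels $S(S^{-1}(y))=y$; the computations are identical in substance.
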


\begin{proof}
Let $y\in\mathbf{k}\left[  S_{n}\right]  $. Recall that $S$ is an involution;
thus, $S$ is invertible. Hence, $S^{-1}$ exists. Moreover, recall that $S$ is
a $\mathbf{k}$-algebra antihomomorphism; thus, we have%
\begin{equation}
S\left(  xz\right)  =S\left(  z\right)  S\left(  x\right)
\ \ \ \ \ \ \ \ \ \ \text{for each }z\in\mathbf{k}\left[  S_{n}\right]  .
\label{pf.prop.L.LxSx.1}%
\end{equation}

Now, comparing%
\[
\left(  R\left(  S\left(  x\right)  \right)  \right)  \left(  y\right)
=yS\left(  x\right)  \ \ \ \ \ \ \ \ \ \ \left(  \text{by the definition of
}R\left(  S\left(  x\right)  \right)  \right)
\]
with%
\begin{align*}
\left(  S\circ\left(  L\left(  x\right)  \right)  \circ S^{-1}\right)  \left(
y\right)   &  =S\left(  \underbrace{\left(  L\left(  x\right)  \right)
\left(  S^{-1}\left(  y\right)  \right)  }_{\substack{=xS^{-1}\left(
y\right)  \\\text{(by the definition of }L\left(  x\right)  \text{)}}}\right)
=S\left(  xS^{-1}\left(  y\right)  \right) \\
&  =\underbrace{S\left(  S^{-1}\left(  y\right)  \right)  }_{=y}S\left(
x\right)  \ \ \ \ \ \ \ \ \ \ \left(  \text{by (\ref{pf.prop.L.LxSx.1}),
applied to }z=S^{-1}\left(  y\right)  \right) \\
&  =yS\left(  x\right)  ,
\end{align*}
we obtain $\left(  R\left(  S\left(  x\right)  \right)  \right)  \left(
y\right)  =\left(  S\circ\left(  L\left(  x\right)  \right)  \circ
S^{-1}\right)  \left(  y\right)  $.

Forget that we fixed $y$. We thus have shown that $\left(  R\left(  S\left(
x\right)  \right)  \right)  \left(  y\right)  =\left(  S\circ\left(  L\left(
x\right)  \right)  \circ S^{-1}\right)  \left(  y\right)  $ for each
$y\in\mathbf{k}\left[  S_{n}\right]  $. In other words, $R\left(  S\left(
x\right)  \right)  =S\circ\left(  L\left(  x\right)  \right)  \circ S^{-1}$.
Hence, the endomorphisms $L\left(  x\right)  $ and $R\left(  S\left(
x\right)  \right)  $ of $\mathbf{k}\left[  S_{n}\right]  $ are mutually
conjugate in the endomorphism ring $\operatorname*{End}\nolimits_{\mathbf{k}%
}\left(  \mathbf{k}\left[  S_{n}\right]  \right)  $ of the $\mathbf{k}$-module
$\mathbf{k}\left[  S_{n}\right]  $. This proves Proposition \ref{prop.L.LxSx}.
\end{proof}

\begin{corollary}
\label{cor.L.conj}Let $\lambda_{1},\lambda_{2},\ldots,\lambda_{n}\in
\mathbf{k}$. Then, the endomorphisms $L\left(  \lambda_{1}t_{1}+\lambda
_{2}t_{2}+\cdots+\lambda_{n}t_{n}\right)  $ and $R\left(  \lambda_{1}%
t_{1}^{\prime}+\lambda_{2}t_{2}^{\prime}+\cdots+\lambda_{n}t_{n}^{\prime
}\right)  $ of $\mathbf{k}\left[  S_{n}\right]  $ are mutually conjugate in
the endomorphism ring $\operatorname*{End}\nolimits_{\mathbf{k}}\left(
\mathbf{k}\left[  S_{n}\right]  \right)  $ of the $\mathbf{k}$-module
$\mathbf{k}\left[  S_{n}\right]  $. Namely, we have%
\[
R\left(  \lambda_{1}t_{1}^{\prime}+\lambda_{2}t_{2}^{\prime}+\cdots
+\lambda_{n}t_{n}^{\prime}\right)  =S\circ\left(  L\left(  \lambda_{1}%
t_{1}+\lambda_{2}t_{2}+\cdots+\lambda_{n}t_{n}\right)  \right)  \circ S^{-1}.
\]

\end{corollary}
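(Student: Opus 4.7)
The plan is to derive this corollary as a direct consequence of Proposition \ref{prop.L.LxSx} combined with the already-established identity \eqref{eq.def.Stl}, which says $S(t_\ell) = t'_\ell$ for each $\ell \in [n]$. Essentially no new work beyond bookkeeping is required, so the ``main obstacle'' is really only making sure the linearity of $S$ is applied correctly.

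First, I would set $x := \lambda_1 t_1 + \lambda_2 t_2 + \cdots + \lambda_n t_n \in \mathbf{k}[S_n]$, and apply Proposition \ref{prop.L.LxSx} to this $x$. That proposition yields
\[
R(S(x)) = S \circ L(x) \circ S^{-1}.
\]
Next, I would compute $S(x)$ explicitly. Since $S$ is $\mathbf{k}$-linear and \eqref{eq.def.Stl} gives $S(t_\ell) = t'_\ell$ for each $\ell \in [n]$, I get
\[
S(x) = S\!\left(\sum_{\ell=1}^{n} \lambda_\ell t_\ell\right) = \sum_{\ell=1}^{n} \lambda_\ell \, S(t_\ell) = \sum_{\ell=1}^{n} \lambda_\ell t'_\ell = \lambda_1 t'_1 + \lambda_2 t'_2 + \cdots + \lambda_n t'_n.
\]
Substituting this into the identity from Proposition \ref{prop.L.LxSx} immediately yields
\[
R(\lambda_1 t'_1 + \lambda_2 t'_2 + \cdots + \lambda_n t'_n) = S \circ L(\lambda_1 t_1 + \lambda_2 t_2 + \cdots + \lambda_n t_n) \circ S^{-1},
\]
which is exactly the claimed equation. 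The mutual conjugacy statement follows because $S$ is an involution and therefore invertible, so conjugation by $S$ is an automorphism of the endomorphism ring $\operatorname{End}_{\mathbf{k}}(\mathbf{k}[S_n])$. This finishes the proof.
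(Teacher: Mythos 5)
Your proof is correct and takes essentially the same approach as the paper; the only (cosmetic) difference is that you apply Proposition \ref{prop.L.LxSx} once to the whole linear combination $x=\lambda_1 t_1+\cdots+\lambda_n t_n$ after computing $S(x)$ by linearity, whereas the paper applies the proposition to each $t_\ell$ separately and then sums, using the linearity of $R$ and $L$. Both routes are valid and equally short.
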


\begin{proof}
It is easy to see that the map
\begin{align*}
R:\mathbf{k}\left[  S_{n}\right]   &  \rightarrow\operatorname*{End}%
\nolimits_{\mathbf{k}}\left(  \mathbf{k}\left[  S_{n}\right]  \right)  ,\\
x  &  \mapsto R\left(  x\right)
\end{align*}
is $\mathbf{k}$-linear. Hence,%
\[
R\left(  \lambda_{1}t_{1}^{\prime}+\lambda_{2}t_{2}^{\prime}+\cdots
+\lambda_{n}t_{n}^{\prime}\right)  =\lambda_{1}R\left(  t_{1}^{\prime}\right)
+\lambda_{2}R\left(  t_{2}^{\prime}\right)  +\cdots+\lambda_{n}R\left(
t_{n}^{\prime}\right)  =\sum_{\ell=1}^{n}\lambda_{\ell}R\left(  t_{\ell
}^{\prime}\right)  .
\]
Similarly,%
\[
L\left(  \lambda_{1}t_{1}+\lambda_{2}t_{2}+\cdots+\lambda_{n}t_{n}\right)
=\sum_{\ell=1}^{n}\lambda_{\ell}L\left(  t_{\ell}\right)  .
\]
Hence,%
\begin{align*}
S\circ\left(  L\left(  \lambda_{1}t_{1}+\lambda_{2}t_{2}+\cdots+\lambda
_{n}t_{n}\right)  \right)  \circ S^{-1}  &  =S\circ\left(  \sum_{\ell=1}%
^{n}\lambda_{\ell}L\left(  t_{\ell}\right)  \right)  \circ S^{-1}\\
&  =\sum_{\ell=1}^{n}\lambda_{\ell}S\circ\left(  L\left(  t_{\ell}\right)
\right)  \circ S^{-1}%
\end{align*}
(since composition of $\mathbf{k}$-linear maps is $\mathbf{k}$-bilinear).
Comparing this with%
\begin{align*}
R\left(  \lambda_{1}t_{1}^{\prime}+\lambda_{2}t_{2}^{\prime}+\cdots
+\lambda_{n}t_{n}^{\prime}\right)   &  =\sum_{\ell=1}^{n}\lambda_{\ell
}R\left(  \underbrace{t_{\ell}^{\prime}}_{\substack{=S\left(  t_{\ell}\right)
\\\text{(by (\ref{eq.def.Stl}))}}}\right)  =\sum_{\ell=1}^{n}\lambda_{\ell
}\underbrace{R\left(  S\left(  t_{\ell}\right)  \right)  }_{\substack{=S\circ
\left(  L\left(  t_{\ell}\right)  \right)  \circ S^{-1}\\\text{(by
(\ref{eq.prop.L.LxSx.1}),}\\\text{applied to }x=t_{\ell}\text{)}}}\\
&  =\sum_{\ell=1}^{n}\lambda_{\ell}S\circ\left(  L\left(  t_{\ell}\right)
\right)  \circ S^{-1},
\end{align*}
we obtain%
\[
R\left(  \lambda_{1}t_{1}^{\prime}+\lambda_{2}t_{2}^{\prime}+\cdots
+\lambda_{n}t_{n}^{\prime}\right)  =S\circ\left(  L\left(  \lambda_{1}%
t_{1}+\lambda_{2}t_{2}+\cdots+\lambda_{n}t_{n}\right)  \right)  \circ S^{-1}.
\]
Thus, the endomorphisms $L\left(  \lambda_{1}t_{1}+\lambda_{2}t_{2}%
+\cdots+\lambda_{n}t_{n}\right)  $ and $R\left(  \lambda_{1}t_{1}^{\prime
}+\lambda_{2}t_{2}^{\prime}+\cdots+\lambda_{n}t_{n}^{\prime}\right)  $ of
$\mathbf{k}\left[  S_{n}\right]  $ are mutually conjugate in the endomorphism
ring $\operatorname*{End}\nolimits_{\mathbf{k}}\left(  \mathbf{k}\left[
S_{n}\right]  \right)  $ of the $\mathbf{k}$-module $\mathbf{k}\left[
S_{n}\right]  $. This proves Corollary \ref{cor.L.conj}.
\end{proof}

Using Corollary \ref{cor.L.conj}, we can derive properties of $L\left(
\lambda_{1}t_{1}+\lambda_{2}t_{2}+\cdots+\lambda_{n}t_{n}\right)  $ from
properties of $R\left(  \lambda_{1}t_{1}^{\prime}+\lambda_{2}t_{2}^{\prime
}+\cdots+\lambda_{n}t_{n}^{\prime}\right)  $ by conjugating with $S^{-1}$. In
particular, we obtain an analogue of Theorem \ref{thm.Rcomb-main} for
$L\left(  \lambda_{1}t_{1}+\lambda_{2}t_{2}+\cdots+\lambda_{n}t_{n}\right)  $
instead of $R\left(  \lambda_{1}t_{1}+\lambda_{2}t_{2}+\cdots+\lambda_{n}%
t_{n}\right)  $, since we already know (from Subsection
\ref{subsect.furtheralg.btr}) that such an analogue exists for $R\left(
\lambda_{1}t_{1}^{\prime}+\lambda_{2}t_{2}^{\prime}+\cdots+\lambda_{n}%
t_{n}^{\prime}\right)  $. Thus, we shall not discuss $L\left(  \lambda
_{1}t_{1}+\lambda_{2}t_{2}+\cdots+\lambda_{n}t_{n}\right)  $ any further.

\begin{noncompile}
The following is true but obsolete.

Using some representation theory, we can answer Question
\ref{quest.Rcomb-mainL} in the case when $\mathbf{k}$ is a field of
characteristic $0$. Indeed, we have a more general fact:

\begin{proposition}
\label{prop.char0-L=R}Assume that $\mathbf{k}$ is a field of characteristic
$0$. Let $x\in\mathbf{k}\left[  S_{n}\right]  $ be arbitrary. Then, the
endomorphisms $L\left(  x\right)  $ and $R\left(  x\right)  $ of
$\mathbf{k}\left[  S_{n}\right]  $ are conjugate in $\operatorname*{End}%
\nolimits_{\mathbf{k}}\left(  \mathbf{k}\left[  S_{n}\right]  \right)  $ (that
is, the matrices representing them are similar).
\end{proposition}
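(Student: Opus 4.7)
The strategy is to use the semisimplicity of $\mathbf{k}\left[S_{n}\right]$ (guaranteed by Maschke's theorem in characteristic $0$) in order to reduce the conjugacy claim for $L\left(x\right)$ and $R\left(x\right)$ to a purely matrix-theoretic assertion, which then follows from the classical fact that every square matrix is similar to its transpose. First, I would extend scalars to the algebraic closure $\overline{\mathbf{k}}$ of $\mathbf{k}$; this is harmless, since two matrices with entries in $\mathbf{k}$ are similar over $\mathbf{k}$ if and only if they are similar over any extension field (by the invariance of the rational canonical form). Applying Artin--Wedderburn then yields a $\overline{\mathbf{k}}$-algebra isomorphism
\[
\overline{\mathbf{k}}\left[S_{n}\right]\;\cong\;\bigoplus_{\lambda\vdash n} M_{d_{\lambda}}\!\left(\overline{\mathbf{k}}\right),
\]
where $d_{\lambda}$ is the dimension of the Specht module indexed by $\lambda$. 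Under this isomorphism, $x$ corresponds to a tuple $\left(X_{\lambda}\right)_{\lambda\vdash n}$, and both $L\left(x\right)$ and $R\left(x\right)$ preserve each summand $M_{d_{\lambda}}\!\left(\overline{\mathbf{k}}\right)$, acting on it by left- and right-multiplication by $X_{\lambda}$, respectively. It thus suffices to show, block by block, that for each $d\geq 0$ and each $X\in M_{d}\!\left(\overline{\mathbf{k}}\right)$, the operators $L_{X}\colon Y\mapsto XY$ and $R_{X}\colon Y\mapsto YX$ on $M_{d}\!\left(\overline{\mathbf{k}}\right)$ are similar.

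For this matrix-algebra step, I would use the identification $M_{d}\!\left(\overline{\mathbf{k}}\right)\cong\overline{\mathbf{k}}^{d}\otimes\overline{\mathbf{k}}^{d}$ that sends the matrix unit $e_{ij}$ to $\mathbf{e}_{i}\otimes\mathbf{e}_{j}$. A direct computation on this basis gives $L_{X}=X\otimes I_{d}$ and $R_{X}=I_{d}\otimes X^{T}$. Since every square matrix is similar to its transpose, $X$ and $X^{T}$ have the same Jordan form, and consequently $X\otimes I_{d}$ and $I_{d}\otimes X^{T}$ both have Jordan form equal to $d$ copies of this common Jordan form; hence they are similar to each other. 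Assembling these block-level similarities produces a similarity between $L\left(x\right)$ and $R\left(x\right)$ over $\overline{\mathbf{k}}$, and descending to $\mathbf{k}$ via the remark above completes the argument.

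None of the individual steps is hard, so the main obstacle lies in choosing an argument whose logical skeleton is honest. The Wedderburn route handles the base-field issue cleanly via the descent fact for matrix similarity, but that descent fact -- though classical -- is nontrivial and must be cited; an attempt to avoid it (for instance by working only over $\mathbf{k}$ and exploiting the rational canonical form directly) tends to reintroduce comparable bookkeeping. It is also worth emphasizing that the characteristic-$0$ hypothesis is genuinely used: it is precisely the semisimplicity of $\mathbf{k}\left[S_{n}\right]$ that makes the Wedderburn decomposition available, and in positive characteristics dividing $n!$ the algebra acquires a nonzero Jacobson radical on which the conclusion $L\left(x\right)\sim R\left(x\right)$ can fail outright.
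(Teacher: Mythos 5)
Your proof is correct, and its skeleton is the same as the paper's: reduce via Artin--Wedderburn to a direct sum of matrix algebras, then invoke the fact that every square matrix over a field is similar to its transpose. Two sub-steps are implemented differently. First, the paper obtains the Wedderburn decomposition over $\mathbf{k}$ itself, by citing that all representations of $S_{n}$ in characteristic $0$ are defined over $\mathbb{Q}$ and tensoring the resulting isomorphism up to $\mathbf{k}$; it therefore never needs your descent lemma that similarity over $\overline{\mathbf{k}}$ implies similarity over $\mathbf{k}$. Your route trades that rationality input for the (equally classical) descent fact -- a fair exchange, and arguably more robust since it would apply to any finite group. Second, on a single block the paper writes down an explicit intertwiner $\Phi\left(  B\right)  =MB^{T}M^{-1}$ (where $M$ satisfies $A=MA^{T}M^{-1}$) and checks $L\left(  A\right)  \circ\Phi=\Phi\circ R\left(  A\right)  $ directly, whereas you compute $L_{X}=X\otimes I_{d}$ and $R_{X}=I_{d}\otimes X^{T}$ and compare Jordan forms; both are fine, though the explicit intertwiner avoids any further appeal to canonical forms.

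One genuine error, confined to your closing remark: the conclusion does \emph{not} fail in positive characteristic. The bilinear form $\left\langle a,b\right\rangle :=\left(  \text{coefficient of }\operatorname*{id}\text{ in }ab\right)  $ on $\mathbf{k}\left[  S_{n}\right]  $ is symmetric and nondegenerate over any commutative ring (its Gram matrix $G$ in the standard basis is the permutation matrix of $w\mapsto w^{-1}$), and it satisfies $\left\langle L\left(  x\right)  a,b\right\rangle =\left\langle a,R\left(  x\right)  b\right\rangle $ because the coefficient-of-identity functional is a trace. Hence the matrix of $R\left(  x\right)  $ equals $G^{-1}\left[  L\left(  x\right)  \right]  ^{T}G$, so $R\left(  x\right)  $ is similar to $L\left(  x\right)  ^{T}$ and therefore to $L\left(  x\right)  $ over any field. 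Only the semisimplicity-based strategy, not the statement itself, requires characteristic $0$.
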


\begin{proof}
[Proof sketch.]It is well-known (see, e.g., \cite[\S 7.2, between Proposition
1 and Lemma 3]{Fulton97}) that all representations of $S_{n}$ in
characteristic $0$ are defined over $\mathbb{Q}$. Hence, by the
Artin--Wedderburn theorem, $\mathbb{Q}\left[  S_{n}\right]  $ is isomorphic to
a direct product of matrix algebras over $\mathbb{Q}$. (This can also be
proved directly, by constructing a system of matrix units in $\mathbb{Q}%
\left[  S_{n}\right]  $; see, e.g., \cite[Theorem 2.6]{GarEge20} for this.) By
tensoring this isomorphism with $\mathbf{k}$ over $\mathbb{Q}$, we conclude
that $\mathbf{k}\left[  S_{n}\right]  $ is isomorphic to a direct product of
matrix algebras over $\mathbf{k}$. Hence, in order to prove Proposition
\ref{prop.char0-L=R}, it suffices to show the analogous result for matrix
algebras over $\mathbf{k}$: That is, it suffices to show that if
$A\in\mathbf{k}^{m\times m}$ is any matrix, then the endomorphisms $L\left(
A\right)  $ and $R\left(  A\right)  $ of the matrix algebra $\mathbf{k}%
^{m\times m}$ (where $L\left(  A\right)  $ sends any matrix $B$ to $AB$,
whereas $R\left(  A\right)  $ sends it to $BA$) are conjugate in
$\operatorname*{End}\nolimits_{\mathbf{k}}\left(  \mathbf{k}^{m\times
m}\right)  $.

However, a well-known fact (\cite[Theorem 1]{TauZas59}) says that any matrix
over a field is similar to its transpose. Thus, $A$ is similar to $A^{T}$. In
other words, there exists an invertible matrix $M\in\mathbf{k}^{m\times m}$
such that $A=MA^{T}M^{-1}$. Consider this $M$.

Now, let $\Phi:\mathbf{k}^{m\times m}\rightarrow\mathbf{k}^{m\times m}$ be the
$\mathbf{k}$-linear map sending each matrix $B\in\mathbf{k}^{m\times m}$ to
$MB^{T}M^{-1}$. Then, $\Phi\in\operatorname*{End}\nolimits_{\mathbf{k}}\left(
\mathbf{k}^{m\times m}\right)  $, and it is straightforward to see that each
matrix $B\in\mathbf{k}^{m\times m}$ satisfies $A\Phi\left(  B\right)
=\Phi\left(  BA\right)  $. Hence, $L\left(  A\right)  \circ\Phi=\Phi\circ
R\left(  A\right)  $, so that $L\left(  A\right)  =\Phi\circ R\left(
A\right)  \circ\Phi^{-1}$. This shows that $L\left(  A\right)  $ and $R\left(
A\right)  $ are conjugate in $\operatorname*{End}\nolimits_{\mathbf{k}}\left(
\mathbf{k}^{m\times m}\right)  $, and thus Proposition \ref{prop.char0-L=R} is complete.
\end{proof}
\end{noncompile}

\subsection{A Boolean interval partition of $\mathcal{P}\left(  \left[
n-1\right]  \right)  $}

Our results on $Q$-indices and lacunar subsets shown above quickly lead to a
curious result, which may be of independent interest (similar results appear
in \cite{AgNyOr06} and other references on peak algebras and cd-indices):

\begin{corollary}
\label{cor.lac.bool}Let $J$ be a subset of $\left[  n-1\right]  $. Then, there
exists a unique lacunar subset $I$ of $\left[  n-1\right]  $ satisfying
$I^{\prime}\subseteq J\subseteq\left[  n-1\right]  \setminus I$.
\end{corollary}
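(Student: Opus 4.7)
The plan is to deduce both existence and uniqueness from Proposition \ref{prop.Qind.equivalent}, using the fact that every subset of $\left[n-1\right]$ arises as the descent set of some permutation in $S_n$, together with Lemma \ref{lem.lac-sum-less2} for the uniqueness argument.

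For existence, I would first build a permutation $w \in S_n$ whose descent set is exactly $J$. This is a standard construction and is essentially what is done inside the proof of Lemma \ref{lem.Qind.surj}: write $\left[n-1\right] \setminus J = \left\{j_1 < j_2 < \cdots < j_p\right\}$ (or partition $\left[n\right]$ into the blocks delimited by $J$), and let $w$ be the permutation that decreases on each maximal block of consecutive positions inside which $J$ contains all interior gaps; an easy verification shows $\operatorname{Des} w = J$. Now set $i := \operatorname{Qind} w$ and take $I := Q_i$. By the definition of a lacunar subset, $I$ is lacunar. Applying the ``$\Longrightarrow$'' direction of Proposition \ref{prop.Qind.equivalent} to this $w$ and $i$, we obtain $Q_i^{\prime} \subseteq \operatorname{Des} w \subseteq \left[n-1\right] \setminus Q_i$, which in view of $\operatorname{Des} w = J$ and $I = Q_i$ is exactly $I^{\prime} \subseteq J \subseteq \left[n-1\right] \setminus I$. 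This gives existence.

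For uniqueness, suppose that $I_1$ and $I_2$ are two lacunar subsets of $\left[n-1\right]$ both satisfying the double inclusion, so that $I_1^{\prime} \subseteq J \subseteq \left[n-1\right] \setminus I_2$ and $I_2^{\prime} \subseteq J \subseteq \left[n-1\right] \setminus I_1$. In particular, $I_1^{\prime} \subseteq \left[n-1\right] \setminus I_2$ and $I_2^{\prime} \subseteq \left[n-1\right] \setminus I_1$. If $I_1 \neq I_2$, then Lemma \ref{lem.lac-sum-less2} applied with $I = I_2$ and $K = I_1$ yields $\operatorname{sum} I_2 < \operatorname{sum} I_1$, while the same lemma applied with $I = I_1$ and $K = I_2$ yields $\operatorname{sum} I_1 < \operatorname{sum} I_2$, a contradiction. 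Hence $I_1 = I_2$, establishing uniqueness.

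The main obstacle is really just bookkeeping: producing a permutation with prescribed descent set is elementary, and once we have that the theorem reduces cleanly to Proposition \ref{prop.Qind.equivalent} and Lemma \ref{lem.lac-sum-less2}, both of which have already been proved. A purely combinatorial proof bypassing permutations would also be feasible (directly construct $I$ from $J$ by a greedy procedure: sweep $\left[n-1\right]$ and declare $i \in I$ precisely when both $i \notin J$ and $i-1 \notin I$, say), but routing through Proposition \ref{prop.Qind.equivalent} makes the result essentially immediate and avoids a separate verification.
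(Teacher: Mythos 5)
Your proof is correct and follows essentially the same route as the paper: both reduce the statement to Proposition \ref{prop.Qind.equivalent} after realizing $J$ as the descent set of a permutation (the construction from the proof of Lemma \ref{lem.Qind.surj}). The only difference is cosmetic: for uniqueness the paper cites the well-definedness of $\operatorname{Qind}w$ (which packages both directions of Proposition \ref{prop.Qind.equivalent}), whereas you apply Lemma \ref{lem.lac-sum-less2} twice symmetrically to get $\operatorname{sum}I_{1}<\operatorname{sum}I_{2}<\operatorname{sum}I_{1}$ -- a valid and slightly more self-contained argument, since that lemma is exactly what powers the ``$\Longleftarrow$'' direction anyway.
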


\begin{proof}
First of all, we observe that there exists a permutation $w\in S_{n}$
satisfying $\operatorname*{Des}w=J$ (indeed, we have already constructed such
a $w$ in our proof of Lemma \ref{lem.Qind.surj}\footnote{Arguably, the set $J$
in the proof of Lemma \ref{lem.Qind.surj} was not an arbitrary subset of
$\left[  n-1\right]  $, but a specially constructed one; however, the
construction of $w$ works equally well for any $J$.}). Fix such a $w$.

There exists a unique $i\in\left[  f_{n+1}\right]  $ such that
$\operatorname*{Qind}w=i$ (since $\operatorname*{Qind}w$ is a well-defined
element of $\left[  f_{n+1}\right]  $). In view of Proposition
\ref{prop.Qind.equivalent}, we can rewrite this as follows: There exists a
unique $i\in\left[  f_{n+1}\right]  $ such that $Q_{i}^{\prime}\subseteq
\operatorname*{Des}w\subseteq\left[  n-1\right]  \setminus Q_{i}$. In view of
$\operatorname*{Des}w=J$, we can rewrite this as follows: There exists a
unique $i\in\left[  f_{n+1}\right]  $ such that $Q_{i}^{\prime}\subseteq
J\subseteq\left[  n-1\right]  \setminus Q_{i}$. Since $Q_{1},Q_{2}%
,\ldots,Q_{f_{n+1}}$ are all the lacunar subsets of $\left[  n-1\right]  $
(listed without repetition), we can rewrite this as follows: There exists a
unique lacunar subset $I$ of $\left[  n-1\right]  $ satisfying $I^{\prime
}\subseteq J\subseteq\left[  n-1\right]  \setminus I$. Corollary
\ref{cor.lac.bool} is thus proven.
\end{proof}

We can rewrite Corollary \ref{cor.lac.bool} in the language of Boolean
interval partitions (see \cite[\S 4.4]{Grinbe21}): Namely, it says that there
is a Boolean interval partition of the powerset $\mathcal{P}\left(  \left[
n-1\right]  \right)  $ whose blocks are the intervals $\left[  I^{\prime
},\ \left[  n-1\right]  \setminus I\right]  $ for all lacunar subsets $I$ of
$\left[  n-1\right]  $.

\subsection{Consequences for the top-to-random shuffle}

Let us briefly comment on what our above results yield for the top-to-random
shuffle $t_{1}$. It is easy to derive from Corollary \ref{cor.eigen.spec} that
when $\mathbf{k}$ is a field, we have%
\[
\operatorname*{Spec}\left(  R\left(  t_{1}\right)  \right)  =\left\{
m_{I,1}\ \mid\ I\subseteq\left[  n-1\right]  \text{ is lacunar}\right\}
=\left\{  0,1,\ldots,n-2,n\right\}
\]
(the latter equality sign here is a consequence of the definition of $m_{I,1}$
and the fact that $\widehat{I}\subseteq\left\{  0,1,\ldots,n-1,n+1\right\}
$). This, of course, is a fairly well-known result (e.g., being part of
\cite[Theorem 4.1]{DiFiPi92}). Unfortunately, the fact that $R\left(
t_{1}\right)  $ is diagonalizable when $\mathbf{k}$ is a field of
characteristic $0$ (see, e.g., \cite[Theorem 4.1]{DiFiPi92}) cannot be
recovered from our above results (as the assumptions of Theorem
\ref{thm.eigen.diagonalizable} are not satisfied when $n\geq4$ and
$\lambda_{2}=\lambda_{3}=\cdots=\lambda_{n}=0$).

\section{\label{sec.stoppingtime}Strong stationary time for the
random-to-below shuffle}

We now leave the realm of algebra for some probabilistic analysis of the
one-sided cycle shuffles.

We shall start this section by recalling how a strong stationary time for the
top-to-random shuffle has been obtained (\cite{AldDia86}). Using a similar but
subtler strategy, we will then describe a strong stationary time for the
one-sided cycle shuffles, and compute its waiting time in the specific case of
the random-to-below shuffle.

\subsection{Strong stationary time for the top-to-random shuffle}

A stopping time for the top-to-random shuffle can be obtained using the
following clever argument: At any given time, the cards that have already been
moved from the top position will appear in a uniformly random relative order.
Hence, once all cards have been moved from the top position, all permutations
of the deck are equally likely. To estimate the time for this event to happen,
we follow the position of the card that is originally at the bottom of the
deck. This card occasionally moves up a position, but never moves down until
it reaches the top of the deck. It moves from the bottommost position to the
next-higher one with probability $\dfrac{1}{n}$, then to one position higher
with probability $\dfrac{2}{n}$, etc., until (as we said) it reaches the top.
One iteration of the top-to-random shuffle later, the deck will be fully
mixed, therefore giving a strong stationary time. The waiting time for this
event can be easily seen to approach $n\log n$. Details can be found in the
introduction of \cite{AldDia86}, or in \cite[\S 6.1 and \S 6.5.3]{LePeWi09}.

\subsection{A similar argument for the one-sided cycle shuffles}

A similar argument can be used for the one-sided cycle shuffles. However,
unlike for the top-to-random shuffle, we do not follow the bottommost card any
more, since it may fall down before reaching the top (and is thus much more
difficult to track). Thus, instead of following a specific card, we follow a
space between two cards.

Namely, we stick a \emph{bookmark} right above the card that was initially at
the bottom. This bookmark will serve as a marker that will distinguish the
fully mixed part (which is the part below the bookmark) from the rest of the
deck. The bookmark itself is not considered to be a card in the deck, so the
only way it moves is when a card that was above it is inserted below
it.\footnote{We agree that if a card moves into the space that contains the
bookmark, then it is inserted below (not above) the bookmark.} Thus, the
bookmark never moves down but occasionally moves up the deck. The deck is
mixed once the bookmark is at the top.

The following theorem follows:

\begin{theorem}
\label{thm.times.stat-osc}If $P(1)\neq0$, then the one-sided cycle shuffle
$\osc(P,n)$ admits a stopping time $\tau$ corresponding to the first time that
all cards have been inserted below a bookmark initially placed right above the
card at the bottom of the deck before the shuffling process. If $X_{t}$ is the
random variable for $\osc(P,n)$, the distribution of $X_{t}$ is uniform for
all $t\geq\tau$, meaning that $\tau$ is a strong stationary time.
\end{theorem}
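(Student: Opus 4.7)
My plan is to adapt the classical Aldous--Diaconis strong stationary time argument for the top-to-random shuffle, with the bookmark playing the role of the original bottom card. First I would verify that $\tau$ is a stopping time: the bookmark's location after each step is a deterministic function of the sequence of picks $(i_k, j_k)$ so far, since it moves up by one every time the shuffle picks a card from above the bookmark and inserts it at or below the bookmark's slot, and is otherwise stationary. The hypothesis $P(1)\neq 0$ then guarantees that from any non-terminal state there is positive probability of such an insertion --- pick $i=1$ and land at any $j$ strictly below the bookmark --- so the bookmark reaches the top almost surely and $\tau<\infty$ a.s.

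The core of the argument is the following inductive claim: for each $t$, conditional on the bookmark position and on the identity of the set $M_t$ of cards currently below the bookmark, the cards in $M_t$ are uniformly distributed over all $\lvert M_t\rvert!$ orderings. To carry out the induction, I would classify each shuffle step into three types. A shuffle supported entirely above the bookmark leaves $M_t$ untouched and trivially preserves the invariant. A shuffle supported entirely within the mixed region is itself a one-sided cycle shuffle on the sub-deck $M_t$; since the uniform measure on $S_{\lvert M_t\rvert}$ is stationary for any right random walk on the symmetric group, it preserves uniformity of the mixed ordering. A shuffle that picks a card from above the bookmark and lands at or below the bookmark's slot (call this a Case~C event) enlarges $M_t$ by a new card, and conditional on Case~C the insertion position $j$ is uniform over the appropriate range at or below the bookmark's slot; under the convention that a card entering the slot counts as being inserted below the bookmark, this uniformity translates into a uniform relative position of the new card within the enlarged mixed region, which together with the inductive hypothesis extends uniformity to the new $M_{t+1}$.

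Applying the invariant at $t=\tau$ gives $M_\tau=[n]$, so $X_\tau$ is uniform on $S_n$. Since this conditional distribution does not depend on the particular value of $\tau$, the time $\tau$ is independent of $X_\tau$, which is precisely the definition of a strong stationary time. For $t>\tau$, further $\osc(P,n)$ steps preserve the uniform measure by the same stationarity argument. The main obstacle will be the Case~C analysis --- specifically, pinning down the slot-insertion convention precisely enough that the new card can land at any of the $\lvert M_{t+1}\rvert$ relative positions of the enlarged mixed region with equal probability, rather than being excluded from the topmost slot. Once this is in place, the waiting-time analysis for the random-to-below special case (promised in the surrounding discussion) reduces to a coupon-collector-style computation on the intervals between successive Case~C events.
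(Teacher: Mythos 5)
Your proposal is correct and follows essentially the same route as the paper: the paper's own justification is precisely the bookmark-above-the-bottom-card argument (with the same convention that a card landing in the bookmark's slot goes below it), presented informally by analogy with the classical top-to-random stationary time. Your three-case induction and the observation that, conditional on a Case~C event, the insertion is uniform over all $\lvert M_{t+1}\rvert$ slots of the enlarged mixed region simply supply the details the paper leaves implicit, and they check out.
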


If $P(1) = 0$, then the top card never moves, and the stationary distribution
is not the uniform distribution over all permutations.

\subsection{The waiting time for the strong stationary time of the
random-to-below shuffle}

Knowing the existence of a strong stationary time for the one-sided cycle
shuffle (with $P\left(  1\right)  \neq0$), one might be interested to know
when it is reasonable to expect this phenomenon to occur. We shall compute
this waiting time for the random-to-below shuffle; the computations for other
one-sided cycle shuffles would result in other numbers.

\begin{itemize}
\item If the bookmark is below the $i$-th card from the bottom, the
probability for it to move in one iteration of the random-to-below shuffle is
the sum of the probabilities for cards above it to move below it. The card at
position $j$ (counting from the bottom) is selected with probability $P\left(
j\right)  =\dfrac{1}{n}$, and (assuming that $j\geq i$) is inserted below the
bookmark with probability $\dfrac{i}{j}$ (this includes the case when it is
moved inbetween positions $i$ and $i-1$, because in this case we insert it
below the bookmark). Hence, the bookmark climbs up one position in the deck
with probability
\[
\sum_{j=i}^{n}\dfrac{1}{n}\cdot\dfrac{i}{j}=\frac{i}{n}\sum_{j=i}^{n}\frac
{1}{j}=\frac{i}{n}\left(  H_{n}-H_{i-1}\right)  ,
\]
where $H_{i}:=\sum_{k=1}^{i}\dfrac{1}{k}$ is the $i$-th harmonic number.

Thus, the probability of the bookmark climbing from position $i$ to $i+1$ at
any single step follows a geometric distribution with parameter $\dfrac{i}%
{n}\left(  H_{n}-H_{i-1}\right)  $, and therefore the expected time needed for
the event to happen is
\[
\dfrac{1}{\dfrac{i}{n}\left(  H_{n}-H_{i-1}\right)  }=\frac{n}{i\left(
H_{n}-H_{i-1}\right)  }.
\]
(Recall that the expected time for an event with probability $p$ to happen is
$\frac{1}{p}$.)

\item The stopping time is the time required for the bookmark to reach the top
of the deck (position $n$). This is achieved in an expected time corresponding
to
\[
\sum_{i=2}^{n}\frac{n}{i\left(  H_{n}-H_{i-1}\right)  }.
\]

\end{itemize}

\begin{theorem}
\label{thm.times.expect-r2b}Let $n \geq2$. The expected number of steps to get
to the strong stationary time for the random-to-below shuffle is
\[
\mathbb{E}(\tau) = \sum_{i=2}^{n}\frac{n}{i\left(  H_{n}-H_{i-1}\right)  }.
\]
Moreover, this time satisfies the following bound:
\[
\sum_{i=2}^{n}\frac{n}{i\left(  H_{n}-H_{i-1}\right)  } \leq n\log
n+n\log\left(  \log n\right)  + n\log(2) +1.
\]
Here, $\log$ denotes the natural logarithm $\ln$.
\end{theorem}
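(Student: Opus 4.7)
The first assertion, namely $\mathbb{E}(\tau) = \sum_{i=2}^n \frac{n}{i(H_n - H_{i-1})}$, will follow directly from the paragraph preceding the theorem statement. By the strong Markov property applied at the successive ascent times of the bookmark, the waiting times $\tau_2,\tau_3,\ldots,\tau_n$ spent at the successive bookmark levels are independent, each geometrically distributed with success probability $p_i = \frac{i}{n}(H_n - H_{i-1})$, and $\tau = \tau_2+\tau_3+\cdots+\tau_n$. Linearity of expectation then yields $\mathbb{E}(\tau) = \sum_i 1/p_i$, which is exactly the stated sum.

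For the inequality, I plan to abbreviate $a_i := H_n - H_{i-1}$ and exploit the identity $a_i - a_{i+1} = 1/i$, which lets me rewrite each summand as $\frac{1}{i a_i} = \frac{a_i - a_{i+1}}{a_i}$. The elementary inequality $1 - r \leq -\log r$ for $r \in (0,1]$, applied at $r = a_{i+1}/a_i$, gives $\frac{a_i - a_{i+1}}{a_i} \leq \log(a_i/a_{i+1})$. Summing over $i = 2,\ldots,n-1$ telescopes the right-hand side to $\log(a_2/a_n) = \log(n(H_n - 1))$, and handling the last term $\frac{1}{n a_n} = 1$ separately produces $\sum_{i=2}^n \frac{1}{i a_i} \leq \log(n(H_n - 1)) + 1$. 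Multiplying by $n$ and combining with the standard bound $H_n - 1 \leq \log n$ gives an intermediate estimate $\mathbb{E}(\tau) \leq n\log n + n\log\log n + n$.

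The main obstacle is sharpening the final additive constant from $n$ to the claimed $n\log 2 + 1$. This will require a more careful estimate: one option is to strengthen the lower bound on $a_i$ by isolating the $k=n$ term in $a_i = \sum_{k=i}^n 1/k$, which yields $a_i \geq 1/n + \log(n/i)$, and then repeating the telescoping argument with $v_i := \log(n/i) + 1/n$ in place of $a_i$. The telescoping ratio then involves $\log(v_2/v_n) = \log((n\log n + 1)/n \cdot n)$-type quantities, and the inequality $\log n + 1 \leq 2\log n$ (valid for $n \geq 3$, equivalently $1 \leq \log n$) turns a factor $\log(\log n + 1)$ into $\log 2 + \log\log n$, producing the desired $n\log 2$. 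An alternative is to retain the second-order remainder in the Taylor bound---namely $1 - r \leq -\log r - \tfrac{1}{2}(1-r)^2$---and to estimate the correction $\sum_{i=2}^{n-1} \frac{1}{2i^2 a_i^2}$ from below using the fact that $1/(i a_i) \to 1/(k+1)$ as $i = n - k$. Either way, the delicate part of the argument is the careful bookkeeping of the boundary contributions near $i = n$, where $a_i$ is of order $1/n$ and the first-order bound is quite loose; the cases $n = 2$ and small $n$ will likely need to be verified by hand to ensure that the additive $+1$ in the final bound is achieved uniformly.
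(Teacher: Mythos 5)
The first part of your proof (the formula for $\mathbb{E}(\tau)$) is fine and follows the same reasoning as the paper's discussion preceding the theorem.

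Your telescoping argument for the upper bound is clean and correct as far as it goes, but it stops short of the claimed bound, and neither of your two proposed refinements can close the gap. To see the size of the problem: after multiplying by $n$, your first-pass estimate is $n\log n + n\log\log n + n$, while the claimed bound is $n\log n + n\log\log n + n\log 2 + 1$. The difference is $n(1-\log 2) - 1 \approx 0.307\,n - 1$, which is of order $\Theta(n)$ and is strictly positive for $n \geq 4$. So for all but a few small values of $n$ you are over by a linear amount, and any successful refinement must recover $\Theta(n)$ of slack.

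Your refinement 2 (retain the second-order term $\tfrac{1}{2}(1-r)^2$) cannot do this. The quantity $1 - a_{i+1}/a_i = \frac{1}{i a_i}$ is bounded by $1$ and, for $i = n-k$ with $k$ fixed and $n$ large, tends to $\frac{1}{k+1}$; so $\sum_{i=2}^{n-1} \frac{1}{2}\bigl(\frac{1}{i a_i}\bigr)^2$ converges to a constant close to $\frac{1}{2}\bigl(\frac{\pi^2}{6}-1\bigr) \approx 0.32$ as $n \to \infty$. You cannot extract $\Omega(n)$ savings from an $O(1)$ correction.

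Your refinement 1 (replace $a_i$ by $v_i := \log(n/i) + 1/n$) also does not go through as described, because the telescoping hinges on the exact identity $a_i - a_{i+1} = 1/i$, and this fails for $v_i$: we have $v_i - v_{i+1} = \log\!\bigl(\tfrac{i+1}{i}\bigr)$, which is strictly less than $1/i$, so $\frac{1}{i v_i} > \frac{v_i - v_{i+1}}{v_i}$ and the crucial upper-bound step runs in the wrong direction. Once you drop the telescoping you are left bounding $\sum_i \frac{1}{i\bigl(\log(n/i)+1/n\bigr)}$ directly, at which point the natural tool is an integral comparison — and this is precisely what the paper does. The paper bounds $a_i \geq \log\frac{n+1}{i}$ and uses the \emph{midpoint rule} for the convex function $x \mapsto \frac{1}{x\log\frac{n+1}{x}}$ (its Lemmas on convexity and Riemann sums): for a convex $f$, $\sum_{i=a}^{b} f(i) \leq \int_{a-1/2}^{b+1/2} f(x)\,dx$. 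The half-integer offsets $3/2$ and $n+1/2$ are exactly what buys the extra $n(1-\log 2)$ that the crude endpoint-by-endpoint comparison in your telescoping estimate loses. Without some such second-order-accurate integral comparison (or an equivalent device), the first-order telescoping bound is structurally $\Theta(n)$ too weak.
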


\begin{proof}
The statement that the expected number of steps is $\sum_{i=2}^{n}\dfrac
{n}{i\left(  H_{n}-H_{i-1}\right)  }$ follows from the discussion above.
Hence, we only need to prove the upper bound.

For this purpose, we shall show several analytic lemmas. The first is a known
property of logarithms:\footnote{Throughout this proof, the notations $\left[
a,b\right]  $, $\left[  a,b\right)  $, $\left(  a,b\right]  $ and $\left(
a,b\right)  $ are used in their familiar meanings from real analysis. In
particular, $\left[  a,b\right]  $ means the set of all real numbers $x$
satisfying $a \leq x \leq b$, contrary to our convention from
Subsection~\ref{subsec.notations}.}

\begin{lemma}
\label{lem.log.1}Let $a$ and $b$ be two positive reals. Then:

\textbf{(a)} We have $\log\dfrac{a+b}{a}\leq\dfrac{b}{a}$.

\textbf{(b)} We have $\log\dfrac{a+b}{a}\geq\dfrac{b}{a+b}$.
\end{lemma}

\begin{proof}
[Proof of Lemma \ref{lem.log.1}.]Since the logarithm function is the
antiderivative of the function $f\left(  x\right)  =\dfrac{1}{x}$, we have
$\int_{a}^{a+b}\dfrac{1}{x}dx=\log\left(  a+b\right)  -\log a=\log\dfrac
{a+b}{a}$. Hence,
\[
\log\dfrac{a+b}{a}=\int_{a}^{a+b}\underbrace{\dfrac{1}{x}}_{\leq\dfrac{1}{a}%
}dx\leq\int_{a}^{a+b}\dfrac{1}{a}dx=\dfrac{b}{a},
\]
which proves part \textbf{(a)}. Furthermore,%
\[
\log\dfrac{a+b}{a}=\int_{a}^{a+b}\underbrace{\dfrac{1}{x}}_{\geq\dfrac{1}%
{a+b}}dx\geq\int_{a}^{a+b}\dfrac{1}{a+b}dx=\dfrac{b}{a+b},
\]
which proves part \textbf{(b)}.
\end{proof}

\begin{lemma}
\label{lem.log.2}Let $m$ be a positive real. Then, the function $f:\left(
0,m\right)  \rightarrow\mathbb{R}$ given by%
\[
f\left(  x\right)  =\dfrac{1}{x\log\dfrac{m}{x}}\ \ \ \ \ \ \ \ \ \ \text{for
all }x\in\left(  0,m\right)
\]
is convex.
\end{lemma}

\begin{proof}
[Proof of Lemma \ref{lem.log.2}.]The second derivative $f^{\left(  2\right)
}$ of this function is easily computed as%
\[
f^{\left(  2\right)  }\left(  x\right)  =\dfrac{2\left(  \log\dfrac{m}%
{x}\right)  ^{2}-3\log\dfrac{m}{x}+2}{x^{3}\left(  \log\dfrac{m}{x}\right)
^{3}},
\]
and this is $\geq0$ because the numerator can be rewritten as $2y^{2}%
-3y+2=2\left(  y-1\right)  ^{2}+y$ for $y=\log\dfrac{m}{x}\geq0$.
\end{proof}

\begin{lemma}
\label{lem.log.3}If $n\geq3$, then $\dfrac{n+1}{n}-\dfrac{n+1}{3}< \dfrac{\log
n}{2n}$.
\end{lemma}

\begin{proof}
[Proof of Lemma \ref{lem.log.3}.]Consider the function $f:\left(
0,\infty\right)  \rightarrow\mathbb{R}$ given by $f\left(  x\right)
:=\dfrac{\log x}{2}-\left(  x+1\right)  \left(  1-\dfrac{x}{3}\right)  $. This
function $f$ is weakly increasing on $\left(  2,\infty\right)  $ (since its
derivative is $f^{\prime}\left(  x\right)  = \dfrac{-4x+4x^{2}+3}{6x} =
\dfrac{\left(  2x-1\right)  ^{2} +2}{6x} \geq0$). Thus, for $n\geq3$, we have
$f\left(  n\right)  \geq f\left(  3\right)  = \dfrac{\log3}{2} >0$. Since
$f\left(  n\right)  =\dfrac{\log n}{2}-\left(  n+1\right)  \left(  1-\dfrac
{n}{3}\right)  $, we can rewrite this as%
\[
\left(  n+1\right)  \left(  1-\dfrac{n}{3}\right)  <\dfrac{\log n}{2}.
\]
Dividing both sides by $n$ and expanding the left hand side, we transform this
into%
\[
\dfrac{n+1}{n}-\dfrac{n+1}{3}<\dfrac{\log n}{2n}.
\]
This proves Lemma \ref{lem.log.3}.
\end{proof}

\begin{lemma}
\label{lem.log.Hn}Let $i \leq n$ be a positive integer. Then,%
\[
H_{n}-H_{i-1}\geq\log\frac{n+1}{i}.
\]

\end{lemma}

\begin{proof}
[Proof of Lemma \ref{lem.log.Hn}.]The definition of $H_{m}$ yields%
\begin{align*}
H_{n}-H_{i-1}  &  =\dfrac{1}{i}+\dfrac{1}{i+1}+\cdots+\dfrac{1}{n}\\
&  \geq\int_{i}^{i+1}\dfrac{1}{x}dx+\int_{i+1}^{i+2}\dfrac{1}{x}dx+\cdots
+\int_{n}^{n+1}\dfrac{1}{x}dx\\
&  \ \ \ \ \ \ \ \ \ \ \ \ \ \ \ \ \ \ \ \ \left(
\begin{array}
[c]{c}%
\text{indeed, }\dfrac{1}{j}\geq\int_{j}^{j+1}\dfrac{1}{x}dx\text{ for each
}j>0\text{,}\\
\text{since the function }\dfrac{1}{x}\text{ is decreasing}%
\end{array}
\right) \\
&  =\int_{i}^{n+1}\dfrac{1}{x}dx=\log\left(  n+1\right)  -\log i=\log
\dfrac{n+1}{i}.
\end{align*}

\end{proof}

\begin{lemma}
\label{lem.convex-riemann}Let $a$ and $b$ be two integers satisfying $a\leq
b$. Let $f:\left(  a-1,b+1\right)  \rightarrow\mathbb{R}$ be a convex
function. Then,%
\[
\sum_{i=a}^{b}f\left(  i\right)  \leq\int_{a-1/2}^{b+1/2}f\left(  x\right)
dx.
\]

\end{lemma}

\begin{proof}
[Proof of Lemma \ref{lem.convex-riemann}.]The interval $\left[
a-1/2,\ b+1/2\right)  $ can be decomposed as a disjoint union%
\begin{align*}
&  \left[  a-1/2,\ a+1/2\right)  \sqcup\left[  a+1/2,\ a+3/2\right)
\sqcup\left[  a+3/2,\ a+5/2\right)  \sqcup\cdots\sqcup\left[
b-1/2,\ b+1/2\right) \\
&  =\bigsqcup_{i=a}^{b}\left[  i-1/2,\ i+1/2\right)  .
\end{align*}
Hence,%
\begin{align*}
\int_{a-1/2}^{b+1/2}f\left(  x\right)  dx  &  =\sum_{i=a}^{b}\underbrace{\int%
_{i-1/2}^{i+1/2}f\left(  x\right)  dx}_{\substack{=\dfrac{1}{2}\left(
\int_{i-1/2}^{i+1/2}f\left(  x\right)  dx+\int_{i-1/2}^{i+1/2}f\left(
x\right)  dx\right)  \\\text{(since }p=\dfrac{1}{2}\left(  p+p\right)  \text{
for any }p\text{)}}}\\
&  =\sum_{i=a}^{b}\dfrac{1}{2}\left(  \int_{i-1/2}^{i+1/2}f\left(  x\right)
dx+\int_{i-1/2}^{i+1/2}f\left(  x\right)  dx\right) \\
&  =\sum_{i=a}^{b}\dfrac{1}{2}\left(  \int_{i-1/2}^{i+1/2}f\left(  x\right)
dx+\int_{i-1/2}^{i+1/2}f\left(  2i-x\right)  dx\right) \\
&  \ \ \ \ \ \ \ \ \ \ \ \ \ \ \ \ \ \ \ \ \left(
\begin{array}
[c]{c}%
\text{here, we have substituted }2i-x\text{ for }x\\
\text{in the second integral}%
\end{array}
\right) \\
&  =\sum_{i=a}^{b}\int_{i-1/2}^{i+1/2}\underbrace{\dfrac{1}{2}\left(  f\left(
x\right)  +f\left(  2i-x\right)  \right)  }_{\substack{\geq f\left(  i\right)
\\\text{(since }f\text{ is convex, and}\\\text{since }i\text{ is the
midpoint}\\\text{between }x\text{ and }2i-x\text{)}}}dx\\
&  \geq\sum_{i=a}^{b}\underbrace{\int_{i-1/2}^{i+1/2}f\left(  i\right)
dx}_{=f\left(  i\right)  }=\sum_{i=a}^{b}f\left(  i\right)  .
\end{align*}
This proves Lemma \ref{lem.convex-riemann}.
\end{proof}

Now, we return to the proof of the upper bound
\begin{equation}
\sum_{i=2}^{n}\frac{n}{i\left(  H_{n}-H_{i-1}\right)  }\leq n\log
n+n\log\left(  \log n\right)  +n\log2+1 \label{pf.thm.times.expect-r2b.goal1}%
\end{equation}
claimed in Theorem \ref{thm.times.expect-r2b}.

Indeed, this upper bound can be checked by straightforward computations for
$n=2$. So let us WLOG assume that $n\geq3$.

Let $m:=n+1$. Define a function $f:\left(  0,m\right)  \rightarrow\mathbb{R}$
as in Lemma \ref{lem.log.2}. Then, Lemma \ref{lem.log.2} says that this
function $f$ is convex. We note also that the function $f$ has antiderivative
$F:\left(  0,m\right)  \rightarrow\mathbb{R}$ given by%
\[
F\left(  x\right)  =-\log\left(  \log\dfrac{m}{x}\right)  .
\]
(This can be easily verified by hand.)

From Lemma \ref{lem.log.Hn}, we obtain
\begin{align*}
\sum_{i=2}^{n}\frac{n}{i(H_{n}-H_{i-1})}  &  \leq\sum_{i=2}^{n}\frac{n}%
{i\log\dfrac{n+1}{i}} = \sum_{i=2}^{n}\frac{n}{i\log\dfrac{m}{i}}
\ \ \ \ \ \ \ \ \ \ \left(  \text{since $n+1=m$}\right) \\
&  =n\cdot\sum_{i=2}^{n}\underbrace{\frac{1}{i\log\dfrac{m}{i}}}%
_{\substack{=f\left(  i\right)  \\\text{(by the definition of }f\text{)}}}
=n\cdot\sum_{i=2}^{n}f\left(  i\right)  .
\end{align*}
Hence, in order to prove (\ref{pf.thm.times.expect-r2b.goal1}), we only need
to show that%
\begin{equation}
\sum_{i=2}^{n}f\left(  i\right)  \leq\log n+\log\left(  \log n\right)
+\log2+\dfrac{1}{n}. \label{pf.thm.times.expect-r2b.goal2}%
\end{equation}
So let us prove this inequality now.

Since $f$ is convex on $\left(  0,m\right)  $, we can apply Lemma
\ref{lem.convex-riemann} to $a=2$ and $b=n=m-1$. We thus obtain%
\begin{align*}
\sum_{i=2}^{n}f\left(  i\right)   &  \leq\int_{3/2}^{n+1/2}f\left(  x\right)
dx\\
&  =\left(  -\log\left(  \log\dfrac{m}{n+1/2}\right)  \right)  -\left(
-\log\left(  \log\dfrac{m}{3/2}\right)  \right) \\
&  \ \ \ \ \ \ \ \ \ \ \ \ \ \ \ \ \ \ \ \ \left(
\begin{array}
[c]{c}%
\text{since }f\text{ has antiderivative }F\text{ given}\\
\text{by }F\left(  x\right)  =-\log\left(  \log\dfrac{m}{x}\right)
\end{array}
\right) \\
&  =\log\left(  \log\dfrac{m}{3/2}\right)  -\log\underbrace{\left(  \log
\dfrac{m}{n+1/2}\right)  }_{\substack{=\log\dfrac{n+1/2+1/2}{n+1/2}%
\\\text{(since }m=n+1=n+1/2+1/2\text{)}}}\\
&  =\log\left(  \log\dfrac{m}{3/2}\right)  -\log\underbrace{\left(  \log
\dfrac{n+1/2+1/2}{n+1/2}\right)  }_{\substack{\geq\dfrac{1/2}{n+1/2+1/2}%
\\\text{(by Lemma \ref{lem.log.1} \textbf{(b)},}\\\text{applied to
}a=n+1/2\text{ and }b=1/2\text{)}}}\\
&  \leq\log\left(  \log\dfrac{m}{3/2}\right)  -\log\dfrac{1/2}{n+1/2+1/2}\\
&  =\log\left(  \log\dfrac{m}{3/2}\right)  -\log\dfrac{1/2}{m}%
\ \ \ \ \ \ \ \ \ \ \left(  \text{since }n+1/2+1/2=n+1=m\right) \\
&  =\log\left(  \left(  \log\dfrac{m}{3/2}\right)  \diagup\dfrac{1/2}%
{m}\right)  =\log\left(  2m\log\dfrac{m}{3/2}\right)  .
\end{align*}
Thus, in order to prove (\ref{pf.thm.times.expect-r2b.goal2}), it will suffice
to show that%
\[
\log\left(  2m\log\dfrac{m}{3/2}\right)  \leq\log n+\log\left(  \log n\right)
+\log2+\dfrac{1}{n}.
\]
After exponentiation, this rewrites as%
\begin{equation}
2m\log\dfrac{m}{3/2}\leq2n\log n\cdot e^{1/n}.
\label{pf.thm.times.expect-r2b.goal4}%
\end{equation}
Upon division by $2$, this rewrites as
\begin{equation}
m\log\dfrac{m}{3/2}\leq n\log n\cdot e^{1/n}.
\label{pf.thm.times.expect-r2b.goal5}%
\end{equation}

However,
\begin{align*}
\log\dfrac{m}{3/2} =\log\left(  n \cdot\dfrac{m}{n}\diagup\dfrac{3}{2}\right)
= \log n + \underbrace{\log\dfrac{m}{n}}_{\substack{=\log\dfrac{n+1}{n}%
\leq\dfrac{1}{n}\\\text{(by Lemma \ref{lem.log.1} \textbf{(a)},}%
\\\text{applied to }a=n\text{ and }b=1\text{)}}} - \underbrace{\log\dfrac
{3}{2}}_{\geq\dfrac{1}{3}} \leq\log n+\dfrac{1}{n}-\dfrac{1}{3},
\end{align*}
so that
\begin{align*}
m\log\dfrac{m}{3/2}  &  \leq\underbrace{m}_{=n+1}\left(  \log n+\dfrac{1}%
{n}-\dfrac{1}{3}\right)  =\left(  n+1\right)  \left(  \log n+\dfrac{1}%
{n}-\dfrac{1}{3}\right) \\
&  =\underbrace{\left(  n+1\right)  \log n}_{=n\log n+\log n}
+\underbrace{\dfrac{n+1}{n}-\dfrac{n+1}{3}}_{\substack{< \dfrac{\log n}%
{2n}\\\text{(by Lemma \ref{lem.log.3})}}}\\
&  < n\log n+\log n+\dfrac{\log n}{2n} =n\log n \cdot\underbrace{\left(
1+\dfrac{1}{n}+\dfrac{1}{2n^{2}}\right)  }_{\substack{=\sum_{k=0}^{2}\dfrac
{1}{k!}\left(  \dfrac{1}{n}\right)  ^{k}\\\leq\sum_{k=0}^{\infty}\dfrac{1}%
{k!}\left(  \dfrac{1}{n}\right)  ^{k}\\=e^{1/n}}} \leq n\log n\cdot e^{1/n}.
\end{align*}
This proves (\ref{pf.thm.times.expect-r2b.goal5}). Thus, the proof of Theorem
\ref{thm.times.expect-r2b} is complete.
\end{proof}

One might ask if this is a good upper bound, or, in other terms, if the order
of magnitude of the bound given in Theorem \ref{thm.times.expect-r2b} is also
the order of magnitude of $\mathbb{E}(\tau)$. Numerical checks suggest that
this is indeed the case, allowing us to make the following conjecture.

\begin{conjecture}
\label{conj.lower_bound_tau} Let $n \geq2$. The expected number of steps to
get to the strong stationary time for the random-to-below shuffle satisfies
the following lower bound:
\[
\mathbb{E}(\tau) = \sum_{i=2}^{n}\frac{n}{i\left(  H_{n}-H_{i-1}\right)  }
\geq n\log n+n\log\left(  \log n\right)  .
\]
Here, $\log$ denotes the natural logarithm $\ln$.
\end{conjecture}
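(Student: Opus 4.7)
The approach is to derive a precise telescoping identity for the target sum and carefully estimate the resulting error. Write $S_i := H_n - H_{i-1}$, so that $S_i - S_{i+1} = 1/i$, $S_1 = H_n$, and $S_n = 1/n$. From the power series expansion of $-\log(1-x)$ at $x = 1/(iS_i)$, one has
\[
\log(S_i/S_{i+1}) = \frac{1}{iS_i} + R_i, \qquad R_i := \sum_{k \geq 2} \frac{1}{k(iS_i)^k} \geq 0.
\]
Summing over $i = 2, \dots, n-1$, the left-hand side telescopes to $\log(H_n - 1) + \log n$, and since the $i = n$ term of the target sum is $1/(nS_n) = 1$, one obtains the identity
\[
T := \sum_{i=2}^n \frac{1}{iS_i} = \log n + \log(H_n - 1) + 1 - R, \qquad R := \sum_{i=2}^{n-1} R_i.
\]
Thus the conjecture reduces to the inequality $R \leq 1 + \log\bigl((H_n - 1)/\log n\bigr)$, whose right-hand side is $1 - (1-\gamma)/\log n + o(1/\log n)$, slightly below $1$, where $\gamma$ denotes the Euler--Mascheroni constant.

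The heart of the proof is an upper bound on $R$. Writing $i = n - k$, a direct expansion gives
\[
iS_i = \sum_{j=0}^k \frac{n-k}{n-k+j} \in \Bigl[\frac{(k+1)(n-k)}{n},\; k+1\Bigr],
\]
so $iS_i \to k+1$ as $n \to \infty$ for fixed $k$. Because $R(x) := -\log(1-x) - x$ is increasing on $(0,1)$ with derivative $R'(x) = x/(1-x)$, the lower bound on $iS_i$ yields $R_i \leq R(1/(k+1)) + O(1/n)$ via a mean-value estimate (using Lemma~\ref{lem.log.1} to control $R'$). Summing using the elementary telescope $\sum_{k=1}^{n-2}[\log((k+1)/k) - 1/(k+1)] = 1 + \log(n-1) - H_{n-1}$ together with the monotone convergence $H_m - \log m \downarrow \gamma$ produces
\[
R \leq 1 - \gamma - \bigl(H_{n-1} - \log(n-1) - \gamma\bigr) + O(\log n/n) \leq 1 - \gamma + O(\log n/n).
\]

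Since $1 - \gamma \approx 0.423$ and the target upper bound on $R$ is $1 - (1-\gamma)/\log n + o(1/\log n)$, the required inequality $R < 1 + \log((H_n - 1)/\log n)$ holds whenever $\log n > (1-\gamma)/\gamma \approx 0.73$, i.e., for all $n \geq 3$; the case $n = 2$ is checked directly since $T = 1 \geq \log 2 + \log\log 2 \approx 0.327$.

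The main obstacle is making the $O(\log n/n)$ error fully explicit and uniform across all moderate $n$, since the conjectured inequality is asymptotically tight at the scale of that correction. One clean route is to work out an explicit finite-$n$ error bound (using only elementary inequalities as in the proof of Theorem~\ref{thm.times.expect-r2b}) and then verify the inequality by direct computation for all $n$ below an explicit threshold $N_0$, with the asymptotic argument handling $n > N_0$. A potentially slicker alternative is to express $R$ in terms of the trigamma function $\psi'$ and derive a closed-form estimate valid uniformly for all $n \geq 2$.
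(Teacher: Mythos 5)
You should note at the outset that the paper itself does not prove this statement: Conjecture \ref{conj.lower_bound_tau} is left open and supported only by numerical checks, so your proposal is attempting new mathematics rather than reconstructing a proof from the paper. Your reduction is correct and genuinely useful: with $S_i = H_n - H_{i-1}$ and $x_i = 1/(iS_i)$ one has $x_i < 1$ for $i \le n-1$, telescoping $\log(S_i/S_{i+1}) = -\log(1-x_i)$ gives the exact identity $\sum_{i=2}^n \frac{1}{iS_i} = \log n + \log(H_n-1) + 1 - R$ with $R = \sum_{i=2}^{n-1}\bigl(-\log(1-x_i)-x_i\bigr) \ge 0$, and the conjecture is equivalent to $R \le 1 + \log\frac{H_n-1}{\log n}$. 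Moreover, since $R \to 1-\gamma \approx 0.423$ while the allowed bound tends to $1$, the asymptotic slack is $\gamma$; your worry that the inequality is tight at the scale $O(\log n / n)$ is unfounded, and the delicate range is small and moderate $n$, not large $n$.

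The genuine gap is the central estimate $R_i \le R(1/(k+1)) + O(1/n)$ (with $i = n-k$), and hence $R \le 1-\gamma + O(\log n/n)$. The lower bound $iS_i \ge \frac{(k+1)(n-k)}{n}$ is sharp only when $k$ is small; for small $i$ (large $k$) it gives roughly $iS_i \gtrsim i$, whereas in truth $iS_i \approx i\log(n/i)$. Concretely, for $i=2$ your mean-value comparison runs between $R\bigl(\frac{n}{2(n-1)}\bigr) \approx R(1/2) = \log 2 - \tfrac12 \approx 0.19$ and $R\bigl(\frac{1}{n-1}\bigr) \approx 0$, so the per-term error of your bound is a constant, not $O(1/n)$; summed over the small-$i$ end these discrepancies amount to about $\sum_{i\ge 2}\bigl(\log\frac{i}{i-1}-\frac1i\bigr) = 1-\gamma$, so what your argument actually yields is only $R \lesssim 2(1-\gamma) \approx 0.85$ plus error terms. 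That weaker bound does not close the inequality for moderate $n$: at $n=4$ one has $\sum_{i=2}^{3} R\bigl(\frac{n}{i(n-i+1)}\bigr) \approx 0.86$, exceeding the required $1+\log\frac{H_4-1}{\log 4} \approx 0.75$ (even though the conjecture holds there, with the true $R \approx 0.43$). Hence the claimed threshold ``all $n \ge 3$'' is unsupported. The natural repair is to treat the two ends differently: for $i$ up to about $n/2$ use Lemma \ref{lem.log.Hn} to get $x_i \le \frac{1}{i\log\frac{n+1}{i}}$ together with $-\log(1-x)-x \le \frac{x^2}{2(1-x)}$, which makes that contribution $O(1/\log^2 n)$ with explicit constants, and keep your fixed-$k$ analysis only for $i$ near $n$; an explicit threshold $N_0$ plus the finite verification you already anticipate would then complete a proof. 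As written, however, the key bound on $R$ is not established, so the proposal falls short of a proof.
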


\subsection{Optimality of our strong stationary time}

A legitimate question to ask is whether there is a strong stationary time that
occurs faster than $\tau$ for the one-sided cycle shuffles. Our stopping time
$\tau$ is the waiting time for the bookmark to reach the top of the deck. We
now shall explain why there is no faster stopping time, i.e., why we need to
wait for the bookmark to reach the top. To do so, we claim that some
permutations cannot be reached until the bookmark reaches the top.

Consider the card that was initially at the bottom. This card was initially
the only card to be below the bookmark. For this card to go up, a card needs
to be inserted below it, and thus below the bookmark. Hence, all the cards
that are above the bookmark are atop of the card that was initially at the
bottom. Note that cards that are below the bookmark can still be above the
card initially at the bottom. As long as there are $k$ cards above the
bookmark, the card initially at the bottom cannot be among the top $k$ cards.
Hence, for any permutation of our deck to be likely, we need the bookmark to
reach the top, showing that our stopping time is optimal.\newline

A consequence of this fact is that, assuming Conjecture
\ref{conj.lower_bound_tau}, the random-to-below shuffle would be slower than
top-to-random, for which the strong stationary time approaches $n \log n$. We
attribute the fact that random-to-below is slower to its greater
\textit{laziness}, in other words, to the fact that the probability of
applying the identity permutation is higher for random-to-below than for top-to-random.

\section{Further remarks and questions}

\subsection{Some identities for $t_{1},t_{2},\ldots,t_{n}$}

We have now seen various properties of the somewhere-to-below shuffles
$t_{1},t_{2},\ldots,t_{n}$. In particular, from Theorem \ref{thm.Rcomb-main},
we know that they can all be represented as upper-triangular matrices of size
$n!\times n!$. Thus, the Lie subalgebra of $\mathfrak{gl}\left(
\mathbf{k}\left[  S_{n}\right]  \right)  $ they generate is solvable. In a
sense, this can be understood as an \textquotedblleft
almost-commutativity\textquotedblright: It is not true in general that
$t_{1},t_{2},\ldots,t_{n}$ commute, but one can think of them as commuting
\textquotedblleft up to error terms\textquotedblright. There might be several
ways to make this rigorous. One striking observation is that the commutators
$\left[  t_{i},t_{j}\right]  :=t_{i}t_{j}-t_{j}t_{i}$ satisfy $\left[
t_{i},t_{j}\right]  ^{2}=0$ whenever $n\leq5$ (but not when $n=6$ and $i=1$
and $j=3$). This can be generalized as follows:

\begin{theorem}
\label{conj.comm.j-i+1}We have $\left[  t_{i},t_{j}\right]  ^{j-i+1}=0$ for
any $1\leq i<j\leq n$.
\end{theorem}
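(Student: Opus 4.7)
My plan is first to reduce the problem to a commutator involving a single cycle. Write
\[
t_i = A + c\,t_j, \qquad \text{where } c := \operatorname*{cyc}\nolimits_{i,i+1,\ldots,j}\ \text{and}\ A := \sum_{k=i}^{j-1} \operatorname*{cyc}\nolimits_{i,i+1,\ldots,k}.
\]
The summand $A$ lies in the subalgebra of $\mathbf{k}\left[  S_{n}\right]$ spanned by permutations fixing every position outside $[i,j-1]$, while $t_j$ lies in the subalgebra spanned by permutations fixing every position outside $[j,n]$. These two subalgebras commute elementwise, so $[A,t_j]=0$. Using the factorization $\operatorname*{cyc}\nolimits_{i,\ldots,k}=c\cdot\operatorname*{cyc}\nolimits_{j,\ldots,k}$ for each $k\ge j$ (which is immediate from (\ref{eq.cyc-via-ss})), the remaining summand becomes $\sum_{k=j}^{n}\operatorname*{cyc}\nolimits_{i,\ldots,k}=c\cdot t_j$. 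Therefore
\[
[t_i,t_j] \;=\; [c\,t_j,\,t_j] \;=\; [c,t_j]\cdot t_j.
\]

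The remaining task is to prove that $\bigl([c,t_j]\cdot t_j\bigr)^{j-i+1}=0$. The key structural input is that $c$ is a cycle of length $j-i+1$, and so satisfies $c^{\,j-i+1}=1$. The plan is to expand the $(j-i+1)$-fold product by repeatedly substituting $[c,t_j]=c\,t_j-t_j c$, and then reorganize the resulting $2^{j-i+1}$ terms by the total exponent of $c$ that they carry; reflection locality (Lemma \ref{lem.si-into-cyc}) governs how $c$ slides past intervening factors of $t_j$, and one expects the terms to collapse in groups because of $c^{j-i+1}=1$. As a fallback, one can invoke Theorem \ref{thm.Rcomb-conc}: in the descent-destroying basis ordered by $\operatorname*{Qind}$, both $R(t_i)$ and $R(t_j)$ are upper-triangular, so $R([t_i,t_j])$ is strictly upper-triangular and hence nilpotent; the goal would then be to refine the Fibonacci filtration into blocks of length at most $j-i+1$ on which both $R(t_i)$ and $R(t_j)$ act as scalars.

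The main obstacle I expect is carrying out the cancellation in the direct expansion, or equivalently, exhibiting the correct refined filtration in the alternative approach. Both routes require a genuinely new combinatorial identity that ties the length of the cycle $c$ to the nilpotency index of the commutator $[c,t_j]\cdot t_j$, and finding this identity is where the real work lies.
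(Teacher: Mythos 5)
Your reduction is correct and worth keeping: splitting $t_i = A + c\,t_j$ with $c := \operatorname*{cyc}\nolimits_{i,i+1,\ldots,j}$, noting that $A$ and $t_j$ have disjoint supports (so $[A,t_j]=0$), and using $\operatorname*{cyc}\nolimits_{i,\ldots,k} = c\cdot\operatorname*{cyc}\nolimits_{j,\ldots,k}$ for $k\geq j$ does give $[t_i,t_j] = [c,t_j]\,t_j$. This is in fact exactly the identity $[t_i,t_j] = [s_i s_{i+1}\cdots s_{j-1},\,t_j]\,t_j$ that the paper lists among the auxiliary propositions (themselves proved in \cite{s2b2}), so you have re-derived a known stepping stone. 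But the theorem is the sharp nilpotency statement $\bigl([c,t_j]\,t_j\bigr)^{j-i+1}=0$, and your proposal does not prove it: the "expand the $2^{j-i+1}$ terms and expect collapse via $c^{j-i+1}=1$" step is only a hope, with no identity exhibited and no mechanism explaining why the exponent $j-i+1$ (rather than some larger number) suffices. You acknowledge this yourself, which means the proposal stops precisely where the real content of the theorem begins. The paper does not prove this theorem internally either; it defers to the preprint \cite{s2b2} and remarks that the proofs there are "surprisingly difficult" despite using only elementary manipulations of cycles and sums, which is consistent with the missing cancellation being a substantial piece of work rather than a routine verification.

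The fallback route also falls short of the claim. From Theorem \ref{thm.Rcomb-conc}, both $R(t_i)$ and $R(t_j)$ are upper-triangular in the descent-destroying basis ordered by $Q$-index, and they act by the scalars $m_{Q_k,i}$ and $m_{Q_k,j}$ on the subquotients $F_k/F_{k-1}$; hence $R([t_i,t_j])$ kills each subquotient and is nilpotent of index at most $f_{n+1}$, the length of the filtration. That yields $[t_i,t_j]^{f_{n+1}}=0$, which is far weaker than the asserted exponent $j-i+1$. To get the sharp bound this way you would need a refined chain of invariant submodules of length $j-i+1$ adapted to the pair $(t_i,t_j)$, and you neither construct such a chain nor give a reason one should exist; the Fibonacci filtration itself does not obviously coarsen in the required manner. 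So the proposal is an honest reduction plus a program, not a proof.
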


\begin{theorem}
\label{conj.comm.n-j+1}We have $\left[  t_{i},t_{j}\right]  ^{\left\lceil
\left(  n-j\right)  /2\right\rceil +1}=0$ for any $1\leq i<j\leq n$.
\end{theorem}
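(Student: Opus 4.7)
The plan is to prove this by refining the filtration of Theorem \ref{thm.t-simultri} into a coarser one that is tailored to the position $j$ and has exactly the length $\lceil (n-j)/2 \rceil + 1$. For each integer $d \geq 0$, I would define a $\mathbf{k}$-submodule
\[
G_d := \sum_{\substack{I \subseteq [n-1] \text{ lacunar} \\ |I \cap [j, n-1]| \leq d}} F(I) \subseteq \mathbf{k}[S_n].
\]
Since every lacunar subset of $[j, n-1]$ has cardinality at most $\lceil (n-j)/2 \rceil$, and since $\sum_{I \text{ lacunar}} F(I) = \mathbf{k}[S_n]$ (by Theorem \ref{thm.t-simultri}(a) and Corollary \ref{cor.FI-lac-cor}), this produces a filtration
\[
0 = G_{-1} \subseteq G_0 \subseteq G_1 \subseteq \cdots \subseteq G_{\lceil (n-j)/2 \rceil} = \mathbf{k}[S_n]
\]
of length $\lceil (n-j)/2 \rceil + 1$, which is exactly the conjectured nilpotency bound.

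The two properties I would establish are: \textbf{(a)} each $G_d$ is preserved by right multiplication by $t_i$ and by $t_j$; and \textbf{(b)} $G_d \cdot (t_i t_j - t_j t_i) \subseteq G_{d-1}$ for every $d$. Property (a) should follow by revisiting the proof of Theorem \ref{thm.tl-FI}: the construction there shows that $F(I) \cdot (t_\ell - m_{I,\ell})$ decomposes as a sum of $F(K)$'s for sets $K$ obtained by decrementing by $1$ a block of consecutive elements of $I$ lying above $\ell$. A case analysis distinguishing $\ell = j$ (where decrement of elements of $I \cap [j,n-1]$ can only push them out of $[j,n-1]$ by turning $j$ into $j-1$) from $\ell = i < j$ (where the decrement block may straddle $j$) together with the reduction to lacunar sets via Corollary \ref{cor.FI-lac-2}, which respects the non-shadow inclusion, should show that every lacunar $J$ contributing to $F(I)\cdot t_\ell$ satisfies $|J \cap [j,n-1]| \leq |I \cap [j,n-1]|$. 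Once property (b) is established, iterating $\lceil (n-j)/2 \rceil + 1$ times sends $\mathbf{k}[S_n] = G_{\lceil (n-j)/2 \rceil}$ into $G_{-1} = 0$, and evaluating at $1 \in \mathbf{k}[S_n]$ yields $[t_i, t_j]^{\lceil (n-j)/2 \rceil + 1} = 0$.

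The main obstacle will be property (b). A naive two-fold application of Theorem \ref{thm.tl-FI} (writing $t_\ell = (t_\ell - m_{I,\ell}) + m_{I,\ell}$ for each factor and expanding) only yields $G_d \cdot [t_i, t_j] \subseteq G_d$ with no strict drop, because on $F(I)$ both $t_i$ and $t_j$ act as scalars plus corrections in $G_d$, so the scalar parts commute trivially while the correction terms may mix the two levels. To obtain the strict drop to $G_{d-1}$, one needs a cancellation identity showing that the "top-level" contributions of $F(I) \cdot t_i t_j$ and $F(I) \cdot t_j t_i$ agree modulo $G_{d-1}$. I would pursue this by developing a refinement of Theorem \ref{thm.tl-FI} that expresses $F(I) \cdot (t_\ell - m_{I,\ell})$ not merely as a sum of $F(K)$'s but as an explicit combination indexed by the block of decremented indices $[i_k, i_r]$, and then matching terms across the two orderings of $t_i, t_j$ using the reflection-locality arguments from Lemma \ref{lem.si-into-cyc}. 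If this direct route proves intractable, a fallback is to work inside the subalgebra $\mathbf{k}[S_{[j,n]}]$, which contains $t_j, t_{j+1}, \ldots, t_n$, and view $\mathbf{k}[S_n]$ as a free right $\mathbf{k}[S_{[j,n]}]$-module; one then lifts the filtration of $\mathbf{k}[S_{[j,n]}]$ from Theorem \ref{thm.t-simultri} (applied to the deck of size $n-j+1$) and controls the contribution of $t_i$ through its explicit commutation relations with simple transpositions $s_k$ for $k \geq j$.
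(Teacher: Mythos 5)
Your proposal is a plan rather than a proof, and the plan's central step is exactly the one you leave open. The filtration $G_d$ is well chosen in the sense that it has the right length (lacunar subsets of $\left[ j, n-1\right]$ indeed have at most $\left\lceil \left( n-j\right) /2\right\rceil$ elements), and your property (a) is believable: tracing through the proof of Theorem \ref{thm.tl-FI} and the lacunarization of Corollary \ref{cor.FI-lac-2}, every set produced is obtained from $I$ by repeatedly decrementing elements, and decrementing can only remove elements from the window $\left[ j,n-1\right]$ (never inject new ones, since nothing ever exceeds $n-1$), so $\left\vert J\cap \left[ j,n-1\right] \right\vert \leq \left\vert I\cap \left[ j,n-1\right] \right\vert$ and $G_d \cdot t_\ell \subseteq G_d$. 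But property (b), the strict drop $G_d\cdot \left[ t_i,t_j\right] \subseteq G_{d-1}$, is where the entire content of the theorem lives, and you do not establish it. As you yourself observe, Theorem \ref{thm.tl-FI} only shows that $t_i$ and $t_j$ act on the subquotient $G_d/G_{d-1}$ as simultaneously triangularizable operators (with respect to the induced Fibonacci filtration), whose commutator is therefore nilpotent on that subquotient -- but nilpotent is not zero, and zero is what (b) demands. Nothing in the paper's machinery forces the off-diagonal correction terms of $R\left( t_i\right)$ and $R\left( t_j\right)$ to cancel modulo $G_{d-1}$, and it is not even clear a priori that this particular filtration witnesses the bound level by level (the truth of the theorem does not imply that your $G_d$ is the right filtration). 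Your two proposed routes to (b) -- a refined, term-by-term version of Theorem \ref{thm.tl-FI} with explicit matching across the two orderings, or a reduction to $\mathbf{k}\left[ S_{\left[ j,n\right] }\right]$ -- are both stated as intentions ("I would pursue", "a fallback is"), not carried out, so the proof has a genuine gap at its key step.

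For comparison: the paper itself does not prove this theorem; it defers to the preprint \cite{s2b2} and describes the argument there as "elementary manipulations of cycles and sums" that are "surprisingly difficult", i.e., a direct computational route rather than a filtration-theoretic one. So your strategy is genuinely different in spirit from the cited proof, and if the cancellation in (b) could actually be established, a filtration proof would be a nice structural complement. As it stands, though, the decisive lemma is missing.
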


Both of these theorems are proved in the preprint \cite{s2b2}. The proofs are
surprisingly difficult, even though they rely on nothing but elementary
manipulations of cycles and sums. Actually, the following two more general
results are proved in \cite{s2b2}:

\begin{theorem}
Let $j\in\left[  n\right]  $, and let $m$ be a positive integer. Let
$k_{1},k_{2},\ldots,k_{m}$ be $m$ elements of $\left[  j\right]  $ (not
necessarily distinct) satisfying $m\geq j-k_{m}+1$. Then,
\[
\left[  t_{k_{1}},t_{j}\right]  \left[  t_{k_{2}},t_{j}\right]  \cdots\left[
t_{k_{m}},t_{j}\right]  =0.
\]

\end{theorem}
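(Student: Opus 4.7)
The starting observation is that Theorem~\ref{thm.t-simultri} already gives a weak version of the claim: since $t_\ell$ acts as the scalar $m_{Q_i,\ell}$ on each subquotient $F_i/F_{i-1}$, every commutator $[t_k,t_j]$ satisfies $F_i\cdot[t_k,t_j]\subseteq F_{i-1}$. Applied to the filtration of Theorem~\ref{thm.t-simultri}, this immediately yields $[t_{k_1},t_j]\cdots[t_{k_m},t_j]=0$ for $m\geq f_{n+1}$. The real challenge is to sharpen this exponent to $j-k_m+1$, a bound that is typically much smaller and, strikingly, depends only on the last index $k_m$.

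The plan is to replace the filtration argument by an explicit factorization of each individual commutator. From \eqref{eq.def.tl.deftl-s-sum} one reads off the recursion $t_k=1+s_k t_{k+1}$ for $k<n$, which when substituted into $[t_k,t_j]$ yields
\[
[t_k,t_j]=s_k\,[t_{k+1},t_j]+[s_k,t_j]\,t_{k+1}.
\]
Reflection locality kills the second summand whenever $k\leq j-2$, so iterating gives $[t_k,t_j]=s_k s_{k+1}\cdots s_{j-2}\cdot[t_{j-1},t_j]$ for all $k\leq j-1$. The same recursion applied to $[t_{j-1},t_j]$, together with the fact that $t_{j+1}$ commutes with $s_{j-1}$, identifies $[t_{j-1},t_j]=(s_{j-1}s_j-s_js_{j-1})\,t_{j+1}t_j$. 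Setting $c_k:=s_k s_{k+1}\cdots s_{j-2}=\operatorname*{cyc}\nolimits_{k,k+1,\ldots,j-1}$ and $E:=s_{j-1}s_j-s_js_{j-1}$, we arrive at the uniform factorization
\[
[t_k,t_j]=c_k\cdot E\cdot t_{j+1}t_j\qquad\text{for }k<j,
\]
while $[t_j,t_j]=0$ disposes of $k=j$ trivially. (The degenerate case $j=n$ is trivial since $t_n=1$.)

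The final step exploits this factorization. Because each $c_{k_l}$ is built from simple transpositions $s_i$ with $i\leq j-2$, reflection locality lets it commute past both $t_j$ and $t_{j+1}$. Sliding the inner $c_{k_l}$'s leftward, the product takes the regrouped form
\[
c_{k_1}\,E\,c_{k_2}\,t_{j+1}t_j\,E\,c_{k_3}\,t_{j+1}t_j\,E\,\cdots\,c_{k_m}\,t_{j+1}t_j\,E\,t_{j+1}t_j,
\]
so the vanishing claim becomes a purely combinatorial identity about alternating products of the cycles $c_{k_l}$ and copies of the fixed element $E$. I would prove it by induction on $j-k_m$: the base case $k_m=j$ is immediate from $[t_j,t_j]=0$. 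For the inductive step, write $c_{k_m}=s_{k_m}c_{k_m+1}$ and push the stray $s_{k_m}$ leftward through the preceding blocks, using pure commutation when the transposition is at distance $\geq 2$ from everything it meets, and the braid relation $s_{j-1}s_j s_{j-1}=s_j s_{j-1}s_j$ (via Lemma~\ref{lem.si-into-cyc}) when it meets an $E$. The expected outcome is a combination of a product whose last commutator carries the increased index $k_m+1$ (and hence falls to the induction hypothesis) together with strictly shorter products that are annihilated by the induction hypothesis applied in length rather than in $j-k_m$.

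The main obstacle, as in any such reduction, is keeping the braid-relation corrections under control so that the induction genuinely closes. Precisely bookkeeping the error terms generated when $s_{k_m}$ crosses an $E$ rather than a $t_{j+1}t_j$ block is the delicate point; I expect the cleanest way forward is to strengthen the inductive hypothesis to a statement about a whole family of mixed products, parameterized not only by the sequence $(k_1,\ldots,k_m)$ but also by an auxiliary statistic that measures how far each $c_{k_l}$ has been displaced from its original position in the intermediate expression.
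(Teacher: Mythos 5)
There is a genuine gap: the proposal stops exactly where the difficulty begins. Your factorization $[t_k,t_j]=c_k\,E\,t_{j+1}t_j$ with $c_k=s_ks_{k+1}\cdots s_{j-2}$ and $E=s_{j-1}s_j-s_js_{j-1}$ is correct (it is equivalent to the identity $[t_i,t_j]=[s_is_{i+1}\cdots s_{j-1},t_j]\,t_j$ recorded in the paper), and the regrouping of the product using the fact that each $c_{k_\ell}$ commutes with $t_j$ and $t_{j+1}$ is also fine. But the resulting claim -- that the alternating product of $c_{k_\ell}$'s, $E$'s and $t_{j+1}t_j$ blocks vanishes -- is the entire content of the theorem, and you do not prove it. Your own text concedes this (``the main obstacle \ldots is keeping the braid-relation corrections under control so that the induction genuinely closes'') and proposes only to search for a strengthened induction hypothesis involving an unspecified ``auxiliary statistic''. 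Moreover, the induction scheme as stated does not close: you induct on $j-k_m$, yet dispose of the error terms by ``the induction hypothesis applied in length rather than in $j-k_m$'', which is not available under a single induction on $j-k_m$; and there is no argument that the shorter products you generate satisfy the hypothesis $m'\ge j-k'_{m'}+1$ for their own last index. Concretely, even the smallest nontrivial instance of the theorem, $[t_{j-1},t_j]^2=0$ (equivalently $E\,t_{j+1}t_j\,E\,t_{j+1}t_j=0$), is established nowhere in the proposal.

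For context: the paper itself does not prove this theorem; it defers to the preprint \cite{s2b2} and explicitly warns that the proofs are ``surprisingly difficult'' despite relying only on elementary manipulations of cycles and sums. So your setup is a reasonable opening move, but what you have is a plan rather than a proof; the combinatorial heart of the argument is missing.
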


\begin{theorem}
Let $j\in\left[  n\right]  $ and $m\in\mathbb{N}$ be such that $2m\geq n-j+2$.
Let $i_{1},i_{2},\ldots,i_{m}$ be $m$ elements of $\left[  j\right]  $ (not
necessarily distinct). Then,
\[
\left[  t_{i_{1}},t_{j}\right]  \left[  t_{i_{2}},t_{j}\right]  \cdots\left[
t_{i_{m}},t_{j}\right]  =0.
\]

\end{theorem}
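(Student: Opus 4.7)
The plan is to construct a sub-filtration of the Fibonacci filtration from Theorem \ref{thm.t-simultri} that is tailored to $t_j$, in which right multiplication by each commutator $[t_i, t_j]$ with $i \in [j]$ descends the filtration by exactly one step. For a lacunar set $I \subseteq [n-1]$, set $\alpha(I) := |I \cap [j, n-1]|$; lacunarity forces $\alpha(I) \leq \lceil (n-j)/2 \rceil =: N-1$. Define
\[
G_s := \sum_{\substack{I \subseteq [n-1] \text{ lacunar} \\ \alpha(I) \leq s}} F(I),
\qquad s = -1, 0, 1, \ldots, N-1,
\]
with $G_{-1} = 0$. Combining Corollary \ref{cor.FI-lac-cor} with Theorem \ref{thm.t-simultri}(a) gives $G_{N-1} = \mathbf{k}[S_n]$. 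The key claim I would aim to prove is: \emph{for every lacunar $I \subseteq [n-1]$ and every $i \in [j]$, one has $F(I) \cdot [t_i, t_j] \subseteq G_{\alpha(I)-1}$.} Granting the claim, iterating yields $\mathbf{k}[S_n] \cdot [t_{i_1}, t_j] \cdots [t_{i_m}, t_j] \subseteq G_{N-1-m}$, which vanishes precisely when $m \geq N = \lceil (n-j+2)/2 \rceil$, equivalent to $2m \geq n-j+2$.

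To attack the key claim, I would start from the refined form of Theorem \ref{thm.tl-FI} visible in its proof: for $q \in F(I)$ with $I = \{i_1 < \cdots < i_p\}$ and each $\ell \in [n]$,
\[
q(t_\ell - m_{I,\ell}) = \sum_{r=k_\ell}^p q_r^{(\ell)},
\qquad q_r^{(\ell)} \in F(K_r^{(\ell)}),
\]
where $k_\ell$ indexes the smallest element of $\widehat{I}$ at least $\ell$ and $K_r^{(\ell)} := ((I \setminus \{i_{k_\ell}, \ldots, i_r\}) \cup \{i_{k_\ell} - 1, \ldots, i_r - 1\}) \cap [n]$. A direct computation then gives
\[
q[t_i, t_j] = \sum_{r \geq k_i} q_r^{(i)}(t_j - m_{I,j}) - \sum_{r \geq k_j} q_r^{(j)}(t_i - m_{I,i}).
\]
Applying the same expansion again to each $q_r^{(\ell)}(t_{\ell'} - m_{I,\ell'}) = q_r^{(\ell)}(t_{\ell'} - m_{K_r^{(\ell)}, \ell'}) + (m_{K_r^{(\ell)}, \ell'} - m_{I,\ell'}) q_r^{(\ell)}$, one reduces everything to a sum of "error" contributions inside $F(<\operatorname{sum}(K_r^{(\ell)}))$ and "diagonal" scalar contributions landing in $F(K_r^{(\ell)})$ itself. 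The error contributions lie in $G_{\alpha(I)-1}$ by an inductive (on $\operatorname{sum}(I)$) application of the claim, using that $\alpha(K_r^{(\ell)}) \leq \alpha(I)$. The substantive content is the cancellation of the diagonal contributions.

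The main obstacle is this diagonal cancellation, and it comes down to one combinatorial identity: whenever a shift from $I$ to $K$ does not cross the threshold $j$ (that is, the shifted element $i_k$ satisfies $i_k > j$, so $i_k - 1 \geq j$), one has $(m_{I,i} - m_{I,j}) + (m_{K,j} - m_{K,i}) = 0$. In the "aligned" subcase $i_{k_i} = i_{k_j} = i_k > j$, both differences equal $\pm(j-i)$ by direct unwinding, and moreover $q(t_i - m_{I,i})$ equals $q(t_j - m_{I,j})$ because $q s_i s_{i+1} \cdots s_{j-1} = q$ (since $[i, j-1] \subseteq I'$ in this regime by the definition of the non-shadow, together with reflection locality as in Lemma \ref{lem.si-into-cyc}); so the two inner sums match term-by-term and the diagonal pieces cancel exactly. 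In the "threshold" subcase $i_{k_j} = j$, the shift sends $j$ to $j-1$, automatically lowering $\alpha$ by one, and no cancellation is needed. The remaining "misaligned" subcase $i_{k_i} < j \leq i_{k_j}$ is handled by splitting the summation $r \in [k_i, p]$ at $r = k_j - 1$: the shifts with $r < k_j$ happen entirely in $[i, j-1]$, leave the "upper" part of $I$ (hence both $\alpha$ and $m_{I,j}$) untouched, and pair against matching terms arising from the expansion of $q \cdot (t_j - m_{I,j})$ through reflection-locality arguments analogous to Lemma \ref{lem.si-into-cyc}; the shifts with $r \geq k_j$ reduce to the aligned or threshold subcase applied to the truncated set $I \cap [j, n-1]$. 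Executing this case analysis cleanly — in particular verifying that the misaligned-case pairings really do cancel modulo $G_{\alpha(I)-1}$ — is the hard technical step and probably the reason that \cite{s2b2} is a separate preprint.
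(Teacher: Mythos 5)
First, a point of reference: the paper does not actually prove this theorem --- it is quoted from the companion preprint \cite{s2b2}, with the warning that the proofs there are ``surprisingly difficult'' --- so there is no in-paper argument to compare yours against, and I am judging the proposal on its own. Your outer framework is sound: the reduction of the theorem to the key claim $F\left(I\right)\cdot\left[t_{i},t_{j}\right]\subseteq G_{\alpha\left(I\right)-1}$ is valid, the arithmetic $m\geq\left\lceil \left(n-j\right)/2\right\rceil+1\Longleftrightarrow2m\geq n-j+2$ checks out, and the base case $\alpha\left(I\right)=0$ is genuinely provable (there $I\subseteq\left[j-1\right]$ forces $F\left(I\right)\subseteq Z\left(\left[j,n-1\right]\right)$, on which $q t_{j}=\left(n-j+1\right)q$ and $qt_{i}\in Z\left(\left[j,n-1\right]\right)$ for $i\leq j$, whence $q\left[t_{i},t_{j}\right]=0$).

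The gap is in the key claim for $\alpha\left(I\right)\geq1$, and it is not just a matter of ``executing the case analysis cleanly''. Two concrete problems. First, after the diagonal scalar contributions cancel, what survives is $\sum_{r}q_{r}\bigl(\left(t_{j}-m_{K_{r},j}\right)-\left(t_{i}-m_{K_{r},i}\right)\bigr)$ --- an expression \emph{linear} in the $t$'s, not a commutator --- so ``an inductive application of the claim'' cannot be invoked on it; worse, in your aligned case the shifted elements $i_{k}-1,\ldots,i_{r}-1$ all stay $\geq j$, so $\alpha\left(K_{r}\right)=\alpha\left(I\right)$ and these residuals are not automatically one level down. You would need to formulate and prove a separate, self-propagating statement about $F\left(K\right)\cdot\bigl(\left(t_{j}-m_{K,j}\right)-\left(t_{i}-m_{K,i}\right)\bigr)$; that is where the real work lives, and nothing in the sketch supplies it. Second, the sets $K_{r}^{\left(\ell\right)}$ coming out of the proof of Theorem \ref{thm.tl-FI} are generally not lacunar subsets of $\left[n-1\right]$, whereas your $G_{s}$ is a sum over lacunar sets only; Corollary \ref{cor.FI-lac-2} produces a lacunar $J$ with $J^{\prime}\subseteq K^{\prime}$ but gives no control on $\alpha\left(J\right)$ (when $K^{\prime}=\varnothing$, both $\left\{1,3,5,\ldots\right\}$ and $\left\{2,4,6,\ldots\right\}$ are admissible replacements with different $\alpha$'s), so even placing the intermediate pieces into your filtration is unjustified as written. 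Combined with the misaligned subcase that you explicitly leave unverified, the proposal is a plausible and well-motivated outline, but not a proof.
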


The following identities are proved in \cite{s2b2} as well:

\begin{proposition}
We have $t_{i}=1+s_{i}t_{i+1}$ for any $i\in\left[  n-1\right]  $.
\end{proposition}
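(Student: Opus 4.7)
The plan is to use the rewritten formula (\ref{eq.def.tl.deftl-s-sum}) from Section \ref{sec.transpositions}, namely $t_{\ell} = \sum_{j=\ell}^{n} s_{\ell} s_{\ell+1} \cdots s_{j-1}$, and simply isolate the first summand. Fix $i \in [n-1]$. Applying the formula to $\ell = i$, I would split the sum into the single term corresponding to $j = i$ (which is the empty product $s_i s_{i+1} \cdots s_{i-1} = 1$) and the remaining terms for $j \in \{i+1, i+2, \ldots, n\}$. This gives
\[
t_i = 1 + \sum_{j=i+1}^{n} s_i s_{i+1} \cdots s_{j-1}.
\]

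Next, I would factor the leftmost $s_i$ out of every term in the remaining sum, since each product $s_i s_{i+1} \cdots s_{j-1}$ with $j \geq i+1$ begins with $s_i$. This yields
\[
t_i = 1 + s_i \cdot \sum_{j=i+1}^{n} s_{i+1} s_{i+2} \cdots s_{j-1}.
\]
Finally, I would recognize the sum on the right as exactly $t_{i+1}$, by another application of (\ref{eq.def.tl.deftl-s-sum}) with $\ell = i+1$. This completes the identity $t_i = 1 + s_i t_{i+1}$.

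There is essentially no obstacle here: the proof is a one-line bookkeeping argument, the only subtlety being the convention that $s_i s_{i+1} \cdots s_{j-1}$ for $j = i$ denotes the empty product (i.e., the unit $1$ of $\mathbf{k}[S_n]$), which is already built into the displayed form of (\ref{eq.def.tl.deftl-s-sum}). It would be worth briefly noting this convention in the proof so that the factoring step in the case $i = n-1$ (where the sum on the right reduces to a single term $1 = t_n$, giving $t_{n-1} = 1 + s_{n-1}$) is transparent.
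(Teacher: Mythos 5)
Your proof is correct and complete: splitting off the $j=i$ term (the empty product) from $t_i=\sum_{j=i}^{n}s_i s_{i+1}\cdots s_{j-1}$ and factoring $s_i$ out of the remaining summands immediately yields $1+s_i t_{i+1}$, with the edge case $i=n-1$ reducing to $t_{n-1}=1+s_{n-1}t_n=1+s_{n-1}$ as you note. The paper itself does not prove this proposition but defers it to the preprint \cite{s2b2}, so your short derivation directly from (\ref{eq.def.tl.deftl-s-sum}) is a perfectly valid self-contained argument.
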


\begin{proposition}
We have $\left(  1+s_{j}\right)  \left[  t_{i},t_{j}\right]  =0$ for any
$1\leq i<j\leq n$.
\end{proposition}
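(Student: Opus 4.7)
The plan is to reduce $(1+s_j)[t_i,t_j]=0$ to the key lemma $(1-s_j)\,t_{j+1}t_j=0$, and then establish that lemma by reverse induction on $j$. (The case $j=n$ is trivial, since $t_n = 1$ forces $[t_i,t_n]=0$; so assume $j\in[n-1]$.)

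First, writing $c_{i,k}:=\operatorname*{cyc}\nolimits_{i,i+1,\ldots,k}$, the identity $c_{i,k} = c_{i,j}\,c_{j,k}$ for $k\ge j$ (immediate from (\ref{eq.cyc-via-ss})) gives the decomposition $t_i = A + c_{i,j}t_j$ with $A := \sum_{k=i}^{j-1} c_{i,k}$. The element $A$ lies in the subalgebra generated by $s_i,\ldots,s_{j-2}$, which by reflection locality commutes with the subalgebra generated by $s_j,\ldots,s_{n-1}$; in particular $A$ commutes with both $t_j$ and $t_{j+1}$. Thus $[t_i,t_j] = [c_{i,j},t_j]\,t_j$. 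Using $t_j = 1+s_jt_{j+1}$ (the preceding proposition), $c_{i,j}s_j = c_{i,j+1}$, and the commutation of $t_{j+1}$ with $c_{i,j}$, I would compute $[c_{i,j},t_j] = (c_{i,j}s_j - s_jc_{i,j})\,t_{j+1}$. Multiplying on the left by $1+s_j$ and using $(1+s_j)s_j = 1+s_j$ to collapse the bracket to $(1+s_j)c_{i,j}(s_j - 1)$, I arrive at
\[
(1+s_j)[t_i,t_j] = -(1+s_j)\,c_{i,j}\,(1-s_j)\,t_{j+1}t_j,
\]
which vanishes once the key lemma is in hand.

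For the key lemma, observe that $s_jt_{j+1} = t_j - 1$, so $(1-s_j)t_{j+1}t_j = 0$ is equivalent to $t_{j+1}t_j = t_j^2 - t_j$. The base $j=n-1$ is direct: $t_n=1$ gives $t_nt_{n-1} = t_{n-1}$ and $t_{n-1}^2 - t_{n-1} = (1+s_{n-1})^2 - (1+s_{n-1}) = t_{n-1}$. For the inductive step with $j\le n-2$, I would expand both sides via the recursion $t_{j+1} = 1+s_{j+1}t_{j+2}$, using the commutation of $t_{j+2}$ with $s_j$. A routine expansion of $t_{j+1}s_j$ and $t_js_j$ produces the parallel expressions
\[
t_{j+1}t_j = t_j + t_{j+1} - 1 + s_{j+1}s_j\,t_{j+2}t_{j+1},\qquad t_j^2 - t_j = t_js_jt_{j+1} = t_j + t_{j+1} - 1 + s_js_{j+1}s_j\,t_{j+2}t_{j+1},
\]
so the braid relation $s_js_{j+1}s_j = s_{j+1}s_js_{j+1}$ shows their difference is $s_{j+1}s_j\,(1-s_{j+1})\,t_{j+2}t_{j+1}$, which is annihilated by the inductive hypothesis $(1-s_{j+1})\,t_{j+2}t_{j+1}=0$.

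The only real obstacle is the bookkeeping in the inductive step: one must carefully expand iterated products using $t_\ell = 1 + s_\ell t_{\ell+1}$ and reflection locality, tracking which simple transpositions commute with which $t_\ell$. Once the auxiliary formulas for $t_{j+1}s_j$ and $t_js_j$ are in hand, the braid relation is invoked exactly once, and the inductive hypothesis does all the remaining work.
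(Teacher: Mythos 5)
Your argument is correct, and I have checked each of the computations you describe as routine: the decomposition $t_i = A + (s_i s_{i+1}\cdots s_{j-1})\, t_j$ with $A$ commuting with both $t_j$ and $t_{j+1}$, the resulting identity $(1+s_j)[t_i,t_j] = -(1+s_j)(s_i\cdots s_{j-1})(1-s_j)\,t_{j+1}t_j$, and the two parallel expansions of $t_{j+1}t_j$ and $t_j s_j t_{j+1}$ in the inductive step all hold as stated. Note that the paper itself contains no proof of this proposition --- it is deferred to the preprint \cite{s2b2} --- so there is no in-paper argument to compare against. What is worth emphasizing is that your key lemma $(1-s_j)t_{j+1}t_j = 0$ is, via $s_jt_{j+1}=t_j-1$, precisely equivalent to Proposition \ref{prop.ti+1ti} ($t_{j+1}t_j=(t_j-1)t_j$), which the paper likewise states without proof; your write-up therefore supplies a self-contained proof of both propositions at once, with the braid relation $s_js_{j+1}s_j=s_{j+1}s_js_{j+1}$ invoked exactly once in the downward induction. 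Two small points to make explicit in the final version: the reduction to the key lemma uses $t_{j+1}$, so the case $j=n$ must be (and is) split off at the outset; and the appeal to the commutation of $t_{j+2}$ with $s_j$ is legitimate because every simple transposition occurring in $t_{j+2}$ has index at least $j+2$, hence differs from $j$ by at least $2$.
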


\begin{proposition}
We have $t_{n-1}\left[  t_{i},t_{n-1}\right]  =0$ for any $1\leq i\leq n$.
\end{proposition}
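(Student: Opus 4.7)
The plan is to derive this identity almost immediately from two results that are already on the table just above the statement in question: the formula $t_i = 1 + s_i t_{i+1}$ (for $i \in [n-1]$) and the identity $(1 + s_j)[t_i, t_j] = 0$ (for $1 \leq i < j \leq n$). The key observation is that $t_{n-1}$ is just $1 + s_{n-1}$; this follows from the recursion $t_{n-1} = 1 + s_{n-1} t_n$ together with $t_n = 1$, or equivalently from the definition $t_{n-1} = \operatorname{cyc}_{n-1} + \operatorname{cyc}_{n-1,n} = 1 + s_{n-1}$.

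First I would dispose of the two edge cases. If $i = n$, then $t_i = t_n = 1$ is the unity of $\mathbf{k}[S_n]$, so $[t_i, t_{n-1}] = 0$ and the claim is trivial. If $i = n-1$, then $[t_i, t_{n-1}] = [t_{n-1}, t_{n-1}] = 0$, so again the claim is trivial. This reduces the proposition to the range $1 \leq i \leq n-2$, i.e., $1 \leq i < n-1$.

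For this remaining range, I would simply apply the earlier proposition $(1 + s_j)[t_i, t_j] = 0$ with $j = n-1$, which yields
\[
(1 + s_{n-1})\,[t_i, t_{n-1}] = 0.
\]
Since $1 + s_{n-1} = t_{n-1}$, the left-hand side is precisely $t_{n-1}[t_i, t_{n-1}]$, completing the proof.

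There is no real obstacle here: the content of the proposition is entirely packaged in the previous identity $(1 + s_j)[t_i, t_j] = 0$, and the only substantive observation is that for the extremal choice $j = n-1$, the factor $1 + s_j$ collapses to $t_j$ itself. In particular, this proposition can be viewed as a degenerate case of the more general phenomenon: although in general $(1+s_j)$ is only a factor of $t_j$, at the top of the range $\ell = n-1$ it equals $t_j$ exactly, which is what makes the stronger statement $t_{n-1}[t_i, t_{n-1}] = 0$ possible.
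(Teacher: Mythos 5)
Your proof is correct. The paper itself does not prove this proposition (it defers all the identities in that list to the companion preprint \cite{s2b2}), so there is no in-paper argument to compare against; but your derivation is clean and complete: the identity $t_{n-1} = \operatorname{cyc}_{n-1} + \operatorname{cyc}_{n-1,n} = 1 + s_{n-1}$ is immediate from the definition (or from $t_{n-1} = 1 + s_{n-1}t_n$ with $t_n = 1$), the edge cases $i \in \{n-1, n\}$ are correctly disposed of, and for $i \le n-2$ the claim is exactly the preceding proposition $\left(1+s_j\right)\left[t_i, t_j\right] = 0$ specialized to $j = n-1$. The only caveat worth flagging is that your argument inherits its substance from that preceding proposition, which is likewise only proved in \cite{s2b2}; granting it, your reduction is airtight.
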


\begin{proposition}
We have $\left[  t_{i},t_{j}\right]  =\left[  s_{i}s_{i+1}\cdots s_{j-1}%
,t_{j}\right]  t_{j}$ for any $1\leq i<j\leq n$.
\end{proposition}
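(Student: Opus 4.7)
The plan is to split the defining sum of $t_i$ at the index $k=j$, so that $t_i$ decomposes into a piece that commutes with $t_j$ plus a piece of the form (cycle)$\cdot t_j$.

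More concretely, starting from \eqref{eq.def.tl.deftl-s-sum}, I would write
\[
t_i = \sum_{k=i}^{n} s_i s_{i+1} \cdots s_{k-1} = a + c\, t_j,
\]
where
\[
a := \sum_{k=i}^{j-1} s_i s_{i+1}\cdots s_{k-1} \qquad \text{and} \qquad c := s_i s_{i+1}\cdots s_{j-1}.
\]
The second equality uses the factorization $s_i s_{i+1} \cdots s_{k-1} = c \cdot s_j s_{j+1} \cdots s_{k-1}$ for $k \ge j$, together with $t_j = \sum_{k=j}^n s_j s_{j+1}\cdots s_{k-1}$ from \eqref{eq.def.tl.deftl-s-sum}.

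Next, I would expand the commutator using this decomposition:
\begin{align*}
[t_i, t_j] &= (a + c t_j) t_j - t_j (a + c t_j) \\
&= (a t_j - t_j a) + (c t_j - t_j c)\, t_j \\
&= [a, t_j] + [c, t_j]\, t_j.
\end{align*}
It therefore suffices to show that $[a, t_j] = 0$, because then $[t_i, t_j] = [c, t_j]\, t_j$, which is exactly the claimed identity (since $c = s_i s_{i+1}\cdots s_{j-1}$).

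The remaining step is the easy one, and is the only place any real content enters. Every addend of $a$ is a product of simple transpositions $s_k$ with $k \in \{i, i+1, \ldots, j-2\}$ (including the empty product $1$ for $k=i$), while every addend of $t_j$ is either $1$ or a product of simple transpositions $s_k$ with $k \in \{j, j+1, \ldots, n-1\}$. Any pair of indices drawn one from each set differs by at least $2$, so by reflection locality every generator appearing in $a$ commutes with every generator appearing in $t_j$. Consequently $a$ and $t_j$ commute, giving $[a, t_j] = 0$ and completing the proof. There is no real obstacle here; the only mild subtlety is remembering to split the sum at precisely $k = j$ so that the leftover factor after $c$ is exactly $t_j$ and not some shifted variant.
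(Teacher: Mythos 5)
Your proof is correct. The decomposition $t_i = a + c\,t_j$ with $a = \sum_{k=i}^{j-1} s_i s_{i+1}\cdots s_{k-1}$ and $c = s_i s_{i+1}\cdots s_{j-1}$ is valid (for $k \ge j$ one factors $s_i\cdots s_{k-1} = c\cdot s_j\cdots s_{k-1}$ and resums to get $c\,t_j$), the commutator expansion $[t_i,t_j] = [a,t_j] + [c,t_j]\,t_j$ is a routine computation, and the vanishing of $[a,t_j]$ does follow from reflection locality: every generator occurring in $a$ has index at most $j-2$, every generator occurring in $t_j$ has index at least $j$, and these differ by at least $2$, so all the relevant products and hence the two sums commute. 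Note that the paper itself does not prove this proposition; it is only stated with a reference to the companion preprint \cite{s2b2}, so there is no in-paper argument to compare against. Your argument is a clean, self-contained proof relying only on \eqref{eq.def.tl.deftl-s-sum} and reflection locality, and I see no gap in it.
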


\begin{proposition}
\label{prop.ti+1ti}We have $t_{i+1}t_{i}=\left(  t_{i}-1\right)  t_{i}$ for
any $1\leq i<n$.
\end{proposition}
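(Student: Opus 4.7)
The plan is to prove the identity by decreasing induction on $i$, relying on the preceding proposition $t_i = 1 + s_i t_{i+1}$ (equivalently $s_i t_{i+1} = t_i - 1$) and on Lemma \ref{lem.si-into-cyc}. The base case $i = n-1$ is direct: $t_n = 1$ gives $t_n t_{n-1} = t_{n-1}$, while $(t_{n-1} - 1) t_{n-1} = s_{n-1}(1 + s_{n-1}) = 1 + s_{n-1} = t_{n-1}$.

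For the inductive step, I would assume $t_{i+2} t_{i+1} = (t_{i+1} - 1) t_{i+1}$ and prove $t_{i+1} t_i = (t_i - 1) t_i$. The first reduction notes that $(t_i - 1) t_i = t_i(t_i - 1) = t_i s_i t_{i+1}$, so the goal becomes $t_i s_i t_{i+1} = t_{i+1} t_i$. To handle the left side, I would push $s_i$ through $t_i$ by means of Lemma \ref{lem.si-into-cyc}: for each $k \geq i + 2$ one has $\operatorname*{cyc}_{i,\ldots,k} \cdot s_i = s_{i+1} \cdot \operatorname*{cyc}_{i,\ldots,k}$, while the summands $k = i$ and $k = i+1$ in $t_i = \sum_k \operatorname*{cyc}_{i,\ldots,k}$ contribute $s_i$ and $1$ respectively, yielding $t_i s_i = 1 + s_i + s_{i+1}(t_i - 1 - s_i)$. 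Multiplying on the right by $t_{i+1}$ and substituting $s_i t_{i+1} = t_i - 1$ twice gives
\[
t_i s_i t_{i+1} = t_{i+1} + t_i - 1 + s_{i+1}(t_i - 1)(t_{i+1} - 1).
\]

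The desired identity therefore reduces to $s_{i+1}(t_i - 1)(t_{i+1} - 1) = (t_{i+1} - 1)(t_i - 1)$. Here I would substitute $t_i - 1 = s_i t_{i+1}$ and $t_{i+1} - 1 = s_{i+1} t_{i+2}$ on both sides. The left-hand side becomes $s_{i+1} s_i \cdot t_{i+1} s_{i+1} t_{i+2}$; the right-hand side becomes $s_{i+1} t_{i+2} s_i t_{i+1}$, which equals $s_{i+1} s_i \cdot t_{i+2} t_{i+1}$ because $t_{i+2}$ lies in the subalgebra generated by $s_{i+2}, \ldots, s_{n-1}$ and so commutes with $s_i$ by reflection locality. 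Cancelling the invertible factor $s_{i+1} s_i$ on the left reduces the problem to showing $t_{i+1} s_{i+1} t_{i+2} = t_{i+2} t_{i+1}$.

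This final equation is exactly where the inductive hypothesis enters: $t_{i+1} s_{i+1} t_{i+2} = t_{i+1}(t_{i+1} - 1) = (t_{i+1} - 1) t_{i+1}$, which by hypothesis equals $t_{i+2} t_{i+1}$. The main obstacle is spotting this sequence of reductions — one has to recognize that the extra term $s_{i+1}(t_i - 1)(t_{i+1} - 1)$ produced by Lemma \ref{lem.si-into-cyc} is precisely what, after commuting $s_i$ past $t_{i+2}$ and factoring out $s_{i+1} s_i$, isolates the quantity $t_{i+1} s_{i+1} t_{i+2}$ in a form to which the previous level of the induction applies. Once the correct rewriting is identified, every remaining step is mechanical.
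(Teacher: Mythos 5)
Your argument is correct. I checked each step: the identity $t_i = 1 + s_i t_{i+1}$ is immediate from \eqref{eq.def.tl.deftl-s-sum}; the computation $t_i s_i = 1 + s_i + s_{i+1}\left(t_i - 1 - s_i\right)$ follows from Lemma \ref{lem.si-into-cyc} applied to the summands $\operatorname*{cyc}_{i,\ldots,k}$ with $k \geq i+2$ together with the two low summands; the expansion $t_i s_i t_{i+1} = t_{i+1} + \left(t_i - 1\right) + s_{i+1}\left(t_i-1\right)\left(t_{i+1}-1\right)$ is right, and the reduction to $s_{i+1}\left(t_i-1\right)\left(t_{i+1}-1\right) = \left(t_{i+1}-1\right)\left(t_i-1\right)$ matches the expansion of $t_{i+1}t_i - t_{i+1} - t_i + 1$. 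The commutation of $t_{i+2}$ with $s_i$ by reflection locality, the cancellation of the invertible group element $s_{i+1}s_i$, and the final appeal to the induction hypothesis at level $i+1$ (which is legitimate in a decreasing induction with base case $i = n-1$, the step only being invoked for $i \leq n-2$ where $t_{i+2}$ and $s_{i+1}$ exist) are all sound. Note that the paper itself does not prove this proposition -- it defers to the preprint \cite{s2b2} -- so I cannot compare your route to the paper's; but your proof is self-contained, uses only tools already established in the paper (Lemma \ref{lem.si-into-cyc} and reflection locality), and is short, which makes it a genuinely useful addition.
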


\begin{proposition}
\label{prop.ti+2ti-1}We have $t_{i+2}\left(  t_{i}-1\right)  =\left(
t_{i}-1\right)  \left(  t_{i+1}-1\right)  $ for any $1\leq i<n-1$.
\end{proposition}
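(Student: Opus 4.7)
The plan is to reduce the identity to an application of Proposition~\ref{prop.ti+1ti} combined with the factorization $t_i - 1 = s_i t_{i+1}$ (which is the content of the proposition that $t_i = 1 + s_i t_{i+1}$), together with reflection locality.

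First, I would rewrite both sides using the factorization $t_i - 1 = s_i t_{i+1}$ (valid for $i \in [n-1]$, hence in particular for our fixed $i < n - 1$). This gives
\[
t_{i+2}(t_i - 1) = t_{i+2} \cdot s_i t_{i+1}
\quad\text{and}\quad
(t_i - 1)(t_{i+1} - 1) = s_i t_{i+1}(t_{i+1} - 1).
\]
Next, I would observe that $t_{i+2}$ is a $\mathbf{k}$-linear combination of products of simple transpositions $s_{i+2}, s_{i+3}, \ldots, s_{n-1}$, each of whose indices differs from $i$ by at least $2$. By reflection locality, each such $s_j$ commutes with $s_i$, hence so does $t_{i+2}$. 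Therefore $t_{i+2} \cdot s_i t_{i+1} = s_i \cdot t_{i+2} t_{i+1}$.

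The remaining step is to compare $s_i \cdot t_{i+2} t_{i+1}$ with $s_i \cdot t_{i+1}(t_{i+1} - 1)$, so it suffices to verify $t_{i+2} t_{i+1} = t_{i+1}(t_{i+1} - 1)$. But Proposition~\ref{prop.ti+1ti}, applied with $i$ replaced by $i+1$ (which is allowed since $i + 1 < n$), says exactly $t_{i+2} t_{i+1} = (t_{i+1} - 1) t_{i+1}$, and the two expressions $(t_{i+1} - 1) t_{i+1}$ and $t_{i+1}(t_{i+1} - 1)$ coincide (both equal $t_{i+1}^2 - t_{i+1}$). Chaining these equalities establishes the desired identity.

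I do not anticipate a genuine obstacle here, since the identity becomes a short three-line manipulation once one invokes the auxiliary factorization and Proposition~\ref{prop.ti+1ti}. The only subtlety worth highlighting in the write-up is the verification that $s_i$ commutes with $t_{i+2}$ (so the reader sees why the index gap of $2$ is essential and why the hypothesis $i < n - 1$ is needed to ensure that $t_{i+2}$ is well-defined and that Proposition~\ref{prop.ti+1ti} applies at index $i+1$).
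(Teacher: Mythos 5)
Your argument is correct: each step checks out. From $t_i = 1 + s_i t_{i+1}$ you get $t_i - 1 = s_i t_{i+1}$; by \eqref{eq.def.tl.deftl-s-sum}, $t_{i+2}$ is a sum of products of $s_{i+2}, s_{i+3}, \ldots, s_{n-1}$, all of whose indices differ from $i$ by at least $2$, so reflection locality gives $t_{i+2} s_i = s_i t_{i+2}$; and Proposition~\ref{prop.ti+1ti} at index $i+1$ (legitimate since $i+1 < n$) yields $t_{i+2} t_{i+1} = \left( t_{i+1} - 1 \right) t_{i+1} = t_{i+1}\left( t_{i+1} - 1 \right)$, whence both sides equal $s_i t_{i+1}\left( t_{i+1} - 1 \right)$. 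Note, however, that the paper itself does not prove Proposition~\ref{prop.ti+2ti-1}: it only cites the preprint \cite{s2b2}, so there is no in-paper proof to compare against. What your route buys is a short reduction of Proposition~\ref{prop.ti+2ti-1} to Proposition~\ref{prop.ti+1ti} (shifted by one) together with the elementary recursion $t_i = 1 + s_i t_{i+1}$; the one caveat is that Proposition~\ref{prop.ti+1ti} is likewise only quoted from \cite{s2b2} here, so your argument is self-contained only relative to that earlier identity (and would be circular only if \cite{s2b2} happened to prove that identity using this one, which you cannot control from within this paper). It would also be worth saying explicitly in the write-up that the $j = i+2$ term of $t_{i+2}$ is the empty product $1$ (and that $t_{i+2} = t_n = 1$ when $i = n-2$), so the commutation claim covers the degenerate cases as well.
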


\subsection{Open questions}

The above results (particularly Propositions \ref{prop.ti+1ti} and
\ref{prop.ti+2ti-1}) might suggest that the $\mathbf{k}$-subalgebra
$\mathbf{k}\left[  t_{1},t_{2},\ldots,t_{n}\right]  $ of $\mathbf{k}\left[
S_{n}\right]  $ can be described by explicit generators and relations. This is
probably overly optimistic, but we believe that it has some more properties
left to uncover. In particular, one can ask:

\begin{question}
What is the representation theory (indecomposable modules, etc.) of this
algebra? What power of its Jacobson radical is $0$? (These likely require
$\mathbf{k}$ to be a field.) What is its dimension (as a $\mathbf{k}$-vector space)?
\end{question}

Any reader acquainted with the standard arsenal of card-shuffling will spot
another peculiarity of the above work: We have not once used any result about
$\mathbf{k}\left[  S_{n}\right]  $-modules (i.e., representations of the
symmetric group $S_{n}$). The subject is, of course, closely related: Each of
the $F\left(  I\right)  $'s and thus also the $F_{i}$'s is a left
$\mathbf{k}\left[  S_{n}\right]  $-module, and it is natural to ask for its
isomorphism type:

\begin{question}
How do the $F\left(  I\right)  $ and the $F_{i}$ decompose into Specht modules
when $\mathbf{k}$ is a field of characteristic $0$ ?
\end{question}

We have been able to answer this question (see \cite{fps2024sn}),
and will prove our answer in forthcoming work.

A different direction in which our results seem to extend is the \emph{Hecke
algebra}. In a nutshell, the \emph{type-A Hecke algebra} (or\emph{
Iwahori-Hecke algebra}) is a deformation of the group algebra $\mathbf{k}%
\left[  S_{n}\right]  $ that involves a new parameter $q\in\mathbf{k}$. It is
commonly denoted by $\mathcal{H}=\mathcal{H}_{q}\left(  S_{n}\right)  $; it
has a basis $\left(  T_{w}\right)  _{w\in S_{n}}$ indexed by the permutations
$w\in S_{n}$, but a more intricate multiplication than $\mathbf{k}\left[
S_{n}\right]  $. A definition of the latter multiplication can be found in
\cite{Mathas99}. We can now define the $q$\emph{-deformed somewhere-to-below
shuffles} $t_{1}^{\mathcal{H}},t_{2}^{\mathcal{H}},\ldots,t_{n}^{\mathcal{H}}$
by%
\[
t_{\ell}^{\mathcal{H}}:=T_{\operatorname*{cyc}\nolimits_{\ell}}%
+T_{\operatorname*{cyc}\nolimits_{\ell,\ell+1}}+T_{\operatorname*{cyc}%
\nolimits_{\ell,\ell+1,\ell+2}}+\cdots+T_{\operatorname*{cyc}\nolimits_{\ell
,\ell+1,\ldots,n}}\in\mathcal{H}.
\]
Surprisingly, these $q$-deformed shuffles appear to share many properties of
the original $t_{1},t_{2},\ldots,t_{n}$; for example:

\begin{conjecture}
Theorem \ref{thm.Rcomb-main} seems to hold in $\mathcal{H}$ when the $t_{\ell
}$ are replaced by the $t_{\ell}^{\mathcal{H}}$.
\end{conjecture}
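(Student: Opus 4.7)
The plan is to transplant the entire argument of Sections~\ref{sec.invariantspaces}--\ref{sec.Filtration} from $\mathbf{k}[S_n]$ to $\mathcal{H}$, replacing each $s_i$ by the generator $T_i := T_{s_i}$ and each scalar $m_{I,\ell}$ by its $q$-analog $[m_{I,\ell}]_q := 1 + q + q^2 + \cdots + q^{m_{I,\ell}-1}$. The natural candidate for the invariant space is
\[
F^{\mathcal{H}}(I) := \left\{ p \in \mathcal{H} \ \mid\ p T_i = q\, p \text{ for all } i \in I' \right\},
\]
since on the Hecke trivial representation one has $T_i \mapsto q$, making this the correct deformation of the condition $q s_i = q$ used to define $F(I)$.

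First I would verify the Hecke analog of Lemma~\ref{lem.si-into-cyc}: for $i \in [\ell, j-2]$,
\[
T_\ell T_{\ell+1} \cdots T_{j-1} \cdot T_i = T_{i+1} \cdot T_\ell T_{\ell+1} \cdots T_{j-1}.
\]
This is a pure braid-relation identity (commutations $T_i T_k = T_k T_i$ for $|i-k|\geq 2$ and braid moves $T_i T_{i+1} T_i = T_{i+1} T_i T_{i+1}$), so it holds in $\mathcal{H}$ by the same step-by-step manipulation as in $S_n$; the quadratic relation $T_i^2 = (q-1)T_i + q$ is not needed here. Next I would prove the Hecke analog of Theorem~\ref{thm.tl-FI}, namely $F^{\mathcal{H}}(I) \cdot (t_\ell^{\mathcal{H}} - [m_{I,\ell}]_q) \subseteq F^{\mathcal{H}}({<}\operatorname{sum} I)$. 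Following the original proof, for $p \in F^{\mathcal{H}}(I)$, successive right-multiplication by $T_u$ with $u \in I'$ picks up a factor of $q$ at each step, so the prefix portion of the sum $p t_\ell^{\mathcal{H}} = \sum_{j=\ell}^n p T_\ell \cdots T_{j-1}$ collapses to $\sum_{j=\ell}^{i_k-1} q^{j-\ell} p = [m_{I,\ell}]_q p$, giving exactly the scalar we want to subtract. The remainder is split into pieces $q' := \sum_{j=i_r}^{i_{r+1}-1} p T_\ell \cdots T_{j-1}$ exactly as before, and each $q'$ is shown to lie in $F^{\mathcal{H}}(K)$ for the same set $K$ as in the original proof by verifying $q' T_i = q\, q'$ for all $i \in K'$.

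The five-case analysis for $q' T_i = q q'$ transfers with one substantive modification: in Case~4 (where $i_r \leq i < i_{r+1}$), the original argument swaps the $j=i$ and $j=i+1$ addends using $s_i^2 = \operatorname{id}$. In $\mathcal{H}$, the quadratic relation produces $T_\ell \cdots T_i \cdot T_i = (q-1) T_\ell \cdots T_i + q\, T_\ell \cdots T_{i-1}$, and a direct check shows that the combined $j=i$ and $j=i+1$ contributions to $q' T_i$ equal
\[
p T_\ell \cdots T_i + (q-1) p T_\ell \cdots T_i + q\, p T_\ell \cdots T_{i-1} = q\bigl(p T_\ell \cdots T_i + p T_\ell \cdots T_{i-1}\bigr),
\]
i.e.~$q$ times the original pair of addends, so $q' T_i = q q'$ as required. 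The other four cases use only the braid relation and the hypothesis $p \in F^{\mathcal{H}}(I)$, and transfer unchanged.

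The combinatorial results of Section~\ref{sec.Lacunarity} (Proposition~\ref{prop.FI-lac}, Corollaries~\ref{cor.FI-lac-2} and \ref{cor.FI-lac-cor}) are statements about subsets of $[n-1]$ and transfer verbatim. Together with the Hecke analog of Theorem~\ref{thm.tl-FI}, they yield the Hecke analog of Theorem~\ref{thm.t-simultri}: a filtration $0 = F^{\mathcal{H}}_0 \subseteq F^{\mathcal{H}}_1 \subseteq \cdots \subseteq F^{\mathcal{H}}_{f_{n+1}} = \mathcal{H}$ on whose $i$-th subquotient each $t_\ell^{\mathcal{H}}$ acts by the scalar $[m_{Q_i, \ell}]_q$. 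The hard part, and the main obstacle I expect, is the Hecke analog of the descent-destroying basis: to produce an explicit basis $(a_w^{\mathcal{H}})_{w \in S_n}$ of the $\mathbf{k}$-module $\mathcal{H}$ that is compatible with this filtration over an \emph{arbitrary} commutative ring $\mathbf{k}$. A natural recipe is to define $a_w^{\mathcal{H}}$ by induction on $w$ in the lexicographic order as the unique element of the form $T_w + \sum_{v < w} c_v T_v$ satisfying $a_w^{\mathcal{H}} T_i = q\, a_w^{\mathcal{H}}$ for all $i \in \operatorname{Des} w$, and then to prove the Hecke analog of Proposition~\ref{prop.aw.basis-QI}; the obstacle lies in showing that the recursion is actually solvable and that the resulting family genuinely forms a $\mathbf{k}$-basis (with freeness of each $F^{\mathcal{H}}(I)$) without passing to the specialization $q=1$. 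Once this basis is in hand, the Hecke analog of Theorem~\ref{thm.Rcomb-conc} follows by exactly the argument given in Section~9, and Theorem~\ref{thm.Rcomb-main} for $\mathcal{H}$ drops out.
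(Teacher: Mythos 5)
First, be aware that the statement you are proving is stated in the paper as an open \emph{conjecture} (the authors say only that attempts to prove it are underway), so there is no proof of record; what you have written is a plan, and you yourself flag that it is incomplete. The parts you do work out are sound. The Hecke analogue of Lemma \ref{lem.si-into-cyc} holds because it is really a statement about reduced words: for $c=\operatorname{cyc}_{\ell,\ell+1,\ldots,j}$ and $i\in\left[\ell,j-2\right]$ one has $\ell\left(cs_{i}\right)=\ell\left(s_{i+1}c\right)=\ell\left(c\right)+1$, so $T_{c}T_{i}=T_{cs_{i}}=T_{s_{i+1}c}=T_{i+1}T_{c}$ without ever invoking the quadratic relation. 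The prefix of the sum does collapse to $[m_{I,\ell}]_{q}\,p$, your Case 4 computation with $T_{i}^{2}=(q-1)T_{i}+q$ is correct, and Cases 1, 2, 3, 5 each pick up exactly one factor of $q$ as needed. So a Hecke analogue of Theorem \ref{thm.tl-FI}, and hence of Theorem \ref{thm.t-simultri}, is within reach along your lines.

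The genuine gap is the one you name but do not close: an adapted basis of the free $\mathbf{k}$-module $\mathcal{H}$. Two remarks on where exactly it bites. First, your recursive definition of $a_{w}^{\mathcal{H}}$ is unnecessary: the literal transplant $a_{w}^{\mathcal{H}}:=\sum_{\sigma\in G\left(\operatorname{Des}w\right)}T_{w\sigma}$ is already a unitriangular family (by Corollary \ref{cor.aw.lead}, which is purely about the lexicographic order) of $q$-eigenvectors, since writing $w=uw_{0,J}$ with $u$ the minimal coset representative and $J=\operatorname{Des}w$ gives $a_{w}^{\mathcal{H}}=T_{u}\cdot\sum_{\sigma\in G\left(J\right)}T_{\sigma}$, and $\bigl(\sum_{\sigma}T_{\sigma}\bigr)T_{i}=q\sum_{\sigma}T_{\sigma}$ for $i\in J$. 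The real obstruction is the spanning statement, i.e., the analogue of Claim 2 in the proof of Proposition \ref{prop.aw.basis-QI}. There the key step "$\left[x\right]\left(qs_{k}\right)=\left[xs_{k}\right]q=0$, hence $\left[x\right]q=0$" deforms to "$q\cdot\left[T_{x}\right]p=\left[T_{x}\right]\left(pT_{k}\right)=q\cdot\left[T_{xs_{k}}\right]p=0$" (using $\ell\left(xs_{k}\right)>\ell\left(x\right)$ when $k\notin\operatorname{Des}x$), which only yields $q\cdot\left[T_{x}\right]p=0$. When $q$ is a zero divisor in $\mathbf{k}$ (e.g.\ $q=0$, the $0$-Hecke algebra), this does not give $\left[T_{x}\right]p=0$, so you cannot conclude that the eigenvector module $F^{\mathcal{H}}\left(I\right)$ equals the span of the $a_{w}^{\mathcal{H}}$ with $I^{\prime}\subseteq\operatorname{Des}w$. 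Redefining the filtration pieces as those spans does not immediately help either, because your proof of the analogue of Theorem \ref{thm.tl-FI} places $q^{\prime}$ in the (possibly larger) eigenvector module $F^{\mathcal{H}}\left(K\right)$ rather than in the span, so the triangularity argument no longer closes. As it stands, your proposal establishes the conjecture at most when $q$ is a non-zero-divisor (or when $\mathbf{k}$ is a field, where one can settle for an abstract flag), not in the full generality in which Theorem \ref{thm.Rcomb-main} is stated.
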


Attempts to prove this conjecture are underway. \medskip

Thus ends our study of the somewhere-to-below shuffles $t_{1},t_{2}%
,\ldots,t_{n}$ and their linear combinations. From a bird's eye view, the most
prominent feature of this study might have been its use of a strategically
defined filtration of $\mathbf{k}\left[  S_{n}\right]  $ (as opposed to, e.g.,
working purely algebraically with the operators, or combining them into
generating functions, or finding a joint eigenbasis). In the language of
matrices, this means that we found a joint triangular basis for our shuffles
(i.e., a basis of $\mathbf{k}\left[  S_{n}\right]  $ such that each of our
shuffles is represented by an upper-triangular matrix in this basis). In our
case, this method was essentially forced upon us by the lack of a joint
eigenbasis (as we saw in Remark \ref{rmk.Rcomb}). However, even when a family
of linear operators has a joint eigenbasis, it might be easier to find a
filtration than to find such an eigenbasis. Thus, a question naturally appears:

\begin{question}
Are there other families of shuffles for which a filtration like ours (i.e.,
with properties similar to Theorem \ref{thm.t-simultri}) exists and can be
used to simplify the spectral analysis?
\end{question}

\bibliographystyle{halpha-abbrv}
\bibliography{biblio.bib}

\end{document}